\newcommand{\tomemail}{\href{mailto:tom.bachmann@zoho.com}{tom.bachmann@zoho.com}}
\newtheorem{proposition}{Proposition}
\newtheorem{corollary}[proposition]{Corollary}
\newtheorem{lemma}[proposition]{Lemma}
\newtheorem{theorem}[proposition]{Theorem}
\newtheorem*{conjecture*}{Conjecture}
\newtheorem*{theorem*}{Theorem}
\newtheorem*{corollary*}{Corollary}
\newtheorem*{proposition*}{Proposition}
\newtheorem*{lemma*}{Lemma}
\theoremstyle{definition}
\newtheorem{definition}[proposition]{Definition}
\newtheorem*{definition*}{Definition}
\newtheorem*{construction*}{Construction}
\theoremstyle{remark}
\newtheorem{remark}[proposition]{Remark}
\newtheorem*{remark*}{Remark}
\newtheorem{question}[proposition]{Question}
\newtheorem{example}[proposition]{Example}
\newtheorem*{example*}{Example}
\newcommand{\id}{\operatorname{id}}
\newcommand{\Z}{\mathbb{Z}}
\newcommand{\F}{\mathbb{F}}
\let\scr=\mathcal
\let\bb=\mathbb
\newcommand{\Gm}{{\mathbb{G}_m}}
\newcommand{\Gmp}[1]{{\mathbb{G}_m^{\wedge #1}}}
\def\A{\bb A}
\newcommand{\1}{\mathbbm{1}}
\newcommand{\SH}{\mathcal{SH}}
\DeclareMathOperator*{\colim}{colim}
\let\lim=\relax
\DeclareMathOperator*{\lim}{lim}
\def\Map{\mathrm{Map}}
\def\map{\mathrm{map}}
\def\CAlg{\mathrm{CAlg}}
\def\CMon{\mathrm{CMon}}
\def\NAlg{\mathrm{NAlg}}
\def\PSh{\mathcal{P}}
\def\Span{\mathrm{Span}}
\def\Cat{\mathcal{C}\mathrm{at}{}}
\def\Spc{\mathcal{S}\mathrm{pc}{}}
\def\Fin{\cat F\mathrm{in}}
\def\Fun{\mathrm{Fun}}
\newcommand{\Spec}{\mathrm{Spec}}
\newcommand{\wequi}{\simeq}
\newcommand{\Mod}{\text{-}\mathcal{M}\mathrm{od}}
\def\adj{\rightleftarrows}
\DeclareRobustCommand{\ul}{\underline}
\newcommand{\Hom}{\operatorname{Hom}}
\def\op{\mathrm{op}}
\let\cat=\mathrm
\def\Sm{{\cat{S}\mathrm{m}}}
\def\Sch{\cat{S}\mathrm{ch}{}}
\def\FEt{\mathrm{FEt}{}}
\def\Nis{\mathrm{Nis}}
\def\mot{\mathrm{mot}}
\def\ph{\mathord-}
\theoremstyle{definition}
\newtheorem{notation}[proposition]{Notation}
\numberwithin{proposition}{section}
\renewcommand{\todo}[1]{}
\newcommand{\NB}[1]{}
\newcommand{\tdn}[1]{}
\newcommand{\NB}[1]{\todo[color=gray!40]{#1}}
\newcommand{\tdn}[1]{\todo[color=green]{#1}}
\newcommand{\sslash}{/\mkern-6mu/}
\def\E{\mathbb{E}}
\newcommand{\Cor}{\mathrm{Cor}}
\def\fet{\mathrm{f\acute et}}
\def\tpffqf{\mathrm{tpffqf}}
\def\tpet{\mathrm{tp\acute et}}
\def\all{\mathrm{all}}
\def\D{\mathrm{D}}
\newcommand{\Stk}{\cat{S}\mathrm{tk}}
\def\NMon{\mathrm{NMon}}
\def\NSym{\mathrm{NSym}}
\def\cof{\mathrm{cof}}
\def\Ex{Ex}
\def\free{\mathrm{free}}
\def\prop{\mathrm{prop}}
\newcommand{\repr}{\mathrm{repr}}
\newcommand{\Shv}{\cat{S}\mathrm{hv}}
\newcommand{\SmQP}{\mathrm{SmQP}}
\newcommand{\SmQA}{\mathrm{SmQA}}
\newcommand{\qaff}{\mathrm{qaff}}
\newcommand{\htp}{\mathrm{htp}}
\newcommand{\Sph}{\mathrm{Sph}}
\newcommand{\Orb}{\mathrm{Orb}}
\newcommand{\Sp}{\mathrm{Sp}}
\newcommand{\SHS}{\SH^{S^1}\!}
\newcommand{\sift}{\mathrm{sift}}
\title{Motivic spectral Mackey functors}
\date{\today}
\author{Tom Bachmann}
\address{Department of Mathematics, University of Oslo, Norway}
\email{\tomemail}
\begin{document}

\maketitle

\begin{abstract}
We show that if $G$ is a finite constant group acting on a scheme $X$ such that $|G| \in \scr O^\times_X$, then the $G$-equivariant motivic stable homotopy category of $X$ is equivalent to the stabilization of the category of motivic $G$-spaces with finite étale transfers over $X$ at the \emph{trivial} representation sphere.
Along the way we obtain several results of independent interest, among them: we construct and study norms in the motivic homotopy theory of stacks, and we extend the homotopy $t$-structure to DM-stacks and establish some favorable properties.
\end{abstract}

\tableofcontents

\section{Introduction}
\subsection{A motivic Guillou--May theorem}
Let $G$ be a finite group and $X$ a scheme with a $G$-action.
Assume that $G$ is tame (relative to $X$), i.e. $|G| \in \scr O_X^\times$.
Following \cite{hoyois-equivariant} and \cite{khan2021generalized}, we can study $G$-equivariant stable motivic homotopy theory over $X$, or equivalently stable motivic homotopy theory over the stack $X \sslash G$ (also sometimes denoted $[X/G]$); this is a presentably symmetric monoidal stable $\infty$-category which we denote $\SH(X \sslash G)$.
This category is obtained by starting with presheaves on $G$-equivariant smooth $X$-schemes, inverting $\A^1$- and Nisnevich equivalences, and stabilizing with respect to the representation spheres.

Denote by $\Cor^\fet(\Sm_{X \sslash G})$ the $(2,1)$-category with the same objects as $\Sm_{X \sslash G}$, and morphisms given by the spans \[ Y_1 \sslash G \xleftarrow{p} Z \sslash G \to Y_2 \sslash G, \] where the map $p$ is required to be finite étale (i.e. the corresponding map $Z \to Y_1$ of schemes is finite étale).
We define the category of \emph{motivic spaces with finite étale transfers} to be \[ \Spc^\fet(X \sslash G) := L_\mot \PSh_\Sigma(\Cor^\fet(\Sm_{X \sslash G})). \]
It is presentably symmetric monoidal, with the monoidal structure induced by just the product of stacks (over $X \sslash G$).

We can now state a motivic analog of a theorem of Guillou--May \cite{guillou2011models} in classical genuine $G$-equivariant stable homotopy theory.
\begin{theorem}[see Theorem \ref{thm:main}] \label{thm:intro-main}
Let $G$ be a finite discrete group acting on a scheme $X$, with $|G| \in \scr O_X^\times$.
Then there is an equivalence of symmetric monoidal $\infty$-categories \[ \SH(X \sslash G) \wequi \Spc^\fet(X \sslash G)[T^{-1}]. \]
Here $T$ denotes (the free motivic space with finite étale transfers on) the Thom space of the \emph{trivial} representation sphere.
\end{theorem}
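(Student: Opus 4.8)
\emph{The comparison functor.} The strategy is to construct a symmetric monoidal, colimit-preserving functor $\Phi\colon \Spc^\fet(X\sslash G)[T^{-1}] \to \SH(X\sslash G)$ and prove it is an equivalence. For $\Phi$: the category $\SH(X\sslash G)$ already carries finite étale transfers. Indeed, if $p\colon Z\sslash G \to Y\sslash G$ is finite étale then $p$ is smooth and proper and its fundamental class trivializes the relative dualizing object, so $p_\# \simeq p_*$; the unit $\id \to p_* p^*$ then gives a wrong-way map $\Sigma^\infty_+(Y\sslash G) \to \Sigma^\infty_+(Z\sslash G)$. The coherence of these transfers (compatibility with composition, base change, and $\otimes$) is precisely what the norm and six-functor formalism for motivic homotopy of stacks developed above encodes, so $\Sigma^\infty_+$ extends to a functor $\Cor^\fet(\Sm_{X\sslash G}) \to \SH(X\sslash G)$; extending along $\PSh_\Sigma$, along $\A^1$- and Nisnevich localization, and then inverting the (already invertible) object $T$ produces $\Phi$.

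\emph{Essential surjectivity.} Since $\SH(X\sslash G)$ is generated under colimits by the objects $\Sigma^{-V}\Sigma^\infty_+(Y\sslash G)$, with $Y\sslash G \in \Sm_{X\sslash G}$ and $V$ a representation, and $\Phi$ hits each $\Sigma^\infty_+(Y\sslash G)$, it suffices to show that every representation sphere $S^V = \mathrm{Th}(V)$ is already $\otimes$-invertible in $\Spc^\fet(X\sslash G)[T^{-1}]$. This is the one point where finite étale transfers are essential; it is the motivic incarnation of the classical fact that transfers make genuine equivariance ``free'', and I expect it to be the conceptual heart of the argument. As $G$ is tame, the quotient map $\pi\colon X \to X\sslash G$ is a $G$-torsor, hence finite étale, and the norm $\pi_\otimes$ — which, crucially, we must have at the level of $\Spc^\fet(-)[T^{-1}]$ of stacks, not merely of $\SH$ — is symmetric monoidal and carries the invertible object $T_X$ to $\mathrm{Th}(\rho)$, the Thom space of the regular representation $\rho = \pi_*\scr O_X$. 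Thus $\mathrm{Th}(\rho)$ is invertible; and since $|G|\in\scr O_X^\times$, averaging exhibits any $V$ fibrewise as a retract of a sum of copies of $\rho$, so $\mathrm{Th}(V)$ is a nonzero retract of an invertible object and hence (componentwise on the base) invertible. By the universal property of $\SH(X\sslash G)$ this also produces a functor $\Psi$ the other way with $\Phi\Psi\simeq\id$.

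\emph{Full faithfulness.} It remains to show $\Phi$ is fully faithful. As both sides are generated under colimits by $\otimes$-invertible twists of the compact objects $\Sigma^\infty_+(Y\sslash G)$, which $\Phi$ matches, it suffices to show $\Phi$ is an equivalence on mapping spectra between these generators — equivalently, that finite étale transfers compute the same bigraded stable homotopy groups as $\SH(X\sslash G)$. For this I would invoke the homotopy $t$-structures: the one on $\SH(X\sslash G)$ extended to DM-stacks as above, and the corresponding one on $\Spc^\fet(X\sslash G)[T^{-1}]$, whose connective part is built from the motivic spectral Mackey functors $\Spc^\fet(X\sslash G)^\gp$. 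One checks $\Phi$ is $t$-exact, that the comparison maps on $\Sigma^\infty_+$ are highly connected, and that the $t$-structures are convergent (one of the ``favorable properties''), which reduces the claim to the level of hearts; on both sides the heart is the abelian category of homotopy modules over $X\sslash G$ equipped with finite étale transfers, where $\Phi_\heart$ is visibly the identity. The connectivity estimate is proved by Nisnevich (or étale) descent over $X$ together with dévissage along the isotropy stratification $X^{(H)}\sslash W_G H$ of $X\sslash G$ and induction on $|G|$, bottoming out in the non-equivariant recognition principle (finite étale correspondences factor through framed ones, and $\Spc^\fr(-)[T^{-1}]\simeq\SH(-)$).

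\emph{Main obstacle.} The conceptual crux is the invertibility of $\mathrm{Th}(\rho)$ above, but the technical crux — and the bulk of the work — is full faithfulness: constructing the homotopy $t$-structure on motivic homotopy of DM-stacks with enough good properties, transporting it to the finite-étale-transfer side, identifying the hearts with motivic Mackey functors, and pushing the connectivity estimate through the change-of-group dévissage. This is exactly the apparatus the remainder of the paper is designed to supply.
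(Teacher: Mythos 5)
There is a genuine gap, and it sits exactly at what you call the conceptual crux. Your proof of invertibility of the representation spheres in $\Spc^\fet(X\sslash G)[T^{-1}]$ is circular: to apply the norm $\pi_\otimes$ along $\pi\colon X\to X\sslash G$ ``at the level of $\Spc^\fet(-)[T^{-1}]$'' you must first descend $\pi_\otimes\colon\Spc^\fet(X)\to\Spc^\fet(X\sslash G)$ to the localizations, and since $\pi_\otimes(T_X)\wequi Th(\rho)$ rather than a power of $T$, this descent is possible precisely when $Th(\rho)$ is already invertible in $\Spc^\fet(X\sslash G)[T^{-1}]$ --- the very statement you are trying to prove. (This is why the paper's norm functors are only constructed after inverting the Thom spaces of \emph{all} vector bundles, where the set of inverted objects is stable under $p_\otimes$.) The invertibility of $T^\rho$ after inverting only the trivial sphere is the surprising content of the theorem and cannot be obtained by a formal norm argument; in the paper it is a \emph{consequence} of the equivalence, deduced from (i) generation of $\SH(X\sslash G)$ by trivial Tate twists $\Sigma^\infty_+\scr Y\wedge\Gmp{n}$ (proved by filtration by isotropy and reduction to gerbes), (ii) the equivalence $\SH(X\sslash G)\wequi\Spc^\fet(X\sslash G)[T^{-\rho}]$, and (iii) full faithfulness of $\Spc^\fet(X\sslash G)[T^{-1}]\to\Spc^\fet(X\sslash G)[T^{-\rho}]$.

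Your full faithfulness step is in the right spirit --- the homotopy $t$-structure on DM-stacks and a connectivity estimate are indeed the tools --- but as written it also begs the question (the identification of the two hearts as ``homotopy modules with finite étale transfers'' is essentially the statement to be proved in degree $0$) and, more importantly, it omits the one non-formal input that makes the argument work: the Adams isomorphism / tom Dieck splitting for proper étale gerbes (Gepner--Heller for finite constant groups). Concretely, the paper reduces full faithfulness via the jointly conservative family of gerbe fixed-point functors $(\ph)^{\scr T}$ to the case of an algebraic space (where $T=T^\rho$), and this reduction requires knowing that these fixed-point functors commute with $\Omega^\infty_\fet\Sigma^\infty_\fet$, which is exactly where ambidexterity for free objects enters; the remaining computation identifies $\Omega^\infty_\fet\Sigma^\infty_\fet$ with $\bigvee_{m}\D_m$ and kills the $m\ge 2$ terms by the estimate $T^{-n\rho}\wedge\D_m(T^{n\rho}\wedge X)\in\SH_{\ge(m-1)n}$ together with left non-degeneracy of the $t$-structure. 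Without the Adams isomorphism (or an equally strong substitute), neither your dévissage along isotropy strata nor your induction on $|G|$ will close.
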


Recall that $\SH(X \sslash G)$ was constructed by inverting \emph{all} representation spheres, or equivalently the regular representation, whereas the right hand side above is obtained by only inverting the trivial representation sphere.
In other words, one of the most surprising aspects of the above result is that the image of $T^\rho$ in $\Spc^\fet(X \sslash G)[T^{-1}]$ is already invertible.

\begin{remark}
Let $\scr S$ be a tame DM-stack with affine diagonal.
Then $\scr S$ admits a quasi-affine Nisnevich covering by stacks of the form $X \sslash G$, for varying and not necessarily constant finite étale groups $G$.
If all $G$ occurring are constant, then Theorem \ref{thm:intro-main} remains true for $\scr S$ in place of $X \sslash G$.
More generally Theorem \ref{thm:intro-main} for all tame DM-stacks would follow from what I call the \emph{Adams hypothesis}; see \S\ref{sec:intro-questions} for more about this.
\end{remark}
In fact, most results in this article are proved for classes of stacks more general than global quotient stacks.
For the purposes of this introduction, we will forgo this additional generality and only explain results in terms of global quotients by finite constant groups.

\subsection{Related results}
In the course of proving Theorem \ref{thm:intro-main}, we establish some related results of independent interest.
\subsubsection{Generation results}
One surprising aspect of Theorem \ref{thm:intro-main} is that objects of the form $\Sigma^\infty_+ \scr Y \wedge \Gmp{n}$, for $\scr Y \in \Sm_{X \sslash G}$ and $n \in \Z$ (and $\Gm$ carrying the trivial action) generate the right hand side, and thus the left hand side.
In fact more is true.
In order to state the next result, we recall some more terminology.
A stack is called a \emph{gerbe} if its inertia (over $\Z$) is flat; for a quotient stack of the form $X \sslash G$ this means that the scheme of inertia subgroups \[ I_X \subset G \times X \to X \] is flat.
A more appealing description may be as follows (see Lemma \ref{lemm:gerbes-admissible-auto}): $X \sslash G$ is a gerbe if and only if $X$ is a disjoint union of $G$-schemes of the form $Y \times_H G$, where $H$ is a subgroup of $G$, $Y$ is an $H$-scheme and there exists a normal subgroup $N \subset H$ such that $N$ acts trivially on $Y$ and $H/N$ acts freely on $Y$.

\begin{proposition}[see Proposition \ref{prop:gens}] \label{prop:intro-gens}
The category $\SH(X \sslash G)$ is generated by objects of the form $\Sigma^\infty_+ \scr Y \wedge \Gmp{n}$, for $\scr Y \in \Sm_{X \sslash G}$ and $n \in \Z$.
If $X \sslash G$ is a gerbe, we may assume that $\scr Y$ also is.
\end{proposition}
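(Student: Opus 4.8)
The plan is to argue by descent, reducing from the stack $X\sslash G$ to ordinary schemes, where the analogous generation statement for $\SH$ is classical.

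First I would recall that $\SH(X\sslash G)$ is generated under colimits, shifts, and the inverted representation spheres by the objects $\Sigma^\infty_+ \scr Y$ for $\scr Y \in \Sm_{X\sslash G}$; this is the definition of the category. The issue is twofold: (1) we want to invert only $\Gm$ with trivial action, not all representation spheres, and (2) we want, in the gerbe case, to restrict to generators $\scr Y$ that are themselves gerbes. For (1), the key input is that every representation sphere becomes invertible after smashing with enough copies of $\Sigma^\infty_+ \scr Y$'s: concretely, for a representation $V$ of $G$, the Thom space $\mathrm{Th}(V)$ over $X\sslash G$ fits in a cofiber sequence built from vector bundles over smooth $X\sslash G$-schemes (the zero section and its complement), so $T^V$ lies in the localizing subcategory generated by the $\Sigma^\infty_+\scr Y\wedge\Gmp{n}$. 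Smashing with $T^{-V}$ then shows the unit, and hence everything, lies in that subcategory. This is where I expect the main work: making precise that the cofiber sequence for a Thom space only involves \emph{smooth} $X\sslash G$-schemes and $\Gm$'s with trivial action, using that $\Gm^{\wedge n}$ appears via the projective bundle / Jouanolou-type decompositions rather than via genuine representations.

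For the refinement to gerbes, I would use the structural description recalled just before the statement: if $X\sslash G$ is a gerbe then $X = \coprod_i Y_i \times_{H_i} G$ with $N_i \trianglelefteq H_i$ acting trivially on $Y_i$ and $H_i/N_i$ acting freely. The point is that for a gerbe, the smooth $X\sslash G$-schemes $\scr Y$ that are \emph{also} gerbes are cofinal (or at least generate): any smooth $\scr Y \to X\sslash G$ can be covered, in the appropriate topology, by smooth schemes of the required product form, because smoothness is local and the gerbe condition (flatness of inertia) is inherited by suitable étale-local pieces. Combined with the fact that $L_\mot\PSh_\Sigma$ is generated by representables, and that Nisnevich-local equivalences are detected on such a generating family, this gives that the gerbe generators suffice.

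The main obstacle will be bookkeeping the interaction between the two reductions simultaneously — i.e. showing that the Thom-space cofiber sequences used to eliminate nontrivial representation spheres can be chosen with all intermediate terms of the form $\Sigma^\infty_+\scr Y\wedge\Gmp{n}$ \emph{and}, when $X\sslash G$ is a gerbe, with $\scr Y$ a gerbe. I expect this follows because the relevant vector bundles (coming from $V$ restricted along smooth maps) are themselves smooth $X\sslash G$-stacks whose total spaces remain gerbes when the base is, flatness of inertia being stable under smooth base change. The remaining steps — that $\A^1$- and Nisnevich-localization are compatible with these manipulations, and that $\Gmp{n}$ for negative $n$ is available once $T$ is inverted — are routine given the results quoted earlier in the paper.
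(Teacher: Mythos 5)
There is a genuine gap at the central step. You correctly observe that for a representation $V$ the cofiber sequence $(V\setminus 0)_+ \to V_+ \to Th(V)$ exhibits $T^V$ as an object of the localizing subcategory $\scr C$ generated by the $\Sigma^\infty_+\scr Y\wedge\Gmp{n}$; but you then conclude that ``smashing with $T^{-V}$ shows the unit, and hence everything, lies in that subcategory.'' This inference is backwards: $\scr C$ is not known to be closed under $T^{-V}\wedge(-)$, and establishing exactly that closure --- equivalently, that $\Sigma^\infty_+\scr Y\wedge T^{-\rho}\in\scr C$ for the regular representation $\rho$ --- is the entire content of the proposition. (The unit itself is trivially a generator; the problem is the negative twists by nontrivial representation spheres, which is what the introduction flags as the surprising, non-formal point.) The paper has to work for this: after localizing to $\scr S=B\sslash G$ it filters $T^{-\rho}$ by isotropy using the weakly adjacent families $\scr F_i/B$ and the objects $\E(\scr F_i/B,\scr F_{i+1}/B)$; Corollary \ref{cor:stable-families-reduction} identifies each graded piece with something induced, via the orbit stack diagram, from the free part $\SH(\Orb_{\scr F})[\scr F_\free/B]$; and Proposition \ref{prop:stab-free-prop} (resting on Lemma \ref{lemm:stabilization-lemma}) shows this free part is a module over $\SH(B)$ with $B$ a scheme, where inverting the trivial Tate sphere already inverts all Thom spaces. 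Nothing in your sketch plays the role of this stratification, and the remark about projective-bundle or Jouanolou decompositions does not substitute for it.

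The gerbe refinement has an analogous problem. A smooth $X\sslash G$-scheme is a gerbe precisely when its isotropy is locally constant, and this cannot be arranged by refining along an equivariant Nisnevich or \'etale cover: for example $\A^1$ with the sign action of $C_2$ has a fixed point at the origin and free action elsewhere, and any equivariant \'etale cover inherits this jumping of isotropy, so no cover by gerbes exists. The paper instead proves generation by gerbes \emph{stably} (Proposition \ref{prop:gerbes-gen} and Corollary \ref{cor:gerbes-gen}): one strips off isotropy strata using homotopy purity for the locus of prescribed isotropy together with the tameness-dependent splitting of the normal bundle into a fixed and a moving part (Lemma \ref{lemm:adjacent-splitting}). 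The relevant decomposition is by cofiber sequences of spectra, not by coverings of stacks, so the ``cofinality of gerbes'' you invoke is not available in the form you need.
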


\subsubsection{Conservative families of fixed point functors}
In classical genuine equivariant homotopy theory, for every subgroup $H \subset G$ there is a \emph{fixed point functor} $(\ph)^H: \SH(BG) \to \SH$, and together these form a conservative collection.
The same is not true in equivariant motivic homotopy theory, essentially because $G$-schemes of the form $G/H \times Y$, where $Y$ carries the trivial action, do not generated $\Sm_{B \sslash G}$.
Proposition \ref{prop:intro-gens} shows a way out.
Let $\scr T \in \Sm_{B \sslash G}$ be a gerbe, where $B$ carries the trivial action.
Then there is an initial morphism $\scr T \to T$, where $T$ is an algebraic space; roughly speaking we have $\scr T = (X \times_H G) \sslash G$ where $N \subset H$ acts trivially and $H/N$ acts freely, and then $T = X/(H/N)$.
Consider the composite \[ (\ph)^{\scr T}: \SH(B \sslash G) \to \SH(\scr T) \to \SH(T), \] where the first functor is pullback along $\scr T \to B \sslash G$ and the second is right adjoint to pullback along $\scr T \to T$.
We call this the \emph{$\scr T$-fixed points functor}.

\begin{example}
Let $\scr T = (G/H \times B)\sslash G$, so that $T = B$.
Then the composite \[ \Sm_{B \sslash G} \to \Sm_{\scr T} \to \Sm_B \] (scheme-level version of $(\ph)^{\scr T}$) sends $X \sslash G$ to $X^H$.
\end{example}

Proposition \ref{prop:intro-gens} implies that the left adjoints of all the $\scr T$-fixed points functors have dense image.
Thus we obtain the following result.
\begin{corollary} \label{cor:into-conservative}
The family of $\scr T$-fixed point functors \[\{ (\ph)^{\scr T}: \SH(B \sslash G) \to \SH(T) \}_{\scr T}\] (indexed by smooth stacks over $B \sslash G$ which are gerbes) is jointly conservative.
\end{corollary}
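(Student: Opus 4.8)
The plan is to reduce the joint conservativity to a density (dense image) statement for the left adjoints of the fixed point functors, and then to feed in Proposition \ref{prop:intro-gens}. First I would recall the general categorical principle: if $F_i \colon \scr C \to \scr D_i$ is a collection of functors between presentable stable $\infty$-categories, each admitting a left adjoint $L_i \colon \scr D_i \to \scr C$, then $\{F_i\}$ is jointly conservative as soon as the full subcategory of $\scr C$ generated under colimits (and desuspensions) by the images of all the $L_i$ is all of $\scr C$; indeed if $0 \neq c \in \scr C$ but $F_i(c) = 0$ for all $i$, then $\Map(L_i d, c) \simeq \Map(d, F_i c) \simeq 0$ for every $d$, and since $c$ is nonzero it must be detected by mapping out of some generator, contradiction. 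So it suffices to show that the objects $L_{\scr T}(\xi)$, as $\scr T$ ranges over smooth gerbes over $B \sslash G$ and $\xi$ over a set of generators of $\SH(T)$, generate $\SH(B \sslash G)$.

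Next I would identify $L_{\scr T}$ concretely. The functor $(\ph)^{\scr T}$ was defined as the composite $\SH(B \sslash G) \xrightarrow{f^*} \SH(\scr T) \xrightarrow{\pi_*} \SH(T)$, where $f \colon \scr T \to B \sslash G$ is the structure map and $\pi \colon \scr T \to T$ is the canonical map to the coarse space (the initial map to an algebraic space). Its left adjoint is therefore $L_{\scr T} = f_\# \circ \pi^*$, using that $f$ is smooth so that $f^*$ has a left adjoint $f_\#$, and that $\pi^*$ is left adjoint to $\pi_*$. Now $\SH(T)$ is generated by $\Sigma^\infty_+ Y \wedge \Gmp n$ for $Y \in \Sm_T$ and $n \in \Z$; applying $\pi^*$ and then $f_\#$ and using compatibility of $\Sigma^\infty_+(\ph) \wedge \Gmp n$ with the relevant pullbacks and with $f_\#$, one sees that $L_{\scr T}(\Sigma^\infty_+ Y \wedge \Gmp n) \simeq \Sigma^\infty_+(\scr Y) \wedge \Gmp n$, where $\scr Y := \scr T \times_T Y \in \Sm_{\scr T}$, viewed over $B \sslash G$ via $f$. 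Moreover, since $Y \to T$ is smooth and $\scr T$ is a gerbe with coarse space $T$, the fiber product $\scr Y$ is again a smooth gerbe over $B \sslash G$ (its inertia is pulled back from that of $\scr T$, hence still flat). Conversely, every smooth gerbe $\scr Y$ over $B \sslash G$ arises this way from $Y = $ its coarse space: one has $\scr Y \to T_{\scr Y} \to B$ (after possibly noting that the coarse space maps to $B$ since $\scr Y$ does), and taking $\scr T$ to be $\scr Y$ itself with its coarse-space map shows $\scr Y$ is in the essential image of some $L_{\scr T}$ applied to a generator.

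Therefore the union over $\scr T$ of the images of $L_{\scr T}$ on generators is exactly the collection of objects $\Sigma^\infty_+ \scr Y \wedge \Gmp n$ with $\scr Y \in \Sm_{B \sslash G}$ a gerbe and $n \in \Z$. By Proposition \ref{prop:intro-gens} (the gerbe case, applied with $X \sslash G$ replaced by $B \sslash G$, which is legitimate since $B \sslash G$ is itself of the required form), these objects generate $\SH(B \sslash G)$. Combined with the first paragraph, this yields joint conservativity, completing the proof.

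The main obstacle I anticipate is the bookkeeping in the second paragraph: pinning down that $L_{\scr T}$ really is $f_\# \pi^*$ requires knowing that $f$ is smooth (so $f_\#$ exists) and that $\pi^* \dashv \pi_*$ — both of which are part of the six-functor/smooth-base-change formalism for $\SH$ of stacks developed in the body of the paper — and that the coarse space $T$ of a smooth gerbe over $B \sslash G$ is again smooth over $B$, with $\scr T \to T$ the structure map needed in the definition of $(\ph)^{\scr T}$. The identification $L_{\scr T}(\Sigma^\infty_+ Y \wedge \Gmp n) \simeq \Sigma^\infty_+ \scr Y \wedge \Gmp n$ also uses that $f_\#$ and $\pi^*$ commute with $\Sigma^\infty_+$ and with smashing with $\Gm$, which are standard but should be cited. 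Everything else is a formal consequence of Proposition \ref{prop:intro-gens}.
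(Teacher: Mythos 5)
Your proposal is correct and is essentially the paper's argument (Corollary \ref{cor:fixed-points-cons}): the paper phrases it as the single adjunction computation $\map(\Gmp{n}, E^{\scr T}) \wequi \map(\Sigma^\infty_+ \scr T \wedge \Gmp{n}, E)$ followed by an appeal to Proposition \ref{prop:gens}, which is exactly your ``left adjoints have jointly dense image'' reduction with $L_{\scr T} = f_\sharp \pi^*$ made explicit. Your extra bookkeeping with general generators $\Sigma^\infty_+ Y \wedge \Gmp{n}$ of $\SH(T)$ is harmless but unnecessary --- testing against $\Gmp{n}$ over $T$ itself already suffices.
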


\begin{example}
Let $G = C_2$.
Since there are no non-trivial proper subgroups, all gerbes arise as disjoint unions of stacks of the form $X \sslash C_2$, where the action on $X$ is either trivial or free.
For $X=B$ we obtain the usual fixed point functor \[ (\ph)^{C_2}: \SH(B \sslash C_2) \to \SH(B). \]
On the other hand if $X$ carries a free $C_2$-action with quotient $Y$ (i.e. $X \to Y$ is a $C_2$-torsor), we obtain a more exotic fixed point functor \[ (\ph)^{X}: \SH(B \sslash C_2) \to \SH(Y). \]
It is an easy consequence of Corollary \ref{cor:into-conservative} that the functor $(\ph)^{C_2}$ together with the functors $(\ph)^X$ for $X \to Y$ a $C_2$-torsor (and $Y \in \Sm_B$) form a conservative collection.
\end{example}

\subsubsection{The homotopy $t$-structure}
Another use of a small family of generators is to define a $t$-structure.
Thus denote by \[ \SH(X \sslash G)_{\ge 0} \] the subcategory generated under colimits and extensions by objects of the form $\Sigma^\infty_+ \scr Y \wedge \Gmp{n}$, where $\scr Y \in \Sm_{X \sslash G}$ and $n \in \Z$.
This defines the non-negative part of a $t$-structure \cite[Proposition 1.4.4.11]{lurie-ha}.
It has the following favorable properties:
\begin{itemize}
\item If $f: \scr X \to \scr Y$ is a morphism of stacks as above (but not necessarily representable), then $f^*: \SH(\scr Y) \to \SH(\scr X)$ is right $t$-exact.
  If $f$ is smoothly representable, then $f^*$ is $t$-exact.
\item The $\scr T$-fixed points functors $(\ph)^{\scr T}: \SH(B \sslash G) \to \SH(T)$ are $t$-exact (Proposition \ref{prop:fixed-t-exact}).
\item If $X$ is finite dimensional, then the $t$-structure on $\SH(X \sslash G)$ is non-degenerate (Proposition \ref{prop:homotopy-t-nondegenerate}).
\end{itemize}

\subsection{Sketch proof of the motivic Guillou--May theorem} \label{subsub:proof-sketch}
In order to prove Theorem \ref{thm:intro-main}, we establish the following facts:
\begin{enumerate}
\item The generation by gerbes result, i.e. Proposition \ref{prop:intro-gens}.
\item The canonical functor $\SH(X \sslash G) \to \Spc^\fet(X \sslash G)[T^{-\rho}]$ is an equivalence.
\item The canonical functor $\Spc^\fet(B \sslash G)[T^{-1}] \to \Spc^\fet(B \sslash G)[T^{-\rho}]$ is fully faithful.
\end{enumerate}
The desired result is deduced as follows.
The functor \[ \Spc^\fet(B \sslash G)[T^{-1}] \to \Spc^\fet(B \sslash G)[T^{-\rho}] \stackrel{(2)}{\wequi} \SH(B \sslash G) \] is fully faithful by (3) and essentially surjective by (1), hence an equivalence.
Thus $T^\rho$ is invertible in $\Spc^\fet(B \sslash G)[T^{-1}]$ and so in $\Spc^\fet(X \sslash G)[T^{-1}]$.
Hence \[ \Spc^\fet(X \sslash G)[T^{-1}] \wequi \Spc^\fet(X \sslash G)[T^{-\rho}] \stackrel{(2)}{\wequi} \SH(X \sslash G). \]

The proof of (1) uses filtration by isotropy arguments, imported into motivic homotopy theory by Gepner--Heller.
In fact traces of such generation by gerbes results can be found in their work, for example in \cite[proof of Proposition 3.27]{gepner-heller}.

The status of (2) is a bit curious.
I expect that this result should be true in great generality, and for essentially ``formal'' reasons, generalizing the fact that passage to commutative monoids is a smashing localization of presentable categories \cite{gepner2016universality}.
Unfortunately all my attempts at giving a proof along these lines got bogged down in cumbersome technicalities, and ultimately seemed not worth the effort.
Instead I present an amusing direct proof, employing good properties of the homotopy $t$-structure (which seemed interesting to develop for their own sake).
The disadvantage of this approach is that it uses the tom Dieck splitting theorem, and hence (for now) only works in the context of finite discrete groups.

The proof of (3) uses the tom Dieck splitting theorem in an essential way, and this was my original insight.
Write \[ \sigma^\infty: \Spc^\fet(B \sslash G)[T^{-1}] \adj \Spc^\fet(B \sslash G)[T^{-\rho}]: \omega^\infty \] for the canonical adjunction.
We wish to prove that for $E \in \Spc^\fet(B \sslash G)[T^{-1}]$ we have $E \wequi \omega^\infty \sigma^\infty E$.
To do so, as discussed above, it suffices to prove that $E^{\scr T} \wequi (\omega^\infty \sigma^\infty E)^{\scr T} \in \SH(T)$ for every gerbe $\scr T \in \Sm_{B \sslash G}$.
Since $T=T^\rho$ over algebraic spaces, it is easy to see that $(\omega^\infty \sigma^\infty E)^{\scr T} \wequi (\sigma^\infty E)^{\scr T}$, and so the main difficulty is to show that $E^{\scr T} \wequi (\sigma^\infty E)^{\scr T}$.
Since on the left hand side we have not inverted $T^\rho$, the functor $(\ph)^{\scr T}$ is accessible essentially by direct computation.
In the case of suspension spectra (which suffice for our purposes), one finds a splitting of $(\ph)^{\scr T}$ into a sum over conjugacy classes of subgroups of certain homotopy orbit spectra---precisely the expected form of the tom Dieck splitting.
On the right hand side one finds the same expression, by the main result of \cite{gepner-heller}.
This concludes the proof.

Actually making precise the identification of the two tom Dieck splittings turns out to be rather cumbersome, so in the current writeup the proof is reformulated in terms of Adams isomorphisms and abstract forms of filtration by isotropy.
But the above sketch is how I discovered it.

\begin{remark}
The same argument can be used to give a proof of the (original) theorem of Guillou--May, where the only non-trivial input is the Adams isomorphism.
\end{remark}

\subsection{Open questions} \label{sec:intro-questions}
Examining the ingredients of the proof, a number of natural questions arise.
I have no intention of working on them.

\subsubsection{}
As pointed out above, the main non-formal ingredient of the motivic Guillou--May theorem is the Adams isomorphism, as established for finite groups in \cite{gepner-heller}.
Given a $G$-scheme $X$ and a normal subgroup $N$ acting trivially, the Adams transformation studied by Gepner--Heller is in a natural way associated with the morphism of stacks $X \sslash G \to X \sslash (G/N)$.
More generally, given any proper étale morphism $p: \scr X \to \scr Y$ of reasonable stacks, there is a natural generalization of the Adams transformation (see \S\ref{sec:ambidex} and \S\ref{subsec:isotropy-spec}).
\begin{question} \label{qst:adams}
If $p: \scr X \to \scr Y$ is a proper étale morphism of linearly scalloped stacks, is the Adams transformation an equivalence?
\end{question}
The special case where $\scr X$ and $\scr Y$ are DM-stacks is called the \emph{Adams hypothesis} in this article (see Definition \ref{def:adams-hyp}).

\subsubsection{}
Let $\scr X$ be a stack.
We have a canonical functor \[ \SH(\scr X) \to \Spc^\fet(\scr X) \otimes_{\Spc(\scr X)} \SH(\scr X). \]
If $\scr X$ is a tame DM-stack and either $\scr X$ which is locally constant, or the Adams hypothesis holds, then this functor is an equivalence.
This is immediate from the motivic Guillou--May theorem in the form of Theorem \ref{thm:main}, but I believe that it should hold much more generally.
\begin{question}
Is the functor $\SH(\scr X) \to \Spc^\fet(\scr X) \otimes_{\Spc(\scr X)} \SH(\scr X)$ an equivalence for all (reasonable) stacks $\scr X$?
\end{question}

\subsubsection{}
Assuming that this is true, for any morphism of stacks $\scr X \to \scr Y$, we obtain a functor \begin{equation}\label{eq:qst} \Spc^\fet(\scr X) \otimes_{\Spc(\scr Y)} \SH(\scr Y) \to \Spc^\fet(\scr X) \otimes_{\Spc(\scr X)} \SH(\scr X) \wequi \SH(\scr X). \end{equation}
The motivic Guillou--May theorem implies that this is an equivalence for morphisms of the form $X \sslash G \to X \sslash (G/N)$, and the Adams hypothesis implies this for any proper étale morphism of DM-stacks.
One may show that (an affirmative answer to) Question \ref{qst:adams} implies that \eqref{eq:qst} is an equivalence for any proper étale morphism $p$.
It is also easy to see that this is true for any representable morphism $p$.
\begin{question}
For which morphisms $\scr X \to \scr Y$ is  \eqref{eq:qst} an equivalence?
\end{question}
For example, what about $\Spec(\F_p) \sslash \mu_p \to \Spec(\F_p)$?

\subsection{Organization}
We now give a quick rundown of the contents of the sections in order.

\subsubsection{Preliminaries (\S\ref{sec:preliminaries})}
In this section we establish some preliminary results.
There are two main topics.
Firstly, we collect some results about morphisms of stacks with flat inertia, for which we could not find references.
Secondly, we establish our conventions for unstable motivic homotopy theory.
In particular we give a definition of $\Spc(\scr X)$ for any stack $\scr X$, which coincides with the construction of \cite{hoyois-equivariant} when the latter is defined, and which coincides with the construction of \cite{khan2021generalized} when the latter is defined and $\scr X$ has affine diagonal.
This allows us in the sequel to treat all of these cases at the same time.

\subsubsection{Norms (\S\ref{sec:norms})}
In this section we extend some of the constructions and results of \cite{bachmann-norms} to stacks.

In \S\ref{subsec:normed-oo-cat} we give a formula for ``free normed objects'' in a very general situation.

Next we construct norm functors for stacks.
For a tame proper étale morphism $p: \scr X \to \scr Y$ of stacks, the functor $p^*: \SmQA_{\scr Y} \to \SmQA_{\scr X}$ admits a right adjoint $p_\otimes$.
(By decomposing $p$ fppf locally on $Y$ into a projection $B \sslash G \to B$ followed by a finite étale morphism, we see that $p_\otimes$ arises as a kind of combination of fixed points and Weil restriction.)
We show that this construction extends to $\Spc(\ph)$ (see \S\ref{subsec:norms-for-spaces}), and also to $\Spc(\ph)_*$, $\Spc^\fet(\ph)$ (see \S\ref{subsec:spcfet}), and even to $\SH(\ph)$ provided that the relevant stacks are scalloped (see \S\ref{subsec:stab-norms}).

In the final subsection \S\ref{subsec:free-normed} we analyze the suspension spectra of free normed monoids.
Thus let $X \in \Spc(\scr S)_*$ and write $U_*F_*X$ for the effect of the composite $\Spc(\scr S)_* \to \Spc^\fet(\scr S) \to \Spc(\scr S)_*$.
Generalizing well-known results in classical homotopy theory (see e.g. \cite{cohen1980stable}) we show that (see Corollary \ref{cor:SH-sph-normed}) \[ \Sigma^\infty U_*F_* X \wequi \bigvee_{n \ge 1} \Sigma^\infty D_n(X), \] where $D_n: \Spc(\scr X)_* \to \Spc(\scr X)_*$ is a version of the motivic extended power functor.

\subsubsection{Complements on motivic homotopy theory of stacks (\S\ref{sec:complements})}
This section is largely concerned with generalizing some of the complementary constructions and results of \cite{gepner-heller}, many of which are related to filtration by isotropy.

One construction of this section worth mentioning are \emph{orbit stacks}.
If $p: \scr Y \to \scr S$ is proper étale, for example $\scr Y = B \sslash G$ and $\scr S=B$, we construct the following diagram of stacks, where on the right hand side we display the special case of the example
\begin{equation*} \label{eq:pp'-diamond}
\begin{tikzcd}
& \widetilde \Orb_p \ar[dl, "p_1"] \ar[dr, "p_2"] & & & & \coprod_{(H)} B \sslash NH \ar[dl, "p_1"] \ar[dr, "p_2"] \\
\scr X \ar[dr,"p"] & & \Orb_p \ar[dl, "p'"] &  & B \sslash G \ar[dr,"p"] & & \coprod_{(H) \subset G} B \sslash WH \ar[dl, "p'"] \\
 & \scr S & & & & B.
\end{tikzcd}
\end{equation*}
Here the disjoint unions are over conjugacy classes of subgroups, $NH$ denotes the normalizer of $H$, and $WH = NH/H$ the Weil group.
We arrived at this construction originally in order to establish some technical results.
However it now seems that orbit stacks can be used to generalize a number of classical results, like the tom Dieck splitting theorem and the geometric fixed point functors.
We use it also as a convenient tool for defining filtration by isotropy in a general context.

One of the main results of this section is the generation by gerbes; see Corollary \ref{cor:gerbes-gen}.

\subsubsection{Ambidexterity (\S\ref{sec:ambidex})}
Ambidexterity refers to the following phenomenon: given an $\infty$-category $\scr X$ and a functor $\scr C: \scr X^\op \to \Cat_\infty$, for certain morphisms $p: X \to Y \in \scr X$ it makes sense to ask that $p^*: \scr C(Y) \to \scr C(X)$ admits a left adjoint $p_!$ and a right adjoint $p_*$, and that there is a canonical equivalence (``Wirthmüller isomorphism'') $p_! \wequi p_*$.
Essentially there is an inductive construction of a transformation $p_! \to p_*$ which uses the iterated diagonals of $p$.
There are various reference for these ideas, see e.g. \cite{hopkins2013ambidexterity,carmeli2018ambidexterity}.

We extend this as follows.
Without assuming that $p_!$ exists, let $A \in \scr C(X)$.
One can ask if the functor $\scr C(Y) \to \Spc, B \mapsto \Map(A, p^*B)$ is representable; if so we denote the representing object by $p_!(A)$ and we shall say that $p_!(A)$ exists.
A similar definition applies to $p_*$.

Our main observation in this section is that instead of asking ``is the morphism $p$ ambidextrous for $\scr C$?'', it makes sense to ask ``for which objects $A \in \scr C(X)$ do $p_!(A)$ and $p_*(A)$ both exist and are naturally isomorphic?''.
We call such objects \emph{$p$-ambidextrous}.
This is relevant in our situation, since the \emph{Adams isomorphism} holds only for free spectra and not all of them; this is precisely an instance of a partially defined left adjoint agreeing with a right adjoint.

The main technical result (Lemma \ref{lemm:norm-nat}) is that under appropriate assumptions, the formation of Wirthmüller isomorphisms is natural in $\scr C$.

\subsubsection{More about spaces with finite étale transfers (\S\ref{sec:more-fet})}
In this section we study the functor $\Spc^\fet(\ph)$ in more detail.
We first prove that it satisfies quasi-affine Nisnevich descent and continuity (\S\ref{subsec:descent-ctty}).
Then we show that it interacts well with isotropy specification (\S\ref{subsec:spcfet-isotropy-spec}), homotopy orbit functors (\S\ref{subsec:spcfet-quotients}) and geometric fixed point functors (\S\ref{subsec:spcfet-geom-fixed}).

We also prove that it satisfies ambidexterity for all tame proper étale morphisms (see \S\ref{subsec:spcfet-ambidex}): if $p: \scr X \to \scr Y$ is tame proper étale, then all free objects of $\Spc^\fet(\scr X)$ are $p$-ambidextrous.
This is a relatively straightforward ab initio computation, in particular it does not use \cite{gepner-heller}.

\subsubsection{DM-stacks (\S\ref{sec:DM})}
In this section we prove our main results.
After recalling some basic facts about DM-stacks, in \S\ref{subsec:gen-triv} we use filtration by isotropy to show that $\SH(\scr S)$ is generated by $\Gm$-desuspensions of suspension spectra, where $\Gm$ carries the trivial action.
Then in \S\ref{subsec:adm-gerbes} and \S\ref{subsec:orbit-stack-ind} we set up our main tool of proof: an abstract form of filtration by isotropy which we call \emph{orbit stack induction}.
Then in \S\ref{subsec:htpy-t} we establish basic properties of the homotopy $t$-structure on DM-stacks.
Finally in \S\ref{subsec:main} we establish our main theorem.

\subsection{Acknowledgements}
The results in this article are cobbled together from a variety of sources of inspiration.
\S\ref{sec:norms} on norms in the motivic homotopy theory of stacks is a relatively straightforward extensions of my paper with Marc Hoyois \cite{bachmann-norms}.
\S\ref{sec:complements} consists of more or less straightforward generalizations of standard methods in equivariant homotopy theory to the motivic homotopy theory of stacks, following closely \cite{gepner-heller} (which treats certain special cases).

The debt of this article to both \cite{hoyois-equivariant} (where Marc Hoyois establishes the foundations of stable motivic homotopy theory of global quotient stacks) and \cite{gepner-heller} (where David Gepner and Jeremiah Heller establish the Adams isomorphism for finite discrete tame groups) is hard to overstate.
We also immensely profited from the work of Khan--Ravi on extending motivic stable homotopy theory from quotient stacks to more general stacks \cite{khan2021generalized}.

During the very long gestation period I have had fruitful discussions about this work with many people, including Elden Elmanto, Jeremiah Heller, Marc Hoyois, Adeel A. Khan.
My special thanks goes to Markus Hausmann and David Gepner who (on independent occasions, several years apart) taught me that the tom Dieck splitting theorem is essentially equivalent to the spectral Mackey functor description of genuine $G$-spectra.

\section{Preliminaries} \label{sec:preliminaries}
\subsection{Conventions regarding stacks}
We follow the stacks project conventions \cite{stacks-project} regarding stacks and algebraic spaces, except where noted otherwise.
That is:
\begin{enumerate}
\item We choose a big fppf site $\Sch$.
  A morphism of sheaves $F \to G$ is called \emph{schematic} if for every $X \in \Sch$ and every morphism $X \to G$, the pullback $X \times_G F$ is a scheme.
  We call $F \to G$ \emph{schematically affine}, \emph{schematically étale}, etc. if it is schematic and $X \times_G F \to X$ is affine/étale/etc.
\item By an \emph{algebraic space} we mean a $0$-truncated sheaf $F$ admitting a schematically étale epimorphism $Y \to F$, where $Y$ is a scheme.
  We call a morphism $F \to G$ of algebraic spaces étale, smooth or flat if for every scheme $X \to G$ and every schematically étale morphism $Y \to X \times_G F$ with $Y$ a scheme, the composite $X \to Y$ is étale/smooth/flat.
  We call a morphism of sheaves $F \to G$ \emph{representable} if for every $X \in \Sch$ and every morphism $X \to G$, the pullback $X \times_G F$ is an algebraic space.
  Note that this is in contrast with the stacks project convention.
\item By an \emph{algebraic stack} we mean a $1$-truncated sheaf $F$ admitting a representably smooth morphism $Y \to F$, where $Y$ is an algebraic space (or equivalently scheme).
  A morphism of stacks $F \to G$ is called étale, smooth or flat if for every scheme $X \to G$ and every representably smooth morphism $Y \to X \times_G F$ with $Y$ a scheme, the composite $X \to Y$ is étale/smooth/flat.
\end{enumerate}
One may show that the diagonal of an algebraic space is schematic\NB{ref?}, and the diagonal of an algebraic stack is representable\NB{ref?}.
This implies that the category of algebraic spaces (respectively algebraic stacks) is closed under pullbacks.

By an affine morphism of stacks we mean a schematically affine one.
Recall that a scheme is called quasi-affine if it is isomorphic to a quasi-compact open subscheme of an affine scheme; a morphism of stacks is called quasi-affine if the pullback to any affine scheme is a quasi-affine scheme (that is, the morphism is ``schematically quasi-affine'').
Note that this property is fpqc local (on the target) \cite[Tag 02L7]{stacks-project}.

All our stacks are assumed qcqs and algebraic.

We call a stack with finite inertia \emph{tame} if it satisfies the equivalent conditions of \cite[Theorem 3.2]{abramovich2008tame} (i.e. all geometric stabilizers are linearly reductive).
We call a morphism of stacks $\scr X \to \scr Y$ \emph{tame} if for every affine scheme $A \to \scr Y$ the stack $\scr X \times_{\scr Y} A$ is tame.

We use the terminology on gerbes from \cite[Tag 06QB]{stacks-project}.
Thus a morphism of stacks is a gerbe if fppf locally on the target it takes the form $B \sslash G \to B$, and a stack $\scr X$ is called a gerbe if there is a gerbe $\scr X \to X$ with $X$ some algebraic space.

\begin{remark} \label{rmk:pet-gerbe-local-structure}
We often use the following fact: If $\scr X \to \scr S$ is a proper étale gerbe, then fppf locally on $\scr S$ it takes the form $B \sslash G \to B$, for finite groups $G$.
Indeed by \cite[Tag 06QH]{stacks-project} this is the case for $G$ a group scheme; but in order for $B \sslash G \to B$ to be proper étale $G \to B$ must be finite étale, and hence étale locally constant on $B$.
\end{remark}

\subsection{Flat inertia}
\subsubsection{First properties}
Recall that if $\scr X \to \scr Y$ is a morphism of stacks, then the \emph{relative inertia stack} is \[ I_{\scr X/\scr Y} = \scr X \times_{\scr X \times_{\scr Y} \scr X} \scr X, \] i.e. the pullback of the relative diagonal along itself.
Recall that $\scr X \to \scr Y$ is representable if and only if $I_{\scr X/\scr Y} \wequi \scr X$ \cite[Tag 04YY]{stacks-project}.

\begin{lemma} \label{lemm:inertia-composition}
Given $\scr X \to \scr Y \to \scr Z$ with $I_{\scr Y/\scr Z} \wequi \scr Y$ we have $I_{\scr X/\scr Y} \wequi I_{\scr X/\scr Z}$.
\end{lemma}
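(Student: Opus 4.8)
The plan is to deduce the claim from a general cancellation property of fibre products. First I would record the following elementary fact: given morphisms $A \xrightarrow{g} B \xrightarrow{h} C$ with $h$ a monomorphism, the canonical comparison map $A \times_B A \to A \times_C A$ (where the first fibre product is formed along $g$ and the second along $hg$) is an equivalence. This follows because the square
\[
\begin{CD}
A \times_B A @>>> A \times_C A \\
@VVV @VVV \\
B @>{\Delta_h}>> B \times_C B
\end{CD}
\]
is cartesian --- this is the standard identification $A \times_B A \wequi (A\times_C A)\times_{B\times_C B} B$, which uses the compatibility of the two diagonals of $A$ --- and $\Delta_h$ is an equivalence precisely because $h$ is a monomorphism; hence the top map is an equivalence.

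Next I would unwind the definitions of the inertia stacks. The canonical map $\iota \colon \scr X \times_{\scr Y} \scr X \to \scr X \times_{\scr Z} \scr X$ sits in a cartesian square
\[
\begin{CD}
\scr X \times_{\scr Y} \scr X @>{\iota}>> \scr X \times_{\scr Z} \scr X \\
@VVV @VVV \\
\scr Y @>{\Delta_{\scr Y/\scr Z}}>> \scr Y \times_{\scr Z} \scr Y
\end{CD}
\]
so that $\iota$ is a base change of $\Delta_{\scr Y/\scr Z}$. The hypothesis $I_{\scr Y/\scr Z} \wequi \scr Y$ means, by \cite[Tag 04YY]{stacks-project}, that $\scr Y \to \scr Z$ is representable, i.e.\ that $\Delta_{\scr Y/\scr Z}$ is a monomorphism; hence $\iota$ is a monomorphism as well. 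Since the diagonal $\Delta_{\scr X/\scr Z}$ factors canonically as $\scr X \xrightarrow{\Delta_{\scr X/\scr Y}} \scr X \times_{\scr Y} \scr X \xrightarrow{\iota} \scr X \times_{\scr Z} \scr X$, the cancellation property above, applied with $A = \scr X$, $B = \scr X \times_{\scr Y} \scr X$, $C = \scr X \times_{\scr Z} \scr X$, $g = \Delta_{\scr X/\scr Y}$ and $h = \iota$, gives
\[
I_{\scr X/\scr Y} = \scr X \times_{\scr X \times_{\scr Y} \scr X} \scr X \ \wequi\ \scr X \times_{\scr X \times_{\scr Z} \scr X} \scr X = I_{\scr X/\scr Z}.
\]

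This is essentially a pure diagram chase, so I do not expect a genuine obstacle. The only points requiring a little care are the two cartesianness assertions --- in particular, ensuring that the identification $A \times_B A \wequi (A \times_C A)\times_{B\times_C B} B$ is tracked at the level of mapping spaces rather than merely $\pi_0$ --- and the routine translation between ``$\scr Y \to \scr Z$ is representable'' and ``$\Delta_{\scr Y/\scr Z}$ is a monomorphism''.
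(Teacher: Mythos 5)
Your argument is correct. It is the same kind of formal diagram chase as the paper's, but the decomposition is genuinely different: the paper writes down a single cartesian square directly relating the two inertia stacks, with bottom edge the unit section $\scr Y \to I_{\scr Y/\scr Z} = \scr Y \times_{\scr Y \times_{\scr Z} \scr Y} \scr Y$, which is an equivalence precisely by the hypothesis $I_{\scr Y/\scr Z} \wequi \scr Y$ --- so the top edge $I_{\scr X/\scr Y} \to I_{\scr X/\scr Z}$ is an equivalence by base change, and the proof is a one-liner. You instead isolate a general cancellation lemma (fibre products of $A$ over $B$ and over $C$ agree when $B \to C$ is a monomorphism) and feed into it the observation that $\scr X \times_{\scr Y} \scr X \to \scr X \times_{\scr Z} \scr X$ is a monomorphism, being a base change of $\Delta_{\scr Y/\scr Z}$. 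The two routes consume the same hypothesis in equivalent packaging ($I_{\scr Y/\scr Z} \wequi \scr Y$ if and only if $\Delta_{\scr Y/\scr Z}$ is a monomorphism, i.e.\ $\scr Y \to \scr Z$ is representable), and both cartesian-square claims you make are standard and correct in the $1$-truncated (or even $\infty$-categorical) setting. What your version buys is a reusable general statement and an argument that does not require guessing the right single square; what the paper's buys is brevity. Either is acceptable.
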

\begin{proof}
Follows from the cartesian square
\begin{equation*}
\begin{CD}
\scr X \times_{\scr X \times_{\scr Y} \scr X} \scr X @>>> \scr X \times_{\scr X \times_{\scr Z} \scr X} \scr X \\
@VVV @VVV \\
\scr Y @>>> \scr Y \times_{\scr Y \times_{\scr Z} \scr Y} \scr Y,
\end{CD}
\end{equation*}
in which the bottom horizontal map is an equivalence by assumption.
\end{proof}

\begin{corollary} \label{cor:bc-inertia}
Suppose given a cartesian square
\begin{equation*}
\begin{CD}
\scr X' @>>> \scr X \\
@VVV          @VVV  \\
\scr S' @>>> \scr S
\end{CD}
\end{equation*}
with $\scr X \to \scr S$ representable.
Then \[ I_{\scr X'/\scr S} \wequi I_{\scr S'/\scr S} \times_{\scr S'} \scr X'. \]
\end{corollary}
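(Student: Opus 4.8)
The plan is to reduce the statement to Lemma \ref{lemm:inertia-composition} together with the (formal) compatibility of relative inertia with base change.

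First I would record that since $\scr X \to \scr S$ is representable, we have $I_{\scr X/\scr S} \wequi \scr X$ by \cite[Tag 04YY]{stacks-project}. Now apply Lemma \ref{lemm:inertia-composition} to the tower $\scr X' \to \scr X \to \scr S$, where $\scr X' \to \scr X$ is the top horizontal arrow of the given square: the hypothesis $I_{\scr X/\scr S} \wequi \scr X$ holds, so the lemma yields $I_{\scr X'/\scr S} \wequi I_{\scr X'/\scr X}$. This is the step that ``discards'' the inertia contribution coming from $\scr X$ itself.

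It then remains to identify $I_{\scr X'/\scr X}$. Here the key point is that $\scr X' \to \scr X$ is the base change of $\scr S' \to \scr S$ along $\scr X \to \scr S$ (this is exactly the cartesianness of the square). Relative inertia is obtained from the relative diagonal by pulling it back along itself, and the relative diagonal obviously commutes with base change; since iterated fibre products commute with one another, chasing the diagram gives
\[
I_{\scr X'/\scr X} \;\wequi\; I_{\scr S'/\scr S} \times_{\scr S} \scr X \;\wequi\; I_{\scr S'/\scr S} \times_{\scr S'} \scr X',
\]
where the last step uses $\scr X' \wequi \scr S' \times_{\scr S} \scr X$ and the fact that $I_{\scr S'/\scr S}$ lives over $\scr S'$. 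Combining this with the previous paragraph gives $I_{\scr X'/\scr S} \wequi I_{\scr S'/\scr S} \times_{\scr S'} \scr X'$, as desired.

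There is no genuine obstacle here; the only thing requiring a little care is the bookkeeping of the various fibre products in the last step, and even that is purely formal once Lemma \ref{lemm:inertia-composition} is available. (Alternatively one could unwind $I_{\scr X'/\scr S} = \scr X' \times_{\scr X'\times_{\scr S}\scr X'} \scr X'$ directly, using $\scr X'\times_{\scr S}\scr X' \wequi (\scr X \times_{\scr S}\scr X)\times_{\scr S}(\scr S'\times_{\scr S}\scr S')$ and that the $\scr X\times_{\scr S}\scr X$-factor contributes $I_{\scr X/\scr S}\wequi \scr X$; but routing through Lemma \ref{lemm:inertia-composition} is cleaner.)
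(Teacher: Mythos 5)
Your proposal is correct and follows the paper's proof exactly: apply Lemma \ref{lemm:inertia-composition} to $\scr X' \to \scr X \to \scr S$ using $I_{\scr X/\scr S} \wequi \scr X$, then use base-change compatibility of relative inertia (which the paper cites as \cite[Tag 06PQ]{stacks-project} rather than spelling out). No issues.
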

\begin{proof}
By Lemma \ref{lemm:inertia-composition} we have $I_{\scr X'/\scr S} \wequi I_{\scr X'/\scr X}$; now apply \cite[Tag 06PQ]{stacks-project}.
\end{proof}

\subsubsection{Quotients} \label{subsec:app-quotients}
\begin{proposition} \label{prop:relative-quotient-gerbe}
Let $\scr X \to \scr Y$ be a morphism of algebraic stacks such that $I_{\scr X/\scr Y} \to \scr X$ is flat and locally of finite presentation.
Then there exists an initial factorization \[ \scr X \to \repr_{\scr Y}(\scr X) \to \scr Y \] with $\repr_{\scr Y}(\scr X) \to \scr Y$ representable.
Moreover the following hold.
\begin{enumerate}
\item Formation of $\repr_{\scr Y}(\scr X)$ is compatible with arbitrary base change in $\scr Y$.
\item $\scr X \to \repr_{\scr Y}(\scr X)$ is a gerbe, so in particular smooth and surjective.
\item If $\scr X \to \scr Y$ is proper (respectively étale, (quasi-)separated, (locally) of finite type, (locally) of finite presentation, (locally) quasi-finite, flat, or quasi-compact) then so are $\scr X \to \repr_{\scr Y}(\scr X)$ and $\repr_{\scr Y}(\scr X) \to \scr Y$.
\item Let $\scr X' \to \scr X$ be a morphism such that $I_{\scr X'/\scr Y}$ is also flat and locally of finite presentation.
  Suppose that $I_{\scr X/\scr Y}$ is finite locally free.
  If $\scr X' \to \scr X$ is (quasi-)affine, then so is $\repr_{\scr Y}(\scr X') \to \repr_{\scr Y}(\scr X)$.
\end{enumerate}
\end{proposition}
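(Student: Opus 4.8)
The plan is to identify $\repr_{\scr Y}(\scr X)$ with the \emph{rigidification} of $\scr X$ along the \emph{full} relative inertia $I_{\scr X/\scr Y}$, and then to verify (1)--(4) mostly after passing to an fppf-local model of the resulting gerbe $\scr X \to \repr_{\scr Y}(\scr X)$. To construct it, pick a smooth atlas $V \to \scr Y$ and then a smooth atlas $U \to \scr X \times_{\scr Y} V$, so that $U \to \scr X$ is smooth surjective and $U \to V$ is smooth; let $R := U \times_{\scr X} U \rightrightarrows U$ be the associated smooth groupoid presenting $\scr X$ and set $R' := U \times_{\scr Y} U$. The map $R \to R'$ is representable (the relative diagonal of $\scr X \to \scr Y$ is), and its kernel pair $R\times_{R'}R \rightrightarrows R$ is, as a groupoid over $R$, built from the pullback of $I_{\scr X/\scr Y}\to\scr X$, hence flat and locally of finite presentation; so the quotient $\bar R := R/(R\times_{R'}R)$ exists as an algebraic space, $R \to \bar R$ is an fppf covering, and $\bar R \to R'$ is a monomorphism. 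Then $\bar R \rightrightarrows U$ inherits a groupoid structure whose source and target maps are smooth (fppf descent of smoothness along $R \to \bar R$), and we set $\repr_{\scr Y}(\scr X) := [\bar R \rightrightarrows U]$; this is an algebraic stack with maps $\scr X \to \repr_{\scr Y}(\scr X) \to \scr Y$, the second of which is representable since $\bar R \to R'$ is a monomorphism (so the relative inertia is trivial). The universal property is formal: a morphism $\scr X \to \scr Z$ over $\scr Y$ with $\scr Z \to \scr Y$ representable kills all relative $2$-automorphisms of $\scr X$, so on atlases it factors through $R \to \bar R$, yielding a unique factorization through $\repr_{\scr Y}(\scr X)$. (One may alternatively invoke a general rigidification construction from the literature.)

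For (1), formation of $I_{\scr X/\scr Y}$ commutes with base change in $\scr Y$ and the groupoid construction above visibly does too; alternatively, both sides enjoy the same universal property. For (2): by construction $\scr X \to \repr_{\scr Y}(\scr X)$ is a gerbe banded by $I_{\scr X/\scr Y}$ (consistent with Lemma \ref{lemm:inertia-composition}, which now gives $I_{\scr X/\repr_{\scr Y}(\scr X)} \wequi I_{\scr X/\scr Y}$), and a gerbe is, fppf-locally on its target, of the form $B \sslash G \to B$ with $G$ a flat, lfp group algebraic space over $B$. Since $B \sslash G \to B$ is then smooth ($BG$ is a smooth algebraic stack over $B$) and smoothness is fppf-local on the target, $\scr X \to \repr_{\scr Y}(\scr X)$ is smooth, and it is surjective as an epimorphism of stacks.

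For (3), work fppf-locally on $\repr_{\scr Y}(\scr X)$, where $\scr X \to \repr_{\scr Y}(\scr X)$ becomes $B \sslash G \to B$ with $G = I_{\scr X/\scr Y}|_B$. The morphism $I_{\scr X/\scr Y} \to \scr X$ is a base change of the relative diagonal $\Delta_{\scr X/\scr Y}$, which inherits each of the listed properties from $\scr X \to \scr Y$ (if $\scr X \to \scr Y$ is proper it is separated, so $\Delta_{\scr X/\scr Y}$ is proper; if $\scr X\to\scr Y$ is étale then $\Delta_{\scr X/\scr Y}$ is étale; (quasi-)separatedness, (local) finite type/presentation, (local) quasi-finiteness and quasi-compactness pass to $\Delta_{\scr X/\scr Y}$ as well); hence $G \to B$ has the property in question, and therefore so does $B \sslash G \to B$. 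For $\repr_{\scr Y}(\scr X) \to \scr Y$: since $\scr X \to \repr_{\scr Y}(\scr X)$ is a flat, lfp, surjective epimorphism and $\scr X \to \scr Y$ has the property, descent along this covering (via the diagonal for properness and (quasi-)separatedness, and fppf-local-on-the-source for the rest) gives the property for $\repr_{\scr Y}(\scr X) \to \scr Y$.

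For (4), first observe $\repr_{\scr Y}(\scr X') \wequi \repr_{\repr_{\scr Y}(\scr X)}(\scr X')$, since a factorization of $\scr X' \to \scr Y$ through a stack representable over $\scr Y$ is the same datum as a factorization of $\scr X' \to \repr_{\scr Y}(\scr X)$ through a stack representable over $\repr_{\scr Y}(\scr X)$. Using (1) and that (quasi-)affineness is fpqc-local on the target, we may base change along an fppf covering $B \to \repr_{\scr Y}(\scr X)$ trivializing the gerbe $\scr X \to \repr_{\scr Y}(\scr X)$; then $\scr X = B \sslash G$ with $G = I_{\scr X/\scr Y}|_B \to B$ finite locally free and $\repr_{\scr Y}(\scr X) = B$. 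Pulling $\scr X' \to \scr X$ back along the finite locally free covering $B \to B \sslash G$ turns it into a $G$-equivariant (quasi-)affine morphism $X' \to B$ with $\scr X' \wequi X' \sslash G$, and the standing flatness of $I_{\scr X'/\scr Y}$ says the stabilizer group scheme $\mathrm{Stab}_G(X') \subseteq X' \times_B G$ is flat and lfp over $X'$; being a closed subscheme of the finite locally free $X' \times_B G \to X'$, it is finite locally free. One identifies $\repr_B(X' \sslash G)$ with the fppf sheaf quotient $X'/G$ (directly from the universal property, or by comparing relative inertias), so it remains to see that $X'/G$ is a scheme, quasi-affine over $B$ (and affine when $X'$ is); this follows from the classical theory of quotients by finite locally free group scheme actions with flat stabilizers, reducing to $B$ affine (where $X'$ is quasi-affine). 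I expect this last point — extracting a genuinely (quasi-)affine scheme, and not merely a categorical quotient, from precisely these finiteness and flatness hypotheses on the stabilizers — to be the main technical obstacle; everything else is formal manipulation of the rigidification.
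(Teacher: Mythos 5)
Your proposal is correct in substance but constructs $\repr_{\scr Y}(\scr X)$ by a genuinely different route: you rigidify an explicit groupoid presentation (taking $\bar R$ to be the image sheaf of $R = U \times_{\scr X} U$ in $R' = U \times_{\scr Y} U$ and invoking bootstrap for the quotient by the flat lfp kernel pair), whereas the paper simply defines $\repr_{\scr Y}(\scr X)$ as the $0$-truncation of $\scr X$ in the $\infty$-topos of sheaves over $\scr Y$ and quotes \cite[Tags 06QD, 06QJ]{stacks-project} for algebraicity and the gerbe property. The truncation description makes the universal property and part (1) (via \cite[Proposition 5.5.6.28]{lurie-htt} and universality of colimits) immediate at the price of citing the rigidification results; your construction proves algebraicity by hand but must then verify the universal property and base change separately, which you do correctly. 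For (2) and (3) the two arguments are essentially parallel: both reduce fppf-locally to $B \sslash G \to B$, and your transfer of each listed property from $G \to B$ (equivalently from $\Delta_{\scr X/\scr Y}$) to $B \sslash G \to B$, together with descent along the fppf surjection $\scr X \to \repr_{\scr Y}(\scr X)$, requires the same property-by-property bookkeeping the paper carries out with Stacks Project references; the individual claims (including fppf-descent of smoothness along $R \to \bar R$) all check out.

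The one genuine gap is the point you flag yourself at the end of (4): that the fppf sheaf quotient $X'/G$ is actually a (quasi-)affine scheme. This is precisely where the paper also leans on external input: for $X'$ affine it cites \cite[Theorem 4.1]{Rydh-quotients}, and for $X'$ quasi-affine it uses the $G$-equivariant open immersion $X' \hookrightarrow \Spec(\scr O_{X'})$ into the affine hull together with the fact that GC quotients are uniformly categorical, so that $X'/G \to \Spec(\scr O_{X'})/G$ is an open immersion into an affine quotient. Your reduction ``to $B$ affine with $X'$ quasi-affine'' stops just short of this last step; supplying the affine-hull trick (or the precise citation to Rydh) is what is needed to close the argument.
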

\begin{proof} \tdn{revise this}
Let $\repr_{\scr Y}(\scr X)$ be the $0$-truncation of $\scr X$ in the $\infty$-topos of sheaves over $\scr Y$.
It is clear that this sheaf has the desired universal property.
Truncation commutes with pullbacks by \cite[Proposition 5.5.6.28]{lurie-htt} and universality of colimits in $\infty$-topoi.
Hence (1).
To prove that $\repr_{\scr Y}(\scr X)$ is a stack, as well as (2,3,4), we may assume by fppf descent that $\scr Y = A$ is an affine scheme.
Let us put $\ul{\scr X} = \repr_{\scr Y}(\scr X)$.
Then by Lemma \ref{lemm:inertia-composition} we have $I_{\scr X/A} \wequi I_{\scr X}$.
It follows now from \cite[Tags 06QD and 06QJ]{stacks-project} that $\scr X \to \ul{\scr X}$ is a gerbe (and in particular $\ul{\scr X}$ is an algebraic space, so stack).
Thus $\scr X \to \ul{\scr X}$ is smooth and surjective \cite[Tage 0DN8]{stacks-project}.

(3) Suppose that $\scr X \to A$ is (quasi-)separated.
Then so is $\ul{\scr X} \to \scr X$ \cite[Remark 2.11(i,iii)]{Rydh-quotients}.
Since $\ul{\scr X}$ is an algebraic space, $\Delta_{\ul{\scr X}/A}$ is separated \cite[Tag 03HK]{stacks-project}, and hence $\scr X \to \ul{\scr X}$ is (quasi-)separated \cite[Tag 050M]{stacks-project}.

Suppose that $\scr X \to A$ is locally of finite type (respectively presentation).
The morphism $\scr X \to \ul{\scr X}$ is always locally of finite presentation, being a gerbe \cite[Tag 0CPS]{stacks-project}.
Choose an fppf morphism $B \to \scr X$, with $B$ an affine scheme.
Then $B \to \scr X \to \ul{\scr X}$ is fppf and representable and $B \to A$ is locally of finite type (respectively presentation), whence so is $\ul{\scr X} \to A$ by \cite[Tags 06U8 and 06Q9]{stacks-project}.

Suppose that $\scr X \to A$ is quasi-compact.
Since $\scr X \to \ul{\scr X}$ is a universal homeomorphism\NB{ref?} it is quasi-compact, and since $\scr X$ is quasi-compact so is $\ul{\scr X}$.
It follows ($A$ being arbitrary) that $\ul{\scr X} \to A$ is quasi-compact.

Suppose that $\scr X \to A$ is locally quasi-finite.
Since $\ul{\scr X} \to A$ is representable it is quasi-DM, and hence locally quasi-finite by \cite[Tag 06UG]{stacks-project}.
Since $\scr X \to A$ is locally quasi-finite it is quasi-DM (by definition), hence so is $\scr X \to \ul{\scr X}$ \cite[Tag 050M]{stacks-project}.
Since $\scr X \to A$ is locally quasi-finite it is locally of finite type (by definition), hence so is $\scr X \to \ul{\scr X}$ (as we have already seen).
It follows that $\scr X \to \ul{\scr X}$ is locally quasi-finite, being a universal homeomeomorphism, quasi-DM and locally of finite type.

Since finite type means locally of finite type and quasi-compact \cite[06FS]{stacks-project}, this property is also dealt with; similarly for quasi-finite, finite presentation.

Suppose that $\scr X \to A$ is proper.
Then we have already seen that $\ul{\scr X} \to A$ is separated (since $\scr X \to A$ is), and hence $\scr X \to \ul{\scr X}$ is proper by \cite[Tag 0CPT]{stacks-project}.\NB{Or use that $\scr X \to \ul{\scr X}$ is universal homeomorphism, separated and finite type.}
We have also already seen that $\ul{\scr X} \to A$ is finite type, and hence it is proper by \cite[Tag 0CQK]{stacks-project}.

Suppose that $\scr X \to A$ is étale.
Then $\scr X \to A$ is unramified, and since $\ul{\scr X} \to A$ is DM (being representable \cite[Tag 050E]{stacks-project}), $\scr X \to \ul{\scr X}$ is unramified \cite[Tag 0CIZ]{stacks-project} and hence étale, being fppf \cite[Tag 0CJ1]{stacks-project}.
This implies that $\ul{\scr X} \to A$ is étale, since this property is étale local on the source (essentially by definition).

Suppose that $\scr X \to A$ is flat (respectively smooth).
Since $\scr X \to \ul{\scr X}$ is smooth surjective, we deduce that $\ul{\scr X} \to A$ is flat (respectively smooth).

(4) We may assume that $\scr Y = A$ is an affine scheme; then $\repr_{A}(\scr X) = \ul{\scr X}$ is the truncation, and similarly for $\scr X'$.
It follows that we may replace $A$ by $\ul{\scr X}$.
Working further locally on $\ul{\scr X}$ we may assume that $\ul{\scr X} = A$ is an affine scheme and $\scr X = A \sslash G$ \cite[Tag 06QH]{stacks-project}.
Here $G$ is an $A$-group scheme, namely the pullback of $I_{\scr X}$; in particular $G \to A$ is finite locally free.
Since $\scr X' \to \scr X$ is (quasi-)affine, $\scr X' = B \sslash G$, where $B$ is (quasi-)affine and carries a $G$-action\NB{Moreover stabilizer of $G$-action on $B$ is flat.} and $\ul{\scr X'} = B/G$.
It remains to show that $B/G$ is a (quasi-)affine scheme.
If $B$ is affine, this is proved in \cite[Theorem 4.1]{Rydh-quotients}.
If $B$ is quasi-affine, then $B \to \Spec(\scr O_B) =: B'$ is a $G$-equivariant open immersion into an affine $G$-scheme $B$ (over $A$).
Then $B/G \to B'/G$ is an open immersion, since GC quotients are uniformly categorical \cite[Remak 3.18]{Rydh-quotients}.
The result follows.\tdn{better argument/details}
\end{proof}

\begin{example} \label{ex:fact-etale}
Let $p: \scr X \to \scr Y$ be étale.
Then $\Delta_p$ is étale \cite[Tag 0CJ1]{stacks-project} whence flat and locally of finite presentation \cite[Tag 0DNP]{stacks-project}, and hence so is $I_p$.
In particular we may apply Proposition \ref{prop:relative-quotient-gerbe} to obtain a factorization $\scr X \to \ul{\scr X} \to \scr Y$ into étale morphisms with $\ul{\scr X} \to \scr Y$ representable.
If $p$ is in addition proper then $\ul{\scr X} \to \scr Y$ is finite étale (being separated and locally quasi-finite, so schematic \cite[Theorem A.2]{laumon2018champs}, whence finite since proper \cite[Tag 02LS]{stacks-project}).
\end{example}

\begin{example} \label{ex:fact-qf-proper}
Let $p: \scr X \to \scr Y$ be proper, finitely presented, quasi-finite and with flat inertia.
Then $\scr X \times_{\scr Y} \scr X \to \scr Y$ is proper and quasi-finite\NB{stability under base change and composition; hence in particular finite type}, whence $\Delta_{\scr X/\scr Y}$ is proper, finitely presented and quasi-finite \cite[Tags 0CPT, 06Q6 and 06UG]{stacks-project}.
Thus $\Delta_{\scr X/\scr Y}$ is proper and quasi-finite, so separated, so quasi-affine \cite[Theorem A.2]{laumon2018champs}, whence finite \cite[Tag 02LS]{stacks-project}.
It follows that $I_{\scr X/\scr Y}$ is finite locally free \cite[Tag 02KB]{stacks-project}.
We obtain a factorization $\scr X \to \ul{\scr X} \to \scr Y$ where $\ul{\scr X} \to \scr Y$ is proper, quasi-finite, flat, finitely presented and representable, and hence also finite locally free.
\end{example}

\begin{corollary} \label{cor:p*-ff}
Let $p: \scr X \to \scr Y$ be a gerbe.
Then the base change functor $p^*: \Stk_{\scr Y}^\repr \to \Stk_{\scr X}^\repr$ is fully faithful.
\end{corollary}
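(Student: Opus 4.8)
The approach is to compute the relevant mapping spaces and thereby reduce to the elementary fact that a gerbe cannot be distinguished from its base by a representable (equivalently, $0$-truncated) target.

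First I would unwind what fully faithfulness asserts. For $\scr A, \scr B \in \Stk_{\scr Y}^\repr$ we have $p^*\scr A \simeq \scr X\times_{\scr Y}\scr A$ and $p^*\scr B \simeq \scr X\times_{\scr Y}\scr B$, and iterated use of the universal property of the fibre product produces natural identifications
\[ \Map_{\scr X}(p^*\scr A, p^*\scr B) \simeq \Map_{\scr Y}(\scr X\times_{\scr Y}\scr A, \scr B) \simeq \Map_{\scr A}(\scr W, \scr C) \quad\text{and}\quad \Map_{\scr Y}(\scr A, \scr B) \simeq \Map_{\scr A}(\scr A, \scr C), \]
where $\scr W := \scr X\times_{\scr Y}\scr A$ and $\scr C := \scr B\times_{\scr Y}\scr A$, the mapping spaces on the right being formed in the slice over $\scr A$, and where one checks that under these identifications the functor $p^*$ becomes precomposition along the projection $q \colon \scr W \to \scr A$. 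Now $q$ is a gerbe (a base change of $p$) and $\scr C \to \scr A$ is representable (a base change of $\scr B \to \scr Y$), so it suffices to prove: if $q \colon \scr W \to \scr A$ is a gerbe and $\scr C \to \scr A$ is representable, then $q^* \colon \Map_{\scr A}(\scr A, \scr C) \to \Map_{\scr A}(\scr W, \scr C)$ is an equivalence.

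To see this I would work inside the $\infty$-topos of sheaves over $\scr A$. Since $\scr C \to \scr A$ is representable, $\scr C$ is a $0$-truncated object there (its pullback to any scheme is an algebraic space, hence a $0$-truncated sheaf). By the defining properties of a gerbe \cite[Tag 06QB]{stacks-project}, both $q$ and its relative diagonal $\Delta_q \colon \scr W \to \scr W\times_{\scr A}\scr W$ are effective epimorphisms of sheaves. Effectivity of $q$ exhibits $\scr A$ as the colimit of the \v Cech nerve of $q$; as $\scr C$ is a sheaf, $q^*$ therefore identifies $\Map_{\scr A}(\scr A, \scr C)$ with the equalizer of the two maps $p_1^*, p_2^* \colon \Map_{\scr A}(\scr W, \scr C) \rightrightarrows \Map_{\scr A}(\scr W\times_{\scr A}\scr W, \scr C)$. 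Because $\scr C$ is $0$-truncated these mapping spaces are sets, and restriction along the effective epimorphism $\Delta_q$ is injective on them; since $p_1\circ\Delta_q = p_2\circ\Delta_q = \id_{\scr W}$, we get $\Delta_q^* \circ p_1^* = \id = \Delta_q^* \circ p_2^*$ and hence $p_1^* = p_2^*$. Thus the equalizer is all of $\Map_{\scr A}(\scr W, \scr C)$ and $q^*$ is the asserted equivalence.

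I do not anticipate a genuine obstacle; the only point requiring a little care is the bookkeeping in the reduction — confirming that the composite of the fibre-product identifications carries $p^*$ to $q^*$, and noting (immediately) that $q$ and $\scr C \to \scr A$ are again a gerbe and a representable morphism. If one prefers to avoid the \v Cech manipulation, one can instead invoke Proposition \ref{prop:relative-quotient-gerbe} to see that $\repr_{\scr A}(\scr W) \simeq \scr A$ for a gerbe $q$ (its $0$-truncation over $\scr A$ is terminal), and then use that representable objects are local with respect to the map $\scr W \to \repr_{\scr A}(\scr W)$; this yields the same conclusion.
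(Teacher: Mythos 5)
Your proof is correct, but it takes a genuinely different route from the paper's. The paper observes that the partially defined left adjoint $p_!$ of $p^*$ exists and is computed by the relative coarse space $\repr_{\scr Y}(\ph)$ of Proposition \ref{prop:relative-quotient-gerbe}; since that construction commutes with arbitrary base change, checking that the unit $\scr T \to p_!p^*\scr T$ is an equivalence can be done fppf-locally on $\scr Y$, which reduces everything to $p: A\sslash G \to A$ with $A$ affine, where $p^*$ is the ``equip with the trivial $G$-action'' functor and full faithfulness is immediate. You instead stay entirely inside the fppf $\infty$-topos: after the (correct) slice-category bookkeeping, the assertion becomes that sections of a representable --- hence relatively $0$-truncated --- morphism are unchanged by pullback along a $1$-connective map, and you verify this by \v Cech descent along $q$ combined with the effective epimorphy of $\Delta_q$ and the injectivity of restriction along an effective epimorphism into a $0$-truncated target. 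This buys you a more formal argument that avoids both the local structure theory of gerbes and the base-change clause of Proposition \ref{prop:relative-quotient-gerbe}, at the modest cost of the $\infty$-topos bookkeeping (identifying $\lim_\Delta$ of a cosimplicial set with the equalizer of the two cofaces); the paper's route is shorter given the infrastructure it has already built. Your closing alternative, via $\repr_{\scr A}(\scr W)\wequi\scr A$ and locality of representable objects, is essentially the paper's proof in disguise.
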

\begin{proof}
Denote the partially defined left adjoint of $p^*$ by $p_!$.
It suffices to show that for $\scr T \in \Stk_{\scr X}^\repr$, $p_!p^* \scr T$ is defined and isomorphic to $\scr T$.
By Corollary \ref{cor:bc-inertia}, $I_{p^* \scr T/\scr X}$ is flat and locally of finite presentation; hence by Proposition \ref{prop:relative-quotient-gerbe} $p_!p^* \scr T$ exists and is stable under base change.
We may thus assume that $\scr Y = A$ is an affine scheme and $\scr X = A \sslash G$ is a classifying stack.
Then $\Stk_{\scr X}^\repr$ is equivalent to the category of algebraic spaces over $A$ with a $G$-action, and $p^*$ is the ``trivial $G$-action'' functor.
This is clearly fully faithful.\NB{Or just directly observe that for any space $X$ and set $T$ over $\pi_0 X$ we have $\pi_0(X \times_{\pi_0 X} T) \wequi T$...}
\end{proof}

\subsubsection{Fixed points} \label{appsub:fixed-points}
Given a morphism of stacks $p: \scr X \to \scr Y$, the functor $p^*: \PSh(\Stk_{\scr Y}) \to \PSh(\Stk_{\scr X})$ (obtained by left Kan extension from the base change functor $\Stk_{\scr Y} \to \Stk_{\scr X}$) admits a right adjoint $p_*$.
\begin{proposition} \label{prop:fixed-points-functor}
Let $p: \scr X \to \scr Y$ be a gerbe with finite locally free inertia.
Let $\scr T \to \scr X$ be schematic\tdn{remove this assumption?} and separated.
\begin{enumerate}
\item $p_*(\scr T) \in \PSh(\Stk_{\scr Y})$ is representable by a stack $p_*(\scr T) \in \Stk_{\scr Y}$ which is representable and separated.
\item The adjunction map $p^*p_*(\scr T) \to \scr T$ is a closed immersion, finitely generated if $\scr T$ is locally of finite type.
\item If $\scr T$ is locally of finite type, affine or quasi-affine (over $\scr X$) then so is $p_*(\scr T)$ (over $\scr Y$).
\item Suppose that $p$ is tame.
  If $\scr T' \to \scr X$ is also schematic and separated, $\scr T, \scr T'$ are locally of finite presentation over $\scr X$, and $\alpha: \scr T' \to \scr T$ is an $\scr X$-morphism which is smooth, étale, an open immersion, a closed immersion or proper, then so is $p_*(\alpha)$ (tameness and locally finite presentation are not necessary for the last two).
\end{enumerate}
\end{proposition}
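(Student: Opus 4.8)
The plan is to reduce, by fppf descent on $\scr Y$, to the case where $p$ is the projection $B\sslash G\to B$ from a classifying stack, where $p_*$ becomes an honest fixed-point functor, and then to apply standard facts about fixed points of finite flat group schemes and about Weil restriction. Since $p^*$ is the left Kan extension of the base-change functor $\Stk_{\scr Y}\to\Stk_{\scr X}$, its right adjoint $p_*$ is restriction along that functor, so $p_*(\scr T)(\scr A)\wequi\Map_{\scr X}(\scr A\times_{\scr Y}\scr X,\scr T)$; in particular the formation of $p_*(\scr T)$ commutes with arbitrary base change in $\scr Y$. Every assertion in (1)--(4) -- being representable by an algebraic stack, being separated, being a closed or open immersion, affine, quasi-affine, locally of finite type or presentation, smooth, étale, or proper -- can be checked fppf-locally on $\scr Y$ (algebraic stacks satisfy fppf descent). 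So I may choose an fppf cover $B\to\scr Y$ by an affine scheme trivializing the gerbe $p$ and assume $\scr Y=B$ is affine, $\scr X=B\sslash G$ for a finite locally free $B$-group scheme $G$ acting trivially on $B$, and $p$ the projection; the general conclusions then follow by descent.

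For items (1)--(3): descent along the $G$-torsor $B\to B\sslash G$ identifies a schematic separated $\scr T\to B\sslash G$ with a separated $B$-scheme $T$ carrying a $G$-action (trivial on $B$), and in these terms $p_*(\scr T)$ is the fixed-point functor $T^G$. To realize $T^G$ as a closed subscheme of $T$ I will present it as a Weil restriction: writing $\sigma,\mathrm{pr}_T\colon G\times_B T\rightrightarrows T$ for the action and the second projection, put $E:=(G\times_B T)\times_{(\sigma,\mathrm{pr}_T),\,T\times_B T,\,\Delta_{T/B}}T$, which is a closed subscheme of $G\times_B T$ because $T/B$ is separated. The morphism $\mathrm{pr}_T\colon G\times_B T\to T$ is finite locally free, and a direct comparison of functors of points gives $T^G\wequi\mathrm{Res}_{(G\times_B T)/T}(E)$. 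Weil restriction along a finite locally free morphism of a scheme affine over the source exists and is affine over the target, and it preserves closed immersions; applied to $E\hookrightarrow G\times_B T$ this shows $T^G\to T$ is a closed immersion, hence $T^G$ is a scheme, separated over $B$ (being closed in the separated $T$), and (quasi-)affine over $B$ whenever $T$ is (a closed subscheme of a (quasi-)affine $B$-scheme is (quasi-)affine). This gives (1), the first part of (2), and the (quasi-)affine part of (3). When $T/B$ is locally of finite type, $\Delta_{T/B}$ is locally of finite presentation, hence so is $E\hookrightarrow G\times_B T$ and then $T^G\hookrightarrow T$, and $T^G$ is locally of finite type over $B$; this yields the "finitely generated" clause of (2) and the remaining part of (3).

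For item (4), the two immersion cases and the proper case do not use tameness. If $\alpha\colon\scr T'\to\scr T$ is a monomorphism then scheme-theoretically $T'^G\wequi T'\times_T T^G$: using injectivity and $G$-equivariance of $\alpha$, a point of $T'$ is $G$-fixed exactly when its image in $T$ is. Hence $p_*(\alpha)$ is the base change of $\alpha$ along $T^G\hookrightarrow T$, so it is an open, resp. closed, immersion when $\alpha$ is. If $\alpha$ is proper, factor $p_*(\alpha)$ as $T'^G\hookrightarrow T'\times_T T^G\to T^G$, where the first map is a closed immersion ($T'^G$ is closed in $T'$, hence in the closed subscheme $T'\times_T T^G$) and the second is the base change of $\alpha$; so $p_*(\alpha)$ is proper. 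For the smooth and étale cases I use base-change compatibility to reduce to $\scr T=\scr X$ (pull back along an affine chart of $p_*(\scr T)$; note $\scr X$ becomes a tame gerbe over an affine base), so I must show: if $\scr T'\to\scr X$ is smooth (resp. étale) and locally of finite presentation, then $p_*(\scr T')\to\scr Y$ is smooth (resp. étale). Granting that $p_*$ preserves local finite presentation, it remains to verify the infinitesimal lifting criterion: for a square-zero thickening $\Spec A_0\hookrightarrow\Spec A$ over $\scr Y$ with ideal $J$, lifting a map $\scr X_{A_0}:=\Spec A_0\times_{\scr Y}\scr X\to\scr T'$ to $\scr X_A\to\scr T'$ is obstructed by a class in $H^1(\scr X_{A_0},\Omega_{\scr T'/\scr X}^\vee\otimes J)$ (and in the étale case $L_{\scr T'/\scr X}$ vanishes, so there is neither obstruction nor ambiguity). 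Since $p$ is tame, $\scr X_{A_0}\to\Spec A_0$ is a tame gerbe, hence cohomologically affine, so $H^1$ of any quasi-coherent sheaf on $\scr X_{A_0}$ vanishes; the lift exists, $p_*(\scr T')\to\scr Y$ is formally smooth (resp. formally étale), and with local finite presentation it is smooth (resp. étale).

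The essential work is the fppf-local reduction together with the identification $p_*(\scr T)\wequi T^G$ and its presentation as a Weil restriction; given these, (1)--(3) reduce to standard properties of closed immersions and of Weil restriction along finite locally free morphisms, and (4) reduces either to the monomorphism base-change trick or to the cohomological amenability of tame gerbes. The points requiring genuine care -- and the main obstacle -- are checking that the presheaf-level right adjoint $p_*$ is computed by the naive formula and commutes with base change, and bookkeeping the preservation of local finite presentation by Weil restriction.
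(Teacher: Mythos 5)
Your proof is correct, and it departs from the paper's argument precisely at the two points where the paper leans on external references. After the same fppf reduction to $B\sslash G\to B$ and the identification $p_*(\scr T)\wequi T^G$, the paper simply cites Conrad--Gabber--Prasad (Proposition A.8.10(1)) for the fact that $T^G\hookrightarrow T$ is a (finitely presented) closed immersion, whereas you reconstruct it by exhibiting $T^G$ as the Weil restriction, along the finite locally free projection $G\times_B T\to T$, of the stabilizer locus $E$; this makes the argument self-contained and reduces everything in (1)--(3) to standard properties of Weil restriction along finite locally free maps. For the smooth/\'etale part of (4) the paper cites CGP A.8.10(2), whose noetherian hypotheses force an extra noetherian-approximation step (Rydh); your deformation-theoretic argument --- the obstruction lives in $H^1$ of a quasi-coherent sheaf on a tame gerbe over an affine base, which vanishes by cohomological affineness --- avoids approximation entirely and isolates exactly where tameness is used. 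You also treat the immersion and proper cases more directly: the paper derives ``\'etale'' as ``smooth and quasi-finite'' and ``open immersion'' as ``\'etale immersion'', while you observe that for a $G$-equivariant monomorphism one has $T'^G\wequi T'\times_T T^G$, so these cases are literal base changes along $T^G\hookrightarrow T$ (which in particular needs neither tameness nor finite presentation, slightly more than the statement asserts for open immersions). The one step you phrase as ``granting'' --- that $p_*$ preserves local finite presentation --- is already supplied by your finite-presentation analysis of $E\hookrightarrow G\times_B T$ in part (2), so nothing essential is missing.
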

\begin{proof}
(1,2) The problem being local on $\scr Y$, we may assume that $\scr Y = A$ is an affine scheme and $\scr X = A \sslash G$.
Then $\scr T = T \sslash G$, $p_*(\scr T) = T^G$ and $p^*p_*(\scr T) = T^G \sslash G$.
The result is proved in \cite[Proposition A.8.10(1)]{conrad2015pseudo}; their proof establishes that $T^G \hookrightarrow T$ is finitely presented if $T$ is locally of finite type.

(3) Since $\scr X \to \scr Y$ is an fppf covering, it suffices to show that $p^*p_*(\scr T) \to p^*(\scr Y) = \scr X$ has the desired properties.
This factors as $p^*p_*(\scr T) \to \scr T \to \scr X$ where the first map is a closed immersion, finitely presented if $\scr Y$ is locally of finite type, and hence $p^*p_*(\scr T) \to \scr X$ is locally of finite type, affine or quasi-affine if $\scr T \to \scr X$ is.

(4) Again it suffices to show that $p^*p_*(\alpha)$ has the desired properties.
We may reduce to the case $\scr X = A \sslash G$ again.
The smoothness result is proved in  \cite[Proposition A.8.10(2)]{conrad2015pseudo}, assuming that $A$ is noetherian.
We can write $A = \colim_i A_i$, where $A_i$ is finitely generated over $\Z$ and hence noetherian.
Since $G$, $T$ and $T'$ are of finite presentation over $A$, they can be defined over $A_i$ for $i$ sufficiently large, and will satisfy all the same properties as before \cite[Propositions B.2 and B.3]{rydh2015noetherian}.
The smoothness follows in general.
If $\alpha$ is étale then $p^*p_*(\alpha)$ is smooth (by what we just said) and quasi-finite (as a consequence of (2)), and hence étale.
If $\alpha$ is an open immersion then $p^*p_*(\alpha)$ is an étale immersion (using (2) again), so an open immersion.
If $\alpha$ is a closed immersion or proper then so is $p^*p_*(\alpha)$, by (2).
\end{proof}

\subsubsection{Weil restriction} \label{appsub:weil-res}
\begin{corollary} \label{cor:weil-res}
Consider a cartesian square of stacks
\begin{equation*}
\begin{CD}
\scr X' @>>> \scr X \\
@Vp'VV        @VpVV   \\
\scr Y' @>>> \scr Y.
\end{CD}
\end{equation*}
Assume that $p$ is flat, proper, finitely presented, quasi-finite and with flat inertia.
Let $\scr T, \scr T' \to \scr X$ be quasi-affine and $\alpha: \scr T \to \scr T'$ an $\scr X$-morphism.
\begin{enumerate}
\item $p_*(\scr T) \in \PSh(\Stk_{\scr Y})$ is represented by a quasi-affine $\scr Y$-stack $p_*(\scr T)$.
\item Suppose $p$ is tame.
  If $\alpha$ is smooth, étale, an open immersion, or a closed immersion, then so is $p_*(\scr \alpha)$.
  If $p$ is étale, the same is true for $\alpha$ proper.\NB{Taking tangent bundles is a special case, and even if $X$ is proper $TX$ usually isn't, so the étale assumption cannot be removed entirely.}
\item The canonical map induces an equivalence \[ p_*(\scr T) \times_{\scr Y} \scr Y' \wequi p'_*(\scr T \times_{\scr X} \scr X'). \]
\end{enumerate}
\end{corollary}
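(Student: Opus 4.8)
The strategy is to split $p_{*}$ as a fixed-points functor followed by a Weil restriction, by means of the factorization of Example~\ref{ex:fact-qf-proper}. Since $p$ is flat, proper, finitely presented, quasi-finite and has flat inertia, that example produces $I_{\scr X/\scr Y}$ finite locally free together with a factorization $\scr X \xrightarrow{q} \ul{\scr X} \xrightarrow{r} \scr Y$ in which $\ul{\scr X} = \repr_{\scr Y}(\scr X)$, the map $q$ is a gerbe with $I_q \wequi I_{\scr X/\scr Y}$ (Lemma~\ref{lemm:inertia-composition}) finite locally free, and $r$ is proper, quasi-finite, flat, finitely presented and representable, hence finite locally free. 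If $p$ is tame then so is $q$, tameness of a morphism being a condition on its relative inertia; if $p$ is étale then by Example~\ref{ex:fact-etale} $q$ is étale and $r$ is finite étale. Since $p^{*} = q^{*} r^{*}$ we have $p_{*} = r_{*} q_{*}$; concretely, once we know that $r_{*}(q_{*}(\scr T))$ is representable by a stack (established below), it represents $\scr W \mapsto \Hom_{\Stk_{\scr X}}(\scr W \times_{\scr Y} \scr X, \scr T)$, which is the defining property of $p_{*}(\scr T)$, so $p_{*}(\scr T) \wequi r_{*}(q_{*}(\scr T))$.

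Since $\scr T$ and $\scr T'$ are quasi-affine over $\scr X$ they are schematic and separated over $\scr X$, so Proposition~\ref{prop:fixed-points-functor} applies to the gerbe $q$, whose inertia is finite locally free: $q_{*}(\scr T)$ is representable and separated over $\ul{\scr X}$, and quasi-affine over $\ul{\scr X}$ (as $\scr T$ is quasi-affine over $\scr X$); and if $p$, hence $q$, is tame, then $q_{*}$ takes smooth, étale, open-immersion and closed-immersion $\scr X$-morphisms to morphisms of the same type over $\ul{\scr X}$, the closed-immersion and proper cases needing no tameness (Proposition~\ref{prop:fixed-points-functor}(4); in the smooth, étale and open-immersion cases one first reduces to finitely presented $\scr T,\scr T'$ by a standard limit argument). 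Crucially, the formation of $q_{*}(\scr T)$ commutes with base change in $\ul{\scr X}$: this is immediate from the functor of points --- equivalently from the local description $q_{*}(\scr T) = T^{G}$ for $\scr X = A\sslash G$, $\scr T = T\sslash G$ --- since a representable functor compatible with base change is represented by an object compatible with base change.

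It remains to analyze $r_{*}$ applied to the quasi-affine $\ul{\scr X}$-stack $q_{*}(\scr T)$. Representability and quasi-affineness of the result are fpqc-local on $\scr Y$, and the formation of $r_{*}$ commutes with base change in $\scr Y$ (again by the functor of points), so we may assume $\scr Y = \Spec B$; then $\ul{\scr X}$, being finite locally free over $\scr Y$, is an affine scheme $\Spec C$ with $C$ finite locally free over $B$, and $q_{*}(\scr T)$ is a quasi-affine $C$-scheme. Classical Weil restriction along a finite locally free morphism (see e.g.\ \cite[Appendix A.5]{conrad2015pseudo} or \cite{bachmann-norms}) then shows that $r_{*}(q_{*}(\scr T))$ exists and is a quasi-affine $B$-scheme, that $r_{*}$ preserves smoothness, étaleness, open immersions and closed immersions, and that, when $C/B$ is finite étale --- which holds when $p$ is étale, by Example~\ref{ex:fact-etale} --- it also preserves properness, since étale-locally on $B$ the morphism $\Spec C \to \Spec B$ becomes a disjoint union of finitely many copies and $r_{*}$ a finite product over $\Spec B$. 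Reassembling over $\scr Y$ and combining with the previous paragraph yields (1) and (2): $p_{*}(\scr T) = r_{*}(q_{*}(\scr T))$ is a quasi-affine $\scr Y$-stack, and $p_{*}(\alpha) = r_{*}(q_{*}(\alpha))$ inherits the relevant property from $\alpha$, it being preserved first by $q_{*}$ and then by $r_{*}$.

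Part (3) follows by feeding the two base-change statements above into Proposition~\ref{prop:relative-quotient-gerbe}(1), which gives $\repr_{\scr Y}(\scr X) \times_{\scr Y} \scr Y' \wequi \repr_{\scr Y'}(\scr X \times_{\scr Y} \scr Y')$, so that base change along $\scr Y' \to \scr Y$ carries the factorization $\scr X \to \ul{\scr X} \to \scr Y$ to that of $p'$; hence $p_{*}(\scr T) \times_{\scr Y} \scr Y' \wequi r'_{*}(q'_{*}(\scr T \times_{\scr X} \scr X')) \wequi p'_{*}(\scr T \times_{\scr X} \scr X')$. The only non-formal ingredients are Proposition~\ref{prop:fixed-points-functor} and the classical theory of Weil restriction; the work here is entirely the bookkeeping of the decomposition $p_{*} = r_{*} q_{*}$, the matching of finiteness hypotheses with those of Proposition~\ref{prop:fixed-points-functor}(4), and the base-change compatibility of the two factors. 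I expect the finiteness-hypothesis matching in the smooth/étale/open-immersion part of (2) to be the fiddliest point.
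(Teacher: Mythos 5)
Your proposal is correct and follows essentially the same route as the paper: reduce (3) to the functor-of-points definition of $p_*$, factor $p$ as a gerbe with finite locally free inertia followed by a finite locally free morphism via Example \ref{ex:fact-qf-proper}, handle the gerbe via Proposition \ref{prop:fixed-points-functor} and the finite locally free part via descent to classical Weil restriction of schemes, with the same disjoint-union/fold-map argument for properness in the finite étale case. Your remark that the smooth/étale/open-immersion part of (2) requires reconciling the finite-presentation hypothesis of Proposition \ref{prop:fixed-points-functor}(4) with the bare quasi-affineness assumed in the corollary is a fair point that the paper's proof glosses over.
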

\begin{proof}
Property (3) holds by definition of $p_*$, so we need only prove (1) and (2).
By Example \ref{ex:fact-qf-proper}, we may factor $p$ as $\scr X \xrightarrow{q} \ul{\scr X} \xrightarrow{r} \scr Y$, where $q$ is a gerbe with finite locally free inertia and $r$ is a finite locally free morphism.
We may thus assume that $p$ has one of these two properties.
If $p$ is a gerbe, the claims follow from Proposition \ref{prop:fixed-points-functor}.
If $p$ is finite locally free we may assume by descent that it is a morphism of schemes, and the claims follow from \cite[Proposition 7.6.2, proof of Theorem 7.6.4, Proposition 7.6.5]{bosch2012neron}.
They do not explicitly state that $p_*$ preserves smooth/étale morphisms, but this is clear from characterization in terms of lifting properties (see e.g. \cite[Tags 02HM and 02H6]{stacks-project}).
Similarly it is clear that $p_*$ for $p$ finite étale preserves proper morphisms, since we can reduce to the case of fold maps.
\end{proof}

\subsection{Unstable motivic homotopy theory of stacks}
Recall from \cite[Definition 2.5]{hoyois2017vanishing} that a Nisnevich covering of a stack $\scr X$ is a family of étale maps $\scr Y_i \to \scr X$ such that every residual gerbe lifts.
For definiteness, we call this an \emph{non-representable} Nisnevich covering.
If all the maps $\scr Y_i \to \scr X$ are representable (respectively quasi-affine), we call this a \emph{representable} (respectively \emph{quasi-affine}) Nisnevich covering.

Given a stack $\scr X$, we denote by $\SmQA_{\scr X}$ the full subcategory of $\Stk_{/\scr X}$ consisting of quasi-affine smooth (finitely presented) morphisms.
We say that a presheaf is \emph{Nisnevich local} if it satisfies the sheaf condition for all quasi-affine Nisnevich covers, and we say that it is \emph{homotopy invariant} if it inverts all torsors under vector bundles (see \cite[Definition 3.3]{hoyois-equivariant}; note that vector bundle torsors are affine since vector bundles are, so $\SmQA_{\scr X}$ is closed under vector bundle torsors and this definition makes sense).
We denote the corresponding localization functors by $L_\Nis$ and $L_\htp$.
\begin{definition}
We write $\Spc(\scr X) \subset \PSh_\Sigma(\SmQA_{\scr X})$  for the subcategory of presheaves that is both Nisnevich local and homotopy invariant, and denote by $L_\mot$ the localization functor.
\end{definition}

\subsubsection{Comparison results}
We now compare our definition of $\Spc(\scr S)$ with the ones given in \cite{khan2021generalized}.
In the sequel will frequently use terminology from \emph{loc. cit.} regarding (nicely or linearly) scalloped, fundamental and quasi-fundamental stacks.
\begin{proposition} \label{prop:nice-scallop}
Suppose that $\scr S$ is nicely scalloped \cite[Definition 2.9]{khan2021generalized} with affine diagonal\tdn{get away with less?}.
Then $\Spc(\scr S)$ is naturally equivalent to the category defined in \cite[\S3.1]{khan2021generalized}.
\end{proposition}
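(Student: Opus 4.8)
The plan is to compare the two presentations by checking that they start from equivalent sites and impose equivalent localizations. First I would recall that the construction in \cite[\S3.1]{khan2021generalized} proceeds by taking presheaves (of spaces, or equivalently $\PSh_\Sigma$ of sets) on the category of smooth \emph{representable} morphisms to $\scr S$ — or on some variant such as smooth affine, or smooth quasi-projective, depending on the precise setup there — and then localizing at $\A^1$-homotopy equivalences and at (representable) Nisnevich covers. Our $\Spc(\scr S)$ uses instead $\SmQA_{\scr S}$, smooth quasi-affine morphisms to $\scr S$, with the homotopy-invariance localization phrased via vector bundle torsors and the Nisnevich localization phrased via quasi-affine Nisnevich covers. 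So the first step is to identify the two \emph{sites}: since $\scr S$ has affine diagonal, every morphism from a quasi-affine (a fortiori affine) scheme-like stack to $\scr S$ is itself quasi-affine, so the two categories of ``test objects'' either coincide or one is cofinal/dense in the other for the relevant topology.

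Second, I would show the inclusion of sites induces an equivalence after localization. Concretely: if \cite{khan2021generalized} uses a smaller category (say smooth affine, or smooth quasi-projective over $\scr S$), I would invoke that every smooth quasi-affine $\scr S$-stack admits a quasi-affine (hence representable) Nisnevich cover — in fact a Zariski cover — by objects of the smaller category, so that restriction along the inclusion is an equivalence on Nisnevich sheaves (this is the standard ``the topology is generated by the smaller category'' argument, cf.\ the reduction to affines in unstable motivic homotopy theory). One must check that the two Grothendieck topologies match under this inclusion: a quasi-affine Nisnevich cover in our sense restricts to a Nisnevich cover in the sense of \cite[Definition 2.5]{hoyois2017vanishing} via the ``residual gerbes lift'' criterion, and conversely. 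For the homotopy localizations, I would note that inverting all vector bundle torsors is equivalent to inverting $\A^1$ (one direction is clear since $\A^1_{\scr Y} \to \scr Y$ is a vector bundle torsor; the other uses that a vector bundle torsor is Nisnevich-locally a projection $\A^n_{\scr Y} \to \scr Y$, so it becomes an equivalence after $\A^1$-localization combined with Nisnevich descent — this is \cite[Definition 3.3]{hoyois-equivariant} and the surrounding discussion). Hence $L_\mot$ on the two sides corresponds under restriction.

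Third, I would assemble these: restriction along the dense inclusion of sites gives an equivalence $\PSh_\Sigma(\text{Khan's category}) \supset (\text{their }\Spc) \simeq \Spc(\scr S)$, compatible with the localization functors, because a presheaf that is Nisnevich-local and homotopy-invariant on the large category is determined by its restriction and vice versa, and both localizations are detected on the small category. The naturality statement (``naturally equivalent'') is then automatic from the functoriality of all the constructions in $\scr S$.

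The main obstacle I anticipate is purely bookkeeping: matching up the precise category of ``smooth test objects'' and the precise form of the Nisnevich topology used in \cite{khan2021generalized} with ours, and in particular verifying that the affine-diagonal hypothesis is exactly what is needed to guarantee that smooth representable (or smooth quasi-projective) morphisms to $\scr S$ are quasi-affine, so that the two sites are genuinely interchangeable. Once that identification is made, the equivalence of localizations is formal. A secondary subtlety is checking that the ``residual gerbes lift'' condition behaves well under restriction to the smaller site — i.e.\ that a cover remains a cover — but this should follow from \cite[Definition 2.5]{hoyois2017vanishing} together with the fact that residual gerbes are insensitive to enlarging the ambient category of covers.
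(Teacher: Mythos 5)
Your overall strategy (compare the two sites, show one is dense in the other, match the topologies and the homotopy localizations) is the same as the paper's, but there are two genuine problems with the execution.

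First, you never isolate the actual crux, which is a \emph{density} statement going in a specific direction: the site of \cite[\S3.1]{khan2021generalized} is the category of smooth \emph{representable} $\scr S$-stacks, which is strictly larger than $\SmQA_{\scr S}$, so what has to be proved is that every smooth representable $\scr S$-stack admits a (quasi-affine) Nisnevich cover by smooth \emph{quasi-affine} ones. Your proposal at one point reverses this ("every smooth quasi-affine $\scr S$-stack admits a \dots cover by objects of the smaller category") and in any case dismisses the step as "the standard reduction to affines". For schemes that reduction is trivial (Zariski covers by affines), but for stacks it is precisely where the \emph{nicely scalloped} hypothesis enters: the paper reduces to fundamental stacks via \cite[Theorem 2.12(c)]{khan2021generalized} and then invokes \cite[Theorem 2.14(i)]{khan2021generalized}. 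Your argument never uses scallopedness at all, so as written it would "prove" the statement for arbitrary stacks with affine diagonal, which is not available. Relatedly, your claim that the affine-diagonal hypothesis guarantees that smooth representable morphisms to $\scr S$ are quasi-affine is false ($\P^1 \times \scr S \to \scr S$ is smooth representable and not quasi-affine); the hypothesis is used for the weaker but correct fact that objects of $\Sm^\repr_{\scr S}$ have quasi-affine diagonal over $\scr S$, so that any morphism from a quasi-affine $\scr S$-stack to such an object is automatically quasi-affine — this is what makes the dense-subsite lemma (\cite[Lemma C.3]{hoyois2015quadratic}) applicable.

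Second, your mechanism for comparing the homotopy localizations is wrong over stacks: a torsor under a vector bundle on a stack is in general \emph{not} Nisnevich-locally a projection $\A^n_{\scr Y} \to \scr Y$ (this failure is exactly why one localizes at all vector bundle torsors rather than just $\A^1$ in the equivariant setting). The correct input is that the $\A^1$-plus-Nisnevich localization of \cite{khan2021generalized} nevertheless contracts all vector bundle torsors, which is \cite[A.3.2(b)]{khan2021generalized}; the underlying argument uses triviality of such torsors over affines (vanishing of quasi-coherent $H^1$), not Nisnevich-local triviality. With these two points repaired your outline matches the paper's proof.
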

\begin{proof}
Then category of \cite{khan2021generalized} is obtained from the category $\PSh_\Sigma(\Sm_{\scr S}^\repr)$ of presheaves on smooth \emph{representable} $\scr S$-stacks by inverting \emph{representable} Nisnevich covers and contracting $\A^1$ (not a priori all vector bundle torsors).
This construction contracts all vector bundle torsors \cite[A.3.2(b)]{khan2021generalized}.
It will thus suffice to prove that the left Kan extension functor \[ \Shv_{\qaff\Nis}(\SmQA_\scr{S}) \to \Shv_{\repr\Nis}(\Sm_\scr{S}^\repr) \] is an equivalence.
Note that all objects of $\Sm^\repr_{\scr S}$ have quasi-affine diagonal (they are finitely presented over $\scr S$, whence quasi-separated over $\scr S$, so with quasi-affine diagonal over $\scr S$ \cite[note after Tag 02X5]{stacks-project}), and hence any morphism $\scr X \to \scr X'$ with $\scr X$ quasi-affine over $\scr S$ and $\scr X' \in \Sm_{\scr S}^\repr$ is automatically quasi-affine.
With this observation in mind, applying \cite[Lemma C.3]{hoyois2015quadratic} it suffices to prove that any object of $\Sm^\repr_{\scr S}$ admits a Nisnevich covering by objects in $\SmQA_{\scr S}$.
Using \cite[Theorem 2.12(c)]{khan2021generalized} we may assume that $\scr S$ is fundamental, and then we conclude by \cite[Theorem 2.14(i)]{khan2021generalized}.
\end{proof}

\begin{proposition} \label{ref:linear-scallop}
Suppose that $\scr S$ is linearly scalloped \cite[Definition A.10]{khan2021generalized}.
Then $\Spc(\scr S)$ is naturally equivalent to the category defined in \cite[\S A.3]{khan2021generalized}.
\end{proposition}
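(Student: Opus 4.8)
The plan is to argue exactly as in the proof of Proposition \ref{prop:nice-scallop}, replacing the inputs drawn from the body of \cite{khan2021generalized} by their counterparts in \cite[\S A]{khan2021generalized}. The category of \cite[\S A.3]{khan2021generalized} is again obtained from presheaves on the smooth representable $\scr S$-stacks by inverting representable Nisnevich covers and $\A^1$, and this localization once more contracts all vector bundle torsors (the analogue of \cite[A.3.2(b)]{khan2021generalized}). Since $\Spc(\scr S)$ is the homotopy-invariant localization of $\Shv_{\qaff\Nis}(\SmQA_{\scr S})$, it therefore suffices to prove that the left Kan extension functor
\[ \Shv_{\qaff\Nis}(\SmQA_{\scr S}) \to \Shv_{\repr\Nis}(\Sm_{\scr S}^{\repr}) \]
is an equivalence of $\infty$-topoi; the comparison of the two versions of $\Spc(\scr S)$ is then obtained by passing to the common homotopy-invariant localization.

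As before, every object of $\Sm_{\scr S}^{\repr}$ is finitely presented, hence quasi-separated, over $\scr S$, and so has quasi-affine diagonal over $\scr S$; consequently any morphism into such an object from a stack quasi-affine over $\scr S$ is automatically quasi-affine, which makes the functor above well defined and lets us invoke \cite[Lemma C.3]{hoyois2015quadratic}. By that lemma the statement reduces to the assertion that every object of $\Sm_{\scr S}^{\repr}$ admits a representable (equivalently quasi-affine) Nisnevich covering by objects of $\SmQA_{\scr S}$. Using a scallop decomposition of the linearly scalloped stack $\scr S$ and induction along its pieces, one reduces this to the case where $\scr S$ is linearly fundamental, where the covering is furnished by the structure theory of \cite[\S A]{khan2021generalized} (the analogue of \cite[Theorem 2.14(i)]{khan2021generalized}).

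The main obstacle I anticipate is essentially bookkeeping rather than conceptual: one must check that the covering statement is compatible with the open--closed gluing in a scallop decomposition, i.e. that quasi-affine Nisnevich covers produced over an open and over the complementary closed stratum can be assembled into a single quasi-affine Nisnevich cover of the total stack (keeping track of the lifting-of-residual-gerbes condition across the gluing), and one must extract from \cite[\S A]{khan2021generalized} the precise covering result for linearly fundamental bases (and, if the statement there is not in exactly this form, supply the short adaptation). With those two points settled the argument runs verbatim as for Proposition \ref{prop:nice-scallop}.
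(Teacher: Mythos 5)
Your proposal runs aground on a misidentification of the target category. The category of \cite[\S A.3]{khan2021generalized} is \emph{not} built from smooth representable $\scr S$-stacks with the representable Nisnevich topology — that is the construction of \cite[\S3.1]{khan2021generalized} used in the nicely scalloped case (Proposition \ref{prop:nice-scallop}). In the linearly scalloped setting the construction is based on smooth \emph{quasi-projective} stacks $\SmQP_{\scr S}$ with the quasi-projective Nisnevich topology; this is precisely why Khan--Ravi treat this case in a separate appendix, namely because the local structure results of their \S2 — in particular Theorem 2.14(i), which produces quasi-affine Nisnevich covers of arbitrary smooth representable stacks — are only available under the ``nice'' hypotheses. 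Consequently the covering statement your argument hinges on (every object of $\Sm^{\repr}_{\scr S}$ over a linearly fundamental base admits a quasi-affine Nisnevich cover by objects of $\SmQA_{\scr S}$) is exactly what is \emph{not} supplied by \cite[\S A]{khan2021generalized}; the ``short adaptation'' you defer is the missing content, not bookkeeping. The scallop-decomposition assembly you worry about is a red herring: the real gap is that you are comparing against the wrong category and invoking a structure theorem that does not exist in this generality.

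The paper's route avoids this entirely by comparing $\SmQA_{\scr S}$ directly with $\SmQP_{\scr S}$: a quasi-projective \emph{étale} morphism is quasi-affine, so the quasi-projective Nisnevich topology restricts on $\SmQA_{\scr S}$ to the quasi-affine one, and the left Kan extension
\[ \Shv_{\qaff\Nis}(\SmQA_{\scr S}) \to \Shv_{qp\Nis}(\SmQP_{\scr S}) \]
is fully faithful. Since the homotopy localizations agree on both sides \cite[Proposition 3.4(1)]{hoyois-equivariant}, the same fully faithfulness holds for motivic spaces, and the comparison with the \S A.3 category is then completed by \cite[Proposition A.9]{khan2021generalized}, which does the remaining work on the quasi-projective side. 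If you want to salvage your write-up, replace $\Sm^{\repr}_{\scr S}$ by $\SmQP_{\scr S}$ throughout and rebuild the argument along these lines.
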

\begin{proof}
Note that a quasi-projective étale morphism is quasi-affine; one deduces that the left Kan extension functor \[ \Shv_{\qaff\Nis}(\SmQA_\scr{S}) \to \Shv_{qp\Nis}(\SmQP_{\scr{S}}) \] is fully faithful.
Since the homotopy localizations are the same on both sides \cite[Proposition 3.4(1)]{hoyois-equivariant}, we deduce the same statement on the level of motivic spaces.
Now we conclude by \cite[Proposition A.9]{khan2021generalized}.
\end{proof}

\begin{remark} \label{rmk:hoyois-comparison}
Proposition \ref{ref:linear-scallop} shows in particular that $\Spc(\scr S)$ coincides with the category of \cite{hoyois-equivariant} whenever the latter is defined, that is, whenever $\scr S = S \sslash G$ where a linearly reductive group scheme $G$ defined over an affine base $B$ acting on a quasi-projective $B$-scheme $S$.
\end{remark}

\subsubsection{Functoriality}
Given any morphism $f: \scr S' \to \scr S$ we have the base change functor $f^*: \SmQA_{\scr S} \to \SmQA_{\scr S'}$ inducing an adjunction \[ f^*: \PSh_\Sigma(\SmQA_{\scr S}) \adj \PSh_\Sigma(\SmQA_{\scr S'}): f_* \] in which $f^*$ preserves motivic equivalences.
It follows as usual that there is an induced adjunction \[ f^*: \Spc(\scr S) \adj \Spc(\scr S'): f_*. \]

If $f$ is quasi-affine, then $f^*: \SmQA_{\scr S} \to \SmQA_{\scr S'}$ admits a left adjoint $f_\sharp$, inducing an adjunction \[ f_\sharp: \PSh_\Sigma(\SmQA_{\scr S'}) \adj \PSh_\Sigma(\SmQA_{\scr S}): f^*. \]
Here both functors preserve colimits and motivic equivalences, and there is an induced adjunction \[ f_\sharp: \Spc(\scr S') \adj \Spc(\scr S): f^*. \]

\subsubsection{Nisnevich separation}
\begin{lemma} \label{lemm:qaff-descent}
The presheaves on $\Stk$ given by \[ \scr X \mapsto L_\Nis \PSh_\Sigma(\SmQA_{\scr X}) \quad\text{and}\quad \scr X \mapsto \Spc(\scr X) \] satisfy quasi-affine Nisnevich descent.
\end{lemma}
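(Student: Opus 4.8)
The plan is to identify the first presheaf with a presheaf of $\infty$-topoi and invoke descent in an ambient $\infty$-topos, and then to deduce the second — which is the fibrewise homotopy localization of the first — from it. Write $F_1(\scr X) = L_\Nis\PSh_\Sigma(\SmQA_{\scr X})$. A presheaf preserves finite products exactly when it satisfies descent for finite disjoint-union decompositions, and these decompositions together with the quasi-affine Nisnevich covers generate a topology $\tau$ on $\SmQA_{\scr X}$ for which $F_1(\scr X) = \Shv_\tau(\SmQA_{\scr X})$ is an $\infty$-topos; the topology $\tau$ is subcanonical. Since our stacks are quasi-compact, every quasi-affine Nisnevich cover admits a finite refinement, so it suffices to verify Čech descent for a finite cover $\{\scr U_\alpha \to \scr X\}$ by quasi-affine étale morphisms. (For $\emptyset$ there is nothing to do: $\SmQA_\emptyset$ is the terminal category and $\PSh_\Sigma$ of a terminal category is terminal, so $F_1(\emptyset) \wequi \Spc(\emptyset) \wequi \ast$.)

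The geometric input is the slice identification: for a quasi-affine étale morphism $f \colon \scr P \to \scr X$ there is an equivalence $(\SmQA_{\scr X})_{/\scr P} \wequi \SmQA_{\scr P}$. An object of the left-hand side is a morphism $\scr Y \to \scr P$ whose composite to $\scr X$ is smooth, quasi-affine and of finite presentation; these properties then hold for $\scr Y \to \scr P$ as well. Smoothness descends because étale morphisms reflect smoothness; quasi-affineness because $\scr P \to \scr X$ is quasi-affine hence separated, so the graph $\scr Y \to \scr Y \times_{\scr X} \scr P$ is a closed immersion and $\scr Y \to \scr P$ factors through the quasi-affine $\scr Y\times_{\scr X}\scr P \to \scr P$; and finite presentation by the cancellation property for morphisms locally of finite presentation. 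All the iterated fibre products $\scr U_{\alpha_0}\times_{\scr X}\cdots\times_{\scr X}\scr U_{\alpha_n}$ are again quasi-affine étale over $\scr X$, so the identification holds over each of them.

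Now work inside the $\infty$-topos $\scr E := F_1(\scr X) = \Shv_\tau(\SmQA_{\scr X})$. Since $\{\scr U_\alpha \to \scr X\}$ is a $\tau$-cover, the induced morphism $\coprod_\alpha a\scr U_\alpha \to a\scr X$ onto the terminal object (here $a$ denotes sheafification) is an effective epimorphism, and by subcanonicity its Čech nerve in $\scr E$ is degreewise the sheafification of the Čech nerve of $\{\scr U_\alpha\}$ formed in $\SmQA_{\scr X}$. Descent in $\scr E$ (the assignment $\scr Z \mapsto \scr E_{/\scr Z}$ carries colimits to limits of $\infty$-categories), together with the standard identification $\scr E_{/a\scr Z} \wequi \Shv_\tau((\SmQA_{\scr X})_{/\scr Z})$ and the slice identification above, gives
\[ F_1(\scr X) = \scr E \;\wequi\; \lim_{\Delta}\Shv_\tau\big(\SmQA_{\check C_\bullet}\big) \;=\; \lim_{\Delta} F_1(\check C_\bullet), \]
where $\check C_\bullet$ is the Čech nerve of $\{\scr U_\alpha \to \scr X\}$ and we use that $F_1$ sends finite disjoint unions to products of $\infty$-categories. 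This is quasi-affine Nisnevich descent for $F_1$.

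It remains to pass to $F_2(\scr X) = \Spc(\scr X)$, the full subcategory of homotopy-invariant objects of $F_1(\scr X)$, by restricting the equivalence just obtained. Every transition functor in the Čech diagram — the restrictions $F_1(\scr X) \to F_1(\scr U_\alpha)$ and the coface maps among the $F_1(\scr U_{\alpha_0\cdots\alpha_n})$ — is of the form $f^*$ for $f$ smooth quasi-affine; such an $f^*$ is restriction along post-composition with $f$, hence has a left adjoint $f_\sharp$ which sends vector bundle torsors to vector bundle torsors and preserves colimits, so $f_\sharp$ preserves homotopy equivalences and therefore $f^*$ preserves homotopy-invariant objects. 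Conversely, $\mathcal F \in F_1(\scr X)$ is homotopy invariant once each $f^*\mathcal F \in F_1(\scr U_\alpha)$ is: for a vector bundle torsor $\scr Z' \to \scr Z$ in $\SmQA_{\scr X}$, the sheaf property of $\mathcal F$ for the pulled-back cover $\{\scr Z\times_{\scr X}\scr U_\alpha \to \scr Z\}$ presents $\mathcal F(\scr Z) \to \mathcal F(\scr Z')$ as the totalization of the maps $(f^*\mathcal F)(\scr Z\times_{\scr X}\scr U_{\alpha_0\cdots\alpha_n}) \to (f^*\mathcal F)(\scr Z'\times_{\scr X}\scr U_{\alpha_0\cdots\alpha_n})$, which are equivalences since these morphisms are again vector bundle torsors. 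Hence the equivalence of the previous paragraph restricts to $\Spc(\scr X) \wequi \lim_{\Delta}\Spc(\check C_\bullet)$. The step I expect to be the main obstacle is the slice identification $(\SmQA_{\scr X})_{/\scr P}\wequi\SmQA_{\scr P}$ for $\scr P \to \scr X$ quasi-affine étale together with the accompanying bookkeeping identifying the Čech nerve in $\scr E$ with the sheafified Čech nerve from $\SmQA_{\scr X}$; a more elementary route, avoiding the ambient topos, would use completeness and regularity of the quasi-affine Nisnevich cd-structure to reduce to distinguished squares and then check directly that $F_1$, hence $F_2$, carries a distinguished square to a pullback square, exploiting that its base change along any $\scr Y \in \SmQA_{\scr X}$ is again a distinguished cover of $\scr Y$.
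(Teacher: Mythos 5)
Your proof is correct and follows essentially the same route as the paper's: identify $L_\Nis\PSh_\Sigma(\SmQA_{\scr X})$ with an $\infty$-topos, use the slice identification $\Shv(\SmQA_{\scr X})_{/\scr P}\wequi\Shv(\SmQA_{\scr P})$ for $\scr P\to\scr X$ quasi-affine étale, invoke internal descent for $\infty$-topoi, and then check that homotopy invariance is preserved by $f^*$ and detected by a quasi-affine Nisnevich cover. The paper compresses this by citing the analogous argument of Hoyois, whereas you spell out the slice identification and the detection step, but the content is the same.
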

\begin{proof}
This is the same proof as \cite[Proposition 4.8]{hoyois-equivariant}.
Note that any morphism of quasi-affine schemes is quasi-affine \cite[Tag 054G]{stacks-project}.
Write $\Shv(\SmQA_{\scr X}) := L_\Nis \PSh_\Sigma(\SmQA_{\scr X})$ for the Nisnevich topos.
If $\scr X \to \scr Y$ is a quasi-affine étale morphism, then $\Shv(\SmQA_{\scr X}) \wequi \Shv(\SmQA_{\scr Y})_{/\scr X}$.
The first claim thus follows from internal descent for $\infty$-topoi \cite[Theorem 6.1.3.9]{lurie-htt}.
For the second claim, given the first one, it suffices to show that if $\scr F \in \Shv(\SmQA_{\scr X})$, then (1) given $f: \scr Y \to \scr X$ quasi-affine étale, if $\scr F$ is homotopy invariant then so is $f^* \scr F$, and (2) given a quasi-affine Nisnevich covering $f: \scr Y \to \scr X$, if $f^* \scr F$ is homotopy invariant then so is $\scr F$.
This is clear.
\end{proof}
\begin{lemma} \label{lemm:Nis-sep}
Let $\{f_i: \scr S_i \to \scr S\}$ be a quasi-affine Nisnevich covering.
Then the functors $f_i^*: \Spc(\scr S) \to \Spc(\scr S_i)$ are jointly conservative.
\end{lemma}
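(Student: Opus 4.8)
The plan is to reduce joint conservativity of the family $\{f_i^*\}$ to the descent statement already established in Lemma \ref{lemm:qaff-descent}. First I would recall that quasi-affine Nisnevich descent means that for a quasi-affine Nisnevich covering $\{f_i \colon \scr S_i \to \scr S\}$, the canonical functor
\[ \Spc(\scr S) \to \lim_{[n] \in \Delta} \prod \Spc(\scr S_{i_0} \times_{\scr S} \cdots \times_{\scr S} \scr S_{i_n}) \]
is an equivalence, where the limit runs over the Čech nerve of $\coprod_i \scr S_i \to \scr S$. Since all the transition functors in this cosimplicial diagram are pullback functors, an object $E \in \Spc(\scr S)$ is determined (up to equivalence) by the compatible system of its restrictions to the various $\scr S_{i_0} \times_{\scr S} \cdots \times_{\scr S} \scr S_{i_n}$. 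In particular, if a morphism $\phi \colon E \to E'$ in $\Spc(\scr S)$ has the property that $f_i^* \phi$ is an equivalence for every $i$, I want to conclude that $\phi$ is an equivalence.

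The key step is that pullback along the iterated fiber products also kills $\phi$. Indeed, each $\scr S_{i_0} \times_{\scr S} \cdots \times_{\scr S} \scr S_{i_n} \to \scr S$ factors through $f_{i_0} \colon \scr S_{i_0} \to \scr S$ (say), and the first map in this factorization is itself quasi-affine étale (being a base change of the quasi-affine étale map $\coprod_j \scr S_j \to \scr S$ along $f_{i_0}$), so the corresponding pullback functor is well-defined; hence the restriction of $\phi$ to $\scr S_{i_0} \times_{\scr S} \cdots \times_{\scr S} \scr S_{i_n}$ is obtained from $f_{i_0}^* \phi$ by a further pullback, and is therefore an equivalence. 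Thus $\phi$ becomes an equivalence after applying the equivalence of Lemma \ref{lemm:qaff-descent}, so $\phi$ is an equivalence. Applied to the cofiber (or more precisely, since $\Spc$ is not stable, applied to $\phi$ directly via the fact that a map of limits of spaces which is an equivalence termwise is an equivalence), this gives conservativity. One small point to check is that $\coprod_i \scr S_i \to \scr S$ is itself quasi-affine étale so that the Čech nerve lives in $\SmQA_{\scr S}$ and Lemma \ref{lemm:qaff-descent} applies verbatim; this is immediate since a finite coproduct of quasi-affine schemes over an affine scheme is quasi-affine, and étaleness is stable under coproducts.

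I do not expect any serious obstacle here: the lemma is essentially a formal consequence of descent, and the only mild subtlety is bookkeeping the factorizations of the iterated fiber products through a single $f_{i_0}$ so that all the relevant pullback functors are legitimately defined on $\Spc$. An alternative, even more direct argument would bypass the Čech nerve entirely: since $\coprod_i \scr S_i \to \scr S$ is a quasi-affine étale epimorphism of stacks (it is a Nisnevich cover, hence in particular surjective), the induced geometric morphism of Nisnevich topoi $\Shv(\SmQA_{\coprod_i \scr S_i}) \to \Shv(\SmQA_{\scr S})$ is an effective epimorphism of $\infty$-topoi in the sense that its pullback is conservative on the motivic-local objects; one then just notes that $\Spc(\coprod_i \scr S_i) \simeq \prod_i \Spc(\scr S_i)$ and that $\phi$ is an equivalence iff each $f_i^* \phi$ is. I would present whichever of these two is cleanest given the conventions already fixed, probably the topos-theoretic one since the identification $\Shv(\SmQA_{\scr X}) \simeq \Shv(\SmQA_{\scr Y})_{/\scr X}$ for $\scr X \to \scr Y$ quasi-affine étale is already invoked in the proof of Lemma \ref{lemm:qaff-descent}.
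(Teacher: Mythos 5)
Your primary argument is correct and is essentially identical to the paper's own proof: both deduce joint conservativity from the descent equivalence of Lemma \ref{lemm:qaff-descent} by observing that each term of the Čech nerve factors through some $f_{i_0}$, so that a morphism inverted by all $f_i^*$ is inverted by every term of the cosimplicial diagram and hence by the limit.
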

\begin{proof}
This is true for any functor satisfying quasi-affine Nisnevich descent (like $\Spc(\ph)$, by Lemma \ref{lemm:qaff-descent}).
Indeed given a morphism $\alpha \in \Spc(\scr S)$ with $f_i^*(\alpha)$ an equivalence for all $i$, it suffices to show that if $p: C \to \scr S$ is any of the terms of the Čech nerve, then $p^*(\alpha)$ is an equivalence.
Since $p$ factors through one of the $f_i$, this is clear.
\end{proof}

\subsubsection{Continuity}
\begin{lemma} \label{lemm:cty-spaces}
Let $\scr X \wequi \lim_\alpha \scr X_\alpha$ where the transition maps are affine and the diagram is cofiltered.
Then $\Spc(\scr X) \wequi \lim_\alpha \Spc(\scr X_\alpha)$.
\end{lemma}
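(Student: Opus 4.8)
The plan is to deduce continuity from three inputs: (i) the continuity of the presheaf categories $\SmQA_{(-)}$ themselves under cofiltered limits along affine transition maps, (ii) the fact that taking $\PSh_\Sigma$ commutes with filtered colimits of small categories (equivalently, the relevant limit of presheaf $\infty$-categories is computed levelwise), and (iii) compatibility of the motivic localization with this limit. So first I would record that $\SmQA_{\scr X} \wequi \colim_\alpha \SmQA_{\scr X_\alpha}$ as an $\infty$-category when $\scr X \wequi \lim_\alpha \scr X_\alpha$ along affine transition maps; this is a standard limit-of-noetherian-approximation / absolute noetherian approximation statement (e.g. via \cite{rydh2015noetherian}) saying that any quasi-affine smooth finitely presented morphism to $\scr X$ is pulled back from some finite stage, uniquely up to refining the stage, and likewise for morphisms between such. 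Dually this gives $\PSh_\Sigma(\SmQA_{\scr X}) \wequi \lim_\alpha \PSh_\Sigma(\SmQA_{\scr X_\alpha})$, where the limit is along the pullback functors $f^*$ (which here have both adjoints, so the limit can be computed either as a limit along $f^*$ or a colimit along $f_\sharp$, and these agree).

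Next I would check that this equivalence is compatible with the Nisnevich and homotopy localizations, so that it descends to $\Spc$. The key point is that the localizing classes are generated by morphisms pulled back from finite stages: a quasi-affine Nisnevich cover of $\scr X$ refines to (the pullback of) one over some $\scr X_\alpha$, and a vector bundle torsor over $\scr X$ descends to a finite stage as well. Hence the class of $L_\mot$-equivalences in $\PSh_\Sigma(\SmQA_{\scr X})$ is generated under the relevant closure operations by images of $L_\mot$-equivalences from the stages $\scr X_\alpha$; combined with the fact that $f^*$ preserves motivic equivalences (noted in the Functoriality subsection), standard formal nonsense about compatible localizations of a limit of $\infty$-categories (cf.\ the argument pattern in \cite{hoyois-equivariant}, e.g.\ around their continuity statement) yields $\Spc(\scr X) \wequi \lim_\alpha \Spc(\scr X_\alpha)$.

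The main obstacle I anticipate is the input (i): making precise that $\SmQA_{\scr X}$ is the filtered colimit of the $\SmQA_{\scr X_\alpha}$ for \emph{stacks} rather than schemes. For schemes this is Thomason--Trobaugh / Grothendieck's $\mathrm{EGA\,IV}$ limit arguments; for algebraic stacks one needs the approximation results of Rydh (\cite{rydh2015noetherian}, and the finite-presentation descent statements used already in the proof of Proposition~\ref{prop:fixed-points-functor}), applied to the structure morphisms and to the quasi-affine smooth objects over them. One has to be a little careful that ``quasi-affine'' and ``smooth of finite presentation'' both spread out and descend along the tower, and that the resulting comparison functor is essentially surjective and fully faithful on mapping spaces; but all of this is exactly the content of the cited approximation results, so no genuinely new idea is required. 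Everything after (i) is formal manipulation of localizations of limits of presentable $\infty$-categories.
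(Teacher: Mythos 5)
Your proposal is correct and is precisely the ``standard argument'' the paper invokes by citing \cite[Proposition C.7]{hoyois2015quadratic}: spreading out $\SmQA_{\scr X} \wequi \colim_\alpha \SmQA_{\scr X_\alpha}$ via Rydh's approximation results, passing to $\PSh_\Sigma$, and checking that the generating Nisnevich and homotopy equivalences descend to finite stages. One small correction: since the affine transition maps are not smooth, $f^*$ on presheaf categories has only a right adjoint $f_*$ here, and the limit in the statement is taken along those right adjoints (equivalently, it is the colimit in $Pr^L$ along the $f^*$), exactly as the paper makes explicit for the analogous Lemma \ref{lemm:spcfet-cont}; your parenthetical invoking $f_\sharp$ should be replaced by this $f^* \dashv f_*$ duality.
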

\begin{proof}
This is a standard argument; see e.g. \cite[Proposition C.7]{hoyois2015quadratic} for a similar proof.
\end{proof}

\subsubsection{Homotopy purity}
By a \emph{quasi-affine smooth closed pair} we mean $\scr X \in \SmQA_{\scr S}$ and $\scr Z \subset \scr X$ closed and smooth over $\scr S$ (so that $\scr Z \in \SmQA_{\scr S}$).
In this situation there in particular exists a \emph{normal bundle} $N_{\scr Z/\scr X}$ over $\scr Z$.

\newcommand{\reasonableness}[1]{Suppose that $#1$ admits a quasi-affine Nisnevich cover by (linearly or nicely) quasi-fundamental stacks.}
\begin{lemma} \label{lemm:homotopy-purity}
\reasonableness{\scr S}
Let $(\scr X, \scr Z)$ be a quasi-affine smooth closed pair over $\scr S$.
There exists a canonical equivalence \[ \scr X/\scr X \setminus \scr Z \wequi N_{\scr Z/\scr X}/N_{\scr Z/\scr X} \setminus \scr Z =: Th(N_{\scr Z/\scr X}) \in \Spc(\scr S)_*. \]
\end{lemma}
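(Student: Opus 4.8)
The plan is to reduce the statement to the already-known motivic homotopy purity theorem on schemes (or on the familiar stacks covered by \cite{hoyois-equivariant} and \cite{khan2021generalized}) by descent, and then to deformation to the normal cone. First I would use the reasonableness hypothesis: $\scr S$ admits a quasi-affine Nisnevich cover $\{\scr S_i \to \scr S\}$ by quasi-fundamental stacks. Since $\Spc(\ph)$ satisfies quasi-affine Nisnevich descent (Lemma \ref{lemm:qaff-descent}) and since the functors $f_i^*$ are jointly conservative (Lemma \ref{lemm:Nis-sep}), and since forming $\scr X \setminus \scr Z$, the normal bundle $N_{\scr Z/\scr X}$, and its zero section all commute with quasi-affine base change, it suffices to construct the equivalence together with its compatibility with base change — i.e.\ as a natural equivalence of functors on the category of quasi-affine smooth closed pairs over $\scr S$, which glues along the cover. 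Thus we may assume $\scr S$ is (linearly or nicely) quasi-fundamental.

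Next I would invoke the deformation to the normal cone. For a quasi-affine smooth closed pair $(\scr X, \scr Z)$ over $\scr S$ one forms the deformation space $D_{\scr Z}\scr X \to \A^1_{\scr S}$, whose fiber over $1$ is $\scr X$ and whose fiber over $0$ is $N_{\scr Z/\scr X}$, together with the closed substack $\scr Z \times \A^1 \hookrightarrow D_{\scr Z}\scr X$. This construction is available since $\scr X \to \scr S$ is quasi-affine and smooth and $\scr Z$ is a smooth closed substack: the relevant blowup/Rees-algebra construction can be performed affine-locally and is manifestly quasi-affine over $\A^1_{\scr S}$, so it stays within $\SmQA$. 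One then gets two closed immersions $\scr X \hookrightarrow D_{\scr Z}\scr X \hookleftarrow N_{\scr Z/\scr X}$ of pairs (relative to $\scr Z \times \A^1$), and the standard argument shows that both induce motivic equivalences $\scr X/(\scr X\setminus \scr Z) \to D_{\scr Z}\scr X/(D_{\scr Z}\scr X \setminus \scr Z\times\A^1) \leftarrow N_{\scr Z/\scr X}/(N_{\scr Z/\scr X}\setminus \scr Z)$. The input to this step is that the open complement inclusion $i_t\colon \scr X_t/(\scr X_t\setminus\scr Z) \to D/(D\setminus\scr Z\times\A^1)$ is an $\A^1$-equivalence for every point $t$ of $\A^1_{\scr S}$; by Nisnevich descent and homotopy invariance this reduces to a local statement which is exactly the scheme-level deformation argument (cf.\ Morel--Voevodsky), now usable because $\scr S$ is quasi-fundamental and we have reduced to that case by the descent step above.

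The main obstacle is ensuring that everything involved stays inside $\SmQA_{\scr S}$ and that the deformation-to-the-normal-cone space is literally a quasi-affine smooth stack over $\A^1_{\scr S}$ rather than merely an algebraic stack; this is what forces the reasonableness hypothesis and makes the reduction to quasi-fundamental stacks genuinely necessary. Concretely, the $\scr S$-smoothness and quasi-affineness of $D_{\scr Z}\scr X$ must be checked, which is a local computation using that a smooth closed pair of affines is, Nisnevich-locally, the model pair $(\A^n, \A^c)$; this is where one invokes the quasi-affine Nisnevich-local structure theory for quasi-fundamental stacks. Once that bookkeeping is done, the two comparison maps are equivalences by the usual $\A^1$-contractibility argument, and chaining them gives the canonical equivalence $\scr X/(\scr X\setminus\scr Z) \wequi Th(N_{\scr Z/\scr X})$ in $\Spc(\scr S)_*$; naturality and base-change compatibility are automatic from the functoriality of the deformation construction, which is what makes the initial descent reduction legitimate.
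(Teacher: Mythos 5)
Your proposal is correct and follows essentially the same route as the paper: construct the canonical deformation-to-the-normal-cone zigzag $\scr X/(\scr X\setminus\scr Z) \to D/(D\setminus \scr Z\times\A^1) \leftarrow Th(N_{\scr Z/\scr X})$, reduce via the quasi-affine Nisnevich local structure hypothesis and Lemma \ref{lemm:Nis-sep} to the quasi-fundamental case, and there invoke the known purity theorem (the paper simply cites \cite[Theorem 3.23]{hoyois-equivariant} in the linearly quasi-fundamental case and \cite[Proof of Theorem 6.6]{khan2021generalized} in the nicely quasi-fundamental case rather than re-running the Morel--Voevodsky argument). The only cosmetic difference is that since the zigzag is already defined globally, conservativity of the $f_i^*$ alone suffices to check it consists of equivalences; one does not need the full gluing-along-the-cover argument you describe.
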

\begin{proof}
The deformation space construction provides a canonical zigzag \[ \scr X/\scr X \setminus Z \to D/D \setminus \scr Z \times \A^1 \leftarrow Th(N_{\scr Z/\scr X}); \] it suffices to show that both maps are motivic equivalences.
By Nisnevich separation (Lemma \ref{lemm:Nis-sep}) and our local structure assumption, we may assume that $\scr S$ is linearly or nicely quasi-fundamental.
In the linearly quasi-fundamental case, this reduces (using Remark \ref{rmk:hoyois-comparison}) to \cite[Theorem 3.23]{hoyois-equivariant}.
In the nicely quasi-fundamental case, the same proof works (see also \cite[Proof of Theorem 6.6]{khan2021generalized}).
\end{proof}

\section{Norms} \label{sec:norms}

\subsection{Normed $\infty$-categories} \label{subsec:normed-oo-cat}
Throughout this section we shall work in the following context.
\begin{itemize}
\item We fix an extensive \cite[Definition 2.3]{bachmann-norms} $\infty$-category $\scr C$ and classes $\scr E \subset \scr L$ of morphisms in $\scr C$.
\item We assume that pullbacks along morphisms in $\scr E$  exist in $\scr C$, $\scr E$ contains all equivalences and fold maps, and is closed under composition, pullback and disjoint union, and the same holds for $\scr L$.
\end{itemize}

\begin{definition}
By a \emph{normed $\infty$-category} we mean a functor \[ \scr D^\otimes: \Span(\scr C, \all, \scr E) \to \Cat_\infty \] preserving finite products.
Given a span $X \xleftarrow{f} Y \xrightarrow{p} Z$ (with $Z \in \scr E$) we denote the induced functors by \[ f^*: \scr D(X) \to \scr D(Y) \quad\text{and}\quad p_\otimes: \scr D(Y) \to \scr D(Z). \]

We call $\scr D^\otimes$ \emph{presentably normed} if the following hold.
\begin{enumerate}
\item Each $\scr D(X)$ is presentable and each functor $f^*: \scr D(X) \to \scr D(Y)$ preserves colimits.
\item For each $p \in \scr E$ the functor $p_\otimes$ preserves sifted colimits.
\item For each $f \in \scr L$ the functor $f^*$ has a left adjoint $f_\sharp$ compatible with base change.
\item The distributivity law of \cite[Proposition 5.10(1)]{bachmann-norms} holds for $h \in \scr L$ and $p \in \scr E$.
\end{enumerate}
\end{definition}

\subsubsection{Free normed spaces}
Suppose given a category $\scr C$ with classes $\scr E, \scr L$ as above.
Throughout this subsection, we further assume the following.
\begin{enumerate}
\item For $X \in \scr C$ we denote by $\scr L_X \subset \scr C_{/X}$ the full subcategory on morphisms in $\scr L$.
  We assume that for $p: X \to Y \in \scr E$ the functor $p^*: \scr L_{Y} \to \scr L_{X}$ admits a right adjoint $p_\otimes$.
\item We assume furthermore $\scr L$ is closed under $p_\otimes$.
\item Denote by $\scr E_/ \in \PSh(\scr C)$ the presheaf $X \mapsto \scr E_{/X}^\wequi$.
  We assume given a decomposition \begin{equation}\label{eq:E-decomp} \coprod_{\lambda \in \Lambda} M_\lambda \wequi \scr E_/ \in \PSh_\Sigma(\scr C). \end{equation}
\end{enumerate}
\begin{remark}
Note that $\scr E_/ \in \PSh_\Sigma(\scr C)$, since $\scr E$ is closed under finite coproducts.
\end{remark}

In this situation, given $X \in \scr C$, we can form the category \cite[\S C]{bachmann-norms} \[ \Cor^{\scr E}(X) = \Span(\scr L_X, \scr E, \all). \]
The functor $\scr L_X \to \Cor^{\scr E}(X)$ preserves finite coproducts \cite[Lemma C.3]{bachmann-norms} and hence induces a cocontinuous functor \[ F: \PSh_\Sigma(\scr L_X) \to \PSh_\Sigma(\Cor^{\scr E}(X)). \]
We denote by $U$ its right adjoint.
\begin{remark}
The functors $X \mapsto \scr L_X$ and $X \mapsto \Span(\scr L_X, \scr E, \all)$ can be upgraded to normed $\infty$-categories $\scr L^\otimes, \Span(\scr L^\otimes, \scr E, \all)$ (see \S\ref{subsec:norms-for-spaces} for details about the case of spans), and their sifted cocompletions to presentably normed $\infty$-categories $\PSh_\Sigma(\scr L^\otimes), \PSh_\Sigma(\Span(\scr L^\otimes, \scr E, \all))$ (see \cite[proof of Lemma 5.6]{bachmann-norms} for the distributivity law).
\end{remark}
Let $Y \in \scr L_X$, $\lambda \in \Lambda$ and $\alpha: Y \to M_\lambda \in \PSh_\Sigma(\scr C)$.
Pulling back along $M_\lambda \to \scr E_/$ we obtain a map $p: Y' \to Y \in \scr E$.
We denote this situation by $(p: Y' \to Y) \in M_\lambda|_X$.
We can define a functor \[ \D_\lambda: \scr L_X \to \PSh_\Sigma(\scr L_X), X' \mapsto \colim_{p: Y' \to Y \in M_\lambda|_X} p_\otimes(X' \times_X Y'). \]
See \cite[\S2]{bachmann-powerops} for details.
By the universal property of $\PSh_\Sigma$, this extends to a sifted cocontinuous functor \begin{equation} \label{eq:D-lambda} \D_\lambda: \PSh_\Sigma(\scr L_X) \to \PSh_\Sigma(\scr L_X). \end{equation}

Our aim is to determine a formula for the functor $UF: \PSh_\Sigma(\scr L_X) \to \PSh_\Sigma(\scr L_X)$.
Note that \[ UF X \wequi \scr E_/|_X := \scr E_/|_{\scr L_X}, \] essentially by definition.
Given $X' \in \scr L_X$, the canonical map $X' \to X$ induces $UFX' \to UFX \wequi \scr E_/|_X$.

\begin{theorem} \label{thm:free-space-with-transfers}
For $X \in \PSh_\Sigma(\scr L_S)$, there is a natural equivalence \[ UFX \wequi \coprod_{\lambda \in \Lambda} \D_\lambda(X) \in \PSh_\Sigma(\scr L_S). \]
\end{theorem}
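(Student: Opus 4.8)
The plan is to reduce to representable $X$ by a sifted-cocontinuity argument, compute both sides explicitly as presheaves on $\scr L_S$, and match them. First I would observe that both endofunctors $UF$ and $\coprod_{\lambda} \D_\lambda$ of $\PSh_\Sigma(\scr L_S)$ preserve sifted colimits: $F$ is a left adjoint (with right adjoint $U$), hence cocontinuous, and $U$ is restriction along the finite-coproduct-preserving functor $j \colon \scr L_S \to \Cor^{\scr E}(S)$ (\cite[Lemma C.3]{bachmann-norms}), which preserves sifted colimits as these are computed objectwise in $\PSh_\Sigma$; meanwhile each $\D_\lambda$ is sifted-cocontinuous by \eqref{eq:D-lambda} and coproducts commute with all colimits. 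By the universal property of $\PSh_\Sigma(\scr L_S)$ as the free sifted cocompletion of $\scr L_S$, it then suffices to produce a natural equivalence after restriction along $\scr L_S \hookrightarrow \PSh_\Sigma(\scr L_S)$, i.e. for representable $X \in \scr L_S$; write $X(-) := \Map_{\scr L_S}(-, X)$ for the corresponding presheaf.

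Next I would compute each side on such an $X$. Since $F$ carries $X$ to the representable presheaf at the image of $X$ in $\Cor^{\scr E}(S) = \Span(\scr L_S, \scr E, \all)$ and $U$ is restriction along $j$, the standard description of mapping spaces in span $\infty$-categories gives, for $W \in \scr L_S$, that $(UFX)(W) \wequi \Hom_{\Cor^{\scr E}(S)}(W, X) \wequi \colim_{(V \xrightarrow{p} W) \in \scr E_{/W}^\wequi} X(V)$, a colimit over the $\infty$-groupoid $\scr E_{/W}^\wequi = \scr E_/(W)$ (an $\scr E$-map into $W$ has source automatically in $\scr L_S$, since $\scr L$ is closed under composition); write $V_p := \mathrm{src}(p)$. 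On the other side, unwinding the definition we have $\D_\lambda(X)(W) = \colim_{(Z,\alpha) \in M_\lambda|_S} \Map_{\scr L_S}(W, (q_\alpha)_\otimes(X \times_S Z'_\alpha))$, with $q_\alpha \colon Z'_\alpha \to Z$ the $\scr E$-cover classified by $\alpha$. A map $W \to (q_\alpha)_\otimes(X \times_S Z'_\alpha)$ over $S$ is the same datum as a map $g \colon W \to Z$ together with a section over $W' := W \times_Z Z'_\alpha$ of $X \times_S W' \to W'$ (using base change for $(-)_\otimes$, part of the normed structure $\scr L^\otimes$, and the adjunction $(q_\alpha)^* \dashv (q_\alpha)_\otimes$), so $\Map_{\scr L_S}(W, (q_\alpha)_\otimes(X \times_S Z'_\alpha)) \wequi \coprod_{g \colon W \to Z} X(W \times_Z Z'_\alpha)$. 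Taking the colimit over $M_\lambda|_S$ turns the index into the comma category $W \downarrow u$ of the forgetful functor $u \colon M_\lambda|_S \to \scr L_S$, and the diagram being colimited factors, via $W \times_Z Z'_\alpha \wequi V_{g^*\alpha}$, through $w \colon W \downarrow u \to M_\lambda(W)$, $(Z,\alpha,g) \mapsto g^*\alpha$. I would then check that $w$ is cofinal — for each $p \in M_\lambda(W)$ the comma category $(W \downarrow u)_{p/}$ has the initial object $(W, p, \id_W)$ — so that $\D_\lambda(X)(W) \wequi \colim_{p \in M_\lambda(W)} X(V_p)$, naturally in $W$ and $X$.

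Finally I would match $\colim_{p \in \scr E_/(W)} X(V_p)$ with $\bigl(\coprod_\lambda \D_\lambda(X)\bigr)(W)$. The key is to use the decomposition \eqref{eq:E-decomp} as an equivalence of presheaves rather than objectwise: a point of $\scr E_/(W)$ amounts to a clopen partition $W = \coprod_\lambda W_\lambda$ together with points of $M_\lambda(W_\lambda)$, and the $\PSh_\Sigma$-coproduct $\coprod_\lambda \D_\lambda(X)$ admits the analogous description; decomposing both colimits accordingly and using that $X$ is product-preserving ($X(\coprod_\lambda V_{p_\lambda}) \wequi \prod_\lambda X(V_{p_\lambda})$) identifies the two sides, and by the first paragraph this extends to the claimed natural equivalence for all $X \in \PSh_\Sigma(\scr L_S)$. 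The step I expect to be the main obstacle is the second one — carrying out the comma-category and cofinality bookkeeping precisely, together with the base-change compatibility of the functors $(-)_\otimes$ and the care needed with the $\PSh_\Sigma$-localization in the definition of $\D_\lambda$ (which, on representables, turns out not to change anything once the above colimit formula is in hand, since that formula is already product-preserving in $W$). Much of this bookkeeping is already available in \cite[\S2]{bachmann-powerops}, which I would invoke.
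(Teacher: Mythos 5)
Your proposal is correct and follows essentially the same route as the paper: reduce to representable $X$ by sifted cocontinuity, describe $UFX$ via the presheaf $\scr E_/$ and the decomposition \eqref{eq:E-decomp}, and identify each $\lambda$-component with $\D_\lambda(X)$ through the $p^* \dashv p_\otimes$ adjunction (your "span $Z \leftarrow Z\times_Y Y' \to X$ equals section over the cover" step is exactly the paper's final identification). The only organizational difference is that the paper gets the decomposition into $\lambda$-pieces by applying universality of colimits to the map $UFX \to UFS \wequi \scr E_/|_S$, which replaces your sectionwise colimit-plus-cofinality bookkeeping.
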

\begin{proof}
The functor $UF$ preserves sifted colimits, and so does the right hand side.
Hence it suffices to exhibit the natural equivalence for $X \in \scr L_S$.
Considering the map $UFX \to UFS \wequi \scr E_/|_S$, by universality of colimits in $\PSh_\Sigma(\scr C)$\NB{ref?} we obtain \[ UFX \wequi \coprod_\lambda \colim_{p: Y' \to Y \in M_\lambda|_S} Y \times_{\scr E_/|_S} UF X =: \coprod_\lambda D'_\lambda(X). \]
It remains to identify $D'_\lambda$ with $D_\lambda$.
Thus we need to show that \[ Y \times_{\scr E_/|_S} UF X \wequi p_\otimes(X \times_S Y'), \] naturally in $X$ and $Y$.

Let $Z \in (\scr L_S)_{/Y}$.
It suffices to show that \[ \Map'(Z, UFX) \wequi \Map_Y(Z, p_\otimes(X \times Y')), \] naturally in $X,Y,Z$.
Here $\Map'$ denotes those maps lying over $Z \to Y \xrightarrow{p} \scr E_/|_S$.
By definition, the left hand side consists of spans $Z \leftarrow Z \times_Y Y' \to X$, whereas the right hand side is $\Map_{Y'}(Z \times_Y Y', X \times Y')$.
These are clearly the same.
\end{proof}

\subsubsection{Free normed objects: general case}
We now extend some of the results of \cite{bachmann-norms} to our more general context.
We assume familiarity in particular with \cite[\S7,\S16]{bachmann-norms}.
Given a normed $\infty$-category $\scr D$ and $X \in \scr C$, following \emph{loc. cit.} we can make sense of the category $\NAlg(\scr D(X))$ as a category of partially cartesian sections of the cocartesian fibration corresponding to $\scr D|_{\Span(\scr L_X, \all, \scr E)}$.

Next we extend the thom spectrum construction.
\begin{theorem}
Let $\scr D^\otimes$ be a presentably normed $\infty$-category.
There is a natural transformation \[ M: \scr L^\otimes_{\sslash \scr D} \to \scr D^\otimes \in \Fun(\Span(\scr C, \all, \scr E), \Cat) \] with the following properties:
\begin{enumerate}
\item For $X \in \scr C$ we have $\scr L^\otimes_{\sslash \scr D}(X) \wequi \scr (\scr L_X)_{\sslash \scr D_X}$, the source of the cartesian fibration classified by the functor $\scr L_X^\op \to \Cat$ obtained by restricting $\scr D^\otimes$.
\item For $Y \in \scr L_X$ and $e = (E \in \scr D(Y), (p:Y \to X) \in \scr L_X) \in \scr (\scr L_X)_{\sslash \scr D_X}$ we have $M(e) \wequi p_\sharp(E) \in \scr D(X)$.
\end{enumerate}
\end{theorem}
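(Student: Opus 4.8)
The plan is to replay the construction of the motivic Thom spectrum functor of \cite[\S16]{bachmann-norms} in this abstract setting, keeping track of the fact that it only uses the data and compatibilities built into the notion of a presentably normed $\infty$-category. There are two things to do: upgrade $X \mapsto (\scr L_X)_{\sslash \scr D_X}$ to a functor $\scr L^\otimes_{\sslash \scr D} : \Span(\scr C, \all, \scr E) \to \Cat$, and then construct the natural transformation $M$ to $\scr D^\otimes$ together with the formula $M(e) \wequi p_\sharp(E)$.

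For the source, the two flavours of functoriality are glued as follows. Along a backward morphism $f : W \to X$ one takes base change, $(Y \xrightarrow{p} X, E) \mapsto (Y \times_X W \to W,\, g^* E)$ with $g : Y \times_X W \to Y$; this is just the restriction of the backward functoriality of $\scr L^\otimes$ and of $\scr D^\otimes$. Along a forward morphism $q : W \to Z$ in $\scr E$ one uses the object-level norm $q_\otimes : \scr L_W \to \scr L_Z$ of the normed $\infty$-category $\scr L^\otimes$ (the right adjoint to $q^*$) together with a transport of the $\scr D$-part: writing $\pi : q_\otimes Y \to Z$ for the structure map, $a : q_\otimes Y \times_Z W \to Y$ for the counit $q^* q_\otimes Y \to Y$, and $q' : q_\otimes Y \times_Z W \to q_\otimes Y$ for the base change of $q$ (which lies in $\scr E$, since $\scr E$ is stable under base change), one sets $(Y \xrightarrow{p} W, E) \mapsto (q_\otimes Y \xrightarrow{\pi} Z,\, q'_\otimes a^* E)$. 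That these assemble into one functor on $\Span(\scr C, \all, \scr E)$ --- equivalently, into the cocartesian fibration over it with the prescribed fibres --- is proved exactly as for $\scr D^\otimes$ itself, by means of the span-category formalism of \cite[\S7]{bachmann-norms}; the only non-formal coherence involved, namely the compatibility of the forward recipe with further base change and with composition in $\scr E$, is the distributivity law, i.e.\ condition (4) in the definition of a presentably normed $\infty$-category (together with its elementary analogue on the level of $\scr L$).

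For the transformation itself, on objects we put $M_X(Y \xrightarrow{p} X, E) := p_\sharp E$, which makes sense since $p \in \scr L$ (condition (3)). One does not need left adjoints along arbitrary morphisms of $\scr L_X$: on a cartesian morphism of $(\scr L_X)_{\sslash \scr D_X}$ lying over $f : Y \to Y'$ (with $p = p'f$) the required map $p_\sharp f^* E' \to p'_\sharp E'$ is $p_\sharp$ applied to $f^*(\eta_{p'})$ followed by the counit $p_\sharp p^* \to \id$, where $\eta_{p'}$ is the unit of $p'_\sharp \dashv p'^*$; so $M_X$ is built entirely from the $p_\sharp$ and the units and counits of these adjunctions. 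The compatibility of $M$ with backward morphisms is then the base-change compatibility of the $f_\sharp$ (condition (3)), and the compatibility with a forward morphism $q \in \scr E$ is precisely the distributivity equivalence
\[ \pi_\sharp\bigl(q'_\otimes a^* E\bigr) \;\wequi\; q_\otimes\bigl(p_\sharp E\bigr) \]
(condition (4)). Thus $M$ is built out of the $p_\sharp$, the base-change equivalences for $(\ph)_\sharp$, and the distributivity equivalences --- exactly the structure of a presentably normed $\infty$-category.

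The real difficulty is not in any of these individual identifications but in the coherence bookkeeping: turning the base-change and distributivity equivalences into an honest natural transformation of functors on the span $\infty$-category, rather than a finite collection of identities among composites. As in \cite[\S7, \S16]{bachmann-norms}, this is achieved by constructing everything first on a generating subcategory of $\Span(\scr C, \all, \scr E)$ --- the backward morphisms together with the single forward morphisms in $\scr E$ --- and then extending by the functoriality of passage to (left) adjoints, so that the $p_\sharp$ together with their coherences are determined by the $p^*$. The point of the theorem is that this extension goes through using only conditions (1)--(4), with no further hypotheses on $\scr C$, $\scr E$ or $\scr L$.
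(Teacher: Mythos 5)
Your proposal is correct and follows essentially the same route as the paper, whose entire proof is the observation that the construction of \cite[\S16.3]{bachmann-norms} goes through unchanged; you have simply unpacked that construction and verified that it only consumes the data in conditions (1)--(4) of a presentably normed $\infty$-category (base change for $(\ph)_\sharp$, distributivity, and the span-category coherence machinery of \cite[\S7]{bachmann-norms}).
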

\begin{proof}
The construction of \cite[\S16.3]{bachmann-norms} goes through unchanged.\tdn{really?}
\end{proof}

\begin{remark} \label{rmk:M-right-adjoint}
Since $\scr D$ is valued in $\Cat_\infty^\sift$, we can extend $M$ to the objectwise sifted cocompletion \[ M: \PSh_\Sigma(\scr L_{\sslash \scr D})^\otimes \to \scr D^\otimes. \]
This functor has a sectionwise right adjoint, which we can informally describe as \[ \scr D(X) \ni d \mapsto ((X \xrightarrow{\id} X) \in \scr L_X, d \in \scr D(X)) \in \scr L_{X \sslash \scr D}. \]
\end{remark}

\begin{lemma} \label{lemm:NAlg-adj}
Suppose given $F: \scr D_1 \to \scr D_2 \in \Fun(\Span(\scr C, \all, \scr E), \Cat_\infty)$.
Suppose that $F$ sectionwise admits a right adjoint compatible with pullbacks along morphisms in $\scr L$.
Then there is an adjunction \[ F: \NAlg(\scr D_1) \adj \NAlg(\scr D_2): R \] computed sectionwise.
\end{lemma}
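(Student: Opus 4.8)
The plan is to deduce the adjunction from the relative adjoint functor theorem together with a Beck--Chevalley computation. Fix $X \in \scr C$ and set $\scr B := \Span(\scr L_X, \all, \scr E)$, with $\scr B_0 \subseteq \scr B$ the wide subcategory of ``backward'' spans $Y \xleftarrow{g} Z \xrightarrow{\id} Z$; thus $\scr B_0 \simeq (\scr L_X)^\op$. Let $\widetilde{\scr D}_i \to \scr B$ denote the cocartesian fibration classified by $\scr D_i|_{\scr B}$, so that (unwinding the definitions of \cite[\S7, \S16]{bachmann-norms}) $\NAlg(\scr D_i(X))$ is the full subcategory of $\Sect(\scr B, \widetilde{\scr D}_i) = \Fun_{/\scr B}(\scr B, \widetilde{\scr D}_i)$ on those sections that are cocartesian over $\scr B_0$ (the ``partially cartesian'' sections). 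Since $F$ is a morphism of functors valued in $\Cat_\infty$ --- not merely a lax one --- the induced map $\widetilde F \colon \widetilde{\scr D}_1 \to \widetilde{\scr D}_2$ over $\scr B$ preserves all cocartesian edges, so postcomposition with it restricts to the asserted functor $F \colon \NAlg(\scr D_1(X)) \to \NAlg(\scr D_2(X))$.

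First I would build the right adjoint on total spaces. By hypothesis each component $F_Y \colon \scr D_1(Y) \to \scr D_2(Y)$ of $F$ --- equivalently, each fibre of $\widetilde F$ --- has a right adjoint $R_Y$, and $\widetilde F$ preserves cocartesian edges; hence the relative adjoint functor theorem \cite[\S7.3.2]{lurie-ha} produces a relative right adjoint $\widetilde R \colon \widetilde{\scr D}_2 \to \widetilde{\scr D}_1$ over $\scr B$, with fibres the $R_Y$. The relative adjunction $\widetilde F \dashv \widetilde R$ then formally induces an adjunction $\widetilde F_* \dashv \widetilde R_*$ between the full section categories $\Sect(\scr B, \widetilde{\scr D}_i)$.

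The crux is to show that $\widetilde R$ takes cocartesian edges lying over $\scr B_0$ to cocartesian edges; granting this, $\widetilde R_*$ preserves partially cartesian sections and hence restricts to $R \colon \NAlg(\scr D_2(X)) \to \NAlg(\scr D_1(X))$. For a backward span given by $g \colon Z \to Y$ in $\scr L_X$, with pullback functors $g_i^* \colon \scr D_i(Y) \to \scr D_i(Z)$, cocartesianness of $\widetilde R$ over that span amounts exactly to the assertion that the Beck--Chevalley mate $g_1^* R_Y \to R_Z g_2^*$ of the equivalence $g_2^* F_Y \simeq F_Z g_1^*$ (valid since $\widetilde F$ preserves cocartesian edges) is again an equivalence --- i.e. that the right adjoints $R_{(-)}$ commute with the pullbacks $g^*$, which is precisely the compatibility hypothesis of the Lemma. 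Given this, the adjunction $\widetilde F_* \dashv \widetilde R_*$ restricts to $F \dashv R$ because $\NAlg(\scr D_i(X))$ are full subcategories of the section categories; and by construction $R$ is computed by the $R_Y$ at each $Y \in \scr L_X$, i.e. sectionwise. Naturality in $X$ (and thus the statement for the global $\NAlg(\scr D_i)$) is immediate from the construction. The one point requiring genuine care is the Beck--Chevalley verification above, which is where the compatibility hypothesis enters; the rest is formal bookkeeping with cocartesian fibrations and their sections.
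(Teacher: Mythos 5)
Your proof is correct and is essentially the paper's argument spelled out in full: the paper simply cites \cite[Lemmas D.3(1) and D.6]{bachmann-norms} for the relative adjunction on total spaces and the induced adjunction on section categories, and then notes that it restricts to the partially cartesian sections ``by assumption'' --- which is exactly your Beck--Chevalley verification that the compatibility hypothesis makes the fibrewise right adjoints preserve cocartesian edges over the backward morphisms. Nothing further is needed.
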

\begin{proof}
There is an adjunction on section categories by \cite[Lemmas D.3(1) and D.6]{bachmann-norms}, which restricts to the subcategories of normed objects by assumption.
\end{proof}

\begin{lemma} \label{lemm:Spcfet-NMon-abstract}
There is a canonical equivalence \[ \NAlg(\PSh_\Sigma(\scr L_X)) \wequi \PSh_\Sigma(\Cor^{\scr E}(X)). \]
\end{lemma}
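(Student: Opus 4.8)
The plan is to realize both sides as monadic over $\PSh_\Sigma(\scr L_X)$ and to identify the two monads. On the right hand side, restriction along $\scr L_X \to \Cor^{\scr E}(X)$ gives a functor $U\colon \PSh_\Sigma(\Cor^{\scr E}(X)) \to \PSh_\Sigma(\scr L_X)$ which is conservative (the two categories have the same objects) and preserves sifted colimits; as it has the left adjoint $F$ recalled above, it is monadic by the Barr--Beck--Lurie theorem, and by Theorem \ref{thm:free-space-with-transfers} the monad $UF$ has underlying endofunctor $\coprod_{\lambda\in\Lambda}\D_\lambda$.

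On the left hand side, $\NAlg(\PSh_\Sigma(\scr L_X))$ --- a category of partially cartesian sections of a presentable fibration --- is presentable, the forgetful functor $U'$ (evaluation at $\id_X$) is conservative, since a normed object is determined up to equivalence by its underlying object via pullback along structure maps, and it admits a left adjoint $F'$, the ``free normed object'' functor of \cite[\S16]{bachmann-norms} adapted to the present generality; as $U'$ also preserves sifted colimits (computed sectionwise), it is monadic. Carrying out the free normed object computation as in \cite[\S16]{bachmann-norms}, but organising the combinatorics by the decomposition \eqref{eq:E-decomp} of $\scr E_/$ into the pieces $M_\lambda$ rather than by $\Span(\Fin)$, identifies the underlying endofunctor of $U'F'$ again with $\coprod_{\lambda\in\Lambda}\D_\lambda$, since each $\D_\lambda$ is built out of exactly the norm functors $p_\otimes$, $(p\colon Y'\to Y)\in M_\lambda|_X$, that enter the normed structure.

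It then remains to produce a functor $G\colon \PSh_\Sigma(\Cor^{\scr E}(X)) \to \NAlg(\PSh_\Sigma(\scr L_X))$ commuting with the forgetful functors down to $\PSh_\Sigma(\scr L_X)$; such a $G$ is restriction of scalars along a morphism of monads $U'F' \to UF$, and is an equivalence precisely when that morphism is, which --- monads being detected on underlying endofunctors --- can be verified using the two computations above. The functor I have in mind sends a presheaf with $\scr E$-transfers $\Phi$ to the section $Y \mapsto (r_Y^*\Phi)|_{\scr L_Y}$ (with $r_Y\colon Y\to X$ the structure map): it is cartesian over backward maps by inspection, and its norm maps along $p\in\scr E$ come from the covariant functoriality of $\Phi$, the whole assignment being natural and compatible with composites of spans because it arises from a morphism of normed $\infty$-categories --- here one uses the presentably normed structure on $\PSh_\Sigma(\Span(\scr L^\otimes, \scr E, \all))$ noted above. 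Structurally this is the expected identification of ``$\scr E$-commutative monoids'' with ``presheaves with $\scr E$-transfers'', in the style of spectral Mackey functors.

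I expect the genuine work --- and the main obstacle --- to be exactly this last point: pinning down $G$ together with its coherences and checking that the induced morphism $U'F'\to UF$ restricts on underlying endofunctors to the tautological identification of both with $\coprod_\lambda\D_\lambda$ (equivalently, that the free normed object on a presheaf coincides, as a normed object, with the free presheaf with $\scr E$-transfers). This is bookkeeping of the same nature as, but somewhat heavier than, the corresponding verifications in \cite[\S16]{bachmann-norms}.
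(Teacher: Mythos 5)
Your monadicity framing is internally sound as far as it goes: both forgetful functors are conservative right adjoints preserving sifted colimits, hence monadic, and a functor $G$ over $\PSh_\Sigma(\scr L_X)$ would induce a morphism of monads $U'F' \to UF$ whose invertibility (checkable on underlying endofunctors) would finish the proof. But the two inputs you feed into this framework are precisely where the argument is missing. First, the identification $U'F' \wequi \coprod_\lambda \D_\lambda$ cannot be imported from \cite[\S16]{bachmann-norms}: there, exactly as in the present paper (the proof of Theorem \ref{thm:free-normed} explicitly invokes Lemma \ref{lemm:Spcfet-NMon-abstract}), the free normed object formula is \emph{deduced from} the Mackey-functor identification you are trying to prove, so citing that computation is circular. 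Computing the left adjoint of evaluation-at-$\id_X$ on the category of partially cartesian sections from first principles --- a relative Kan extension that must be shown to land back in the cartesian sections, which is where distributivity enters --- is the harder half of the argument and is not sketched. Second, the functor $G$ itself: the assignment $Y \mapsto (r_Y^*\Phi)|_{\scr L_Y}$ together with norm maps ``from the covariant functoriality of $\Phi$'' specifies the section only on objects and single arrows, and the coherent compatibility with composites of spans, which you correctly flag as the main obstacle, \emph{is} the content of the lemma. The appeal to ``a morphism of normed $\infty$-categories'' does not resolve this: the only such morphism available in the paper, $\PSh_\Sigma(\scr L^\otimes) \to \PSh_\Sigma(\Span(\scr L^\otimes,\scr E,\all))$, points the wrong way (it is the free functor), and producing $G$ from the normed structure on the target would require first showing that every presheaf with $\scr E$-transfers is canonically a normed object of $\PSh_\Sigma(\Cor^{\scr E,\otimes})$ --- a coherence problem of the same difficulty, merely relocated.

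For comparison, the paper's own proof avoids monads entirely: it ports the argument of \cite[Remark 16.18]{bachmann-norms}, which identifies the partially cartesian sections of $\PSh_\Sigma(\scr L^\otimes)$ over $\Span(\scr L_X,\all,\scr E)$ with product-preserving presheaves on $\Span(\scr L_X,\scr E,\all)$ directly by span-category formalism, and the free normed object formula then comes out as a \emph{consequence}. To salvage your route you would have to either construct $G$ with its full coherences (at which point the monad comparison is largely redundant) or compute $U'F'$ independently; neither is bookkeeping.
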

\begin{proof}
The argument of \cite[Remark 16.18]{bachmann-norms} goes through unchanged.\tdn{really?}
\end{proof}

\begin{proposition} \label{prop:free-normed}
Let $S \in \scr S$.
\begin{enumerate}
\item The following diagram commutes
\begin{equation*}
\begin{CD}
\PSh_\Sigma(\scr L_{S\sslash \scr D}) @>F>> \NAlg(\PSh_\Sigma(\scr L_{S\sslash \scr D})) \\
@VVV                                           @VVV                                            \\
\PSh_\Sigma(\scr L_S) @>F>> \NAlg(\PSh_\Sigma(\scr L_S)), \\
\end{CD}
\end{equation*}
  where the vertical functors forget the object of $\scr D$, and the horizontal functor is the free normed object.

\item The following diagram commutes
\begin{equation*}
\begin{CD}
\PSh_\Sigma(\scr C_{S\sslash \scr D}) @>F>> \NAlg(\PSh_\Sigma(\scr L_{S\sslash \scr D})^\otimes) \\
@VMVV                                           @VMVV                                          \\
\scr D(S)                            @>F>> \NAlg(\scr D(S)). \\
\end{CD}
\end{equation*}

\item The following diagram commutes
\begin{equation*}
\begin{CD}
\NAlg(\PSh_\Sigma(\scr L_{S\sslash \scr D})) @>M>> \NAlg(\scr D(S)) \\
                 @VVV                                             @VVV          \\
\PSh_\Sigma(\scr L_{S\sslash \scr D})   @>M>> \scr D(S), \\
\end{CD}
\end{equation*}
  where the vertical functors are the forgetful ones.
\end{enumerate}
\end{proposition}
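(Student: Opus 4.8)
The plan is to verify all three squares by the same two-step device. First, observe that every functor occurring in the three diagrams is a left adjoint, so it suffices to check that the mate square of right adjoints commutes; second, observe that once we have passed to right adjoints every functor in sight is computed sectionwise and the two composites agree by inspection, so the identity follows by the Yoneda lemma. That every functor is a left adjoint is seen as follows: the horizontal $F$'s are by construction left adjoint to the relevant forgetful functors; the Thom-spectrum functors $M$ are left adjoints by Remark~\ref{rmk:M-right-adjoint}, and their $\NAlg$-versions, obtained by applying $\NAlg(\ph)$ to the morphism of normed $\infty$-categories $M\colon \PSh_\Sigma(\scr L_{\sslash\scr D})^\otimes \to \scr D^\otimes$, are left adjoints by Lemma~\ref{lemm:NAlg-adj}; and the vertical forgetful functors of (1) and (3), induced by the projection of normed $\infty$-categories $\pi\colon \scr L^\otimes_{\sslash\scr D} \to \scr L^\otimes$, are left adjoints ($\pi_!$, i.e. left Kan extension, on each section, hence $\NAlg(\pi_!)$ on normed algebras again by Lemma~\ref{lemm:NAlg-adj}). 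In each invocation of Lemma~\ref{lemm:NAlg-adj} one has to exhibit a sectionwise right adjoint compatible with pullback along morphisms of $\scr L$; for the restriction functors $\pi^*$ this is automatic, and for the right adjoint $R$ of $M$ it is part of Remark~\ref{rmk:M-right-adjoint}.

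For part (1): the right adjoint of the horizontal $F$ is the forgetful functor $U$ from normed algebras to their underlying objects (right adjoint to $F$; cf. Lemma~\ref{lemm:Spcfet-NMon-abstract}), and the right adjoint of the vertical forgetful functor is $\pi^*$, applied sectionwise. The reduced claim is then $U \circ \NAlg(\pi^*) \wequi \pi^* \circ U$. This holds because both composites send a normed algebra to the underlying presheaf on $\scr L_S$ of its restriction along $\pi$: both $U$ and $\pi^*$ only involve the underlying $S$-section, on which they are literally the same operations before and after changing the order of composition. Yoneda then recovers the original square of left adjoints.

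Part (3) needs no adjoint argument at all. By the very definition of $\NAlg$ as a full subcategory of sections over $\Span(\scr L_S, \all, \scr E)$, the vertical forgetful functors in (3) are restriction along the inclusion of the object $S$; since $M$ is a \emph{morphism} of functors $\Span(\scr C, \all, \scr E) \to \Cat_\infty$, it commutes with every such restriction, and the square commutes essentially by construction. Part (2) is handled as in part (1): the right adjoint of the vertical $M$'s is the functor $R$ of Remark~\ref{rmk:M-right-adjoint}, informally $d \mapsto ((S \xrightarrow{\id} S) \in \scr L_S,\, d \in \scr D(S))$, together with its sectionwise $\NAlg$-version; one then checks $U \circ \NAlg(R) \wequi R \circ U$ by the same inspection of sectionwise formulas as in (1), and concludes by Yoneda.

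The main obstacle is organizational rather than mathematical. The only points that genuinely require care are: (a) using $M$ and $\pi$ consistently as morphisms of \emph{normed} $\infty$-categories, so that $\NAlg(\ph)$ may be applied to them and Lemma~\ref{lemm:NAlg-adj} invoked; and (b) identifying, in each case, the relevant sectionwise right adjoint and verifying its compatibility with pullback along morphisms of $\scr L$, which is precisely the hypothesis needed for Lemma~\ref{lemm:NAlg-adj} to produce the sectionwise adjunction on normed algebras. With this bookkeeping in place, every remaining verification is a direct inspection of the sectionwise descriptions, and there is no further content.
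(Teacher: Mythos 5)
Your proof is correct and follows essentially the same route as the paper: reduce (1) and (2) to commutativity of the mate squares of right adjoints and observe via Lemma~\ref{lemm:NAlg-adj} that these are computed sectionwise, while (3) amounts to the sectionwise computation of $\NAlg(M)$ itself. The only blemish is the opening claim that the vertical functors of (3) are induced by $\pi$ and are left adjoints --- they are the $\NAlg$-forgetful functors --- but your actual treatment of (3) correctly identifies them as evaluation at $S$, so nothing is affected.
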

\begin{proof}
(1, 2) It suffices to establish commutativity of the right adjoints, which is immediate from Lemma \ref{lemm:NAlg-adj}.

(3) Immediate from Lemma \ref{lemm:NAlg-adj} and Remark \ref{rmk:M-right-adjoint}.
\end{proof}

\begin{theorem}\label{thm:free-normed}
Let $\scr D$ be presentably normed and $S \in \scr C$.
Then for $E \in \scr D(S)$ we have \[ FE \wequi \coprod_\lambda \colim_{X \xrightarrow{p} Y \xrightarrow{q} * \in M_\lambda|_S} q_\sharp p_\otimes(E_X). \]
\end{theorem}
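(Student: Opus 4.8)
The plan is to deduce the formula from the space-level version, Theorem~\ref{thm:free-space-with-transfers}, by lifting $E$ through the thom spectrum functor $M$ and transporting the answer back. Concretely, let $\tilde E \in \PSh_\Sigma(\scr L_{S\sslash\scr D})$ be the presheaf represented by $(S \xrightarrow{\id} S, E) \in \scr L_{S\sslash\scr D}$; by Remark~\ref{rmk:M-right-adjoint} we have $M(\tilde E) \wequi (\id_S)_\sharp E \wequi E$. Feeding this into Proposition~\ref{prop:free-normed}: square~(2) gives $M(F\tilde E) \wequi F(M\tilde E) \wequi FE$ in $\NAlg(\scr D(S))$, and square~(3) identifies the underlying object of $M(F\tilde E)$ with $M$ applied to the underlying object of $F\tilde E$. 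Since $M$ preserves colimits (it has a right adjoint, Remark~\ref{rmk:M-right-adjoint}) and sends the presheaf represented by $(Y \xrightarrow{q} S, D)$ to $q_\sharp D$, the whole problem reduces to computing the underlying presheaf $UF\tilde E \in \PSh_\Sigma(\scr L_{S\sslash\scr D})$ of the free normed object on $\tilde E$.

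For this I would re-run the proof of Theorem~\ref{thm:free-space-with-transfers} one level up, in $\PSh_\Sigma(\scr L_{\sslash\scr D})^\otimes$ in place of $\PSh_\Sigma(\scr L^\otimes)$: the functor $UF$ preserves sifted colimits, colimits in $\PSh_\Sigma$ are universal, and the same mapping-space bookkeeping applies, giving
\[ UF\tilde E \wequi \coprod_\lambda \colim_{X \xrightarrow{p} Y \xrightarrow{q} * \in M_\lambda|_S} h_{(Y,\, p_\otimes(E_X))}, \]
where $E_X$ is the pullback of $E$ along $X \to S$ and $h$ is the Yoneda embedding. The $\scr D$-labels can alternatively be pinned down using the forgetful functor of Proposition~\ref{prop:free-normed}(1): it carries $F\tilde E$ to $F(S) \wequi \coprod_\lambda \D_\lambda(S)$, and since both this functor and the construction of $\D_\lambda$ involve only pullbacks and the operations $p_\otimes$ inside $\scr L$ (no $q_\sharp$), the label on the summand indexed by $(p\colon X \to Y)$ is forced to be $p_\otimes$ of the pulled-back label of $\tilde E$, namely $p_\otimes(E_X)$. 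Applying $M$ to the displayed decomposition, using colimit-preservation and $M(h_{(Y,D)}) \wequi q_\sharp D$, yields
\[ FE \wequi \coprod_\lambda \colim_{X \xrightarrow{p} Y \xrightarrow{q} * \in M_\lambda|_S} q_\sharp p_\otimes(E_X), \]
as claimed.

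The main obstacle is the identification of $UF\tilde E$. Because $\scr L_{S\sslash\scr D}$ is not literally of the form $\scr L_X$ for an object $X$ of $\scr C$, one cannot simply invoke Theorem~\ref{thm:free-space-with-transfers}; one must check that its proof is available in $\PSh_\Sigma(\scr L_{\sslash\scr D})^\otimes$ (that this is a presentably normed $\infty$-category with the requisite universality of colimits and the same mapping-space identification), or else make the ``lifting along the forgetful functor'' argument precise, which requires controlling how the cartesian fibration $\scr L_{S\sslash\scr D} \to \scr L_S$ interacts with the colimits defining $\D_\lambda$. The remaining steps are formal manipulations of Proposition~\ref{prop:free-normed} and Remark~\ref{rmk:M-right-adjoint}.
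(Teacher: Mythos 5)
Your proposal follows the paper's own proof essentially step for step: lift $E$ to the representable object $(S\xrightarrow{\id}S,E)\in\scr L_{S\sslash\scr D}$ with $M$ of it equal to $E$, commute $M$ past $F$ via Proposition~\ref{prop:free-normed}(2), pass to underlying objects via Proposition~\ref{prop:free-normed}(3), identify $UF$ of the lift by running Theorem~\ref{thm:free-space-with-transfers} one level up with the labels pinned down by Proposition~\ref{prop:free-normed}(1), and finally apply the colimit-preserving $M$ using $M(h_{(Y,D)})\wequi q_\sharp D$. The ``main obstacle'' you flag is precisely the step the paper compresses into ``unwinding the definitions,'' so your writeup is if anything more explicit about the same argument.
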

\begin{proof}
Consider $e = (E \in \scr D(*), * \in \scr C) \in \scr C_{\sslash \scr D}$.
Then $Me \wequi E$ by construction; we thus need to determine (the object underlying) $FMe$.
By Proposition \ref{prop:free-normed}(2) this is the same as $MFe$, and by Proposition \ref{prop:free-normed}(3) the underlying object is the same as $MUFe$.
By Proposition \ref{prop:free-normed}(1), $UFe \in \PSh(\scr C_{\sslash \scr D})$ is given by a diagram $F(*) \to \scr D$.
We can see that it takes the claimed form by unwinding the definitions and using Theorem \ref{thm:free-space-with-transfers}, via Lemma \ref{lemm:Spcfet-NMon-abstract}.
\end{proof}

\subsubsection{The case of stacks}
Fix a base stack $\scr S$.
From now on we put $\scr C = \Stk_{/\scr S}$, $\scr L$ the class of smooth quasi-affine morphisms, $\scr E$ the class of finite étale morphisms.
For the decomposition \eqref{eq:E-decomp} we set $\lambda = \Z_{\ge 0}$ and $M_\lambda$ the stack of finite étale schemes of degree $n$.

\subsection{Norms for spaces} \label{subsec:norms-for-spaces}
Let $p: \scr X \to \scr Y$ be a morphism (of stacks) which is tame, proper, flat, finitely presented, quasi-finite, and with flat inertia.
By Corollary \ref{cor:weil-res}, the functor $p^*: \SmQA_{\scr Y} \to \SmQA_{\scr X}$ has a right adjoint $p_\otimes$, stable under base change.

\begin{remark} \label{rmk:pffqf-fact}
By Example \ref{ex:fact-qf-proper}, $p$ factors as $\scr X \xrightarrow{q} \ul{\scr X} \xrightarrow{r} \scr Y$, where $q$ is a gerbe with finite locally free inertia and $r$ is finite locally free.
It follows that $p_\otimes \wequi r_\otimes q_\otimes$, where $q_\otimes$ is fppf locally given by taking fixed points with respect to the action of a finite locally free group scheme, and $r_\otimes$ is fppf locally given by the usual Weil restriction.
\end{remark}
\begin{example} \label{ex:pet}
Example \ref{ex:fact-etale} shows the following:
if $p: \scr X \to \scr Y$ is proper étale, then it satisfies all of the above assumptions, and in the factorization $\scr X \to \ul{\scr X} \to \scr Y$ the first map has finite étale inertia and the second one is finite étale.
\end{example}

\begin{proposition} \label{prop:norms-equiv}
Let $p: \scr X \to \scr Y$ be a finite étale morphism of stacks.
Then the sifted cocontinuous extension \[ p_\otimes: \PSh_\Sigma(\SmQA_{\scr X}) \to \PSh_\Sigma(\SmQA_{\scr Y}). \] preserves Nisnevich and motivic equivalences.
\end{proposition}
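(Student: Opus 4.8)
The plan is to follow the proof of the corresponding statement for schemes in \cite{bachmann-norms}, making the modifications needed to accommodate the gerbe component of the norm. By Example \ref{ex:pet}, the finite étale morphism $p$ satisfies all hypotheses of \S\ref{subsec:norms-for-spaces}, so $p^* \colon \SmQA_{\scr Y} \to \SmQA_{\scr X}$ has a base-change-compatible right adjoint $p_\otimes$; being a right adjoint, $p_\otimes$ preserves fiber products, and by Corollary \ref{cor:weil-res}(2) (together with Proposition \ref{prop:fixed-points-functor}(4), via the factorization of Remark \ref{rmk:pffqf-fact}) it preserves étale morphisms and carries vector bundle torsors to vector bundle torsors. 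The first thing I would record is the key geometric input: $p_\otimes$ sends quasi-affine Nisnevich covers in $\SmQA_{\scr X}$ to quasi-affine Nisnevich covers in $\SmQA_{\scr Y}$. Using the factorization $p = r \circ q$ of Remark \ref{rmk:pffqf-fact} this splits into two claims: for the finite étale morphism of schemes $r$ one reduces fppf-locally on the target to a fold map, where the norm is a finite product and a finite product of Nisnevich covers is again a Nisnevich cover; for the gerbe $q$ one reduces fppf-locally to $B \sslash G \to B$ with $G$ tame finite, where the norm is $Z \mapsto Z^G$, and here one uses in an essential way that a Nisnevich cover of $B \sslash G$ is an étale family \emph{along which residual gerbes lift}, so that over a $G$-fixed point there is a $G$-fixed lift of the same residue field, whence points of $Z^G$ lift.

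With this in hand, the proof proceeds as in \cite{bachmann-norms}: since $p_\otimes$ is only sifted-cocontinuous (it does not preserve finite coproducts), one cannot merely evaluate it on a generating set of Nisnevich- (resp. motivic-) local equivalences; instead one invokes the distributivity law \cite[Proposition 5.10(1)]{bachmann-norms}---available because $\PSh_\Sigma(\scr L^\otimes)$ is presentably normed---to control $p_\otimes$ on coproducts and pushouts, and deduces that $L_\Nis p_\otimes \wequi L_\Nis p_\otimes L_\Nis$ and $L_\mot p_\otimes \wequi L_\mot p_\otimes L_\mot$. The computation of $p_\otimes$ on a Čech nerve uses that geometric realization is a sifted colimit and that $p_\otimes$ preserves fiber products (so $p_\otimes$ of the Čech nerve of $\scr U/\scr X'$ is the Čech nerve of $p_\otimes \scr U/p_\otimes \scr X'$), together with the cover-preservation recorded above; the computation for the vector bundle torsor generators uses that $p_\otimes$ carries such torsors to such torsors.

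I expect the main obstacle to be twofold. First, the distributivity bookkeeping: verifying that the colimit expression for $p_\otimes$ of a generating equivalence (equivalently, of the unit $F \to L_\mot F$) involves only motivic equivalences, despite $p_\otimes$ not commuting with the colimits in play---this is the technical core already present in \cite{bachmann-norms}, transported verbatim. Second, and genuinely new to the stacky setting, is the cover-preservation statement for the fixed-points component $q_\otimes$ of the norm, whose proof, as indicated above, hinges on the ``residual gerbes lift'' definition of Nisnevich covers rather than on covers of underlying schemes; checking this carefully, and checking its compatibility with quasi-affineness, is where I would expect to spend the most effort.
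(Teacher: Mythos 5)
Your overall architecture matches the paper's: reduce to showing that $p_\otimes$ sends the generating Nisnevich and homotopy equivalences to equivalences of the same type, dispose of vector bundle torsors by noting that Weil restriction along finite étale maps preserves them, and reduce the Nisnevich case to the assertion that $p_\otimes$ carries the covering $\scr U \amalg \scr V \to \scr T$ attached to a Nisnevich square to a Nisnevich covering. (Two remarks on scope: the proposition concerns a \emph{finite étale} $p$ only, so the gerbe/fixed-points component you discuss at length belongs to the separate Proposition \ref{prop:fixed-equiv}; and the reduction to generating equivalences goes through \cite[Lemma 2.10]{bachmann-norms} together with Lemma \ref{lemm:squared-sieve} and Remark \ref{rmk:p-otimes-*}, not through the distributivity law, which is not needed here.)

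The genuine gap is in your justification of the key geometric input. You propose to check that $p_\otimes(\scr U \amalg \scr V) \to p_\otimes(\scr T)$ is a Nisnevich covering by passing to an fppf cover $A \to \scr Y$ splitting $p$ into a fold map and observing that a finite product of Nisnevich coverings is again one. But being a Nisnevich covering is not fppf-local (not even étale-local) on the target: $\Spec(\C) \to \Spec(\R)$ becomes split, hence a Nisnevich covering, after base change along $\Spec(\C) \to \Spec(\R)$, yet is not one; and a finite étale morphism cannot in general be split Nisnevich-locally, so the trivializing cover really is only fppf. The paper's proof circumvents this by producing, over each trivializing $A$, a locally closed stratification $\scr S_d(A)$ of $p^A_\otimes(\scr T_A)$ (indexed by the number of factors landing in $\scr Z = \scr T \setminus \scr U$) together with sections $\sigma_d(A)$ of $p_\otimes(\scr U_A \amalg \scr V_A)$ over the strata, constructed \emph{functorially in $A$}; this splitting datum --- unlike the bare covering property --- descends to $p_\otimes(\scr T)$, and one concludes by the splitting-sequence characterization of Nisnevich coverings \cite[Proposition 2.6]{hoyois2017vanishing}. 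Your fppf-local reduction must be replaced by such a descent-robust construction (or by a direct lifting argument via the adjunction $p^* \dashv p_\otimes$ applied to field-valued points), and the same caveat applies to your sketch for the gerbe component.
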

\begin{remark} \label{rmk:p-otimes-*}
The functor $p_\otimes$ coincides with the right adjoint $p_*$ of $p^*: \PSh_\Sigma(\SmQA_{\scr Y}) \to \PSh_\Sigma(\SmQA_{\scr X})$.
Indeed this holds on representables by construction, and both functors preserve sifted colimits.
\end{remark}
\begin{proof}
Given a Nisnevich square \cite[(2.8)]{hoyois2017vanishing}
\begin{equation*}
\begin{CD}
\scr W @>>> \scr V \\
@VVV        @VVV \\
\scr U @>>> \scr T
\end{CD}
\end{equation*}
in $\SmQA_{\scr X}$, denote by $S_{\scr U,\scr V} \hookrightarrow T$ the sieve corresponding to $\scr U \amalg \scr V \to \scr T$.
We call maps of this form generating Nisnevich equivalences.
Indeed by Lemma \ref{lemm:squared-sieve} below and \cite[Proposition 2.9]{hoyois2017vanishing}, a $\Sigma$-presheaf is a Nisnevich sheaf if and only if it is local for these maps.
Similarly, we call vector bundle torsors $G \to \scr T$ generating homotopy equivalences.
Since our generating equivalences are stable under coproduct with identities, by \cite[Lemma 2.10]{bachmann-norms} it suffices to show that $p_\otimes$ sends generating Nisnevich (respectively homotopy) equivalences to Nisnevich (respectively homotopy) equivalences.

We first deal with homotopy equivalences.
Thus let $V \to \scr T$ be a vector bundle and $G \to \scr T$ a $V$-torsor.
Since Weil restriction along finite étale morphisms preserves vector bundles and torsors\todo{ref?}, we see that $p_\otimes V \to p_\otimes \scr T$ is a vector bundle, and $p_\otimes G \to p_\otimes \scr T$ is a $p_\otimes V$-torsor.
The claim follows.

For the Nisnevich equivalences, we argue as follows.\NB{ref/more succinct argument?}
By Remark \ref{rmk:p-otimes-*}, $p_\otimes(S_{\scr U, \scr V}) \to \scr T$ is the sieve generated by $p_\otimes(\scr U \amalg \scr V) \to p_\otimes(\scr T)$.
It is thus enough to show that $p_\otimes(\scr U \amalg \scr V) \to p_\otimes(\scr T)$ is a Nisnevich covering.
Let $A \to \scr Y$ be a flat morphism splitting $\scr X$; in other words $\scr X_A \wequi A^{\amalg n}$ for some $n$.
Let $p^A: \scr X_A \wequi A^{\amalg n} \to A$ be the induced fold map.
Given $\scr B \in \SmQA_{\scr X_A}$, denote by $\scr B_i \in \SmQA_A$ its components.
We have $p^A_\otimes(\scr B) \wequi \prod_i B_i$.
Put $\scr Z = \scr T \setminus \scr U$ and choose a section $\scr Z \to \scr V$ (which exists by definition of Nisnevich square).
For $d \ge 0$, denote by \[ \scr S_d(A) \subset p^A_\otimes(\scr T_A) \wequi \prod_i (\scr T_A)_i = \prod_i (\scr U_A \amalg \scr Z_A)_i \] the locally closed subset where precisely $d$ points lie in $(\scr Z_A)_i$ (for various $i$).
Then \[ p_\otimes^A(\scr T_A) = \coprod_{d \ge 0} \scr S_d(A) \] is a locally closed decomposition.
Furthermore $\scr S_d(A)$ decomposes into disjoint clopen subsets corresponding to which factors are contained in $\scr Z$ and which in $\scr U$.
This implies that there is a section \[ \sigma_d(A): \scr S_d(A) \to p_\otimes(\scr U_A \amalg \scr V_A), \] given on each component by lifting each factor into $\scr U$ or $\scr V$.
The construction of $\scr S_d(A)$ and $\sigma_d(A)$ is functorial in $A$ (in particular does not depend on the choice of decomposition $\scr X_A \wequi A^{\amalg n}$).
This implies that the $\scr S_d(A)$ descend to a locally closed closed decomposition of $p_\otimes(\scr T)$, and the $\sigma_d(A)$ descend to sections of $p_\otimes(\scr U \amalg \scr V) \to p_\otimes(\scr T)$ over the strata.
It follows that this is indeed a Nisnevich covering \cite[Proposition 2.6]{hoyois2017vanishing}.
\end{proof}

\begin{lemma} \label{lemm:squared-sieve} \todo{Is this a general cd structure thing?}
Let
\begin{equation*}
\begin{CD}
\scr W @>>> \scr V \\
@VVV        @VVV \\
\scr U @>>> \scr T
\end{CD}
\end{equation*}
be a Nisnevich square in $\SmQA_{\scr X}$.
Then $L_\Sigma(\scr U \amalg_{\scr W} \scr V) \in \PSh_\Sigma(\SmQA_{\scr X})$ is equivalent to the sieve generated by the covering $\scr U \amalg \scr V \to \scr T$.
\end{lemma}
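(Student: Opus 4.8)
The plan is to identify both objects by testing against an arbitrary $\scr R \in \SmQA_{\scr X}$, viewed inside $\PSh_\Sigma(\SmQA_{\scr X})$. First I would recall the relevant cd-structure: the Nisnevich squares as in \cite[(2.8)]{hoyois2017vanishing} form a (complete, regular, bounded) cd-structure on $\SmQA_{\scr X}$ — this is exactly the local structure that makes the Nisnevich topology work on quasi-affine smooth stacks, cf. \cite[Proposition 2.9]{hoyois2017vanishing}. The key formal input is that for such a cd-structure, the pushout $\scr U \amalg_{\scr W} \scr V$ computed in $\PSh_\Sigma$, i.e. $L_\Sigma(\scr U \amalg_{\scr W} \scr V)$, is a \emph{monomorphism} into $\scr T$ (not merely a map), so it determines a subobject of $\scr T$ and hence a sieve.

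The main steps would be: (i) show $L_\Sigma(\scr U \amalg_{\scr W}\scr V) \to \scr T$ is a monomorphism in $\PSh_\Sigma(\SmQA_{\scr X})$; (ii) identify its value on each $\scr R$ with the set of maps $\scr R \to \scr T$ that factor (Zariski-locally on $\scr R$, or after passing to the components of a suitable partition) through $\scr U$ or through $\scr V$, which is precisely the membership condition for the sieve generated by $\{\scr U \amalg \scr V \to \scr T\}$. For (i), I would use that $\PSh_\Sigma(\SmQA_{\scr X})$ is a topos-like setting in which $\scr U \amalg \scr V \to \scr T$ and the projections $\scr W \to \scr U$, $\scr W \to \scr V$ are themselves monomorphisms (open immersions on one side, the complementary closed immersion data on the other); the square being a Nisnevich/distinguished square of a cd-structure means exactly that the resulting map out of the pushout is mono, by the standard ``descent for the cd-structure'' argument. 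Concretely: $\scr W \to \scr U$ is an open immersion, $\scr V \to \scr T$ is étale and restricts to an isomorphism over the closed complement $\scr Z = \scr T \setminus \scr U$; unwinding this, a map $\scr R \to \scr T$ lifts to $\scr U \amalg_{\scr W} \scr V$ if and only if over the open part $\scr R \times_{\scr T} \scr U$ it is the canonical thing and over (a neighborhood of) $\scr R \times_{\scr T} \scr Z$ it factors through $\scr V$ — and these patch uniquely because $\{\scr R\times_{\scr T}\scr U, \scr R \times_{\scr T}\scr V\}$ is a Nisnevich cover of $\scr R$, so the gluing is an honest pullback of sheaves, giving at most one lift. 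That uniqueness is the content of monomorphy, and the existence-of-lift condition is verbatim the sieve membership.

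The step I expect to be the real obstacle is step (i), namely verifying cleanly that the $\PSh_\Sigma$-pushout of the distinguished square stays a monomorphism into $\scr T$ — in ordinary $1$-topos language this is routine for a cd-structure, but here we are in $\PSh_\Sigma$ of an $\infty$-category, so one must be a little careful that ``$\amalg_{\scr W}$'' is computed correctly and that the map to $\scr T$ is $(-1)$-truncated. The cleanest route is probably to reduce to the topos $\Shv_{\Nis}(\SmQA_{\scr X})$ where the distinguished-square pushout is literally the union subobject (this is what \cite[Proposition 2.9]{hoyois2017vanishing} gives), observe that $L_\Sigma$ and $L_\Nis$ agree on this particular pushout because all four terms and all the maps are already Nisnevich sheaves representable by objects of $\SmQA_{\scr X}$ with the square a \emph{Nisnevich-local} pushout, and then transport the identification of the union with the generated sieve back to $\PSh_\Sigma$ along the (fully faithful on the relevant subcategory) localization. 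I would also remark that this is exactly the ``general cd-structure thing'' flagged in the statement's \texttt{todo}, so citing \cite[Proposition 2.9]{hoyois2017vanishing} and the complete-regular-bounded formalism of Voevodsky should let one avoid reproving it from scratch.
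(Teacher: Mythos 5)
Your step (i) rests on a reduction that cannot work: you propose to pass to $\Shv_{\Nis}(\SmQA_{\scr X})$, identify the pushout there with the "union subobject", and transport the answer back along the localization. But Nisnevich sheafification is exactly the functor that erases the distinction the lemma is about. In $\Shv_{\Nis}$ the distinguished square \emph{is} a pushout square, so $L_\Nis(\scr U \amalg_{\scr W} \scr V) \wequi \scr T$; and the sieve generated by the covering $\scr U \amalg \scr V \to \scr T$ also sheafifies to all of $\scr T$, since its inclusion is by definition a Nisnevich equivalence. So after $L_\Nis$ both objects become $\scr T$ and there is nothing left to compare, while the content of the lemma is precisely that in $\PSh_\Sigma$ the pushout is the (in general proper) subobject given by the sieve. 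The localization is not fully faithful on the sieve, so nothing can be "transported back". Relatedly, your uniqueness argument in step (ii) appeals to gluing over the Nisnevich cover $\{\scr R\times_{\scr T}\scr U, \scr R\times_{\scr T}\scr V\}$, but sections of the $\Sigma$-localized presheaf pushout are not computed by Nisnevich descent: $(\scr U \amalg_{\scr W}\scr V)(\scr R)$ is, up to finite \emph{clopen} decomposition of $\scr R$, just $\Hom(\scr R,\scr U)\amalg_{\Hom(\scr R,\scr W)}\Hom(\scr R,\scr V)$, and likewise membership in the sieve is a clopen-decomposition condition (factoring through $\scr U\amalg\scr V$), not a Nisnevich-local one.

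The argument the paper gives is a direct sectionwise comparison in $\PSh_\Sigma$, and the two geometric inputs it needs are the ones your sketch does not supply. First, the pushout is $0$-truncated because $\scr W \to \scr V$ is a monomorphism, so one really is comparing two subobjects of $\scr T$. Second, for injectivity one takes two sections $s_1, s_2$ landing in $\scr V$ with the same image in $\scr T$ and uses that $\scr V \to \scr T$ is separated (being quasi-affine) and étale, so its diagonal is \emph{clopen}; this splits $\scr R$ into a locus where $s_1 = s_2$ and a locus where their images are disjoint, and on the latter both sections must factor through $\scr W$ because $\scr V \to \scr T$ is an isomorphism over the complement of $\scr U$. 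Surjectivity is the easy clopen decomposition $\scr R \wequi \alpha^{-1}(\scr U)\amalg\alpha^{-1}(\scr V)$. If you want to keep your cd-structure framing, you would need to prove the monomorphy of the $\Sigma$-pushout by an argument of exactly this kind inside $\PSh_\Sigma$; the detour through the Nisnevich topos does not provide it.
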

\begin{proof}
The morphism $\scr W \to \scr V$ is (sectionwise) a monomorphism (of sets).
This implies that $\scr F := \scr U \amalg_{\scr W} \scr V \in \PSh(\SmQA_{\scr X})$ is $0$-truncated.\NB{I.e. pushouts along monomorphisms are homotopy pushouts.}
Its image in $\scr T$ is the union of the sieves generated by $\scr V$ and $\scr U$, so contained in the sieve $\scr S$ generated by $\scr U \amalg \scr V$.
It remains to show that the resulting map $\scr F \to \scr S$ is $\Sigma$-locally an isomorphism.

\emph{Surjectivity.} Given $\beta: \scr Y \to \scr T$ and a factorization through $\alpha: \scr Y \to \scr U \amalg \scr V$, we get $\scr Y \wequi \alpha^{-1}(\scr U) \amalg \alpha^{-1}(\scr V)$.
By construction, $\beta|_{\alpha^{-1}(\scr U)}, \beta|_{\alpha^{-1}(\scr V)}$ are in the image of $\scr F \to \scr S$.

\emph{Injectivity.} Suppose given two sections $[s_1], [s_2] \in \scr F(\scr Y)$, corresponding to $s_1, s_2 \in \Hom(\scr Y, \scr U) \amalg \Hom(\scr Y, \scr V)$, and suppose that $s_1, s_2$ have the same image in $\Hom(\scr Y, \scr X)$.
If $s_1 \in \Hom(\scr Y, \scr U)$ then either $s_2 \in \Hom(\scr Y, \scr U)$ or $s_2 \in \Hom(\scr Y, \scr W)$; in either case $[s_1] = [s_2]$.
We thus need only deal with the case $s_1, s_2 \in \Hom(\scr Y, \scr V)$.
Since $\scr V \to \scr T$ is quasi-affine it is separated, and since it is also étale, its diagonal is clopen \cite[Tags 01KK and 02GE]{stacks-project}.
It follows that there is a decomposition $\scr Y \wequi \scr Y_1 \amalg \scr Y_2$ such that $s_1|_{\scr Y_1} = s_2|_{\scr Y_1}$ and $s_1|_{\scr Y_2}, s_2|_{\scr Y_2}$ have disjoint images.
Passing to the $\Sigma$-cover, we may thus assume that $s_1$ and $s_2$ have disjoint images.
Since $\scr V \to \scr T$ is an isomorphism over the complement of $\scr U$ (and thus $s_1, s_2$ have the same image over any point of that complement), we deduce that $s_1, s_2$ factor through $\scr W$.
Thus again $[s_1] = [s_2]$.

This concludes the proof.
\end{proof}


\begin{proposition} \label{prop:fixed-equiv}
Let $p: \scr X \to \scr Y$ be a tame gerbe with finite locally free inertia.
Then the cocontinuous extension $p_\otimes: \PSh(\scr X) \to \PSh(\scr Y)$ preserves Nisnevich and motivic equivalences.
\end{proposition}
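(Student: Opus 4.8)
The plan is to follow the strategy of the proof of Proposition~\ref{prop:norms-equiv}, exploiting that the present situation is in fact simpler: unlike Weil restriction along a finite étale morphism, the functor $p_\otimes$ is, fppf-locally on $\scr Y$, the fixed-point operation $T\sslash G\mapsto T^G$ for a tame finite locally free group scheme $G$, and this operation carries Nisnevich squares to Nisnevich squares, essentially because it does not enlarge the base.

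First I would record the properties of $p_\otimes|_{\SmQA}$ that are needed. It is compatible with base change (the discussion preceding \S\ref{subsec:norms-for-spaces}, via Corollary~\ref{cor:weil-res}); being the right adjoint of $p^*\colon\SmQA_{\scr Y}\to\SmQA_{\scr X}$ it preserves fibre products; it preserves finite coproducts (visibly so fppf-locally, where it is $(\ph)^G$), so its cocontinuous extension restricts to $\PSh_\Sigma$; and by Proposition~\ref{prop:fixed-points-functor}(3,4) it preserves affine morphisms, open and closed immersions, and---using tameness---smooth and étale morphisms. As in the proof of Proposition~\ref{prop:norms-equiv}, combining Lemma~\ref{lemm:squared-sieve}, \cite[Proposition 2.9]{hoyois2017vanishing} and \cite[Lemma 2.10]{bachmann-norms}, it then suffices to show that the cocontinuous extension $p_\otimes$ sends generating Nisnevich equivalences to Nisnevich equivalences and generating homotopy equivalences to homotopy equivalences; preservation of motivic equivalences then follows by a further application of \cite[Lemma 2.10]{bachmann-norms}.

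For a generating homotopy equivalence, i.e.\ a torsor $\scr G\to\scr T$ under a vector bundle $\scr V\to\scr T$ in $\SmQA_{\scr X}$, I claim $p_\otimes(\scr G)\to p_\otimes(\scr T)$ is a torsor under the vector bundle $p_\otimes(\scr V)$, hence a homotopy equivalence. Since $p_\otimes$ commutes with base change and ``being a vector-bundle torsor'' is fppf-local on the target, one reduces by \cite[Tag 06QH]{stacks-project} to $\scr Y=A$ affine, $\scr X=A\sslash G$ (with $G$ tame finite locally free), $\scr T=T\sslash G$, $\scr V=V\sslash G$, $\scr G=P\sslash G$, and $p_\otimes=(\ph)^G$. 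Here $V^G=(V|_{T^G})^G$ is a direct summand of the $G$-equivariant bundle $V|_{T^G}$ over $T^G$ (hence itself a bundle), and $P^G=(P|_{T^G})^G$ is, étale-locally on $T^G$, a non-empty $V^G$-torsor; both hold because $|G|\in\scr O^\times$ (averaging splits off fixed points, and kills the obstruction to lifting a local section).

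For a generating Nisnevich equivalence, let $(\scr W,\scr U,\scr V,\scr T)$ be a Nisnevich square in $\SmQA_{\scr X}$ with sieve inclusion $S_{\scr U,\scr V}\hookrightarrow\scr T$. By Lemma~\ref{lemm:squared-sieve}, $S_{\scr U,\scr V}\wequi\scr U\amalg_{\scr W}\scr V$, so cocontinuity gives $p_\otimes(S_{\scr U,\scr V})\wequi p_\otimes\scr U\amalg_{p_\otimes\scr W}p_\otimes\scr V$; it therefore suffices to show that $(p_\otimes\scr W,p_\otimes\scr U,p_\otimes\scr V,p_\otimes\scr T)$ is again a Nisnevich square in $\SmQA_{\scr Y}$, for then Lemma~\ref{lemm:squared-sieve}, applied to it, identifies $p_\otimes(S_{\scr U,\scr V})$ with the sieve of the Nisnevich covering $p_\otimes\scr U\amalg p_\otimes\scr V\to p_\otimes\scr T$, whose inclusion into $p_\otimes\scr T$ is a Nisnevich equivalence. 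That this square is cartesian is preservation of fibre products, and that $p_\otimes\scr U\to p_\otimes\scr T$ is an open immersion and $p_\otimes\scr V\to p_\otimes\scr T$ is étale is Proposition~\ref{prop:fixed-points-functor}(4). For the remaining condition, let $\scr Z\subset\scr T$ be the reduced complement of $\scr U$, a closed substack with $\scr V|_{\scr Z}\cong\scr Z$; since $p_\otimes$ preserves fibre products and closed immersions, $p_\otimes\scr V\to p_\otimes\scr T$ is an isomorphism over the closed substack $p_\otimes\scr Z\hookrightarrow p_\otimes\scr T$. It then remains to check that $|p_\otimes\scr T|=|p_\otimes\scr U|\cup|p_\otimes\scr Z|$---a statement about underlying topological spaces, hence fppf-local on $\scr Y$---which in the model $\scr X=A\sslash G$ reads $|T^G|=|U^G|\cup|Z^G|$ and holds since $U$ and $Z$ are $G$-stable and $|T^G|\hookrightarrow|T|=|U|\sqcup|Z|$. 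I expect this verification that $p_\otimes$ carries Nisnevich squares to Nisnevich squares---and in particular the fppf-local bookkeeping together with the étaleness of $V^G\to T^G$, which is exactly where tameness enters---to be the only real work.
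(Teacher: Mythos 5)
Your proof is correct and follows essentially the same route as the paper's: reduce to generating homotopy and Nisnevich equivalences, handle the former by noting that $p_\otimes$ preserves vector bundles and their torsors (checked fppf-locally, where $p_\otimes$ is $G$-fixed points and tameness supplies the averaging), and handle the latter by showing $p_\otimes$ carries Nisnevich squares/coverings to Nisnevich squares/coverings using Proposition \ref{prop:fixed-points-functor} together with an fppf-local check in the model $A\sslash G$. The only cosmetic difference is that the paper phrases the final step as preservation of locally closed decompositions while you verify the covering condition $|p_\otimes\scr T|=|p_\otimes\scr U|\cup|p_\otimes\scr Z|$ directly; these amount to the same fppf-local computation.
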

\begin{remark} \label{rmk:p-otimes-*-2}
The restriction of $p_\otimes$ to $\PSh_\Sigma(\scr X) \subset \PSh(\scr X)$ preserves sifted colimits and has essential image contained in $\PSh_\Sigma(\scr Y)$; hence this is the sifted cocontinuous extension.
It follows that $p_\otimes: \PSh_\Sigma(\scr X) \to \PSh_\Sigma(\scr Y)$ also preserves Nisnevich and motivic equivalences.
Also, as in Remark \ref{rmk:p-otimes-*}, we see that both functors denoted $p_\otimes$ are right adjoint to the appropriate functor $p^*$.
\end{remark}
\begin{proof}
It suffices to show that $p_\otimes$ preserves generating homotopy and Nisnevich equivalences.
For homotopy equivalences, this follows from the fact that $p_\otimes$ preserves vector bundles and vector bundle torsors; indeed this claim can be checked fppf locally whence we may assume that $p_\otimes$ is equivalent to taking $G$-fixed points, in which case this is clear.
For Nisnevich equivalences, let $A \to \scr T$ be the Nisnevich sieve generated by the covering $A_1, A_2, \dots, A_n \to \scr T$.
Then by Remark \ref{rmk:p-otimes-*-2} we find that $p_\otimes(A) \to p_\otimes(\scr T)$ is the sieve generated by $\{p_\otimes(A_i)\}$; in other words it suffices to show that $p_\otimes$ preserves Nisnevich coverings.
By Proposition \ref{prop:fixed-points-functor}, it preserves étale morphisms and open and closed immersions.
Being a right adjoint it preserves fiber products, and hence by the characterization via Nisnevich squares \cite[Proposition 2.9]{hoyois2017vanishing} it suffices to show that $p_\otimes$ preserves locally closed decompositions.
This can be checked fppf locally, so for $G$-fixed points, where it is obvious.
\end{proof}

\begin{remark} \label{rmk:p*-pres-coverings}
The proofs of Propositions \ref{prop:fixed-equiv} and \ref{prop:norms-equiv} show that when they apply, the functors $p_\otimes$ preserve Nisnevich coverings consisting of one map.
\end{remark}

Let us write $\tpffqf$ for the class of morphisms that are tame, proper, flat, finitely presented, quasi-finite, and with flat inertia (i.e. those for which we have constructed $p_\otimes$).
For $p: \scr X \to \scr Y \in \tpffqf$, the functor $p_\otimes: \SmQA_{\scr X} \to \SmQA_{\scr Y}$ is by definition right adjoint to $p^*$, and its formation is compatible with base change (see Corollary \ref{cor:weil-res}).
Using Barwick's unfurling construction \cite[Proposition 11.6]{BarwickMackey} we thus obtain a functor \[ \SmQA^\otimes: \Span(\Stk, \all, \tpffqf) \to \Cat. \]

We write $\tpet$ for the class of tame proper étale morphisms.
By Example \ref{ex:pet}, we have $\tpet \subset \tpffqf$, and any morphism in $\tpet$ factors into a gerbe with finite étale inertia followed by a finite étale morphism.
Arguing as in \cite[\S6.1]{bachmann-norms}, it follows (from Proposition \ref{prop:norms-equiv}, Proposition \ref{prop:fixed-equiv} and Remarks \ref{rmk:p-otimes-*}, \ref{rmk:p-otimes-*-2}) that we can apply $\PSh_\Sigma$ and perform motivic localization sectionwise to obtain  \begin{equation*} L_\mot: \PSh_\Sigma(\SmQA^\otimes) \to \Spc^\otimes: \Span(\Stk, \all, \tpet) \to \Cat. \end{equation*}
Restricting to $\fet \subset \tpet$, we have in particular constructed a normed category $\Spc^\otimes$.

\begin{remark}
It seems likely that $p_\otimes$ preserves motivic equivalences for all $p \in \tpffqf$, not just $p \in \tpet$, but this seems more difficult to prove.
In any case, Proposition \ref{prop:fixed-equiv} shows that if $p: \scr X \to \scr Y \in \tpffqf$ is a gerbe then we do have a well-behaved functor $p_\otimes = p_*: \Spc(\scr X) \to \Spc(\scr Y)$.
\end{remark}

\begin{lemma} \label{lemm:p-otimes-colim}
Let $p: \scr X \to \scr Y$ be proper étale.
Then $p_\otimes: \Spc(\scr X) \to \Spc(\scr Y)$ preserves sifted colimits.
If $p$ is a gerbe, then $p_\otimes$ preserves all colimits (even for $p \in \tpffqf$).
\end{lemma}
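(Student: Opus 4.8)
The plan is to deduce both statements from the analogous facts about the sifted cocontinuous extension $\tilde p_\otimes\colon \PSh_\Sigma(\SmQA_{\scr X}) \to \PSh_\Sigma(\SmQA_{\scr Y})$ of $p_\otimes\colon \SmQA_{\scr X} \to \SmQA_{\scr Y}$. Recall from the construction of $\Spc^\otimes$ that $p_\otimes$ on motivic spaces is obtained from $\tilde p_\otimes$ by sectionwise motivic localization: writing $\iota$ for the inclusion of motivic spaces into $\Sigma$-presheaves and $L_\mot$ for the localization functor, we have $p_\otimes \simeq L_\mot \circ \tilde p_\otimes \circ \iota$, and $\tilde p_\otimes$ preserves motivic equivalences (this is precisely what makes the construction possible; cf.\ Propositions~\ref{prop:norms-equiv} and~\ref{prop:fixed-equiv}, applied in the proper étale case to the factorization of Example~\ref{ex:pet}). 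The first step is the formal observation that any functor $\tilde F$ between presentable categories which sends motivic equivalences to motivic equivalences and preserves colimits of some fixed shape class $\scr K$ induces a functor $F := L_\mot \circ \tilde F \circ \iota$ on motivic localizations that again preserves $\scr K$-indexed colimits. Indeed $L_\mot \tilde F \simeq F \circ L_\mot$, because the unit $G \to \iota L_\mot(G)$ is a motivic equivalence and $\tilde F$ sends it to one; hence for a $\scr K$-diagram $d$ one computes $F(\colim_{\scr K} d) \simeq L_\mot \tilde F(\colim_{\scr K} \iota d) \simeq L_\mot(\colim_{\scr K} \tilde F \iota d) \simeq \colim_{\scr K} F d$, using that colimits in the localization are localized colimits of $\Sigma$-presheaves, that $\tilde F$ preserves $\scr K$-colimits, and that the left adjoint $L_\mot$ preserves all colimits. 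Since $\tilde p_\otimes$ preserves sifted colimits by construction, taking $\scr K$ to be the sifted diagrams gives the first assertion.

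For the second assertion I need, by the same principle, that $\tilde p_\otimes$ preserves \emph{all} colimits when $p \in \tpffqf$ is a gerbe (in which case $p_\otimes = p_*$ is still defined on $\Spc$ by Proposition~\ref{prop:fixed-equiv}). As $\tilde p_\otimes$ already preserves sifted colimits, and as a functor out of $\PSh_\Sigma(\SmQA_{\scr X})$ preserving finite coproducts and sifted colimits preserves all colimits, it suffices to check that $\tilde p_\otimes$ preserves finite coproducts; and since $\tilde p_\otimes$ restricts to $p_\otimes$ on representables while finite coproducts of representables in $\PSh_\Sigma$ are again representable, this reduces to showing $p_\otimes\colon \SmQA_{\scr X} \to \SmQA_{\scr Y}$ preserves finite coproducts. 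Here I would use that $p_\otimes$ commutes with base change (Corollary~\ref{cor:weil-res}) and that whether a map in $\SmQA_{\scr Y}$ exhibits a finite coproduct decomposition can be tested fppf-locally on $\scr Y$, reducing to the case $p\colon B\sslash G \to B$ with $G$ finite locally free and tame, where $p_\otimes$ is the $G$-fixed-point functor (Remark~\ref{rmk:pffqf-fact}, Proposition~\ref{prop:fixed-points-functor}). In that case a finite coproduct in $\SmQA_{B\sslash G}$ is a disjoint union of $G$-schemes on which $G$ preserves the summands, so $\emptyset^G = \emptyset$ and $(Y_1 \amalg Y_2)^G = Y_1^G \amalg Y_2^G$, as required.

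I expect no serious obstacle. Once the compatibility with motivic localization is isolated as in the first paragraph, the first assertion is purely formal, and the second reduces to the essentially trivial statement that relative fixed points commute with finite coproducts. The only point demanding a little care is that the reductions in the second paragraph genuinely are fppf-local, which holds because the formation of $p_\otimes$ is stable under base change.
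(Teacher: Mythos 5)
Your proof is correct and follows essentially the same route as the paper: the first assertion is exactly the paper's "by construction" argument (sectionwise motivic localization of a motivic-equivalence-preserving, sifted-cocontinuous functor), and the second assertion is the paper's observation that for a gerbe the extension of $p_\otimes$ preserves all colimits and motivic equivalences. The only (immaterial) difference is in packaging: the paper invokes the cocontinuous extension on all of $\PSh(\SmQA_{\scr X})$, whereas you stay in $\PSh_\Sigma$ and supply the missing finite-coproduct preservation by the direct fppf-local fixed-point computation $(Y_1 \amalg Y_2)^G = Y_1^G \amalg Y_2^G$ — which is precisely the content hidden in the paper's Remark \ref{rmk:p-otimes-*-2}.
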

\begin{proof}
The preservation of sifted colimits is by construction.
The preservation of all colimits follows from the fact that $p_*: \PSh(\SmQA_{\scr X}) \to \PSh(\SmQA_{\scr Y})$ preserves motivic equivalences (by Proposition \ref{prop:fixed-equiv}) and colimits.
\end{proof}

We record also the following.
\begin{corollary} \label{cor:Dn-mot-equiv}
The functors $\D_n: \PSh_\Sigma(\SmQA_{\scr S}) \to  \PSh_\Sigma(\SmQA_{\scr S})$ preserve motivic equivalences.
\end{corollary}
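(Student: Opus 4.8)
\emph{Proof proposal.} The plan is to reduce the statement to Proposition~\ref{prop:norms-equiv} together with the elementary functoriality of motivic spaces, by exploiting the explicit colimit formula defining $\D_n$. Fix $n$. Recall (cf.\ \cite[\S2]{bachmann-powerops}, and Theorem~\ref{thm:free-space-with-transfers}) that $\D_n$ is the unique sifted-cocontinuous endofunctor of $\PSh_\Sigma(\SmQA_{\scr S})$ whose value on a representable $X' \in \SmQA_{\scr S}$ is
\[ \D_n(X') \wequi \colim_{(p \colon Y' \to Y)\, \in\, M_n|_{\scr S}} q_\sharp\, p_\otimes(\rho^* X'), \]
where $q \colon Y \to \scr S$ is the structure map of $Y \in \SmQA_{\scr S}$, $p \colon Y' \to Y$ is the degree-$n$ finite étale cover classified by the chosen map $Y \to M_n$, and $\rho = q \circ p \colon Y' \to \scr S$, so that $\rho^* X' = X' \times_{\scr S} Y' \in \SmQA_{Y'}$. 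Here $\rho^*$, $p_\otimes$ and $q_\sharp$ denote the corresponding functors on $\Sigma$-presheaves, as in \S\ref{subsec:norms-for-spaces} and the functoriality discussion of \S\ref{sec:preliminaries}.

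First I would upgrade the above to an identity of endofunctors of $\PSh_\Sigma(\SmQA_{\scr S})$: namely $\D_n \wequi \colim_{(p \colon Y' \to Y)\, \in\, M_n|_{\scr S}} q_\sharp\, p_\otimes\, \rho^*$. Each composite $q_\sharp\, p_\otimes\, \rho^*$ preserves sifted colimits, since $\rho^*$ and $q_\sharp$ are left adjoints and $p_\otimes$ is the sifted-cocontinuous extension of Weil restriction (see also Lemma~\ref{lemm:p-otimes-colim}); hence the colimit of these functors over $M_n|_{\scr S}$ also preserves sifted colimits, colimits commuting with colimits. Since this colimit agrees with $\D_n$ on representables, it equals $\D_n$ by uniqueness of sifted-cocontinuous extensions. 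Now let $\alpha$ be a motivic equivalence in $\PSh_\Sigma(\SmQA_{\scr S})$. Then $\rho^* \alpha$ is a motivic equivalence because $f^*$ preserves motivic equivalences for every morphism $f$; next $p_\otimes\, \rho^* \alpha$ is a motivic equivalence by Proposition~\ref{prop:norms-equiv}, since $p$ is finite étale; and finally $q_\sharp\, p_\otimes\, \rho^* \alpha$ is a motivic equivalence because $q$ is quasi-affine and $f_\sharp$ preserves motivic equivalences for quasi-affine $f$. Thus $\D_n(\alpha)$ is a colimit of motivic equivalences, and since $L_\mot$ is a left adjoint and therefore preserves colimits, $\D_n(\alpha)$ is itself a motivic equivalence.

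The only genuinely delicate step is the first paragraph: to be precise one must check that $(p \colon Y' \to Y) \mapsto q_\sharp\, p_\otimes\, \rho^*$ is a functor on $M_n|_{\scr S}$ whose colimit, evaluated on representables, reproduces the defining formula for $\D_n$. This is, however, purely formal bookkeeping that is already implicit in the construction of the extended power functor, and it uses nothing more than Proposition~\ref{prop:norms-equiv}, Lemma~\ref{lemm:p-otimes-colim}, and the functoriality of $f^*$ and $f_\sharp$. (Alternatively, one could argue as in the proofs of Propositions~\ref{prop:norms-equiv} and~\ref{prop:fixed-equiv}, verifying that $\D_n$ carries generating Nisnevich and homotopy equivalences to motivic equivalences and invoking \cite[Lemma~2.10]{bachmann-norms}; but dealing with the sieve-generated Nisnevich generators directly is less transparent than the colimit argument sketched above.)
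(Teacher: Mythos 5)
Your argument is correct and is essentially the paper's own proof: the paper likewise deduces the claim from the defining colimit formula \eqref{eq:D-lambda} for $\D_n$, using that $f^*$ and $f_\sharp$ always preserve motivic equivalences, that $p_\otimes$ does so for $p$ finite étale by Proposition \ref{prop:norms-equiv}, and that motivic equivalences are closed under colimits. Your write-up merely makes explicit the bookkeeping (the identification of $\D_n$ with the colimit of the functors $q_\sharp p_\otimes \rho^*$) that the paper leaves implicit.
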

\begin{proof}
Since motivic equivalences are closed under colimits, and the functors $f_\sharp$ (as always) and $p_\otimes$ (by Proposition \ref{prop:norms-equiv}) preserve motivic equivalences, this follows from the defining formula \eqref{eq:D-lambda} of $\D_n$.
\end{proof}
In particular, the functors $\D_n$ descend to endofunctors of $\Spc(\scr S)$.

\subsection{Spaces with finite étale transfers} \label{subsec:spcfet}
\subsubsection{}
For $\scr S \in \Stk$ we denote by \[ \Cor^\fet(\scr S) = \Span(\SmQA_{\scr S}, \fet, \all) \] the category of finite étale correspondences \cite[Appendix C]{bachmann-norms}.
The functor $\SmQA_{\scr S} \to \Cor^\fet(\scr S)$ preserves finite coproducts and hence induces a colimit preserving functor \cite[Proposition 5.5.8.10]{lurie-htt} \[ F: \PSh_\Sigma(\SmQA_{\scr S}) \to \PSh_\Sigma(\Cor^\fet(\scr S)). \]
Its right adjoint $U$ preserves sifted colimits \cite[Proposition 5.5.8.10(4)]{lurie-htt}.

\begin{definition}
The category of \emph{motivic spaces with finite étale transfers over $\scr S$} is given by \[ \Spc^\fet(\scr S) = L_\mot \PSh_\Sigma(\Cor^\fet(\scr S)). \]
\end{definition}

The localization $L_\mot$ refers to the localization at maps of the form $F(\alpha)$, where $\alpha \in \PSh_\Sigma(\SmQA_{\scr S})$ is a (generating) motivic equivalence.
In particular the functor $F: \PSh_\Sigma(\SmQA_{\scr S}) \to \PSh_\Sigma(\Cor^\fet(\scr S))$ preserves motivic equivalences and induces $F: \Spc(\scr S) \to \Spc^\fet(\scr S)$.

\begin{remark} \label{rmk:compute-F}
We shall often make use of the following fact.
If $X \in \PSh_\Sigma(\SmQA_{\scr S})$ and $Y \in \SmQA_{\scr S}$, then \[ (FX)(Y) \wequi \colim_{U \in \FEt_Y} X(U). \]
Indeed this holds for $X \in \SmQA_{\scr S}$, essentially by definition of $\Cor^\fet$, and extends to general $X$ since $F$ preserves (sifted) colimits.\NB{better justification?}
\end{remark}

\subsubsection{}
Here is a more invariant way of constructing the category of spaces with finite étale transfers.
\begin{lemma} \label{lemm:Spcfet-NMon}
There is a canonical equivalence \[ \NMon(\Spc(\scr S)) \wequi \Spc^\fet(\scr S). \]
\end{lemma}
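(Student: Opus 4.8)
The plan is to combine the abstract identification of normed objects in presheaves, Lemma~\ref{lemm:Spcfet-NMon-abstract}, with the fact that $\NAlg$ commutes with motivic localization. First I would specialize Lemma~\ref{lemm:Spcfet-NMon-abstract} to $\scr C = \Stk$, $\scr L$ the smooth quasi-affine morphisms, $\scr E = \fet$ and $X = \scr S$, obtaining a canonical equivalence $\NMon(\PSh_\Sigma(\SmQA_{\scr S})) \wequi \PSh_\Sigma(\Cor^\fet(\scr S))$. The structural feature I need, built into the proof of that lemma (cf.\ \cite[Remark 16.18]{bachmann-norms}), is that this equivalence intertwines the forgetful functor $\NMon(\PSh_\Sigma(\SmQA_{\scr S})) \to \PSh_\Sigma(\SmQA_{\scr S})$ (evaluation at $\scr S$) with the right adjoint $U$ of $F\colon \PSh_\Sigma(\SmQA_{\scr S}) \to \PSh_\Sigma(\Cor^\fet(\scr S))$, and the free normed object functor with $F$.

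Next I would feed the adjunction $L_\mot\colon \PSh_\Sigma(\SmQA^\otimes) \adj \Spc^\otimes \colon \iota$ — the sectionwise reflective localization of normed $\infty$-categories constructed in \S\ref{subsec:norms-for-spaces} — into Lemma~\ref{lemm:NAlg-adj}. Its hypothesis is that $\iota$ (inclusion of motivic-local presheaves) commutes with the pullback functors $j^*$ along quasi-affine morphisms $j$, which holds because $j_\sharp$ preserves motivic equivalences and hence $j^*$ preserves motivic-local objects. We thereby get an adjunction $\NMon(\PSh_\Sigma(\SmQA_{\scr S})) \adj \NMon(\Spc(\scr S))$, computed sectionwise, whose right adjoint is fully faithful (since $\iota$ is sectionwise fully faithful). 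Thus $\NMon(\Spc(\scr S))$ is identified with the full subcategory of $\NMon(\PSh_\Sigma(\SmQA_{\scr S})) \wequi \PSh_\Sigma(\Cor^\fet(\scr S))$ on those normed objects all of whose section values are motivic-local; and since the value of a normed object at $Y \in \SmQA_{\scr S}$ is $j_Y^*$ of its value at $\scr S$, while $j_Y^*$ preserves motivic-local objects, this subcategory consists precisely of the $A$ with $U A \in \Spc(\scr S)$.

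Finally I would match this against $\Spc^\fet(\scr S) = L_\mot \PSh_\Sigma(\Cor^\fet(\scr S))$. Its local objects are, by construction, the $A$ with $\Map(F\alpha, A) \wequi \ast$ for every generating motivic equivalence $\alpha$ in $\PSh_\Sigma(\SmQA_{\scr S})$; by the adjunction $F \dashv U$ this is exactly the condition that $U A$ be motivic-local, i.e.\ $U A \in \Spc(\scr S)$. Hence the two reflective localizations of $\PSh_\Sigma(\Cor^\fet(\scr S))$ have the same local objects and coincide, yielding $\NMon(\Spc(\scr S)) \wequi \Spc^\fet(\scr S)$. The only step demanding real care is the first one, namely pinning down the compatibility of the equivalence of Lemma~\ref{lemm:Spcfet-NMon-abstract} with $U$ and $F$; once that is in hand, everything else is a formal comparison of generating local equivalences across $F \dashv U$.
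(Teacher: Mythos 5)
Your argument is correct and is exactly the route the paper intends: its entire proof is ``This follows from Lemma \ref{lemm:Spcfet-NMon-abstract},'' and your write-up supplies the implicit content of that citation, namely the presheaf-level equivalence intertwining $F \dashv U$ with the free/forgetful pair, plus the identification of the two reflective localizations by comparing local objects across the adjunction. No gaps; you have simply made explicit what the paper leaves to the reader.
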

\begin{proof}
This follows from Lemma \ref{lemm:Spcfet-NMon-abstract}.
\end{proof}

In the same spirit, we can determine a formula for the free space with transfers.
\begin{lemma} \label{lemm:identify-free-normed-presheaf}
For $X \in \PSh_\Sigma(\SmQA_{\scr S})$ we have \[ UF X \wequi \coprod_{n \ge 0} \D_n X. \]
\end{lemma}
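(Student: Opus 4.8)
The plan is to obtain this as a direct specialization of Theorem~\ref{thm:free-space-with-transfers}, applied in the setting fixed at the end of \S\ref{subsec:normed-oo-cat}: namely $\scr C = \Stk_{/\scr S}$, $\scr L$ the class of smooth quasi-affine morphisms, $\scr E = \fet$, $\Lambda = \Z_{\ge 0}$, and $M_n$ the stack of degree-$n$ finite étale schemes. With base $\scr S$ one has $\scr L_{\scr S} = \SmQA_{\scr S}$ and $\Cor^{\scr E}(\scr S) = \Span(\SmQA_{\scr S}, \fet, \all) = \Cor^\fet(\scr S)$, so the functors $F$ and $U$ appearing in Theorem~\ref{thm:free-space-with-transfers} are precisely the functors $F\colon \PSh_\Sigma(\SmQA_{\scr S})\to\PSh_\Sigma(\Cor^\fet(\scr S))$ and $U$ of \S\ref{subsec:spcfet}; and, by construction, the functor $\D_\lambda$ of \eqref{eq:D-lambda} for $\lambda = n$ is the functor $\D_n\colon\PSh_\Sigma(\SmQA_{\scr S})\to\PSh_\Sigma(\SmQA_{\scr S})$ of Corollary~\ref{cor:Dn-mot-equiv}. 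Granting this, Theorem~\ref{thm:free-space-with-transfers} gives $UFX\wequi\coprod_{\lambda\in\Lambda}\D_\lambda(X) = \coprod_{n\ge 0}\D_n X$, which is the assertion.

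The one thing to check is that this data satisfies the three running hypotheses of the ``Free normed spaces'' paragraph. A finite étale morphism $p\colon\scr X\to\scr Y$ is proper étale, hence lies in $\tpffqf$ and is tame (Example~\ref{ex:pet}); so by Corollary~\ref{cor:weil-res} the functor $p^*\colon\SmQA_{\scr Y}\to\SmQA_{\scr X}$ has a right adjoint $p_\otimes$, compatible with base change, which sends quasi-affine morphisms to quasi-affine morphisms and smooth morphisms to smooth morphisms. Since $p_\otimes$ is a right adjoint it sends the terminal object $\id_{\scr X}$ of $\SmQA_{\scr X}$ to the terminal object $\id_{\scr Y}$ of $\SmQA_{\scr Y}$, and applying the preservation of smoothness to the structure morphism of an arbitrary object of $\SmQA_{\scr X}$ shows that $p_\otimes$ does indeed restrict to a functor $\SmQA_{\scr X}\to\SmQA_{\scr Y}$; this gives hypotheses (1) and (2). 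Hypothesis (3) is exactly the decomposition $\coprod_{n\ge 0}M_n\wequi\scr E_/$ in $\PSh_\Sigma(\scr C)$ recorded at the end of \S\ref{subsec:normed-oo-cat}, which expresses that every finite étale morphism splits canonically into its clopen parts of constant degree.

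I do not expect any real obstacle: the content of the lemma is entirely carried by Theorem~\ref{thm:free-space-with-transfers}, and the remaining work is the bookkeeping above, which was essentially arranged for when the stacks setting was introduced. If one prefers to avoid invoking the general theorem, the same formula can be extracted directly: since $U$ is restriction along $\SmQA_{\scr S}\to\Cor^\fet(\scr S)$, Remark~\ref{rmk:compute-F} gives $(UFX)(Y)\wequi\colim_{V\in\FEt_Y}X(V)$ for $Y\in\SmQA_{\scr S}$, and base changing the map $UFX\to UF\scr S\wequi\scr E_/|_{\scr S}$ along the degree decomposition of $\scr E_/$, together with the adjunction defining $p_\otimes$, identifies the resulting summands with $\D_n X$ (first on representables, then in general, as all functors involved preserve sifted colimits).
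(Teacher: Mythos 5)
Your proposal is correct and is exactly the paper's argument: the paper's entire proof is "This is a special case of Theorem \ref{thm:free-space-with-transfers}," with the specialization to $\scr C = \Stk_{/\scr S}$, $\scr L = $ smooth quasi-affine, $\scr E = \fet$, and $M_n$ the stack of degree-$n$ finite étale schemes fixed at the end of \S\ref{subsec:normed-oo-cat}. Your verification of the running hypotheses (via Corollary \ref{cor:weil-res} and the degree decomposition of $\scr E_/$) is the bookkeeping the paper leaves implicit.
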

\begin{proof}
This is a special case of Theorem \ref{thm:free-space-with-transfers}.
\end{proof}

We deduce the following.
\begin{lemma} \label{lemm:detect-motivic-equiv}
The functor $U: \PSh_\Sigma(\Cor^\fet(\scr S)) \to \PSh_\Sigma(\SmQA_{\scr S})$ preserves and detects motivic equivalences.
\end{lemma}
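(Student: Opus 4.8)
The plan is to reduce the statement to two facts: that $U$ is conservative, and that $U$ is compatible with the motivic localization on both sides. First I would observe that $U$ is conservative. By construction $F$ is the colimit-preserving extension of the bijective-on-objects functor $\SmQA_{\scr S} \to \Cor^\fet(\scr S)$, so its right adjoint $U$ is simply restriction of presheaves along this functor (for $G$ and $Y \in \SmQA_{\scr S}$, $U(G)(Y) \wequi \Map(FY, G) \wequi G(Y)$). Since the functor is surjective on objects, restriction of presheaves detects equivalences, and this passes to the subcategories of $\Sigma$-presheaves. I would also record for later that $Z \in \PSh_\Sigma(\Cor^\fet(\scr S))$ is motivically local if and only if $UZ$ is — immediate from the definition of the localizing class $\{F\alpha\}$ together with the adjunction $F \dashv U$ — so that $U$ restricts to a still-conservative functor $\bar U \colon \Spc^\fet(\scr S) \to \Spc(\scr S)$.

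The crux is the commutativity of the square
\[
\begin{CD}
\PSh_\Sigma(\Cor^\fet(\scr S)) @>{L_\mot}>> \Spc^\fet(\scr S) \\
@VVUV @VV{\bar U}V \\
\PSh_\Sigma(\SmQA_{\scr S}) @>{L_\mot}>> \Spc(\scr S).
\end{CD}
\]
To obtain it I would use that the motivic localization $L_\mot \colon \PSh_\Sigma(\SmQA^\otimes) \to \Spc^\otimes$ is performed sectionwise, its sectionwise right adjoint (the inclusion of motivically local objects) being compatible with pullback along smooth quasi-affine morphisms, since such an $f^*$ admits a left adjoint $f_\sharp$ preserving motivic equivalences. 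Lemma \ref{lemm:NAlg-adj} (applied to $F = L_\mot$, $R = $ inclusion, over finite étale spans) then produces an adjunction on normed objects computed sectionwise, which via Lemmas \ref{lemm:Spcfet-NMon-abstract} and \ref{lemm:Spcfet-NMon} is precisely $F \colon \Spc(\scr S) \adj \Spc^\fet(\scr S) \colon \bar U$; ``computed sectionwise'' says exactly that evaluation at the terminal object, i.e.\ the forgetful functor $U$, intertwines the two motivic localizations, which is the square.

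Granting the square, the conclusion is formal. If $\beta$ is a motivic equivalence then $L_\mot\beta$ is an equivalence, hence so is $\bar U L_\mot\beta = L_\mot U\beta$, so $U\beta$ is a motivic equivalence; this is preservation. Conversely, if $U\beta$ is a motivic equivalence then $L_\mot U\beta = \bar U L_\mot\beta$ is an equivalence, and since $\bar U$ is conservative $L_\mot\beta$ is an equivalence, i.e.\ $\beta$ is a motivic equivalence; this is detection. The main obstacle is the square of the previous paragraph: the sectionwise character of the motivic localization is already built into the construction, but matching the abstract forgetful functor on $\NAlg$ with the concrete restriction functor $U$, and checking that ``computed sectionwise'' genuinely yields this compatibility rather than merely an equivalence after some further forgetting, is where the care is needed. (One can also avoid invoking the square for the ``detects'' half: combining conservativity of $\bar U$ with the fact that $U$ carries motivically local objects to motivically local objects, detection follows once preservation is known, by a two-out-of-three argument applied to the localization units; but preservation still seems to need the sectionwise compatibility, since $U$ fails to preserve finite coproducts and so the naive ``preimage of a strongly saturated class'' argument does not apply.)
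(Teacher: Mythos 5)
Your proof is correct, but it takes a genuinely different route from the paper's. The paper establishes preservation concretely: since the generating motivic equivalences are closed under coproduct with identities and $U$ preserves sifted colimits, \cite[Lemma 2.10]{bachmann-norms} reduces the claim to showing that $UF(\alpha)$ is a motivic equivalence for $\alpha$ a generating equivalence, and this follows from the explicit free-object formula $UF(\alpha) \wequi \coprod_{n} \D_n(\alpha)$ of Lemma \ref{lemm:identify-free-normed-presheaf} together with the fact that each $\D_n$ preserves motivic equivalences (Corollary \ref{cor:Dn-mot-equiv}); detection is then exactly the two-out-of-three argument you give. You instead derive the commuting square $\bar U \circ L_\mot \wequi L_\mot \circ U$ abstractly, from Lemma \ref{lemm:NAlg-adj} applied to the sectionwise localization $\PSh_\Sigma(\SmQA^\otimes) \to \Spc^\otimes$, using that the sectionwise right adjoint (the inclusion of motivically local objects) commutes with $f^*$ for $f$ smooth quasi-affine. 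This works: the left adjoint so produced and the paper's $L_\mot$ on $\PSh_\Sigma(\Cor^\fet(\scr S))$ are both left adjoint to the inclusion of the same full subcategory (you correctly identify the $\{F\alpha\}$-local objects with the sectionwise-local sections via the adjunction $F \dashv U$), hence agree, and evaluation at $\id_{\scr S}$ turns "computed sectionwise" into the square. Note that both arguments ultimately rest on Proposition \ref{prop:norms-equiv}: yours through the very existence of $\Spc^\otimes$ as a normed category (the norms must descend to the localization for the sectionwise localization of a normed algebra to be a normed algebra), the paper's through Corollary \ref{cor:Dn-mot-equiv}. What your version buys is avoiding the reduction lemma and the free-object formula; what it costs is the care you yourself flag, namely matching the abstract forgetful functor on partially cartesian sections with the concrete restriction functor $U$ under Lemma \ref{lemm:Spcfet-NMon-abstract}. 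Your closing observation — that $U$ fails to preserve finite coproducts, so the naive "preimage of a strongly saturated class" argument is unavailable — is exactly the issue the paper's citation of \cite[Lemma 2.10]{bachmann-norms} is designed to circumvent.
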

\begin{proof}
We first show that $U$ preserves motivic equivalences.
Using \cite[Lemma 2.10]{bachmann-norms} it suffices to prove that if $\alpha$ is a generating Nisnevich or homotopy equivalence in $\PSh_\Sigma(\SmQA_{\scr S})$ and $X \in \SmQA_S$ then $U(F(\alpha \coprod \id_X))$ is a motivic equivalence.
If $\alpha$ is one of the generating equivalences then so is $\alpha \coprod \id_X$; hence it suffices to show that $UF(\alpha)$ is a motivic equivalence.
By Lemma \ref{lemm:identify-free-normed-presheaf} we have $UF(\alpha) \wequi \coprod_n \D_n(\alpha)$, so this follows from the fact that $\D_n$ preserves motivic equivalences (by Corollary \ref{cor:Dn-mot-equiv}).
\NB{Alternative proof in Lemma \ref{lemm:Nis-tpet}.}

Detection of motivic equivalences follows by standard arguments.\NB{$U$ preserves motivic equivalences and motivically local objects, so preserves motivic localizations, and is conservative, and a map is a motivic equivalence iff its motivic localization is an equivalence}
\end{proof}

\begin{corollary} \label{lemm:U-sifted-colim}
The functor $U: \Spc^\fet(\scr S) \to \Spc(\scr S)$ is conservative and preserves sifted colimits.
\end{corollary}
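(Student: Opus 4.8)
We prove this by transporting the presheaf-level statements of Lemma \ref{lemm:detect-motivic-equiv} (that $U \colon \PSh_\Sigma(\Cor^\fet(\scr S)) \to \PSh_\Sigma(\SmQA_{\scr S})$ preserves and detects motivic equivalences) and of the construction of $\Spc^\fet(\scr S)$ (that $U$ preserves sifted colimits) across the Bousfield localizations $L_\mot$. First I would record that $U$ descends to the motivic localizations and is the functor in the statement. Since $F$ preserves motivic equivalences, its right adjoint $U$ carries motivically local presheaves to motivically local presheaves (for $Y$ local and $W$ a motivic equivalence, $\Map(W, UY) \wequi \Map(FW, Y)$ is an equivalence); together with the fact that $U$ preserves motivic equivalences, this shows that, under the reflective embeddings $\Spc^\fet(\scr S) \hookrightarrow \PSh_\Sigma(\Cor^\fet(\scr S))$ and $\Spc(\scr S) \hookrightarrow \PSh_\Sigma(\SmQA_{\scr S})$, the restriction of $U$ to local objects is precisely the functor $U \colon \Spc^\fet(\scr S) \to \Spc(\scr S)$, and that it is right adjoint to $F \colon \Spc(\scr S) \to \Spc^\fet(\scr S)$. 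Writing $L$, $L'$ for the localization endofunctors of the two presheaf categories, the unit $X \to LX$ is a motivic equivalence, hence so is $UX \to ULX$, and $ULX$ is local; therefore $L'U X \wequi U L X$, naturally in $X$. This compatibility is the only point requiring care beyond pure formalism.

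\emph{Conservativity.} Let $f$ be a morphism of $\Spc^\fet(\scr S)$ with $U(f)$ an equivalence in $\Spc(\scr S)$. Viewing $f$ as a morphism of motivically local presheaves on $\Cor^\fet(\scr S)$, the hypothesis says $U(f)$ is a motivic equivalence, so by the detection half of Lemma \ref{lemm:detect-motivic-equiv} the morphism $f$ is a motivic equivalence; being a motivic equivalence between motivically local objects, it is an equivalence.

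\emph{Sifted colimits.} Let $(Y_p)$ be a sifted diagram in $\Spc^\fet(\scr S)$. Its colimit is computed as $L_\mot$ applied to the colimit $\colim_p Y_p$ formed in $\PSh_\Sigma(\Cor^\fet(\scr S))$. Applying $U$ and using in turn the identity $L'U \wequi UL$, the preservation of sifted colimits by $U$ at presheaf level, and the analogous description of sifted colimits in $\Spc(\scr S)$, we get $U\bigl(\colim_p Y_p\bigr) \wequi L'\bigl(\colim_p U Y_p\bigr) \wequi \colim_p U Y_p$ in $\Spc(\scr S)$. Hence $U$ preserves sifted colimits. There is no genuine obstacle here: all the real content sits in Lemma \ref{lemm:detect-motivic-equiv} (which in turn rests on the functors $\D_n$ preserving motivic equivalences), and the remaining work is the bookkeeping of the reflective localization, principally the compatibility $L'U \wequi UL$ just noted.
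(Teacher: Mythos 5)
Your proof is correct and is exactly the argument the paper intends by its one-line proof ("immediate from Lemma \ref{lemm:detect-motivic-equiv} and the fact that $U$ preserves sifted colimits on presheaves"): you have simply written out the standard bookkeeping of descending a right adjoint through Bousfield localizations, with the key compatibility $L'U \wequi UL$ following from $U$ preserving motivic equivalences and local objects. No issues.
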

\begin{proof}
Immediate from Lemma \ref{lemm:detect-motivic-equiv} and the fact that $U: \PSh_\Sigma(\Cor^\fet(\scr S)) \to \PSh_\Sigma(\Sm_S)$ preserves sifted colimits.
\end{proof}

\begin{corollary} \label{cor:identify-free-normed-space}
For $X \in \Spc(\scr S)$ we have \[ UF X \wequi \coprod_{n \ge 0} \D_n X. \]
\end{corollary}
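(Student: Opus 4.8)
The plan is to transport the presheaf-level formula of Lemma \ref{lemm:identify-free-normed-presheaf} through the motivic localization, following the usual convention that a symbol denotes a presheaf-level functor or its motivic descendant according to the category of its argument. First I would record the relevant compatibilities with $L_\mot$. By construction of the induced functor, $F \circ L_\mot \wequi L_\mot \circ F$. Since $F \colon \PSh_\Sigma(\SmQA_{\scr S}) \to \PSh_\Sigma(\Cor^\fet(\scr S))$ preserves motivic equivalences, its right adjoint $U$ takes values in motivically local presheaves; hence on $\Spc^\fet(\scr S) \subset \PSh_\Sigma(\Cor^\fet(\scr S))$ this presheaf-level $U$ restricts to the motivic functor $U \colon \Spc^\fet(\scr S) \to \Spc(\scr S)$, and moreover $U \circ L_\mot \wequi L_\mot \circ U$ as functors $\PSh_\Sigma(\Cor^\fet(\scr S)) \to \PSh_\Sigma(\SmQA_{\scr S})$ (apply $U$ to the localization map $\scr F \to L_\mot \scr F$ and note that the target is already local). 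Finally each $\D_n$ preserves motivic equivalences by Corollary \ref{cor:Dn-mot-equiv}, so the motivic $\D_n$ satisfies $\D_n \circ L_\mot \wequi L_\mot \circ \D_n$.

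I would then fix $X \in \Spc(\scr S)$, viewed as a motivically local object of $\PSh_\Sigma(\SmQA_{\scr S})$ so that $L_\mot X \wequi X$. Combining $FX \wequi L_\mot FX$ (presheaf-level $F$ on the right) with the restriction description of $U$ and $U \circ L_\mot \wequi L_\mot \circ U$, then Lemma \ref{lemm:identify-free-normed-presheaf}, then cocontinuity of $L_\mot$, and finally $L_\mot \circ \D_n \wequi \D_n \circ L_\mot$ together with $L_\mot X \wequi X$, one obtains
\[ UFX \wequi L_\mot U F X \wequi L_\mot \coprod_{n \ge 0} \D_n X \wequi \coprod_{n \ge 0} L_\mot \D_n X \wequi \coprod_{n \ge 0} \D_n X , \]
which is the asserted identity.

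I do not expect a genuine obstacle here: the argument is pure bookkeeping with the localization functor, all of whose inputs are already in place. An alternative packaging would note that $UF$ (combine Corollary \ref{lemm:U-sifted-colim} with cocontinuity of $F$) and $\coprod_{n \ge 0} \D_n$ both preserve sifted colimits on $\Spc(\scr S)$, and that $\Spc(\scr S)$ is generated under sifted colimits by the images $L_\mot Y$ of representables $Y \in \SmQA_{\scr S}$, thereby reducing to such $X$; but checking the formula even on these still invokes exactly the same commutations, so the direct computation above is the most economical route.
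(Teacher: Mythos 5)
Your proof is correct and is essentially the paper's argument made explicit: the paper deduces the corollary directly from Lemma \ref{lemm:identify-free-normed-presheaf} together with Lemma \ref{lemm:detect-motivic-equiv}, and your bookkeeping with $L_\mot$ is exactly that deduction spelled out. The only point worth stating explicitly is that your parenthetical justification of $U \circ L_\mot \wequi L_\mot \circ U$ also needs $U$ to \emph{preserve} motivic equivalences (so that $U(\scr F) \to U(L_\mot \scr F)$ is a motivic equivalence with local target), which is precisely the content of Lemma \ref{lemm:detect-motivic-equiv} that the paper cites.
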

\begin{proof}
Immediate from Lemmas \ref{lemm:identify-free-normed-presheaf} and \ref{lemm:detect-motivic-equiv}.
\end{proof}

\begin{corollary} \label{cor:p*-fet}
Let $p: \scr X \to \scr S$ be tame proper étale.
Then \[ p_*: \PSh_\Sigma(\Cor^\fet(\SmQA_{\scr X})) \to \PSh_\Sigma(\Cor^\fet(\SmQA_{\scr X})) \] preserves motivic equivalences and the functor \[ p_*: \Spc^\fet(\scr X) \to \Spc^\fet(\scr S) \] preserves colimits.
\end{corollary}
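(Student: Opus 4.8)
The plan is to transport the already-established behaviour of norms on $\PSh_\Sigma(\SmQA_\ph)$ across the free/forgetful adjunction $F \dashv U$ relating $\PSh_\Sigma(\SmQA_{\scr S})$ and $\PSh_\Sigma(\Cor^\fet(\scr S))$. Throughout, $p^*$ on $\PSh_\Sigma(\Cor^\fet(\scr S))$ denotes the cocontinuous extension of the base change functor on finite étale correspondence categories, and $p_*$ its right adjoint, computed by restriction along base change.

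First I would check, on representables where it is immediate, that the square with vertical maps $F$ and horizontal maps the base change functors $p^*$ (on $\SmQA$-presheaves and on $\Cor^\fet$-presheaves) commutes; passing to right adjoints yields $U \circ p_* \wequi p_\otimes \circ U$, where I use that the right adjoint of $p^* \colon \PSh_\Sigma(\SmQA_{\scr S}) \to \PSh_\Sigma(\SmQA_{\scr X})$ is the norm $p_\otimes$ (Remarks \ref{rmk:p-otimes-*} and \ref{rmk:p-otimes-*-2}, via the factorization of a tame proper étale morphism into a gerbe with finite étale inertia followed by a finite étale morphism, Example \ref{ex:pet}). Since $p_\otimes$ preserves motivic equivalences --- exactly what was arranged before the corollary out of Propositions \ref{prop:norms-equiv} and \ref{prop:fixed-equiv} --- and $U$ both preserves and detects them (Lemma \ref{lemm:detect-motivic-equiv}), it follows formally that $p_*$ preserves motivic equivalences: if $\alpha$ is such an equivalence over $\scr X$, then $U\alpha$ is one, hence $p_\otimes U\alpha \wequi U p_* \alpha$ is one, hence $p_*\alpha$ is one. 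In particular $p_*$ descends to a functor $\Spc^\fet(\scr X) \to \Spc^\fet(\scr S)$, which one identifies with the right adjoint of the pullback $p^*$ (using that $p^*$ on $\Cor^\fet$-presheaves itself preserves motivic equivalences --- on representables it is base change --- hence preserves motivically local objects).

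For the colimit assertion I would first observe that $p_*$ is already cocontinuous on $\PSh_\Sigma(\Cor^\fet(\scr X))$: as a restriction functor it preserves sifted colimits (these are computed objectwise in $\PSh_\Sigma$), as a right adjoint it preserves finite products, and these coincide with finite coproducts because $\Cor^\fet(\scr X) = \Span(\SmQA_{\scr X}, \fet, \all)$ --- a span category on an extensive category --- is semiadditive, hence so is $\PSh_\Sigma$ of it; and sifted colimits together with finite coproducts generate all colimits. Combining this with the preservation of motivic equivalences by $p^*$ on $\Cor^\fet$-presheaves, a standard localization argument then gives that the induced $p_* \colon \Spc^\fet(\scr X) \to \Spc^\fet(\scr S)$ preserves colimits: for a diagram $Y_\bullet$ of motivically local objects, $p_*$ of its $\Spc^\fet$-colimit is $L_\mot p_*$ of the presheaf-level colimit, past which $p_*$ commutes, landing on $L_\mot$ of the presheaf-level colimit of the $p_* Y_\bullet$, i.e.\ the $\Spc^\fet$-colimit of the $p_* Y_\bullet$.

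I do not anticipate a genuine obstacle here. The two points deserving care are the identification of the right adjoint of $p^*$ on $\SmQA$-presheaves with the norm $p_\otimes$ for tame proper étale $p$ (whence the appeal to the gerbe/finite étale factorization and to Propositions \ref{prop:norms-equiv} and \ref{prop:fixed-equiv}), and the semiadditivity input, which is what lets one push finite coproducts through $p_*$ --- something the conservative but only sifted-cocontinuous functor $U$ (Corollary \ref{lemm:U-sifted-colim}) would not by itself achieve.
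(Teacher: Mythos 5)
Your argument is correct and follows the paper's own proof essentially verbatim: preservation of motivic equivalences via the commutation $U p_* \wequi p_\otimes U$ together with Lemma \ref{lemm:detect-motivic-equiv} and Propositions \ref{prop:norms-equiv}, \ref{prop:fixed-equiv}, and preservation of colimits from sifted colimits plus finite products plus semiadditivity (Lemma \ref{lemm:Spc-fet-semiadditive}). No further comment is needed.
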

\begin{proof}
Since $p_*$ commutes with $U$, the claim about preservation of motivic equivalences follows from Lemma \ref{lemm:detect-motivic-equiv} and Propositions \ref{prop:norms-equiv}, \ref{prop:fixed-equiv}.
The preservation of colimits thus reduces to the same claim about $p_*$ on $\PSh_\Sigma(\dots)$.
Being a right adjoint, this functor preserves finite products; and by construction it preserves sifted colimits.
The result follows by semiadditivity (see Lemma \ref{lemm:Spc-fet-semiadditive}) and \cite[Lemma 2.8]{bachmann-norms}.
\end{proof}

\subsubsection{}
Since finite étale morphisms are preserved by Weil restriction along tame proper étale morphisms (see Corollary \ref{cor:weil-res}, noting that finite étale morphisms between quasi-affines are the same as proper étale morphisms), we can lift $\SmQA^\otimes$ to $\scr M\Cat$ (the category of $\infty$-categories with a class of marked arrows) marking the finite étale morphisms, and then apply $\Span(\ph, \mathrm{marked}, \all)$ (see e.g. \cite[\S C.3]{bachmann-norms}) sectionwise to obtain the functor \[ \Cor^{\fet\otimes}: \Span(\Stk, \all, \tpet) \to \Cat, \scr S \mapsto \Cor^\fet(\Sm_{\scr S}). \]
Since clopen immersions are also preserved by Weil restriction (see again Corollary \ref{cor:weil-res}), in $\Cor^{\fet\otimes}$ we may sectionwise pass to the wide subcategory where all the backwards morphisms are required to be clopen immersions.
Since $\Span(\SmQA_{\scr S}, \mathrm{clopen}, \all) \wequi \SmQA_{\scr S+}$ (where $\SmQA_{\scr S+} \subset \SmQA_{\scr S*}$ is the full subcategory on objects of the form $\scr X \amalg \scr S$) we denote the resulting functor by $\SmQA_+^\otimes$.
Arguing slightly more carefully, we in fact obtain a sequence of natural transformations \[ \SmQA^\otimes \to \SmQA_+^\otimes \to \Cor^{\fet\otimes}: \Span(\Stk, \all, \tpet) \to \Cat. \]
Note that motivic localization on $\PSh_\Sigma(\SmQA_{\scr S+})$ (respectively $\PSh_\Sigma(\Cor^\fet_{\scr S})$) refers to localization at maps of the form $\alpha_+$ (respectively $F(\alpha)$) where $\alpha$ is a motivic equivalence in $\PSh_\Sigma(\SmQA_{\scr S})$.
It follows that the norms on $\PSh_\Sigma(\SmQA_{\scr S+})$ (respectively $\PSh_\Sigma(\Cor^\fet_{\scr S})$) automatically preserve motivic equivalences (since they do for $\PSh_\Sigma(\SmQA_{\scr S})$).
Thus we can apply $\PSh_\Sigma$ and perform motivic localization sectionwise to obtain  \begin{equation}\label{eq:F*-first} \Spc^\otimes \to \Spc_*^\otimes \xrightarrow{F_*} \Spc^{\fet\otimes}: \Span(\Stk, \all, \tpet) \to \Cat. \end{equation}
By \cite[Proposition C.9]{bachmann-norms}, this construction lifts uniquely to $\CAlg(\Cat)$.

\begin{remark}
If $p: \scr X \to \scr Y \in \tpffqf$ is a gerbe then we similarly obtain \[ p_\otimes: \Spc(\scr X)_* \to \Spc(\scr Y)_* \] and \[p_\otimes: \Spc^\fet(\scr X) \to \Spc^\fet(\scr Y). \]
\end{remark}

\subsubsection{}
If $p \in \tpffqf$ is a gerbe, the description of $p_\otimes$ for pointed spaces simplifies.
\begin{lemma} \label{lemm:pointed-fixed-points}
Let $p: \scr X \to \scr Y \in \tpffqf$ be a gerbe.
Then $p_\otimes \wequi p_*: \Spc(\scr X)_* \to \Spc(\scr Y)_*$, and both functors preserve colimits.
In particular the following square commutes
\begin{equation*}
\begin{CD}
\Spc(\scr X) @<<< \Spc(\scr X)_* \\
@V{p_\otimes}VV    @V{p_\otimes}VV \\
\Spc(\scr Y) @<<< \Spc(\scr Y)_*.
\end{CD}
\end{equation*}
\end{lemma}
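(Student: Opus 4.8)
The plan is to promote the identification $p_\otimes \simeq p_*$ from unpointed to pointed spaces, and then to deduce the square and the cocontinuity statement formally. Recall from Remark~\ref{rmk:p-otimes-*-2} (and Proposition~\ref{prop:fixed-equiv}) that on unpointed spaces $p_\otimes$ is the right adjoint $p_*$ of $p^*$. The extra geometric input for the pointed case is that, $p$ being a gerbe in $\tpffqf$, the scheme-level norm $p_\otimes\colon \SmQA_{\scr X} \to \SmQA_{\scr Y}$ preserves the terminal object and finite coproducts: both assertions can be checked fppf-locally (Remark~\ref{rmk:pffqf-fact}, Proposition~\ref{prop:fixed-points-functor}), where $p_\otimes$ becomes $G$-fixed points, which manifestly sends the base to itself and commutes with finite coproducts of $G$-schemes.

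First I would use this to see that the induced norm $p_\otimes\colon \SmQA_{\scr X+} \to \SmQA_{\scr Y+}$ is still right adjoint to $p^*$. Here one uses $\SmQA_{\scr S+} \simeq \Span(\SmQA_{\scr S}, \mathrm{clopen}, \all)$, so that a morphism $\scr A_+ \to \scr B_+$ amounts to a point of $\Hom_{\SmQA_{\scr S}}(\scr A, \scr B \amalg \scr S)$; the chain
\[ \Hom_{\SmQA_{\scr X}}(p^*\scr W, \scr Z \amalg \scr X) \simeq \Hom_{\SmQA_{\scr Y}}(\scr W, p_\otimes(\scr Z \amalg \scr X)) \simeq \Hom_{\SmQA_{\scr Y}}(\scr W, p_\otimes \scr Z \amalg \scr Y) \]
then exhibits the adjunction on $\SmQA_+$. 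Passing to $\PSh_\Sigma$, the right adjoint of $p^*\colon \PSh_\Sigma(\SmQA_{\scr Y+}) \to \PSh_\Sigma(\SmQA_{\scr X+})$ is sifted-cocontinuous, its value at a representable $\scr W_+$ being evaluation at the representable $p^*\scr W_+$; since it agrees with $p_\otimes$ on representables and $p_\otimes$ is itself sifted-cocontinuous, the argument of Remark~\ref{rmk:p-otimes-*} gives $p_\otimes \simeq p_*$ on $\PSh_\Sigma(\SmQA_{\scr X+})$. Because $p^*$ preserves motivic equivalences, $p_*$ preserves motivically local objects, so this restricts to an adjunction $p^*\colon \Spc(\scr Y)_* \adj \Spc(\scr X)_* \colon p_*$ and identifies the pointed norm (which is the motivic localization of the $\PSh_\Sigma$-level norm) with $p_*$.

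With $p_\otimes \simeq p_*$ established, the square of the statement follows by passing to right adjoints in the evident commuting square of left adjoints $p^* \circ (\ph)_+ \simeq (\ph)_+ \circ p^*$ (valid since $p^*$ preserves coproducts and the terminal object): the corresponding square of right adjoints reads $U_{\scr Y} \circ p_* \simeq p_* \circ U_{\scr X}$, with $U$ the forgetful functor. For the cocontinuity, note that $p_\otimes = p_*$, being a right adjoint, preserves the terminal object of $\Spc(\scr X)_*$, which coincides with its initial object, so only connected colimits remain to be checked. Given a connected colimit $\colim_i X_i$ in $\Spc(\scr X)_*$, I would apply the conservative functor $U_{\scr Y}$ and compute $U_{\scr Y}\, p_\otimes(\colim_i X_i) \simeq \colim_i U_{\scr Y}\, p_\otimes(X_i) \simeq U_{\scr Y}(\colim_i p_\otimes X_i)$, the first equivalence combining the square just proved with the facts that $U_{\scr X}$ preserves connected colimits and that $p_\otimes$ on unpointed spaces preserves all colimits (Lemma~\ref{lemm:p-otimes-colim}); conservativity of $U_{\scr Y}$ then yields $p_\otimes(\colim_i X_i) \simeq \colim_i p_\otimes X_i$.

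The main obstacle I anticipate is the geometric input of the first paragraph together with the routine but fiddly bookkeeping of partial-map mapping spaces needed to lift ``right adjoint to $p^*$'' from $\SmQA$ to $\SmQA_+$; everything after $p_\otimes \simeq p_*$ is formal.
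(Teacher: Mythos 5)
Your proof is correct and follows essentially the same route as the paper: both arguments identify the pointed norm with the right adjoint of the pointed $p^*$ by using that the unpointed $p_\otimes=p_*$ preserves colimits, and by checking agreement on the representables $\scr T_+\mapsto p_\otimes(\scr T)_+$ (your explicit span-level adjunction on $\SmQA_+$ and your connected-colimit argument via the conservative forgetful functor are just unwound versions of the paper's appeal to Lemma \ref{lemm:adj-preserved} and to sifted colimits plus finite coproducts of generators).
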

\begin{proof}
The functor $p_\otimes: \Spc(\scr X) \to \Spc(\scr Y)$ preserves colimits and is right adjoint to $p^*$.
It follows from Lemma \ref{lemm:adj-preserved} that there is an induced adjunction $\Spc(\scr Y)_* \adj \Spc(\scr X)_*$, with left adjoint induced by $p^*$ and right adjoint given by $p_\otimes'$, which is just the unpointed $p_\otimes$ with the canonical pointing.
The induced left adjoint is the usual $p^*$, and so it remains to verify that $p_\otimes'$ is the usual pointed $p_\otimes$.
To see this, it suffices to prove that (1) the pointed $p_\otimes$ preserves colimits and that (2) the two functors coincide on $\SmQA_{\scr X+}$

(1)
By construction it preserves sifted colimits, so it suffices to show that it preserves finite coproducts of the generators $\scr T_+$, which it sends to $p_\otimes(\scr T)_+$.
This follows from the fact that $p_\otimes: \Spc(\scr X) \to \Spc(\scr Y)$ preserves colimits (by Lemma \ref{lemm:p-otimes-colim}).

(2)
This follows from the fact that unpointed $p_\otimes$ preserves coproducts, and so $p_\otimes'(T_+) \wequi p_\otimes(T) \amalg p_\otimes(*) \wequi p_\otimes(T_+)$.
\end{proof}

\subsection{Stabilization} \label{subsec:stab-norms}
\begin{lemma} \label{lemm:norm-Trho}
Let $f: \scr S \to \scr S'$ be a proper étale morphism (or $\tpffqf$ gerbe) in $\Stk$, and $V$ a vector bundle on $\scr S$.
Then \[ f_\otimes(Th(V)) \wequi Th(f_* V). \]
\end{lemma}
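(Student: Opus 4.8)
The plan is to reduce, via the standard factorization of proper étale morphisms, to two cases handled by quite different arguments. \emph{Reduction.} If $f$ is a $\tpffqf$ gerbe we are already in the gerbe case below. Otherwise, by Example~\ref{ex:pet} (resp.\ Remark~\ref{rmk:pffqf-fact}) factor $f$ as $\scr S\xrightarrow{q}\ul{\scr S}\xrightarrow{r}\scr S'$ with $q$ a gerbe with finite locally free inertia and $r$ finite étale (resp.\ finite locally free). Functoriality of the normed structure on $\Span(\Stk,\all,\tpet)$ gives $f_\otimes\wequi r_\otimes q_\otimes$, and $f_*V\wequi r_* q_* V$ with $q_*V$ again a vector bundle on $\ul{\scr S}$ (Corollary~\ref{cor:weil-res}, or its fppf-local description as the fixed subbundle). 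Granting the lemma for $q$ and for $r$ separately then yields $f_\otimes Th(V)\wequi r_\otimes Th(q_* V)\wequi Th(r_* q_* V)\wequi Th(f_*V)$, so it suffices to treat gerbes and finite étale morphisms.

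\emph{The gerbe case.} Here $f_\otimes\wequi f_*$ preserves colimits on $\Spc(\scr S)_*$ and satisfies $f_\otimes(X_+)\wequi(f_*X)_+$ (Lemma~\ref{lemm:pointed-fixed-points}, using that unpointed $f_*$ preserves coproducts and fixes the terminal object). Writing $Th(V)=\mathrm{cofib}\big((V\setminus\scr S)_+\to V_+\big)$ in $\Spc(\scr S)_*$, with $\scr S\hookrightarrow V$ the zero section, we get $f_\otimes Th(V)\wequi\mathrm{cofib}\big((f_*(V\setminus\scr S))_+\to(f_*V)_+\big)$. Since $f_*$ commutes with base change, the identification of $f_*(V\setminus\scr S)$ with the complement $f_*V\setminus\scr S'$ of the zero section of the vector bundle $f_*V$ may be checked fppf-locally, where $f=B\sslash G\to B$, $f_*V=V^G$ and $f_*(V\setminus\scr S)=(V\setminus 0_B)^G=V^G\setminus 0_{V^G}$ (a $G$-equivariant map into $V$ avoiding the zero section is the same as a map into $V^G$ avoiding the zero section). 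Hence $f_\otimes Th(V)\wequi Th(f_*V)$.

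\emph{The finite étale case.} The elementary input is the additivity $Th(V\oplus W)\wequi Th(V)\wedge Th(W)$, which follows from the formula $X/A\wedge Y/B\wequi(X\times Y)/(A\times Y\cup X\times B)$ for a smash of quotients along monomorphisms, since $(V\setminus\scr S)\times_{\scr S}W\cup V\times_{\scr S}(W\setminus\scr S)=(V\oplus W)\setminus\scr S$; iterating gives $\bigwedge_{i=1}^n Th(V_i)\wequi Th(\bigoplus_{i=1}^n V_i)$. So when $r\colon\coprod_{i=1}^n\scr S'\to\scr S'$ is a fold map, $r_\otimes$ is the $n$-fold smash product and $r_*V=\bigoplus_iV_i$, whence $r_\otimes Th(V)\wequi Th(r_*V)$. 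For a general finite étale $r$ the plan is to descend from this split case: $r_\otimes$, $r_*$ and $Th(-)$ are all compatible with base change, and $r$ becomes a fold map after pulling back along the associated $S_n$-torsor (ordering the sheets), so both sides of the desired equivalence agree after that finite étale base change. The main obstacle is to upgrade this to an equivalence over $\scr S'$ itself, since $\Spc(-)_*$ has only Nisnevich, not étale, descent. I expect the way to do this is to produce the comparison map functorially from the distributive law of the normed structure — expressing $r_\otimes$ of the cofiber defining $Th(V)$ as a colimit of $\sharp$-pushforwards indexed by the ``exponential'' finite étale $\scr S'$-scheme of sections of $\scr S\to\scr S'$ — and then to check it is an equivalence after splitting $r$, where both sides have become finite colimits of representables matched by a direct geometric computation; alternatively one reduces the schematic subcase to the corresponding statement of \cite{bachmann-norms}.
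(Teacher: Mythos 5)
Your reduction to the two cases (gerbe, finite étale) via the factorization of Example~\ref{ex:pet}/Remark~\ref{rmk:pffqf-fact} is exactly what the paper intends, and your gerbe case is the paper's argument verbatim: commute $f_\otimes$ past the cofiber defining $Th(V)$ using Lemma~\ref{lemm:pointed-fixed-points}, then identify $f_*(V\setminus 0)\wequi f_*(V)\setminus 0$ fppf-locally where $f_*$ is ordinary fixed points. No issues there.

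The finite étale case is where your primary plan does not close, for precisely the reason you flag yourself: after reducing to the fold map (where additivity of Thom spaces does the job), you cannot conclude over $\scr S'$ by "checking after pulling back along the $S_n$-torsor that splits $r$", because the pullback functors along a finite étale cover are not jointly conservative on $\Spc(-)_*$ — only quasi-affine \emph{Nisnevich} covers are (Lemma~\ref{lemm:Nis-sep}), and a finite étale cover is not one. So that descent step is a genuine gap, not just a technicality. The correct repair is the one you mention only as a fallback: the argument of \cite[Lemma 4.4]{bachmann-norms}, which does not descend from the split case but instead computes $r_\otimes$ of the quotient $V/(V\setminus 0)$ directly via the locally closed stratification of $r_\otimes(\ph)$ according to how many points of each fiber land in the closed complement — the same device used in the proof of Proposition~\ref{prop:norms-equiv} here. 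That argument is already functorial over $\scr S'$ (the strata and their sections descend), which is what makes it work without étale descent; the paper's proof simply asserts it goes through unchanged for stacks, not merely for the "schematic subcase". So: right skeleton, correct gerbe half, but the finite étale half should be rewritten around the stratification/quotient formula rather than around splitting and descent.
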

\begin{proof}
If $f$ is finite étale, the proof of \cite[Lemma 4.4]{bachmann-norms} goes through unchanged.\tdn{really?}

If $f$ is a gerbe, we have \[ f_\otimes(Th(V)) = f_\otimes(V/V \setminus 0) \wequi f_\otimes(V)/f_\otimes(V \setminus 0) \] by Lemma \ref{lemm:pointed-fixed-points}, and the latter expression is equivalent to $Th(f_\otimes(V))$ since $f_\otimes(V \setminus 0) \wequi f_\otimes(V) \setminus 0$ (as may be checked fppf locally, i.e. for taking ordinary fixed points, where it is obvious).
\end{proof}

\begin{corollary} \label{cor:SH-sph-normed}
Let $\scr S$ be a stack.
Denote by $\Sph_{\scr S} = \{Th(V)\} \subset \Spc(\scr S)_*$ the set of Thom spaces on vector bundles over $\scr S$.
There is a normed functor \[ \Span(\Stk, \all, \tpet) \to \Cat, \scr X \mapsto \Spc(\scr X)_*[\Sph_{\scr X}^{-1}]. \]
\end{corollary}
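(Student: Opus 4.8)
The plan is to invert the Thom spaces $\Sph_{\scr X}$ fibrewise in the normed functor $\Spc_*^\otimes$ of \eqref{eq:F*-first}, the only thing to check being that the collection $\Sph$ is compatible with all the structure maps of $\Spc_*^\otimes$. For an arbitrary morphism $f\colon \scr X \to \scr Y$ the functor $f^*$ is symmetric monoidal and $f^* Th(V) \wequi Th(f^* V)$, so $f^*$ carries $\Sph_{\scr Y}$ into $\Sph_{\scr X}$. For $p\colon \scr X \to \scr Y \in \tpet$ (or a $\tpffqf$ gerbe), Lemma \ref{lemm:norm-Trho} gives $p_\otimes Th(V) \wequi Th(p_* V)$, and $p_* V$ is again a vector bundle on $\scr Y$: by Remark \ref{rmk:pffqf-fact}, fppf locally on $\scr Y$ the functor $p_*$ is a composite of Weil restriction along a finite locally free morphism, which preserves vector bundles, with fixed points along a tame gerbe, which carries a representation $V$ of a linearly reductive group scheme to the locally free direct summand $V^G$. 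Hence $p_\otimes$ carries $\Sph_{\scr X}$ into $\Sph_{\scr Y}$.

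Given this, I would localize fibrewise, following the construction of the normed stable motivic homotopy category $\SH^\otimes$ in \cite[\S4]{bachmann-norms}. Abstractly: if $\scr D^\otimes\colon \Span(\scr C, \all, \scr E) \to \CAlg(\Cat)$ is a normed functor with values in presentably symmetric monoidal categories, and $P = \{P_{\scr X}\}$ is a choice of small set of objects $P_{\scr X} \subset \scr D(\scr X)$ such that every $f^*$ sends $P_{\scr Y}$ into $P_{\scr X}$ and every $p_\otimes$ sends $P_{\scr X}$ into $P_{\scr Y}$, then there is an induced normed functor $\scr X \mapsto \scr D(\scr X)[P_{\scr X}^{-1}]$ under $\scr D^\otimes$. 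Applying this with $\scr D^\otimes = \Spc_*^\otimes$ and $P = \Sph$ yields the required functor; exactly as in the remark following \eqref{eq:F*-first} it moreover lifts to $\CAlg(\Cat)$. The verification of the hypotheses over $\Stk$ uses only Propositions \ref{prop:norms-equiv} and \ref{prop:fixed-equiv}, Lemma \ref{lemm:norm-Trho}, and the compatibility recorded in the previous paragraph, so the argument of \cite[\S4]{bachmann-norms} goes through unchanged.

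The step I expect to be the main obstacle is the descent of the norm functors $p_\otimes$ to the localizations. That the pullbacks descend is the formality of symmetric monoidal Bousfield localization, since $f^* P_{\scr Y}$ is already invertible in $\scr D(\scr X)[P_{\scr X}^{-1}]$. But $p_\otimes$ does not preserve colimits for non-gerbe $p$, so one cannot argue simply that it preserves $P$-local equivalences. The point is instead that, by the distributivity law \cite[Proposition 5.10]{bachmann-norms}, a norm $p_\otimes$ is built out of pullbacks, the left adjoints $f_\sharp$, and the tensor product applied to pullbacks of objects of $P$; feeding a $P$-local equivalence into this expression produces a $P$-local equivalence, so $p_\otimes$ does descend, and coherence of the whole package is then forced by the universal property of the span category. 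This is carried out in detail in \cite[\S4]{bachmann-norms} and requires no changes in the present generality.
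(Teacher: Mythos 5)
Your proposal is correct and is essentially the paper's proof: one observes, using Lemma \ref{lemm:norm-Trho} together with the fact that $p_*V$ is again a vector bundle, that $\scr X \mapsto (\Spc(\scr X)_*, \Sph_{\scr X})$ lifts $\Spc_*^\otimes$ to a functor valued in categories equipped with a marked set of objects, and then inverts the marked objects sectionwise as in \cite{bachmann-norms}. The only quibble is your last paragraph: $\otimes$-inverting a set of objects is not a Bousfield localization, so ``preserving $P$-local equivalences'' is not the relevant criterion and distributivity is not the mechanism; the descent of $p_\otimes$ is instead packaged into the functoriality of $(\scr C,P)\mapsto \scr C[P^{-1}]$ on marked symmetric monoidal categories whose structure maps are symmetric monoidal, preserve sifted colimits and preserve the marked objects---which is exactly what you verified in your first two paragraphs.
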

\begin{proof}
The assignment $\scr X \mapsto (\Spc(\scr X)_*, \Sph_{\scr X})$ defines a functor $\Span(\Stk_{\scr S}, \all, \tpet) \to \scr O\Cat$ (by Lemma \ref{lemm:norm-Trho}), and hence we can invert the spheres sectionswise.
\end{proof}

Unfortunately unless $\scr S$ is quasi-fundamental, $\Spc(\scr S)_*[\Sph_{\scr S}^{-1}]$ need not really be reasonable.
We remedy this now in the scalloped case.

\begin{lemma} \label{lemm:Nis-tpet}
The forgetful functor \[ \PSh_\Sigma(\Span(\Stk, \tpet, \all)) \to \PSh_\Sigma(\Stk) \] preserves (quasi-affine) Nisnevich equivalences.
\end{lemma}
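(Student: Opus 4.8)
The plan is to follow the template of the proof of Lemma~\ref{lemm:detect-motivic-equiv}. Write $F \colon \PSh_\Sigma(\Stk) \adj \PSh_\Sigma(\Span(\Stk, \tpet, \all)) : U$ for the free--forgetful adjunction. The composite $UF$ preserves sifted colimits (as $F$ preserves all colimits and $U$ preserves sifted colimits), and the generating (quasi-affine) Nisnevich equivalences --- the maps $\scr U \amalg_{\scr W} \scr V \to \scr T$ attached to (quasi-affine) Nisnevich squares, cf.\ Lemma~\ref{lemm:squared-sieve} --- are stable under $(\ph) \amalg \id$. Hence by \cite[Lemma 2.10]{bachmann-norms} it suffices to prove that $UF(\alpha)$ is a (quasi-affine) Nisnevich equivalence for every generating (quasi-affine) Nisnevich equivalence $\alpha$.

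For this I would first record a formula for $UF$ in the spirit of Theorem~\ref{thm:free-space-with-transfers} (equivalently Lemma~\ref{lemm:identify-free-normed-presheaf}): for a stack $\scr T$, the presheaf $UF\scr T$ sends $\scr Y$ to the groupoid of spans $\scr Y \xleftarrow{p} \scr Z \to \scr T$ with $p$ tame proper étale, and running the universality-of-colimits argument from the proof of Theorem~\ref{thm:free-space-with-transfers} presents $UF\scr T$ naturally as a colimit, over the presheaf $\scr Y \mapsto \{\scr Z \xrightarrow{p} \scr Y \text{ tame proper étale}\}^\wequi$, of norm terms $p_\otimes(\scr T \times \scr Z)$ (with $\scr Z$ the source of $p$, viewed over itself). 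In particular $UF(\alpha)$ is a colimit of the maps $p_\otimes(\alpha \times \id_{\scr Z})$. Now for each tame proper étale $p \colon \scr Z \to \scr Y$, the map $\alpha \times \id_{\scr Z}$ is again a generating (quasi-affine) Nisnevich equivalence (products of (quasi-affine) Nisnevich squares with a fixed stack, or more generally base changes along any morphism of stacks, are again such squares), and factoring $p = r \circ q$ with $q$ a tame gerbe with finite locally free inertia and $r$ finite étale (Example~\ref{ex:pet}) gives $p_\otimes \wequi r_\otimes q_\otimes$, so by Propositions~\ref{prop:norms-equiv} and~\ref{prop:fixed-equiv} (see also Remark~\ref{rmk:p*-pres-coverings}) $p_\otimes(\alpha \times \id_{\scr Z})$ is a (quasi-affine) Nisnevich equivalence. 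Since (quasi-affine) Nisnevich equivalences are closed under colimits, so is $UF(\alpha)$, and we are done.

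I expect the middle step to be the main obstacle. Theorem~\ref{thm:free-space-with-transfers} is set up relative to a fixed base and with finite étale transfers carrying a clean integer-degree decomposition, whereas here one works in the ``baseless'' category $\Span(\Stk, \tpet, \all)$ with all tame proper étale transfers, so the universality-of-colimits bookkeeping has to be redone; most delicately, one must make sense of --- and control the Nisnevich behaviour of --- the functors $p_\otimes$ at the level of presheaves on $\Stk$, whereas Propositions~\ref{prop:norms-equiv} and~\ref{prop:fixed-equiv} are phrased for the $\SmQA$-sites. One route is to extend those propositions (their proofs, via preservation of vector bundle torsors and the $\scr S_d$-stratification, adapt); another is to reduce the generating equivalences --- which involve only morphisms étale over a stack --- to the representable situation covered by the stated propositions. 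An alternative that sidesteps the free-object formula entirely: $\Span(\Stk, \tpet, \all)$ is semiadditive (disjoint unions are simultaneously products and coproducts, since $\tpet$ contains fold maps), so $U$ admits a right adjoint $R$, and it suffices to check that $R$ carries Nisnevich-local objects to Nisnevich-local objects; concretely this amounts to showing that $\scr Y \mapsto \lim_{\scr Z \in (\tpet_{/\scr Y})^{\op}} \scr H(\scr Z)$ is a (quasi-affine) Nisnevich sheaf whenever $\scr H$ is, which follows from fppf descent for tame proper étale morphisms.
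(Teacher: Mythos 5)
Your setup matches the paper's: both proofs invoke \cite[Lemma 2.10]{bachmann-norms} to reduce to generating quasi-affine Nisnevich equivalences, and both use the description of $UF\scr T$ as the presheaf of spans $\scr Y \xleftarrow{p} \scr Z \to \scr T$ with $p$ tame proper étale. But the heart of the argument --- why $UF$ of a generating Nisnevich equivalence is again one --- is exactly the step you defer, and neither of your two proposed ways of filling it works as stated. For the main route: the ``norm terms'' in your colimit decomposition are objects of the form $p_*(\scr T\times\scr Z)$, $p_*(\scr U\times\scr Z)$, etc., where $\scr T,\scr U$ are arbitrary stacks, so these are genuinely non-representable presheaves; Propositions \ref{prop:norms-equiv} and \ref{prop:fixed-equiv} concern $p_\otimes$ on $\PSh_\Sigma(\SmQA_{(\ph)})$ applied to representables over a fixed base, and their proofs (the $\scr S_d$-stratification of $p_\otimes(\scr T)$ in particular) use the stack structure of the target of $p_\otimes$, so they do not extend formally to this situation. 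Resolving this is not bookkeeping --- it is the actual content of the lemma. Your fallback alternative is worse: $U$ does \emph{not} admit a right adjoint. Semiadditivity of $\Span(\Stk,\tpet,\all)$ makes the \emph{source} $\PSh_\Sigma(\Span(\Stk,\tpet,\all))$ semiadditive, but the target $\PSh_\Sigma(\Stk)$ is not, and $U$ fails to preserve finite coproducts: $U(F\scr X \amalg F\scr Y) \wequi U(F\scr X\times F\scr Y)\wequi UF\scr X\times UF\scr Y$, which is not $UF\scr X\amalg UF\scr Y$. So the reduction ``it suffices that $R$ preserves local objects'' has no $R$ to apply it to (and the descent claim you invoke for the putative limit formula is also unjustified --- Nisnevich sheaves need not satisfy fppf descent).

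The paper closes the gap with a short direct lifting argument that you should compare with. Since $UFY_0\to UFX$ is a monomorphism of presheaves (for $Y_0\hookrightarrow X$ the sieve of a quasi-affine Nisnevich cover $Y\to X$), it suffices to show local surjectivity: given a span $T\xleftarrow{p} V\to X$, one must find a quasi-affine Nisnevich cover $T'\to T$ over which the span factors through $Y_0$. Take $Y'=V\times_X Y$, a quasi-affine Nisnevich cover of $V$, and set $T'=p_\otimes(Y')$; this is a quasi-affine Nisnevich cover of $T$ by Remark \ref{rmk:p*-pres-coverings} (here $Y'\to V$ \emph{is} quasi-affine, so the norm exists as a stack and the small-site results apply), and the counit $V'=p^*p_\otimes Y'\to Y'$ supplies the required lift. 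This is in effect the ``reduce the generating equivalences to the representable situation'' option you mention in passing, and it is the step your writeup needs to actually carry out.
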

\begin{proof}
Write \[ F: \PSh_\Sigma(\Stk) \adj \PSh_\Sigma(\Span(\Stk, \tpet, \all)): U \] for the adjunction.
Using \cite[Lemma 2.10]{bachmann-norms} it will be enough to show that if $Y \to X \in \Stk$ is a quasi-affine Nisnevich covering with associated sieve $Y_0 \hookrightarrow X$, then $UF Y_0 \to UFX$ is a Nisnevich equivalence.
One checks that for $T \in \Stk$, $(UF Y)(T)$ is the space of spans \[ T \xleftarrow{p} V \to X \] with $p$ tame proper étale, and $(UF Y_0)(T)$ is the subspace of those spans where $U \to X$ factors through $Y_0$.\todo{really?}
It will thus suffice to show that given arbitrary $V \to X$, there exists a quasi-affine Nisnevich covering $T' \to T$ such that if $V' = T' \times_T V$, then the composite $V' \to V \to X$ lifts to $Y$.
To do this, set $Y' = V \times_X Y$; thus we must lift $V' \to V$ into $Y'$.
Set $T' = p_\otimes(Y')$.
Then $T' \to T$ is a Nisnevich covering by Remark \ref{rmk:p*-pres-coverings}.
But now $V' \wequi p^*p_\otimes Y' = p^*p_* Y'$, which admits a map to $Y'$ by adjunction.
\end{proof}

\begin{corollary} \label{cor:shv-norms}
The forgetful functor \[ \Fun^\times(\Span(\Stk, \tpet, \all)^\op, \Cat_\infty) \to \Fun^\times(\Stk^\op, \Cat_\infty) \] commutes with quasi-affine Nisnevich sheafification.
\end{corollary}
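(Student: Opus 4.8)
The plan is to deduce this from the $\Spc$-valued statement, Lemma~\ref{lemm:Nis-tpet}, by the standard criterion for a functor to intertwine two Bousfield localizations. Write $\scr A = \Fun^\times(\Span(\Stk,\tpet,\all)^\op,\Cat_\infty)$, $\scr B = \Fun^\times(\Stk^\op,\Cat_\infty)$, and let $U\colon \scr A \to \scr B$ be the forgetful functor, i.e.\ restriction along the finite-coproduct-preserving inclusion $\iota\colon \Stk \hookrightarrow \Span(\Stk,\tpet,\all)$ of the identity spans; write $L_{\scr A},L_{\scr B}$ for the two quasi-affine Nisnevich sheafifications. Since $U$ is a restriction functor and $\iota$ preserves finite coproducts, $U$ preserves all limits and colimits, so to prove $UL_{\scr A}\wequi L_{\scr B}U$ it suffices to check that (i) $U$ sends $L_{\scr A}$-local objects to $L_{\scr B}$-local objects, and (ii) $U$ sends $L_{\scr A}$-equivalences to $L_{\scr B}$-equivalences; given (i) and (ii) the identity follows from the usual chase with the units $\id\to L_{\scr A}$, $\id\to L_{\scr B}$.

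Step (i) is a matter of unwinding definitions: the quasi-affine Nisnevich sheafification appearing in the statement localizes at (the images under $\iota$ of) quasi-affine Nisnevich equivalences of stacks, and the \v{C}ech nerve of a quasi-affine Nisnevich cover of a stack is a simplicial object of $\Stk$; hence if $\scr C\in\scr A$ satisfies descent, then $U\scr C$---which agrees with $\scr C$ on objects of $\Stk$ and on these \v{C}ech nerves---satisfies quasi-affine Nisnevich descent on $\Stk$, i.e.\ lies in $\scr B$. Step (ii) is the real content, and here I would reduce to $\Spc$-coefficients. For any small $\infty$-category $\scr E$ with finite coproducts there is a natural equivalence $\Fun^\times(\scr E^\op,\Cat_\infty)\wequi \PSh_\Sigma(\scr E)\otimes\Cat_\infty$ in $\mathrm{Pr}^L$; under it $U$ becomes $U_0\otimes\id_{\Cat_\infty}$, where $U_0\colon \PSh_\Sigma(\Span(\Stk,\tpet,\all))\to\PSh_\Sigma(\Stk)$ is the forgetful functor of Lemma~\ref{lemm:Nis-tpet}, and the $\Cat_\infty$-valued quasi-affine Nisnevich sheaves are exactly $\Shv_{\qaff\Nis}(-)\otimes\Cat_\infty$ (one checks this against the generating Nisnevich-square sieve inclusions of Lemma~\ref{lemm:squared-sieve}, using that $\Cat_\infty$ is generated under colimits by $\Delta^0$ and $\Delta^1$), so that $L_{\scr A}=L_0'\otimes\id$ and $L_{\scr B}=L_0\otimes\id$ with $L_0,L_0'$ the $\Spc$-valued sheafifications. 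Now Lemma~\ref{lemm:Nis-tpet}, together with the $\Spc$-version of (i) (the same \v{C}ech argument), gives $U_0L_0'\wequi L_0U_0$, and tensoring this equivalence with $\id_{\Cat_\infty}$ yields $UL_{\scr A}\wequi L_{\scr B}U$. One can also bypass some of the tensor yoga by arguing with generators directly: the $L_{\scr A}$-equivalences form the strongly saturated class generated by the maps $j\boxtimes\id_{\Delta^0}$ and $j\boxtimes\id_{\Delta^1}$ for $j$ a generating quasi-affine Nisnevich equivalence, $U(j\boxtimes\id_K)\wequi U_0(j)\boxtimes\id_K$ is an $L_{\scr B}$-equivalence by Lemma~\ref{lemm:Nis-tpet}, and $U$ preserves the colimits and cobase changes defining saturated classes.

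The routine parts are the formal localization criterion and the \v{C}ech-nerve check in (i). The step that requires care---and that I expect to be the main obstacle---is making the reduction in (ii) precise: identifying $\Cat_\infty$-valued quasi-affine Nisnevich sheaves, the forgetful functor $U$, and the sheafification functors with the corresponding $\Spc$-valued data tensored up with $\Cat_\infty$. This rests on $\Cat_\infty$ being generated under colimits by $\Delta^0$ and $\Delta^1$ and on the quasi-affine Nisnevich topology being cut out by the explicit generating squares of Lemma~\ref{lemm:squared-sieve}; once this bookkeeping is in place, the corollary is a formal consequence of Lemma~\ref{lemm:Nis-tpet}.
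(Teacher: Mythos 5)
Your overall strategy---check that the forgetful functor $U$ preserves local objects (which is essentially tautological here, since the sheaf condition on $\Span$-presheaves only sees their restriction to $\Stk$) and local equivalences, and reduce the latter to Lemma \ref{lemm:Nis-tpet} by generating $\Cat_\infty$ under colimits by $\Delta^0$ and $\Delta^1$---is viable, and it is genuinely different from the paper's proof. The paper instead embeds $\Cat_\infty$ into $\PSh(\Delta)$ via complete Segal spaces, deduces the $\PSh(\Delta)$-valued case levelwise from the $\Spc$-valued case, and then checks that sheafification preserves the complete Segal conditions because it is built from limits and filtered colimits.

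There is, however, a concrete error that both of your routes for step (ii) rely on: $U$ does \emph{not} preserve all colimits. The source $\Fun^\times(\Span(\Stk,\tpet,\all)^\op,\Cat_\infty)$ is semiadditive (fold maps lie in $\tpet$), so its finite coproducts are finite products, while the target $\Fun^\times(\Stk^\op,\Cat_\infty)$ is not semiadditive; since $U$ preserves finite products, it cannot preserve finite coproducts. Consequently $U_0$ is not a morphism of $Pr^L$, so ``$U_0\otimes\id_{\Cat_\infty}$'' is undefined and the tensor-yoga route collapses as stated; and in the generators-direct route, $U$ does not preserve the cobase changes and colimits that generate the strongly saturated class of $L_{\scr A}$-equivalences, so sending the generating maps to $L_{\scr B}$-equivalences does not immediately give (ii). This is exactly the issue that Lemma \ref{lemm:Nis-tpet} itself confronts and resolves via \cite[Lemma 2.10]{bachmann-norms}, i.e. by checking the enlarged generating set $\alpha\sqcup\id_X$ and using only that $U$ preserves sifted colimits; your argument needs the same device, now with $\Cat_\infty$-coefficients (and with the extra generators $j\boxtimes\id_{\Delta^i}$). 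The paper's complete-Segal-space detour avoids this bookkeeping entirely, because in $\Fun(\Delta^\op,\Fun^\times(\Stk^\op,\Spc))$ both the forgetful functor and the sheafification are computed levelwise.
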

\begin{proof}
For $\Spc$ in place of $\Cat_\infty$, this is Lemma \ref{lemm:Nis-tpet}.
This implies also the case with $\PSh(\Delta)$ in place of $\Cat_\infty$.
Using the complete Segal space presentation $\Cat_\infty \subset \PSh(\Delta)$, it will suffice to show that given a presheaf of complete Segal spaces $F \in \Fun(\Delta^\op, \PSh_\Sigma(\Stk))$, the associated Nisnevich sheaf also consists of complete Segal spaces.
This is true because the sheafification can be built using limits and filtered colimits \cite[proof of Proposition 6.2.2.7]{lurie-htt}, and complete Segal spaces are closed under these operations.\tdn{Ref?}
\end{proof}

Now consider the presheaf of categories $\Spc(\ph)_*[\Sph_{\ph}^{-1}]^\otimes$ on $\Span(\Stk, \tpet, \all)$ obtained from Corollary \ref{cor:SH-sph-normed}.
By Corollary \ref{cor:shv-norms} we can take the associated Nisnevich sheaf, and this operation is compatible with forgetting the norms.
We denote the associated Nisnevich sheaves by \[ \SH^\otimes(\ph): \Span(\Stk, \all, \tpet) \to \Cat_\infty \quad\text{and}\quad \SH(\ph): \Stk^\op \to \Cat_\infty. \]
This satisfies the following properties:
\begin{enumerate}
\item $\SH(\ph) \wequi \SH^\otimes(\ph)|_{\Stk^\op}$
\item If $\scr S$ is linearly scalloped, or nicely scalloped with affine diagonal, then $\SH(\scr S)$ coincides with the category defined in \cite[\S A, \S4,]{khan2021generalized}.
\end{enumerate}
\begin{proof}
Only the second statement is non-obvious.
Either class of stacks is closed under quasi-affine extension, so the restriction of the (quasi-affine) Nisnevich topology thereto makes sense.
Quasi-fundamental stacks provide a basis of the topology, and the two functors agree thereon by construction.
The result follows since the functors defined in \cite[\S A, \S4,]{khan2021generalized} are sheaves.
\end{proof}

\begin{remark}
If $\scr S$ is not linearly scalloped (or nicely scalloped with affine stabilizer), then it is unclear how to describe $\SH(\scr S)$ as defined above, or if this definition is even reasonable.
\end{remark}

\subsection{The free normed monoid on a pointed space} \label{subsec:free-normed}
\begin{lemma} \label{lemm:transfers-to-NAlg}
Let $\scr D^\otimes$ be a normed category over $\scr S$ and $\sigma: \Spc^\otimes \to \scr D^\otimes$ a normed functor.
Assume that $\sigma$ sectionwise admits a right adjoint $\omega$ and that $\omega$ commutes with pullback along smooth quasi-affine morphisms.

Let $X \in \Spc^\fet(\scr S)$.
Then $\sigma UX \in \scr D(\scr S)$ naturally defines an object of $\NAlg(\scr D^\otimes)$.
This induces an adjunction \[ \sigma: \Spc^\fet(\scr S) \adj \NAlg(\scr D^\otimes): \omega, \] where both adjoints are computed sectionwise as indicated.
In particular, the following diagrams commute.
\begin{equation*}
\begin{CD}
\Spc^\fet(\scr S) @>{\sigma}>> \NAlg(\scr D^\otimes) \quad @. \Spc^\fet(\scr S) @>{\sigma}>> \NAlg(\scr D^\otimes) \\
@V{U}VV                                 @V{U}VV                         @A{F}AA                                 @A{\NSym}AA    \\
\Spc(\scr S)      @>{\sigma}>> \scr D(\scr S)              @. \Spc(\scr S)      @>{\sigma}>> \scr D(\scr S)
\end{CD}
\end{equation*}
\end{lemma}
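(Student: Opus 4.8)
The plan is to deduce the lemma essentially formally from Lemma~\ref{lemm:NAlg-adj}, together with the identification of $\Spc^\fet(\scr S)$ with normed objects of $\Spc^\otimes$ provided by Lemmas~\ref{lemm:Spcfet-NMon-abstract} and~\ref{lemm:Spcfet-NMon}. First I would restrict $\Spc^\otimes$ along $\Span(\Stk_{/\scr S}, \all, \fet) \to \Span(\Stk, \all, \tpet)$, so that $\sigma : \Spc^\otimes \to \scr D^\otimes$ becomes a morphism in $\Fun(\Span(\Stk_{/\scr S}, \all, \fet), \Cat_\infty)$. By hypothesis $\sigma$ sectionwise admits the right adjoint $\omega$, and $\omega$ is compatible with pullback along the morphisms of $\scr L$ (the smooth quasi-affine ones); this is exactly the input of Lemma~\ref{lemm:NAlg-adj}, which thus yields an adjunction $\sigma : \NAlg(\Spc^\otimes) \adj \NAlg(\scr D^\otimes) : \omega$ in which both adjoints are computed sectionwise; in particular the underlying object of $\sigma(A)$ is the section-level $\sigma$ applied to the underlying object of $A$, and dually for $\omega$.

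Next I would use Lemmas~\ref{lemm:Spcfet-NMon-abstract} and~\ref{lemm:Spcfet-NMon} to identify $\NAlg(\Spc^\otimes)$ over $\scr S$ with $\NMon(\Spc(\scr S)) \wequi \Spc^\fet(\scr S)$, in such a way that the forgetful functor $U : \Spc^\fet(\scr S) \to \Spc(\scr S)$ corresponds to evaluation of a normed object at $\id_{\scr S}$ and the free functor $F$ corresponds to $\NSym$. Transporting the adjunction along this identification produces the asserted $\sigma : \Spc^\fet(\scr S) \adj \NAlg(\scr D^\otimes) : \omega$, and its sectionwise description is precisely the statement that $\sigma U X$ naturally underlies an object of $\NAlg(\scr D^\otimes)$.

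It remains to verify the two squares. The left square commutes by the sectionwise description just recalled: $\sigma$ on normed objects is post-composition of partially cartesian sections with the transformation $\sigma$, while both vertical $U$'s are evaluation at $\id_{\scr S}$, so the underlying object of $\sigma(X)$ is $\sigma(UX)$. For the right square, I would pass to right adjoints of all four arrows, which are left adjoints ($F \dashv U$, $\NSym \dashv U_{\scr D}$ for the underlying-object functor $U_{\scr D} : \NAlg(\scr D^\otimes) \to \scr D(\scr S)$, and $\sigma \dashv \omega$ on both the space level and the normed level): the claimed identity $\sigma F \wequi \NSym\,\sigma$ is then equivalent to $U\,\omega \wequi \omega\,U_{\scr D}$ as functors $\NAlg(\scr D^\otimes) \to \Spc(\scr S)$, and this is once more just the sectionwise formation of $\omega$ from Lemma~\ref{lemm:NAlg-adj}. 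I do not anticipate a genuine obstacle here: the mathematical content lies entirely in Lemma~\ref{lemm:NAlg-adj}, and what remains is only the bookkeeping of matching its sectionwise adjunction with the identification $\NAlg(\Spc^\otimes)|_{\scr S} \wequi \Spc^\fet(\scr S)$ and with the recognition of $U$, $F$, $\NSym$ as the evident forgetful and free functors.
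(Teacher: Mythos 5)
Your proposal is correct and matches the paper, whose entire proof is the observation that the lemma is a special case of Lemma \ref{lemm:NAlg-adj} (combined implicitly with the identification $\NAlg(\Spc^\otimes)|_{\scr S} \wequi \Spc^\fet(\scr S)$ from Lemmas \ref{lemm:Spcfet-NMon-abstract} and \ref{lemm:Spcfet-NMon}). You have simply spelled out the bookkeeping that the paper leaves implicit.
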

\begin{proof}
This is a special case of Lemma \ref{lemm:NAlg-adj}.
\end{proof}

Since $\Spc^\fet(\scr S)$ is pointed, the functor $U: \Spc^\fet(\scr S) \to \Spc(\scr S)$ factors through a functor $U_*: \Spc^\fet(\scr S) \to \Spc(\scr S)_*$.
The functor $U_*$ still preserves limits and sifted colimits, and hence admits a left adjoint $F_*$.
In fact we have already seen this functor in \eqref{eq:F*-first}; in particular it is compatible with norms.

\begin{lemma} \label{lemm:F*-pushout}
Let $X \in \Spc(\scr S)_*$.
Then there exists a pushout diagram in $\Spc^\fet(\scr S)$ as follows
\begin{equation*}
\begin{CD}
\1 @>>> F(X) \\
@VVV    @VVV   \\
*  @>>> F_* X.
\end{CD}
\end{equation*}
\end{lemma}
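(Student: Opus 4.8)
The plan is to realize the square as the image, under the left adjoint $F_*\colon \Spc(\scr S)_* \to \Spc^\fet(\scr S)$, of the familiar ``disjoint basepoint'' cofiber sequence in pointed motivic spaces; the whole argument is a formal chase through adjunctions, with no motivic input.

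First I would record the formal relation between $F$ and $F_*$. Write $v\colon \Spc(\scr S)_* \to \Spc(\scr S)$ for the forgetful functor and $(\ph)_+\colon \Spc(\scr S) \to \Spc(\scr S)_*$ for its left adjoint (add a disjoint basepoint). Since $U = v\circ U_*$ (as recalled just above the statement), uniqueness of left adjoints gives $F \simeq F_*\circ (\ph)_+$. Hence for a pointed space $X$ with underlying space $vX$ one has $F_*\big((vX)_+\big)\simeq F(vX)$ — abbreviated $F(X)$ in the statement — and $F_*(S^0) = F_*(\ast_+) \simeq F(\ast) \simeq \1$, using that $\1$ is the motivic localization of the presheaf represented by the terminal object $\ast = \id_{\scr S}\in\SmQA_{\scr S}$, which the functor $F$ sends to itself.

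Next I would write down the cofiber sequence: for $X \in \Spc(\scr S)_*$ with basepoint $x_0\colon \ast \to vX$, the map $S^0 = \ast_+ \xrightarrow{(x_0)_+} (vX)_+$ is a split monomorphism (retract $vX$ onto the non-basepoint summand of $S^0$) whose cofiber is canonically $X$; equivalently, the square with corners $S^0$, $(vX)_+$, $\ast$ (the zero object of $\Spc(\scr S)_*$) and $X$, with bottom-right arrow the counit $(vX)_+ \to X$, is a pushout in $\Spc(\scr S)_*$. This is standard, and the only point to verify — since $\Spc(\scr S)_*$ is merely a pointed presentable $\infty$-category, not an $\infty$-topos — is the universal property, which one checks by mapping into an arbitrary pointed target and unwinding the adjunction $(\ph)_+ \dashv v$.

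Finally, $F_*$ preserves this pushout, being a left adjoint. Combining the identifications above with $F_*(\ast) = *$ (a left adjoint preserves the initial object, which coincides with the terminal $*$ by semiadditivity of $\Spc^\fet(\scr S)$, cf.\ Lemma \ref{lemm:Spc-fet-semiadditive}) and $F_*(X) = F_* X$ yields exactly the asserted pushout square, its top arrow being $F$ of the basepoint inclusion and its right arrow the canonical map $F(X) \to F_* X$. There is no genuine obstacle here; the only steps deserving care are the equivalence $F \simeq F_*\circ(\ph)_+$, the standard cofiber sequence $S^0 \to (vX)_+ \to X$, and correctly matching up the four corners and three maps of the square.
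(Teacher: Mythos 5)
Your proof is correct and is essentially the paper's own argument: apply the colimit-preserving functor $F_*$ to the standard pushout square $*_+ \to (vX)_+$, $*_+ \to *$, with pushout $X$, in $\Spc(\scr S)_*$, and identify the corners via $F_*\circ(\ph)_+ \wequi F$ and $F_*(*)\wequi *$. The extra care you take in justifying $F\wequi F_*\circ(\ph)_+$ and the universal property of the source square is fine but matches what the paper takes as given.
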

\begin{proof}
Apply the colimit preserving functor $F_*$ to the pushout diagram
\begin{equation*}
\begin{CD}
*_+ @>>> X_+  \\
@VVV     @VVV \\
\emptyset_+ @>>> X
\end{CD}
\end{equation*}
in $\Spc(\scr S)_*$ and use that $F_*((\ph)_+) \wequi F(\ph)$ (and $F\emptyset = *$ since $\Spc^\fet(\scr S)$ is pointed).
\end{proof}

Suppose given $\sigma: \Spc(\scr S) \to \scr D(\scr S)$.
If $\scr D(\scr S)$ is pointed and presentable, then there is a unique functor $\sigma_*: \Spc(\scr S)_* \to \scr D(\scr S)$ satisfying $\sigma_*(X_+) \wequi \sigma(X)$ \cite[Proposition 4.8.2.11]{lurie-ha}.\NB{It is given by $\sigma_*(X) = \cof(\1 = \sigma(*) \to \sigma(UX))$.}
\begin{corollary} \label{cor:F*-stable}
Let \[ \sigma: \Spc^\otimes \adj \scr D^\otimes: \omega \] be as in Lemma \ref{lemm:transfers-to-NAlg}.
Assume furthermore that $\scr D(\scr S)$ is stable and $\scr D$ satisfies distributivity and smooth base change.

Let $X \in \Spc(\scr S)_*$.
Then there is a canonical equivalence \[ \sigma F_* X \wequi \NSym \sigma_* X \in \NAlg(\scr D^\otimes). \]
In particular \[ \sigma_* U_* F_* X \wequi \bigvee_{n > 0} \D_n \Sigma^\infty X. \]
\end{corollary}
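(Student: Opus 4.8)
The plan is to deduce the final formula from the first equivalence $\sigma F_* X \wequi \NSym \sigma_* X$ in $\NAlg(\scr D^\otimes)$, which I would prove first. Throughout, $\Sigma^\infty X$ denotes $\sigma_* X \in \scr D(\scr S)$, and the $\D_n$ on the right are the evident $\scr D$-analogues of the extended power functors on $\Spc(\scr S)$, i.e. the summands occurring in Theorem \ref{thm:free-normed} for $\Lambda = \Z_{\ge 0}$ and $M_n$ the stack of degree-$n$ finite étale schemes.

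First I would establish $\sigma F_* X \wequi \NSym \sigma_* X$. Both $\sigma F_*$ and $\NSym \sigma_*$ are colimit-preserving functors $\Spc(\scr S)_* \to \NAlg(\scr D^\otimes)$: indeed $F_*$ is left adjoint to $U_*$, the functor $\sigma\colon \Spc^\fet(\scr S) \to \NAlg(\scr D^\otimes)$ of Lemma \ref{lemm:transfers-to-NAlg} is left adjoint to $\omega$, $\NSym$ is left adjoint to the forgetful functor, and $\sigma_*$ is the colimit-preserving reduction of $\sigma$. On the generators $\scr T_+$, $\scr T \in \SmQA_{\scr S}$, one computes, using $F_* \circ (\ph)_+ \wequi F$, $\sigma_* \circ (\ph)_+ \wequi \sigma$, and the right-hand commuting square of Lemma \ref{lemm:transfers-to-NAlg} (namely $\sigma \circ F \wequi \NSym \circ \sigma$),
\[ \sigma F_*(\scr T_+) \wequi \sigma F(\scr T) \wequi \NSym \sigma(\scr T) \wequi \NSym \sigma_*(\scr T_+), \]
naturally in $\scr T$ and compatibly with finite coproducts. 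Since $\Spc(\scr S)_*$ is generated under colimits by the $\scr T_+$ — compatibly with the motivic localization, which both functors invert by construction — this natural equivalence on generators extends uniquely to an equivalence $\sigma F_* \wequi \NSym \sigma_*$ of functors, which we evaluate on $X$. Equivalently, one may apply the colimit-preserving $\sigma$ to the pushout square of Lemma \ref{lemm:F*-pushout}, identify the corners via Lemma \ref{lemm:transfers-to-NAlg} ($\1 = F(*) \mapsto \NSym\sigma(*)$, $* \mapsto \NSym(0)$, $F(X) \mapsto \NSym\sigma(X)$), and use that $\NSym$ preserves pushouts to rewrite the result as $\NSym$ of the pushout $\sigma(X) \amalg_{\sigma(*)} 0 \wequi \cof(\sigma(*) \to \sigma(X)) \wequi \sigma_* X$.

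Then I would obtain the final statement by applying the forgetful functor $\NAlg(\scr D^\otimes) \to \scr D(\scr S)$. On the right-hand side, Theorem \ref{thm:free-normed} gives $\NSym(E) \wequi \bigvee_{n \ge 0} \D_n E$ on underlying objects, with $\D_0 E \wequi \1_{\scr D}$ (the $n=0$ summand, constant in $E$) and $\D_n(0) \wequi 0$ for $n \ge 1$ (the norms $p_\otimes$, hence $\D_n$, annihilate the zero object, as one checks fppf-locally where $p_\otimes$ becomes a tensor power along a fold map). On the left-hand side, the left-hand square of Lemma \ref{lemm:transfers-to-NAlg} identifies the underlying object of $\sigma F_* X$ with $\sigma(U F_* X)$, where $U F_* X \in \Spc(\scr S)$ is the underlying unpointed space; and by the defining cofiber formula for the reduction, $\sigma_* U_* F_* X \wequi \cof\bigl(\1_{\scr D} = \sigma(*) \to \sigma(U F_* X)\bigr)$, with structure map induced by the basepoint $* \to U_* F_* X$ — which under $\sigma F_* X \wequi \NSym \sigma_* X$ is precisely the algebra unit $\NSym(0) \to \NSym(\sigma_* X)$, i.e. on underlying objects the split inclusion of the $n = 0$ summand $\D_0(\sigma_* X) \wequi \1_{\scr D}$. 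Taking the cofiber therefore deletes the degree-$0$ term and yields $\sigma_* U_* F_* X \wequi \bigvee_{n > 0} \D_n \Sigma^\infty X$.

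The step I expect to be the main obstacle is this last matching: verifying that the reduction built into $\sigma_* U_*$ cancels exactly the $n=0$ summand. This needs (i) the identification $\NSym(E) \wequi \bigvee_{n} \D_n E$ on underlying objects together with its canonical unit splitting off $\D_0 E \wequi \1_{\scr D}$, and (ii) the vanishing $\D_n(0) \wequi 0$ for $n \ge 1$ — both elementary, but requiring a careful unwinding of the definitions and of the pointed-versus-unpointed bookkeeping. The remainder is formal juggling of adjoint and colimit-preserving functors.
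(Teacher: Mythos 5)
Your proposal is correct and follows the paper's own argument: the ``equivalently'' route --- applying the colimit-preserving $\sigma$ to the pushout of Lemma \ref{lemm:F*-pushout} and commuting $\NSym$ past the pushout to land on $\NSym(\cof(\1 \to \sigma X)) \wequi \NSym \sigma_* X$ --- is exactly how the paper establishes the first equivalence, and the second claim is obtained by the same cofiber computation cancelling the $\D_0$-summand against the unit. Note only that your first route (two colimit-preserving functors agreeing objectwise on the generators $\scr T_+$) is not by itself a complete argument without first exhibiting a natural comparison map between them, so the pushout argument should be regarded as the actual proof.
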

\begin{proof}
The cofiber sequence $\1 \to \sigma X \to \sigma_* X$ is split by ($\sigma$ applied to) the projection $X \to *$; it follows (using stability) that $\sigma X \wequi \sigma_* X \amalg \1$.
Applying $\NSym$ to this coproduct decomposition, we obtain the top square of the following diagram
\begin{equation*}
\begin{CD}
\1        @>>> \NSym(\sigma_* X) \\
@VVV             @VVV                   \\
\NSym(\1) @>>> \NSym(\sigma X) \\
@VVV            @VVV \\
\1        @>>>  \sigma F_* X.
\end{CD}
\end{equation*}
The bottom square is obtained by applying $\sigma$ to the pushout of Lemma \ref{lemm:F*-pushout}.
Thus both the top and the bottom square are pushouts (in $\NAlg(\scr D^\otimes)$), and hence so is the outer square \cite[Lemma 4.4.2.1]{lurie-htt}.
Since $\1$ is initial in $\NAlg(\scr D^\otimes)$, the left hand vertical composite is an equivalence, and hence so is the right hand one.
The first claim follows.

To deduce the second claim we use that by Lemma \ref{lemm:transfers-to-NAlg} and Proposition \ref{prop:free-normed}(4) we have \[ \sigma_* U_* F_* X \wequi \cof(\1 \to \sigma U F_* X) \wequi \cof(\1 \to U\NSym \sigma X) \wequi \cof(\1 \to \bigvee_{n \ge 0} \D_n \sigma X), \] and $\D_0 \sigma X \wequi \1$.
\end{proof}

\begin{example}
We may apply the results of this section to the stabilization functor \[ \sigma = \Sigma^\infty_+: \Spc^\otimes(\ph) \to \SH^\otimes(\ph) \] from \S\ref{subsec:stab-norms}.
We have $\sigma_* = \Sigma^\infty$.
\end{example}

\section{Complements on motivic homotopy theory of stacks} \label{sec:complements}
\subsection{Orbit stacks}
Fix a proper étale morphism $p: \scr S \to \scr B$.
For $\scr Y \in \Stk_{\scr B}$ we put $p^* \scr Y = \scr Y \times_{\scr B} \scr S$.
Given $\scr T \to p^* \scr Y$ finite étale, the composite $\scr T \to p^* \scr Y \to \scr Y$ is proper étale, and hence by Proposition \ref{prop:relative-quotient-gerbe} there exists an initial stack $\repr_{\scr Y}(\scr T)$ under $\scr T$ representable over $\scr Y$.
We put $p_!(\scr T) := \repr_{\scr Y}(\scr T)$.
Define a presheaf of $1$-groupoids $\Orb_p$ on $\Stk_{\scr B}$ by \[ \Orb_p(\scr Y) = \{ \alpha: \scr T \to p^* \scr Y \mid \alpha \text{ finite étale, } p_!(\scr T) \wequi p_!(p^* \scr Y) \}. \]
Since $\scr S \to \scr B$ is proper étale, the diagonal $\Delta: \scr S \to \scr S \times_{\scr B} \scr S$ is proper\NB{proper implies separated means proper diagonal}, étale \cite[Tag 0CJ1]{stacks-project} and representable, so finite étale \cite[Tags 02LR, 02LS]{stacks-project}.
The composite $\scr S \xrightarrow{\Delta} \scr S \times_{\scr B} \scr S \wequi p^* \scr S \to \scr S$ (projection to either factor) is the identity and hence representable; thus $p_!(\Delta) = \scr S$ and so $\Delta \in \Orb_p(\scr S)$.
Given $\scr Y \to \scr S$, we can pull back $\Delta$ along $p^* \scr Y \to p^* \scr S$ to obtain $\Delta_{\scr Y} \in \Orb_p(\scr Y)$; explicitly $\Delta_{\scr Y} = \scr Y \to \scr Y \times_{\scr B} \scr S$ is the identity on the first factor and the canonical morphism on the second.
Define a further presheaf of $1$-groupoids $\widetilde{\Orb}_p$, this time on $\Stk_{\scr S}$, by \[ \widetilde{\Orb}_p(\scr Y \to \scr S) = \{\Delta_{\scr Y} \twoheadrightarrow \scr T \mid \scr T \in \Orb_p(\scr Y) \}. \]
Here by an isomorphism of surjections we mean a commutative diagram
\begin{equation*}
\begin{CD}
\Delta_{\scr Y} @>{\wequi}>> \Delta_{\scr Y} \\
@VVV                            @VVV \\
\scr T @>{\wequi}>> \scr T';
\end{CD}
\end{equation*}
note that since the vertical maps are surjective the bottom map (if it exists) is completely determined by the top map.
\begin{definition}
Let $p: \scr S \to \scr B$ be a proper étale morphism.
The above construction yields a span of presheaves over $\scr B$ \[ \scr S \xleftarrow{p_1} \widetilde\Orb_p \xrightarrow{p_2} \Orb_p, \] which we call the \emph{orbit stack diagram associated with $p$}.
\end{definition}

\begin{remark} \label{rmk:orbit-stack-bc}
These constructions are stable under base change; in particular given a cartesian square
\begin{equation*}
\begin{CD}
\scr Y' @>>> \scr S \\
@V{p'}VV      @V{p}VV \\
\scr Y @>>> \scr B
\end{CD}
\end{equation*}
we have \[ \Orb_{p'}(\scr Y) \wequi \Orb_p(\scr Y) \text{ and } \Orb_{p'}(\scr Y'' \to \scr Y') \wequi \Orb_p(\scr Y'' \to \scr S). \]
\end{remark}

\begin{example} \label{ex:orbits-BG-1}
Let $p: \scr S = B \sslash G \to B$, where $G$ is a finite constant group, and let $\scr Y = S$ be a strictly henselian local scheme.
A finite étale morphism $\scr T \to p^* S$ is a finite étale $S$-scheme together with a $G$-action.
Such a scheme non-equivariantly splits into a finite disjoint union of copies of $S$, and hence equivariantly splits into a finite disjoint union of $G$-orbits $G/H \times S$.
We will have $\scr T \in \Orb_p(\scr B)$ if and only if the number of orbits is $1$.
In other words $\Orb_p(S)$ is equivalent to the core of the orbit category of $G$.
\end{example}

\begin{example} \label{ex:orbits-BG-2}
Continuing with the previous example, suppose given instead a morphism $S \to B \sslash G$, i.e. a $G$-torsor $U$ on $S$.
Then $\Delta_{S} = (S \to S \times B \sslash G) = (U \to S) \sslash G$.
Since $S$ was strictly henselian local, actually $U=G \times S$ and so $\widetilde\Orb_p(S)$ is the core of the subcategory of $\Fin_{G/}$ on the $G$-orbits.
Any $G$-orbit under $G$ has the form $G \to G/H$, and the automorphisms of $G/H$ under $G$ (in the above sense) are given by $NH$.
\end{example}

We amplify the above two examples.
Fix a finite group $G$ and a base stack $\scr B$.
Given a subgroup $H \subset G$ and a $WH$-torsor $A$ (over $\scr B$), we obtain a finite étale $G$-scheme over $\scr B$ given by $A \times_{WH} G/H$.
Given instead a $NH$-torsor $A'$, we can form $A' \times_{NH} G \to A' \times_{NH} G/H$.
\begin{lemma} \label{lemm:orbits-BG}
The above constructions induce equivalences \[ \coprod_{(H) \subset G} \scr B \sslash WH \xrightarrow{\wequi} \Orb_{\scr B\sslash G/\scr B} \text{ and } \coprod_{(H) \subset G} \scr B \sslash NH \xrightarrow{\wequi} \widetilde\Orb_{\scr B \sslash G/\scr B}; \] here the coproducts are over conjugacy classes of subgroups.
\end{lemma}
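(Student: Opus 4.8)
The plan is to write down the two morphisms by hand, via the associated-bundle (``induction'') construction, and then verify that they are equivalences after passing to stalks, using Examples~\ref{ex:orbits-BG-1} and~\ref{ex:orbits-BG-2} for the local computation. Here $p$ denotes $\scr B \sslash G \to \scr B$, so that $p^*\scr Y = \scr Y \times_{\scr B} (\scr B \sslash G)$ is just $\scr Y$ with the trivial $G$-action and (by Proposition~\ref{prop:relative-quotient-gerbe}(1)) $p_!(p^*\scr Y) \wequi \scr Y$. Given a subgroup $H \subset G$, the group $WH$ acts on the $G$-set $G/H$ commuting with the $G$-action, so for a $WH$-torsor $A$ over $\scr Y$ the balanced product $\tilde{\scr T} := A \times_{WH}(G/H)$ is a finite étale $G$-scheme over $\scr Y$ with $\tilde{\scr T}/G \wequi A/WH \wequi \scr Y$; thus $\scr T := \tilde{\scr T} \sslash G \to p^*\scr Y$ is an object of $\Orb_p(\scr Y)$, and sending $(H), A$ to this $\scr T$ defines the first map $\coprod_{(H)} \scr B \sslash WH \to \Orb_p$. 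Similarly, given an $NH$-torsor $A'$ over $\scr Y$, the induced $G$-torsor $A' \times_{NH} G$ provides a structure map $\scr Y \to \scr S$, and the canonical $G$-equivariant surjection $A' \times_{NH} G \twoheadrightarrow A' \times_{NH}(G/H)$ becomes, after passing to stack quotients, a surjection $\Delta_{\scr Y} \twoheadrightarrow (A' \times_{NH} G/H)\sslash G$, i.e.\ an object of $\widetilde\Orb_p(\scr Y)$; this defines the second map. Both are natural in $\scr Y$ and, by Remark~\ref{rmk:orbit-stack-bc}, compatible with base change, and by inspection they fit into the displayed $p_1, p_2$ diagram.

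Next I would reduce to a local computation. The source and target of each map are stacks for the étale topology: $\scr B \sslash WH$ and $\scr B \sslash NH$ because $WH, NH$ are finite constant groups, and $\Orb_p, \widetilde\Orb_p$ because finite étale morphisms satisfy étale descent and the conditions carving them out (namely $p_!(\scr T) \wequi p_!(p^*\scr Y)$, resp.\ surjectivity of $\Delta_{\scr Y} \to \scr T$) are étale-local, again by compatibility of $p_! = \repr$ with base change. Since the maps are compatible with base change, it suffices to check that they induce equivalences of groupoids on strictly henselian local $\scr B$-schemes $\scr Y$. Over such a $\scr Y$ every torsor is trivial, so the sources become $\coprod_{(H)} B(WH)$, resp.\ $\coprod_{(H)} B(NH)$. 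By Examples~\ref{ex:orbits-BG-1} and~\ref{ex:orbits-BG-2} the targets become, respectively, the core of the orbit category of $G$ and the core of the subcategory of $\Fin_{G/}$ on the $G$-orbits, which decompose as $\coprod_{(H)} B(\mathrm{Aut}_G(G/H))$ and $\coprod_{(H)} B(\mathrm{Aut}(G \twoheadrightarrow G/H))$. It then remains to observe that our maps realize the classical identifications $\mathrm{Aut}_G(G/H) \wequi WH$ and $\mathrm{Aut}(G \twoheadrightarrow G/H) \wequi NH$ (an automorphism of the latter being, up to conventions, right translation by an element of the normalizer of $H$ together with the induced automorphism of $G/H$): the trivial $WH$-torsor maps to $G/H \times \scr Y$, and right translation by $w \in WH$ induces on it the image of $w$ under $WH \wequi \mathrm{Aut}_G(G/H)$, and analogously for $NH$ and $\Delta_{\scr Y} \wequi G \times \scr Y$. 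Hence both maps are equivalences on stalks, so equivalences.

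I expect the main obstacle to be the bookkeeping in the reduction step rather than any deep idea: one must verify cleanly (i) that any object of $\Orb_p(\scr Y)$ (resp.\ $\widetilde\Orb_p(\scr Y)$) is étale-locally of ``type $(H)$'' for a locally constant conjugacy class $(H)$ — which lets one pass to a clopen decomposition of $\scr Y$ and fix $(H)$ — and (ii) that the associated-bundle functor $A \mapsto A \times_{WH}(G/H)$ is an equivalence from $WH$-torsors onto the ``type $(H)$'' part, i.e.\ that it is inverse to forming the sheaf of $WH$-equivariant automorphisms. Both are instances of ordinary non-abelian $H^1$-descent for forms of a finite $G$-set, but they must be phrased carefully in the stacky setting, keeping track throughout that the quotient condition $p_!(\scr T) \wequi \scr Y$ is exactly what picks out orbits among all finite étale $G$-schemes, and likewise that the surjectivity condition is what describes $\widetilde\Orb_p$.
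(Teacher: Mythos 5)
Your proposal is correct and follows essentially the same route as the paper: the maps are the balanced-product constructions already set up before the lemma, and since both sides are $1$-truncated finitary sheaves one checks the equivalence on strictly henselian local schemes, where Examples \ref{ex:orbits-BG-1} and \ref{ex:orbits-BG-2} identify the targets and the classical isomorphisms $\mathrm{Aut}_G(G/H)\cong WH$, $\mathrm{Aut}(G\twoheadrightarrow G/H)\cong NH$ finish the computation. The extra bookkeeping you flag (local constancy of the type $(H)$ and torsor descent) is exactly what the paper's stalkwise reduction absorbs.
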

\begin{proof}
First observe that $A \times_{WH} G/H$ and $A' \times_{NH} G/H$ are indeed orbits (being locally just given by the orbit $G/H$).
Consequently the maps are well-defined.
Since both sides are $1$-truncated (finitary) fppf sheaves, it suffices to check that the maps induce equivalences when evaluated on strictly henselian local schemes.
This was worked out in Examples \ref{ex:orbits-BG-1} and \ref{ex:orbits-BG-2}.
\end{proof}

\begin{lemma} \label{lemm:orbits-fet}
Let $p: \scr B \to \scr B'$ be finite étale.
Then $\widetilde \Orb_p \wequi \widetilde \Orb_{p' \circ p}$ and $\Orb_p \wequi \Orb_{p' \circ p}$.
\end{lemma}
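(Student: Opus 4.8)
The plan is to reduce the statement, via fppf descent, to the explicit local computation of Lemma~\ref{lemm:orbits-BG}.

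The two structural facts I would invoke are: (i) $\Orb_p$, $\widetilde\Orb_p$, $\Orb_{p'\circ p}$ and $\widetilde\Orb_{p'\circ p}$ are all $1$-truncated finitary fppf sheaves (as already noted in the proof of Lemma~\ref{lemm:orbits-BG}); and (ii) both the orbit stack diagram and the formation of $\repr_{(\ph)}$---hence of $p_!$---are stable under arbitrary base change (Remark~\ref{rmk:orbit-stack-bc} and Proposition~\ref{prop:relative-quotient-gerbe}(1)). Using the naturality of $\repr_{(\ph)}$ in its argument together with Lemma~\ref{lemm:inertia-composition} (to compare the relevant relative inertia over $\scr B$ and over $\scr B'$ along the representable morphism $p$), I would first write down the comparison morphisms $\widetilde\Orb_{p'\circ p}\to\widetilde\Orb_p$ and $\Orb_{p'\circ p}\to\Orb_p$, compatibly with the projections to $\scr S$ and with the structure morphisms over the base; and I would then note, using (i) and (ii), that it suffices to check that these are equivalences after an fppf---indeed étale---base change.

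After such a base change we may assume every finite étale morphism under consideration is split and every proper étale gerbe is, componentwise, of the form $B\sslash G\to B$ with $G$ a finite constant group. In this situation both sides are computed by Lemma~\ref{lemm:orbits-BG} together with the elementary behaviour of $\Orb$ and $\widetilde\Orb$ under finite disjoint unions, and one verifies directly that the comparison morphisms are equivalences; descent then yields the general case. (Alternatively, the $\widetilde\Orb$ assertion follows from the $\Orb$ assertion by base change along $p_2$, since the tautological sections $\Delta_{(\ph)}$ are respected by the comparison.) I expect the only genuine difficulty to be the first step: producing the comparison morphisms as honest natural transformations of presheaves of groupoids requires threading several instances of the universal property of $\repr_{(\ph)}$ through the canonical factorizations of the proper étale morphisms involved---once this is in place, the descent reduction and the local verification are routine.
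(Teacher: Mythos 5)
Your plan is correct in outline, but it takes a genuinely different and considerably heavier route than the paper, and the one step you defer is in fact the whole proof. The paper's argument is a direct appeal to the definitions via the canonical factorization $p = r\circ p_0$ with $p_0\colon \scr S\to\repr_{\scr B}\scr S$ the gerbe part and $r$ finite étale (Example \ref{ex:fact-etale}): an orbit $\scr T\to\scr Y\times_{\scr B}\scr S$ determines, via $\repr_{\scr Y}(\scr T)\to\repr_{\scr Y}(\scr Y\times_{\scr B}\repr_{\scr B}\scr S)=\scr Y\times_{\scr B}\repr_{\scr B}\scr S$ and Lemma \ref{lemm:inertia-composition}, a lift of $\scr Y$ through which $\scr T$ factors, and conversely an orbit for $p_0$ is an orbit for $p$ via the clopen immersion $\scr Y\times_{\repr_{\scr B}\scr S}\scr S\hookrightarrow\scr Y\times_{\scr B}\scr S$; these assignments are visibly mutually inverse. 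So the comparison morphism you acknowledge as ``the only genuine difficulty'' is precisely where all the content lies, and once it is constructed one simply writes down its inverse --- the fppf descent and the appeal to Lemma \ref{lemm:orbits-BG} become superfluous. That said, your route is not wrong: the orbit presheaves are indeed $1$-truncated finitary fppf sheaves in general (the argument sketched in the proof of Lemma \ref{lemm:orbits-BG} does not use the special form of $p$ there), there is no circularity (algebraicity of $\Orb_p$ is only established after this lemma, but you use only the sheaf property), and base-change stability reduces everything to the split case. Two caveats on the local verification: after splitting, $p$ and $p'\circ p$ are not gerbes but finite disjoint unions of constant gerbes over copies of the base, so what you really need locally is the disjoint-union bookkeeping for $\Orb$ and $\widetilde\Orb$ (which in the split case \emph{is} the statement being proved) rather than the explicit identification of Lemma \ref{lemm:orbits-BG}; and $\widetilde\Orb_p$ is not literally a base change of $\Orb_p$ along $p_2$, so your parenthetical shortcut still requires matching the tautological surjections $\Delta_{\scr Y}\twoheadrightarrow\scr T$ on both sides, which again comes down to the same clopen-immersion observation.
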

\begin{proof}
Indeed it is clear by definition\NB{...} that if $p_0: \scr S \to \repr_{\scr B} \scr S$ is the relative quotient, hen $\Orb_p \wequi \Orb_{p_0}$ and similarly for $\widetilde\Orb$.
\end{proof}
In particular, for most problems regarding orbits stacks we may assume that $p$ is a gerbe, and conversely most constructions still make sense for any morphism $\scr S \to \scr B$ with finite étale inertia.

We can now prove that $\Orb_p$ and $\widetilde \Orb_p$ are actually stacks.
\begin{proposition}
The presheaves $\widetilde\Orb_p, \Orb_p$ are algebraic stacks.
The morphism $p_1$ is finite étale, $p_2$ is a gerbe (tame if $p$ is), and $\Orb_p \to \scr B$ is proper étale (tame if $p$ is).
\end{proposition}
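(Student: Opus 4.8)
The plan is to reduce the statement to the case $p : \scr B \sslash G \to \scr B$ with $G$ a finite constant group, where Lemma~\ref{lemm:orbits-BG} makes the orbit stack diagram completely explicit, and then to read off the claimed properties.

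\emph{Reductions.} I would first observe that all the assertions --- that $\Orb_p$ and $\widetilde\Orb_p$ are algebraic stacks, that $p_1$ is finite étale, that $p_2$ is a gerbe, that $\Orb_p \to \scr B$ is proper étale, and the tameness clauses --- are fppf-local on $\scr B$: the whole orbit stack diagram is compatible with base change in $\scr B$ by Remark~\ref{rmk:orbit-stack-bc}, ``being an algebraic stack'' can be checked fppf-locally on the base, and each of the classes of morphisms at issue (finite étale, gerbe, proper étale, tame) is fppf-local on the target. Since $\Orb_p$ and $\widetilde\Orb_p$ are (finitary) fppf sheaves, it therefore suffices to argue fppf-locally on $\scr B$; in particular we may take $\scr B$ affine. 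Next, factoring $p = r \circ p_0$ with $r$ finite étale and $p_0$ a proper étale gerbe with finite étale inertia (Example~\ref{ex:fact-etale}), Lemma~\ref{lemm:orbits-fet} identifies the orbit stack diagram of $p$ with that of $p_0$, the only difference being that the base $\ul{\scr S}$ is replaced by $\scr B$ via the representable --- indeed finite étale --- morphism $r$. Since all the classes of morphisms in play are stable under composition with $r$, and tameness is unaffected by composing with the representable $r$ (Lemma~\ref{lemm:inertia-composition}), we reduce to proving the statement for the gerbe $p_0$. Localizing still further on $\scr B$ via Remark~\ref{rmk:pet-gerbe-local-structure}, we are reduced to $p : \scr B \sslash G \to \scr B$ with $G$ finite constant and $\scr B$ any algebraic stack.

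\emph{The case $p = \scr B \sslash G \to \scr B$.} Here Lemma~\ref{lemm:orbits-BG} gives $\Orb_p \wequi \coprod_{(H)} \scr B \sslash WH$ and $\widetilde\Orb_p \wequi \coprod_{(H)} \scr B \sslash NH$ (coproducts over conjugacy classes of subgroups of $G$), which are patently algebraic stacks. Tracing through the constructions of Lemma~\ref{lemm:orbits-BG} via Examples~\ref{ex:orbits-BG-1} and~\ref{ex:orbits-BG-2}, one checks that under these equivalences $p_1$ is, on the $(H)$-summand, the map $\scr B \sslash NH \to \scr B \sslash G = \scr S$ induced by $NH \hookrightarrow G$; that $p_2$ is, on the $(H)$-summand, the map $\scr B \sslash NH \to \scr B \sslash WH$ induced by $NH \twoheadrightarrow WH = NH/H$; and that $\Orb_p \to \scr B$ is $\coprod_{(H)} (\scr B \sslash WH \to \scr B)$. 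Now $\scr B \sslash NH \to \scr B \sslash G$ is representable, since its pullback along a map classified by a $G$-torsor $P$ is the scheme $P \times_G (G/NH)$ of reductions of structure group, which is finite étale; hence $p_1$ is finite étale. The pullback of $\scr B \sslash NH \to \scr B \sslash WH$ along the universal $WH$-torsor $\scr B \to \scr B \sslash WH$ is $\scr B \sslash H \to \scr B$; hence $p_2$ is a gerbe. Finally $\scr B \sslash WH \to \scr B$ is proper étale because $WH$ is a finite constant --- hence finite étale --- group over $\scr B$; hence $\Orb_p \to \scr B$ is proper étale. Finite disjoint unions of morphisms with these properties again have them. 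For the tameness refinements, if $p$ is tame then $|G| \in \scr O_{\scr B}^\times$, so $|H|$ and $|WH|$, being divisors of $|G|$, are invertible on $\scr B$, making $\scr B \sslash H \to \scr B$ and $\scr B \sslash WH \to \scr B$ tame; thus $p_2$ and $\Orb_p \to \scr B$ are tame.

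\emph{Main obstacle.} The crux is not any single computation but the careful identification, under the equivalences of Lemma~\ref{lemm:orbits-BG}, of the abstractly defined span maps $p_1$ and $p_2$ with the explicit maps between classifying stacks; this is essentially bookkeeping with torsors and orbit categories (via Examples~\ref{ex:orbits-BG-1} and~\ref{ex:orbits-BG-2}), but it is precisely what makes the passage from Lemma~\ref{lemm:orbits-BG} to the present statement non-automatic. A secondary point requiring care is checking that the fppf-descent reduction and the transport along the finite étale morphism $r$ genuinely preserve all the morphism properties --- tameness included --- so that the reduction to $p = \scr B \sslash G \to \scr B$ is legitimate.
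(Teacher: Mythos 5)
Your proposal is correct and follows essentially the same route as the paper: reduce via Lemma \ref{lemm:orbits-fet} and the factorization of Example \ref{ex:fact-etale} to the case of a proper étale gerbe, localize fppf on $\scr B$ using Remark \ref{rmk:pet-gerbe-local-structure} to reach $p:\scr B\sslash G\to\scr B$ with $G$ finite constant, and then read everything off from the explicit description in Lemma \ref{lemm:orbits-BG}. The only difference is that you spell out the identification of $p_1$, $p_2$ and the verification of their properties (representability of $\scr B\sslash NH\to\scr B\sslash G$, etc.), which the paper leaves implicit in the phrase ``we are reduced to Lemma \ref{lemm:orbits-BG}''.
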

\begin{proof}
Using the factorization $\scr S \to \repr_{\scr B} \scr S \to \scr B$ and Lemma \ref{lemm:orbits-fet}, we may assume that $p$ is a proper étale gerbe.
The claims being fppf local, we may assume (using Remark \ref{rmk:pet-gerbe-local-structure}) that $\scr S = B \sslash G$ for some finite group $G$ (reductive over $\scr B=B$ if $p$ is tame).
In this case we are reduced to Lemma \ref{lemm:orbits-BG} (using that subquotients of $G$ are also reductive over $B$ if $G$ is).
\end{proof}

Recall that a morphism of $\scr B$-stacks $\scr A \to \scr B$ is called \emph{fixed-point reflecting} if the induced map $I_{\scr A/\scr B} \to I_{\scr B/\scr B} \times_{\scr B} \scr A$ is an equivalence \cite[Tag 0DU6]{stacks-project}.
We say that $\scr A \in \Stk_{/\scr B}$ has \emph{flat inertia (over $\scr B$)} if $I_{\scr A/\scr B} \to \scr A$ is flat.
One of the main reasons for considering orbit stacks is the following.
\begin{proposition} \label{prop:identify-flat-inertia}
Let $p: \scr S \to \scr B$ be proper étale.\NB{do not seem to need tame...?}
The composite \[ \SmQA_{\Orb_p}[\scr F_\free/\scr B] \xrightarrow{p_2^*} \SmQA_{\widetilde\Orb_p} \xrightarrow{p_{1\sharp}} \SmQA_{\scr S} \] is an equivalence onto the subcategory of stacks with flat inertia (over $\scr B$) and fixed-point reflecting morphisms.
Here we write $\SmQA_{\Orb_p}[\scr F_\free/\scr B] \subset \SmQA_{\Orb_p}[\scr F_\free/\scr B]$ for the full subcategory on stacks with free action, i.e. representable over $\scr B$.
\end{proposition}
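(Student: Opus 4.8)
The plan is to reduce everything to an explicit computation for $B \sslash G$ with $G$ finite constant, and then exhibit a quasi-inverse given by the orbit-type decomposition. First I would observe that both sides of the asserted equivalence are values of fppf sheaves of $\infty$-categories on $\scr B$: the assignment $\SmQA_{(\ph)}$ is an fppf sheaf in its argument (quasi-affine morphisms satisfy fpqc descent \cite[Tag 02L7]{stacks-project}, and smoothness and finite presentation descend); the conditions ``free action over the base'', ``flat inertia over the base'' and ``fixed-point reflecting'' are all fppf-local on $\scr B$; and $\Orb_{(\ph)}, \widetilde\Orb_{(\ph)}$ together with $p_1, p_2$ are compatible with base change on $\scr B$ by Remark \ref{rmk:orbit-stack-bc}, as are $p_2^*$ and $p_{1\sharp}$ (the latter being computed by composition since $p_1$ is representable). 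Since a base-change-compatible morphism of fppf sheaves of categories that becomes an equivalence over an fppf cover is itself an equivalence, and since by Lemma \ref{lemm:orbits-fet} we may replace $p$ by its relative quotient and thereby assume $p$ is a proper étale gerbe, Remark \ref{rmk:pet-gerbe-local-structure} reduces us to the case $\scr S = B \sslash G$ with $B$ affine and $G$ a finite constant group.

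In that case Lemma \ref{lemm:orbits-BG} identifies $\Orb_p \wequi \coprod_{(H)} B \sslash WH$ and $\widetilde\Orb_p \wequi \coprod_{(H)} B \sslash NH$ (coproducts over conjugacy classes of subgroups), with $p_2$ given on the $(H)$-summand by the gerbe $B \sslash NH \to B \sslash WH$ coming from $NH \twoheadrightarrow WH$, and $p_1$ by the representable finite étale morphism $B \sslash NH \to B \sslash G$ coming from $NH \hookrightarrow G$. Identifying $\SmQA_{B \sslash K}$ with the category of smooth quasi-affine $B$-schemes carrying a $K$-action, the source category becomes the product over $(H)$ of the categories of such schemes with a \emph{free} $WH$-action; and using the standard equivalence $(G \times_K Z) \sslash G \wequi Z \sslash K$ to compute $p_{1\sharp}$, one finds that $p_{1\sharp} p_2^*$ sends a tuple $(Y_{(H)})_{(H)}$ to the $G$-scheme $\coprod_{(H)} G \times_{NH} Y_{(H)}$, where $NH$ acts on $Y_{(H)}$ through $NH \twoheadrightarrow WH$ (so that $H$ acts trivially on $Y_{(H)}$), with the evident effect on morphisms.

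It then remains to verify, over $B \sslash G$, three things: (i) the target of $p_{1\sharp}p_2^*$ lands in the claimed subcategory; (ii) $p_{1\sharp}p_2^*$ is essentially surjective onto it; (iii) it is fully faithful. For (i): the stabiliser of $[g,y] \in G \times_{NH} Y_{(H)}$ equals $g (NH)_y g^{-1} = gHg^{-1}$ because $H$ acts trivially and $WH$ freely on $Y_{(H)}$, so each fixed locus $\big(\coprod_{(H)} G \times_{NH} Y_{(H)}\big)^g$ is clopen and the relative inertia over $\scr B$ is finite locally free; and a morphism coming from the source is componentwise $WH$-equivariant, hence preserves stabilisers, hence is fixed-point reflecting. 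For (ii) and (iii) I would produce the quasi-inverse $X \mapsto (X^H_{(H)})_{(H)}$: if $X$ is smooth quasi-affine over $B$ with flat inertia over $\scr B$, then each $X^H = \bigcap_{h \in H} X^h$ is clopen, so $X = \coprod_{(H)} X_{(H)}$ splits into the clopen orbit-type strata; the stratum $X^H_{(H)}$ is smooth quasi-affine over $B$, has stabiliser exactly $H$, hence carries a free $WH$-action, and $X_{(H)} = \coprod_{(H')=(H)} X^{H'}_{(H)} \wequi G \times_{NH} X^H_{(H)}$; moreover a fixed-point-reflecting $G$-morphism between two such schemes preserves these decompositions and restricts to $WH$-equivariant morphisms on the $X^H_{(H)}$, with the obvious inverse construction. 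Checking that the two composites are naturally the identity then yields the equivalence onto the subcategory, and undoing the reduction of the first paragraph concludes.

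I expect the main obstacle to be the scheme-level identification $X_{(H)} \wequi G \times_{NH} X^H_{(H)}$ (rather than a mere bijection on points): the key point is that if $H' \ne H''$ are distinct conjugates of $H$, then a common $\langle H', H''\rangle$-fixed point of $X_{(H)}$ would have stabiliser of order $> |H|$, which is impossible, so $X_{(H)} = \coprod_{(H')=(H)} X^{H'}_{(H)}$ really is a decomposition into open-and-closed subschemes matching $G \times_{NH} X^H_{(H)} = \coprod_{g \in G/NH} X^{gHg^{-1}}_{(H)}$ $G$-equivariantly — and this is precisely where the flat-inertia hypothesis enters, as it forces all the loci $X^{H'}$ to be clopen. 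The remaining ingredients — the first-paragraph descent bookkeeping, and the naturality checks for the two composites in (ii) and (iii) — are routine.
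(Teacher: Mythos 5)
Your proposal is correct and follows essentially the same route as the paper: reduce by fppf descent and Lemma \ref{lemm:orbits-fet} to $\scr S = B \sslash G$ with $G$ finite constant, identify the orbit stacks via Lemma \ref{lemm:orbits-BG}, and then use the clopen orbit-type decomposition $X \wequi \coprod_{(H)} G \times_{NH} X(H)$ (the paper's Lemma \ref{lemm:flat-inertia-form}, which is exactly your "flat inertia forces the fixed loci to be clopen" point) together with the set-level check $(G \times_{NH} Y)^H \wequi Y$ for full faithfulness. The only cosmetic difference is that you package essential surjectivity and full faithfulness as an explicit quasi-inverse, whereas the paper proves them separately; the content is the same.
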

\begin{proof}
By Lemmas \ref{lemm:inertia-composition} and \ref{lemm:orbits-fet} we may assume that $p$ is a proper étale gerbe.
Since all our properties are fppf local, we are dealing with fppf sheaves of $1$-categories; we may hence assume that $\scr S = B\sslash G$ where $G$ is a finite constant group, and $B$ is an affine scheme.

We first prove essential surjectivity; thus let $X$ be a smooth quasi-affine $B$-scheme with a $G$-action and flat inertia.
Lemma \ref{lemm:flat-inertia-form} below implies that \[ X \wequi \coprod_{(H) \subset G} G \times_{NH} X(H); \] here the coproduct is over conjugacy classes of subgroups of $G$.
Here we use that \[ \coprod_{H' \text{ conj. to } H} X(H') = \coprod_{gNH \in G/NH} X(H)^g \wequi G \times_{NH} X(H). \]
Note that $H$ acts trivially on $X(H)$, and $NH/H$ acts freely (because all points have isotropy precisely $H$, so no element of $NH \setminus H$ can fix any point).
It hence suffices to show (using Lemma \ref{lemm:orbits-BG}) that if $X$ is a smooth quasi-affine $B$-scheme with action by $NH$, such that $H$ acts trivially and $NH/H$ acts freely, then $X$ is obtained by inflation from a free $WH$-scheme.
This is clear.

Now we prove fully faithfulness onto the subcategory of fixed-point reflecting morphisms.
Thus let $f: X \to Y$ be a morphism of (smooth quasi-affine) $B$-schemes with $G$-action and flat inertia.
If $f$ is fixed-point reflecting, then $f(X(H)) \subset Y(H)$, i.e. $f$ respects the decompositions of $X$ and $Y$.
Note also that any morphism of $G$-schemes both of which have all isotropy conjugate to $H$ is necessarily fixed-point reflecting.
It thus suffices to prove that if $H$ is a subgroup of $G$, then the functor $p_{1\sharp}p_2^*$ from free $WH$-schemes to $G$-schemes is fully faithful.
In other words if $X$ is a free $WH$-scheme (smooth and quasi-affine over $B$), then we seek to prove that $(G \times_{NH} X)^H \wequi X$ (i.e. $p_{2*}p_1^*p_{1\sharp}p_2^* X \wequi X$).
Considering the functor of points, we reduce to the same statement about \emph{sets}, which is easy.\NB{$(g,x) \in G \times_{NH} X$ is fixed by $H$ only if $g \in NH$, i.e. only one component left}
\end{proof}
\begin{lemma} \label{lemm:flat-inertia-form}
Let $G$ be a finite constant group acting on a locally finite type, separated scheme $X$ with flat inertia.
For a subgroup $H \subset G$, write $X(H)$ for the set of points with isotropy precisely $H$.
Then \[ X = \coprod_{H} X(H) \] is a decomposition into disjoint clopen subsets, where the disjoint union runs over all subgroups.
\end{lemma}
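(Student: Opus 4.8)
The plan is to reduce the statement to the structure of the fixed loci of the individual elements $g \in G$, and from there to a short combinatorial argument about subgroups. Since $G$ is a finite constant group, the inertia scheme decomposes as $I_X = \coprod_{g \in G} X^g$, where $X^g := \mathrm{Eq}(g, \id_X)$ is the equaliser of the automorphism $g$ and the identity; as $X$ is separated, $\Delta_X$ is a closed immersion and hence each $X^g \hookrightarrow X$ is a closed immersion, in particular a monomorphism. The hypothesis that $I_X \to X$ is flat says exactly that each $X^g \to X$ is flat, and since $X$ is locally of finite type the immersion $X^g \hookrightarrow X$ is moreover locally of finite presentation. A flat monomorphism which is locally of finite presentation is an open immersion \cite[Tag 025G]{stacks-project}, so \emph{each $X^g$ is a clopen subscheme of $X$}. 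For a subgroup $H \subseteq G$ I put $X^H := \bigcap_{g \in H} X^g$, a finite intersection of clopen subschemes and hence clopen; a point $x$ lies in $X^H$ if and only if $x \in X^g$ for all $g \in H$.

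The key step is the containment $X^g \cap X^{g'} \subseteq X^{gg'}$ for all $g, g' \in G$. By the universal property of the equaliser, the inclusion $j_g \colon X^g \hookrightarrow X$ satisfies $g \circ j_g = j_g$, so $g$ preserves $X^g$ and restricts to the identity on it. Consequently both $g$ and $g'$ restrict to the identity on $Y := X^g \cap X^{g'}$, hence so does $gg'$; therefore $j_Y \colon Y \hookrightarrow X$ equalises $gg'$ and $\id_X$ and thus factors through $X^{gg'} = \mathrm{Eq}(gg', \id_X)$, that is, $Y \subseteq X^{gg'}$. The same argument gives $X^H \cap X^{H'} \subseteq X^{\langle H, H' \rangle}$ for subgroups $H, H'$.

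Granting this, I would finish as follows. For $x \in X$ write $G_x := \{ g \in G : x \in X^g \}$ for its isotropy group. Using $X^e = X$, the identity $X^g = X^{g^{-1}}$ (inversion on the inertia group scheme swaps the components $X^g$ and $X^{g^{-1}}$) and the containment just proved, one sees that $G_x$ is a subgroup of $G$, and that $x \in X^H$ if and only if $H \subseteq G_x$. Matching this against the definition of $X(H)$ one checks that $X(H) = \{ x \in X : G_x = H \} = X^H \setminus \bigcup_{H \subsetneq K \subseteq G} X^K$. Since $G$ has only finitely many subgroups and each $X^g$ is clopen, this is a finite Boolean combination of clopen subsets of $X$, hence clopen. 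Every $x$ lies in $X(G_x)$ and the various $X(H)$ are pairwise disjoint by construction, so $X = \coprod_H X(H)$ is the asserted decomposition into disjoint clopen subschemes.

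The only point at which the hypotheses genuinely enter is the implication ``$X^g$ closed $\Rightarrow$ $X^g$ clopen'', which uses flatness of the inertia together with $X^g \hookrightarrow X$ being locally of finite presentation (this is where ``locally of finite type'' is needed). I expect that step, and writing out the equaliser containment $X^g \cap X^{g'} \subseteq X^{gg'}$ in full, to be the only parts requiring care; I foresee no serious obstacle.
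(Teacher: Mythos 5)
Your proposal is correct and follows essentially the same route as the paper: each $X^g$ is shown to be clopen because it is a closed immersion (separatedness), flat (flat inertia), and locally of finite presentation (pullback of the diagonal of a locally finite type morphism), hence open; then $X(H)$ is a finite Boolean combination of the clopen sets $X^g$. Your extra verification that $X^g \cap X^{g'} \subseteq X^{gg'}$, so that every point's isotropy set is genuinely a subgroup, is a detail the paper leaves implicit but is a welcome addition.
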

\begin{proof}
Consider the cartesian squares (for some $g \in G$)
\begin{equation*}
\begin{CD}
X^g @>>> I @>>> X \\
@VVV @VVV  @V{\Delta}VV \\
\{g\} \times X @>>> G \times X @>>> X \times X.
\end{CD}
\end{equation*}
The composite $\{g \times X\} \to X \times X$ is just the diagonal followed by an automorphism, whence a finitely presented \cite[Tag 0818]{stacks-project} closed immersion.
By assumption $I \to X$ is flat, and by what we just said it is finitely presented, whence open \cite[Tag 01UA]{stacks-project}.
Thus the same follows for $X^g \to X$, which is also closed (by what we just said).
We have thus shown that $X^g \subset X$ is a clopen subset, for any $g \in G$.

The result follows easily from this.
Indeed $X(H)$ is obtained from subschemes of the form $X^g$ by finitely many operations of intersecting and complementing, whence it is a clopen subscheme.
By construction $X(H) \cap X(H') = \emptyset$ whenever $H \ne H'$.
Hence \[ \coprod_H X(H) \subset X \] is a clopen subscheme.
Since every point is in one of the sets $X(H)$, the result follows.
\end{proof}

\subsection{Isotropy specification} \label{subsec:isotropy-spec}
Let $p: \scr S \to \scr B$ be a proper étale morphism.
\begin{definition}
By an \emph{isotropy specification for $p$} we mean a clopen substack of $\Orb_p$.
\end{definition}
\begin{remark}
Since $\widetilde \Orb_p \to \Orb_p$ is a gerbe, their sets of clopen substacks are in bijection.\NB{check fppf locally, i.e. for $B \sslash G$}\tdn{ref?}
\end{remark}

Let $\{B_\alpha \to \scr B\}$ be an fppf cover such that for each $\alpha$, $B_\alpha \times_{\scr B} \scr S \wequi B_\alpha \sslash G_\alpha$ for some finite constant group $G_\alpha$.
(This always exists, by Remark \ref{rmk:pet-gerbe-local-structure}.)
By a \emph{compatible collection of isotropy groups} $\scr F$ we mean for each $\alpha$ a collection $\scr F_\alpha$ of subgroups of $G_\alpha$ which is stable under conjugation and such that whenever $B_\alpha \times_{\scr B} B_\beta \ne \emptyset$, we have $\scr F_\alpha \wequi \scr F_\beta$ under the induced isomorphism $G_\alpha \wequi G_\beta$.
\begin{remark}
There is no canonical isomorphism $B_\alpha \times_{\scr B} \scr S \wequi B\sslash G_\alpha$; equivalently $B_\alpha \sslash G_\alpha$ has automorphisms.
These are given (clopen-locally on $B_\alpha$) by conjugation by elements of $G$.
The assumption that $\scr F_\alpha$ by stable under conjugation thus ensures that the family of subgroups \emph{is} canonical.
\end{remark}
\begin{lemma}
There is a natural injective map \[ \{\text{compatible collections of isotropy groups} \} \hookrightarrow \{\text{isotropy specifications}\}. \]
If each $B_\alpha$ is connected, the map is also surjective.
In general, every isotropy specification arises in this way up to a clopen refinement of the cover.
\end{lemma}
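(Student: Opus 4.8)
The plan is to describe the clopen substacks of $\Orb_p$ concretely via fppf descent along the cover $\{B_\alpha\}$, and then to recognize the resulting descent data as exactly the notion of a compatible collection of isotropy groups. First I would record the local picture: by Remark \ref{rmk:orbit-stack-bc} we have $\Orb_p \times_{\scr B} B_\alpha \wequi \Orb_{p_\alpha}$ for the proper étale gerbe $p_\alpha\colon B_\alpha \sslash G_\alpha \to B_\alpha$, and by Lemma \ref{lemm:orbits-BG} this is $\coprod_{(H) \subset G_\alpha} B_\alpha \sslash WH$, the coproduct over conjugacy classes of subgroups. Clopen immersions satisfy fppf descent, and for a gerbe $\scr X \to X$ pullback is a bijection on clopen substacks (the same fact used in the remark preceding the lemma, applied here to each gerbe $B_\alpha \sslash WH \to B_\alpha$). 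Hence a clopen substack of $\Orb_p \times_{\scr B} B_\alpha$ amounts to a function $(H) \mapsto B_\alpha^{(H)}$ assigning a clopen subscheme of $B_\alpha$ to each conjugacy class, and an isotropy specification is equivalently a family of such functions, one for each $\alpha$, agreeing over the overlaps $B_\alpha \times_{\scr B} B_\beta =: B_{\alpha\beta}$.

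Next I would construct the map. To a compatible collection $\scr F = (\scr F_\alpha)$ I attach, over $B_\alpha$, the clopen substack $\scr C_\alpha := \coprod_{(H) \in \scr F_\alpha} B_\alpha \sslash WH \subseteq \Orb_p \times_{\scr B} B_\alpha$ — i.e. the function sending $(H)$ to $B_\alpha$ when $(H) \in \scr F_\alpha$ and to $\emptyset$ otherwise. To glue the $\scr C_\alpha$ it suffices to check that $\scr C_\alpha$ and $\scr C_\beta$ restrict to the same substack over each clopen piece of $B_{\alpha\beta}$ on which the induced identification $G_\alpha \wequi G_\beta$ — coming from $B_{\alpha\beta}\sslash G_\alpha \wequi B_{\alpha\beta}\times_{\scr B}\scr S \wequi B_{\alpha\beta}\sslash G_\beta$, which is canonical only up to inner automorphism — is constant. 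There the two local identifications of $\Orb_p \times_{\scr B} B_{\alpha\beta}$ with coproducts of classifying stacks of Weyl groups differ by the induced bijection on conjugacy classes of subgroups, which is well defined precisely because inner automorphisms act trivially on conjugacy classes, and the compatibility hypothesis says exactly that this bijection carries $\scr F_\alpha$ onto $\scr F_\beta$. Thus by fppf descent for clopen immersions (the cocycle condition being automatic, since clopen immersions are monomorphisms) the $\scr C_\alpha$ glue to a clopen substack of $\Orb_p$; the construction is manifestly canonical, giving a natural map.

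It then remains to settle the three assertions. Injectivity is immediate: after discarding any empty members of the cover, if $\scr F_\alpha$ and $\scr F'_\alpha$ differ at some conjugacy class $(H)$, then the associated clopen substacks of $\Orb_p$ differ already after pullback to $B_\alpha$. For surjectivity when each $B_\alpha$ is connected, given an isotropy specification $\scr C$ its pullback to $B_\alpha$ corresponds to clopen subschemes $B_\alpha^{(H)}$ each of which is $\emptyset$ or $B_\alpha$, hence to a conjugation-stable family $\scr F_\alpha$; these are compatible because the local pieces of $\scr C$ already agree on overlaps (the gluing criterion above, run in reverse), and they recover $\scr C$. In general, since $G_\alpha$ has only finitely many conjugacy classes of subgroups, the finitely many clopen subschemes $B_\alpha^{(H)}$ cut $B_\alpha$ into finitely many clopen pieces $B_\alpha = \coprod_j B_{\alpha,j}$ on each of which every $B_\alpha^{(H)}$ is empty or everything; passing to the refined cover $\{B_{\alpha,j}\}$ reduces to the connected case, exhibiting $\scr C$ as arising from a compatible collection for the refinement.

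I expect the only genuinely delicate step to be the gluing argument in the second paragraph: matching the fppf-descent condition on the $\scr C_\alpha$ with the stated compatibility, and in particular the observation that the ambiguity in the identification $G_\alpha \wequi G_\beta$ is an inner automorphism, which acts trivially on conjugacy classes of subgroups and is therefore absorbed by the conjugation-stability built into the definition. Everything else — fppf descent for clopen immersions, the bijection between clopen substacks of a gerbe and of its base, and the finiteness of the partition in the refinement step — is routine.
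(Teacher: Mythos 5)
Your proof is correct and follows essentially the same route as the paper's: identify $\Orb_p\times_{\scr B}B_\alpha$ via Lemma \ref{lemm:orbits-BG}, descend the clopen substacks $\coprod_{(H)\in\scr F_\alpha}B_\alpha\sslash WH$ using the compatibility (conjugation-stability absorbing the inner ambiguity in $G_\alpha\wequi G_\beta$), and reverse the process up to clopen refinement of the cover. Your write-up is simply a more detailed version of the paper's terse argument; no gaps.
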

\begin{proof}
Given the collection $\scr F_\alpha$, we obtain the clopen substack \[ \coprod_{(H \in \scr F_\alpha)} B_\alpha \sslash WH \subset \coprod_{(H)} B_\alpha \sslash WH \wequi \Orb_{p \times_{\scr B} B_\alpha}. \]
The compatibility assumption implies that this is a descent datum; hence we have produced an isotropy specification.
Conversely, an isotropy specification yields a clopen substack of \[ \coprod_{(H)} B_\alpha \sslash WH. \]
This must be a disjoint union of clopen substacks of the $B_\alpha \sslash WH$, which correspond to clopen substacks of $B_\alpha$.
Thus up to clopen refinement of $B_\alpha$, we have produced a collection of subgroups of $G_\alpha$ stable under conjugation.
The two operations are inverses of one another, to the extent that they are defined.
\end{proof}

For this reason, we often denote isotropy specifications by $\scr F$, and think of them as collections of subgroups.
\begin{notation}
When thinking of an isotropy specification $\scr F$ as a collection of subgroups, we denote for clarity the corresponding clopen substack of $\Orb_p$ by $\Orb_\scr F$, and its preimage in $\widetilde\Orb_p$ by $\widetilde\Orb_\scr F$.
\end{notation}

\begin{definition} \label{def:family}
We call an isotropy specification $\scr F$ a \emph{family} if each $\scr F_\alpha$ is a family, that is, closed under passage to subgroups.
We call $\scr F$ \emph{thin} if whenever $H_1 \subset H_2$ with $H_1, H_2 \in \scr F_\alpha$, we have $H_1 = H_2$.
We call families $\scr F_1 \subset \scr F_2$ \emph{weakly adjacent} if $\scr F_2 \setminus \scr F_1$ is thin.
\end{definition}

\begin{example} \label{ex:families}
Taking $\scr F_\alpha$ to be the collection of subgroups of size $\le i$, we obtain an isotropy specification which we denote by $\scr F_i/\scr B$.
One may check this is independent of the presentation of $\scr S$.
These are families, and $\scr F_n/\scr B \subset \scr F_{n+1}/\scr B$ are weakly adjacent.
We also write $\scr F_\free/\scr B := \scr F_0/\scr B$.
\end{example}
\begin{example}
Taking $\scr F_\alpha$ to consist of proper subgroups, we obtain an isotropy specification (in fact family) $\scr F_\prop/\scr B$.
\end{example}
\begin{example}
Taking $\scr F_\alpha$ to consist of \emph{all} subgroups, we obtain an isotropy specification (in fact family) $\scr F_\all/\scr B$.
This is mainly notationally useful.
\end{example}

\begin{definition}
Given a family $\scr F$ and $\scr S' \to \scr S$ representable, we say that \emph{$\scr S$ has isotropy in $\scr F$} if whenever $H \not\in \scr F_\alpha$ and $\scr S' \times_{\scr S} B_\alpha \sslash G_\alpha \wequi X' \sslash G_\alpha$, we have $X'^H = \emptyset$.

Given a thin isotropy specification $\scr F$, we say that $\scr S$ has isotropy in $\scr F$ if $X' = \bigcup_{H \in \scr F_\alpha} X'^H$ and whenever $H \in \scr F_\alpha$, $H \subsetneq K$ we have $X'^K = \emptyset$.

In either case we denote by $\SmQA_\scr{S}[\scr F] \subset \SmQA_\scr{S}$ the full subcategory on stacks with isotropy in $\scr F$.
\end{definition}

\begin{remark} \label{rmk:isotropy-sieve}
It is clear that if $\scr F$ is a family, then $\SmQA_\scr{S}[\scr F]$ is a sieve: if $X \to Y \in \SmQA_{\scr S}$ and $Y \in \SmQA_\scr{S}[\scr F]$, then $X \in \SmQA_\scr{S}[\scr F]$.
\end{remark}

\begin{lemma} \label{lemm:define-TF}
Let $\scr F$ be a family.
For $\scr T/\scr S$ representable, there exists a largest open substack $\scr T(\scr F)$ with isotropy contained in $\scr F$.
\end{lemma}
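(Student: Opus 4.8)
The plan is to reduce to an explicit computation on a quotient stack $B \sslash G$ and then take the complement of the closed locus of "bad" isotropy. First, by Example \ref{ex:fact-etale} I would factor $p$ as $\scr S \xrightarrow{q} \ul{\scr S} \xrightarrow{r} \scr B$ with $q$ a proper étale gerbe and $r$ finite étale; since $\Orb_p \wequi \Orb_q$ by Lemma \ref{lemm:orbits-fet}, isotropy specifications for $p$ and for $q$ coincide, so I may assume $p$ is a gerbe. Next, a largest open substack of $\scr T$ with a prescribed local property is, if it exists, unique, and its restriction to an open of $\scr S$ is again the largest such over that open; since open substacks satisfy fppf descent, existence of $\scr T(\scr F)$ may be checked fppf-locally on $\scr B$ and the local solutions then glue. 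By Remark \ref{rmk:pet-gerbe-local-structure} I may therefore assume $\scr S = B \sslash G$ with $G$ a finite constant group over an affine $B$ and $\scr F$ given by a conjugation-stable family of subgroups of $G$; writing $\scr T \times_{\scr S} B =: X$ (an algebraic space over $B$ with $G$-action, separated since the morphisms in play are), we have $\scr T \wequi X \sslash G$.

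Now I would put $X(\scr F) := X \setminus \bigcup_{H \notin \scr F} X^H$, where $X^H \subseteq X$ is the fixed locus of $H$; this is closed since $X/B$ is separated, and the union is finite since $G$ is finite, so $X(\scr F)$ is open. It is $G$-invariant because $X^{gHg^{-1}} = g \cdot X^H$ and $\scr F$ is conjugation-stable, hence descends to an open substack $\scr T(\scr F) := X(\scr F)\sslash G \subseteq \scr T$. A point $x$ of $X$ lies in $X(\scr F)$ iff its stabilizer $G_x$ contains no subgroup outside $\scr F$, and since $\scr F$ is downward closed this is equivalent to $G_x \in \scr F$; thus $\scr T(\scr F)$ has isotropy in $\scr F$. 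Conversely, any open $\scr U \subseteq \scr T$ with isotropy in $\scr F$ pulls back to a $G$-invariant open $U \subseteq X$ every point of which has stabilizer in $\scr F$, so $U$ avoids each $X^H$ with $H \notin \scr F$, i.e. $U \subseteq X(\scr F)$ and $\scr U \subseteq \scr T(\scr F)$. This is the required maximality on the chart.

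Finally, the construction $X \mapsto X(\scr F)$ does not depend on the chosen presentation $\scr S \wequi B \sslash G$ — two such differ by conjugation, under which $X(\scr F)$ is invariant by the previous paragraph — and it is compatible with passing to a finer cover, so the charts agree on overlaps and glue, via fppf descent for open substacks, to a global open $\scr T(\scr F) \subseteq \scr T$ with the stated universal property. I do not expect a genuine obstacle here; the only point requiring a little care is the closedness of the fixed loci $X^H$ (hence openness of $X(\scr F)$), which rests on $X$ being separated over $B$ — automatic for the $\scr T$ that occur in practice, e.g. objects of $\SmQA_{\scr S}$, whose structure morphisms are affine.
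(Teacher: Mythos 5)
Your proof is correct and follows essentially the same route as the paper's (much terser) argument: reduce fppf-locally to $\scr S = B\sslash G$ with $G$ finite constant and set $T(\scr F) = T \setminus \bigcup_{H\notin\scr F} T^H$. Your additional care about conjugation-invariance, descent of the construction, and the separatedness needed for closedness of the fixed loci $X^H$ fills in details the paper leaves implicit, and is all sound.
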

\begin{proof}
Working fppf locally, we may assume that $\scr S = B \sslash G \to B$ and $\scr T = T \sslash G$, where $G$ is a finite constant group.
We can then put \[ T(\scr F) = T \setminus \bigcup_{H \not\in \scr F'} T^H. \]
\end{proof}

\begin{lemma} \label{lemm:adjacent-inertia-flat}
Let $\scr F_1 \subset \scr F_2$ be weakly adjacent families.
Suppose that $\scr S \to \scr B$ is tame.
For $\scr T/\scr S$ schematic, smooth and separated, the complement $\scr T(\scr F_2 \setminus \scr F_1) := \scr T(\scr F_2) \setminus \scr T(\scr F_1)$ admits a canonical scheme structure which is smooth over $\scr S$ and has flat inertia.
\end{lemma}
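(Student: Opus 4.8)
The plan is to reduce fppf-locally on $\scr B$ to the case of a finite constant group, and then to build the required scheme structure by hand out of fixed-point loci. First I would reduce to the local situation: the three assertions --- existence of the scheme structure, smoothness over $\scr S$, and flatness of the relative inertia over $\scr B$ --- may all be tested fppf-locally on $\scr B$, provided the local constructions are stable under base change, and this stability will be built in via Proposition~\ref{prop:fixed-points-functor} and Corollary~\ref{cor:weil-res}. So choose an fppf cover $\{B_\alpha \to \scr B\}$ with $B_\alpha\times_{\scr B}\scr S \wequi B_\alpha\sslash G_\alpha$, $G_\alpha$ finite constant and, by tameness, $|G_\alpha|$ invertible on $B_\alpha$; fix one such $\alpha$ and write $B=B_\alpha$, $G=G_\alpha$, so $\scr S = B\sslash G$ and $\scr T = T\sslash G$ for a smooth separated $B$-scheme $T$ with $G$-action. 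By Lemma~\ref{lemm:define-TF}, $\scr T(\scr F_i) = T(\scr F_i)\sslash G$ with $T(\scr F_i) = T\setminus\bigcup_{H\notin\scr F_i}T^H$ (regarding $\scr F_i$ as a conjugation-stable collection of subgroups of $G$), and $\scr F_2\setminus\scr F_1$ is a conjugation-stable antichain.

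Next comes the geometric core. For each subgroup $K\subseteq G$, the fixed locus $T^K := p_{K\ast}(T\sslash K)$, for $p_K\colon B\sslash K \to B$, is by Proposition~\ref{prop:fixed-points-functor}(2),(4) (here tameness is used) a closed subscheme of $T$ that is smooth over $B$; moreover $K$ acts trivially on $T^K$, $NK$ acts on it, and $T^K\subseteq T^H$ is closed whenever $H\subseteq K$. For $H\in\scr F_2\setminus\scr F_1$ put $T^H_{=H} := T^H\setminus\bigcup_{K\supsetneq H}T^K$, an open subscheme of $T^H$, hence smooth over $B$, on which $H$ acts trivially and $NH$ acts with every stabilizer equal to $H$, so that $WH=NH/H$ acts freely. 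Because $\scr F_2\setminus\scr F_1$ is thin, a point of $T$ has stabilizer in $\scr F_2\setminus\scr F_1$ exactly when its stabilizer equals a (unique) such $H$; consequently $T(\scr F_2)\setminus T(\scr F_1)$ --- which is closed in the open subscheme $T(\scr F_2)$ of $T$ --- decomposes, as a set, into the pairwise disjoint clopen pieces $T^{H'}_{=H'}$, $H'\in\scr F_2\setminus\scr F_1$. I would then declare the \emph{canonical scheme structure} on $\scr T(\scr F_2\setminus\scr F_1)$ to be the one with $T(\scr F_2)\setminus T(\scr F_1) \wequi \coprod_{(H)} G\times_{NH}T^H_{=H}$ (disjoint union over conjugacy classes), which maps to $T$ by a $G$-equivariant locally closed immersion onto this subset; independence of the chosen presentation and gluing along $\{B_\alpha\}$ follow from base-change compatibility of the $T^K$.

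With this structure $\scr T(\scr F_2\setminus\scr F_1) \wequi \coprod_{(H)} T^H_{=H}\sslash NH$. Each $G\times_{NH}T^H_{=H}$ is a finite disjoint union of copies of $T^H_{=H}$, hence smooth over $B$, so $\scr T(\scr F_2\setminus\scr F_1) \to \scr S$ is smooth. For flatness of the inertia over $\scr B$, which may be checked fppf-locally on the stack, I would pull back $I_{\scr T(\scr F_2\setminus\scr F_1)/B}$ along the cover $\coprod_{(H)} T^H_{=H} \to \scr T(\scr F_2\setminus\scr F_1)$: on the $H$-piece this gives $\{(x,g)\in T^H_{=H}\times NH : gx=x\}$, and since $H$ acts trivially on $T^H_{=H}$ while $WH$ acts freely this is just $T^H_{=H}\times\ul H$, which is finite étale --- in particular flat --- over $T^H_{=H}$.

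The step I expect to be the main obstacle is making the second paragraph rigorous: producing the honest \emph{scheme} structure rather than merely the underlying set, checking that $T^H_{=H}$ genuinely is an open subscheme of $T^H$ and that the pieces assemble into a single locally closed immersion into $T$, and verifying that the whole construction descends from the $B_\alpha$ to $\scr B$. The smoothness input for fixed loci and the inertia computation are then routine, as is the passage between the stack $\scr T(\scr F_2\setminus\scr F_1)$ and its cover.
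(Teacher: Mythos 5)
Your proposal is correct and follows essentially the same route as the paper: reduce fppf-locally to $B\sslash G$ with $G$ finite constant, decompose the complement into fixed loci with constant isotropy, and invoke Proposition \ref{prop:fixed-points-functor} (via tameness) for smoothness. The only cosmetic difference is that the paper first replaces $\scr T$ by $\scr T(\scr F_2)$, after which $T^H$ already has exact isotropy $H$ for $H\in\scr F_2\setminus\scr F_1$ by the thinness argument, so it never needs to introduce the loci $T^H_{=H}$ explicitly.
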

\begin{proof}
We may assume that $\scr T = \scr T(\scr F_2)$, and we may work locally so that $\scr S = B \sslash G\to B$ and $\scr T = T \sslash G$.
Let $\scr E = \scr F_2' \setminus \scr F_1'$; this is a set of subgroups of $G$.
The complement $T \setminus T(\scr F_1)$ consists of those points with isotropy in $\scr E$; since $T$ has isotropy in $\scr F_2$ this is precisely \[ T \setminus T(\scr F_1) = \bigcup_{H \in \scr E} T^H. \]
Note that if $H_1 \ne H_2 \in \scr E$ then $T^{H_1} \cap T^{H_2} = \emptyset$.
Indeed if $x \in T^{H_1} \cap T^{H_2}$ then $x$ must have isotropy group $H$ containing both $H_1$ and $H_2$; then $H_i \subsetneq H$ and all of these groups are in $\scr E$, contradicting our adjacency assumption.
We deduce that \[ T \setminus T(\scr F_1) = \coprod_{H \in \scr E} T^H; \] this is smooth (e.g. by Proposition \ref{prop:fixed-points-functor}) and the isotropy above $T^H$ is constantly $H$, so that we have flat inertia.
\end{proof}

\begin{lemma} \label{lemm:adjacent-splitting}
Let $p: \scr S \to \scr B$ be a tame proper étale gerbe.
Let $\scr F_1 \subset \scr F_2$ be weakly adjacent families.
Let $\scr T/\scr S$ be schematic and smooth, with $\scr T = \scr T(\scr F_2 \setminus \scr F_1)$, and $V \to \scr T$ a vector bundle.
Then there exists a splitting $V \wequi V_1 \oplus V_2$ such that $V_1(\scr F_2 \setminus \scr F_1) = V_1$ and $V_2 \setminus 0$ has isotropy in $\scr F_1$.
\end{lemma}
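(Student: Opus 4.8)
The plan is to reduce fppf-locally on $\scr B$ and then build the splitting out of the averaging idempotent attached to the inertia. First, using Remark \ref{rmk:pet-gerbe-local-structure} and tameness, I would reduce to the case $\scr S = B\sslash G$ for a finite constant group $G$ with $|G|\in\scr O_B^\times$, so that $\scr T = T\sslash G$ for a smooth $G$-scheme $T$ over $B$ and $V$ is a $G$-equivariant vector bundle on $T$. As in the proof of Lemma \ref{lemm:adjacent-inertia-flat}, the hypothesis $\scr T = \scr T(\scr F_2\setminus\scr F_1)$ furnishes a $G$-invariant clopen decomposition $T = \coprod_{H\in\scr E} T^H$, where $\scr E = \scr F_2'\setminus\scr F_1'$ is thin and conjugation-stable, $G$ permutes the strata by $g\cdot T^H = T^{gHg^{-1}}$, and the isotropy over $T^H$ is constantly $H$.

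Next I would form, for each $H\in\scr E$, the idempotent $e_H := \frac1{|H|}\sum_{h\in H}\rho(h)\in\mathrm{End}_{\scr O_{T^H}}(V|_{T^H})$ (well defined since $|H|$ divides $|G|\in\scr O_B^\times$), with $\mathrm{im}(e_H) = (V|_{T^H})^H$ and $\ker(e_H) = (1-e_H)(V|_{T^H})$ an $H$-stable complement. Since $g\,e_H\,g^{-1} = e_{gHg^{-1}}$, the subsheaves $V_1 := \bigoplus_{H\in\scr E}\mathrm{im}(e_H|_{T^H})$ and $V_2 := \bigoplus_{H\in\scr E}\ker(e_H|_{T^H})$ are $G$-equivariant, are direct summands of $V$ (hence vector bundles), and yield a $G$-equivariant splitting $V\wequi V_1\oplus V_2$. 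Because the $e_H$ are given by the universal formula of integration along the inertia, this construction is canonical — it can equally be phrased directly on $\scr T$ via the finite flat tame relative inertia $I_{\scr T/\scr S}$ — so it does not depend on the chosen presentation and descends to a global splitting on $\scr T$.

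Finally I would verify the two isotropy conditions, which is where the hypotheses on $\scr F_1\subset\scr F_2$ enter. A point of $V_1$ over $T^H$ lies in $(V|_{T^H})^H$, so its stabilizer in $G$ is exactly $H\in\scr E=\scr F_2'\setminus\scr F_1'$; thus every stabilizer occurring in $V_1$ lies in $\scr F_2$ and in no member of $\scr F_1$, so $V_1(\scr F_2)=V_1$, $V_1(\scr F_1)=\emptyset$ and hence $V_1(\scr F_2\setminus\scr F_1)=V_1$. For $V_2\setminus 0$: a nonzero $v\in V_2$ over $x\in T^H$ has stabilizer $K:=\mathrm{Stab}_H(v)\subsetneq H$ (equality would put $v\in\mathrm{im}(e_H)\cap\ker(e_H)=0$); since $\scr F_2$ is a family and $H\in\scr F_2'$ we get $K\in\scr F_2'$, and $K\notin\scr E$ (else $K\subsetneq H$ with $K,H\in\scr E$ contradicts thinness), so $K\in\scr F_1'$; as $\scr F_1$ is a family, every subgroup fixing a nonzero vector of $V_2$ lies in $\scr F_1'$, which is precisely the assertion that $V_2\setminus 0$ has isotropy in $\scr F_1$.

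I do not expect a serious obstacle here. The only points needing care are that the averaging idempotent really splits off a locally free direct summand (not merely a subsheaf) and that the fppf-local construction glues; both are handled by the universal integration-over-the-inertia description, which makes $V_1$ and $V_2$ manifestly canonical. The remainder is the bookkeeping with isotropy groups, in which thinness of $\scr F_2\setminus\scr F_1$ and the family property of $\scr F_1,\scr F_2$ must be invoked at exactly the right spots.
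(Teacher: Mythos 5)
Your proof is correct and takes essentially the same route as the paper: reduce fppf-locally to $B\sslash G$, use the isotropy decomposition of Lemma \ref{lemm:adjacent-inertia-flat} to stratify $T$ by fixed loci, split off the $H$-fixed subbundle via the tame averaging idempotent, and then do exactly the same bookkeeping with thinness and the family property. The only cosmetic difference is that the paper first uses Proposition \ref{prop:identify-flat-inertia} to reduce to a single induced stratum with $H$ normal and sets $V_1 = V^H$ there, whereas you write the idempotents stratum by stratum; both yield the same canonical splitting.
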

\begin{proof}
Provided we construct the splitting canonically we may work fppf locally; thus we may assume that $\scr B = B$ and $\scr S = B \sslash G$, for a finite constant tame group $G$ over $B$.
By Lemma \ref{lemm:adjacent-inertia-flat} and Proposition \ref{prop:identify-flat-inertia}, we may assume that there exists a subgroup $H \in \scr F_2 \setminus \scr F_1$ such that $T = G \times_{NH} T'$ for some $NH$-scheme $T'$ on which $H$ acts trivially and $WH$ acts freely.
The vector bundle $V$ is then also induced; hence we may replace $G$ by $NH$ and so assume that $T=T'$ and $H$ is normal in $G$.
We put $V_1 = V^H$.
This still has a $G$-action since $H$ is normal, and there is a canonical complement $V_2$ by the usual averaging procedure (using crucially that $G$, and hence $H$, is tame).
By construction, all isotropy groups of $V_1$ contain $H$, whence $V_1(\scr F_2 \setminus \scr F_1) = V_1$.
Similarly by construction $V_2^H = \{0\}$, whence $(V_2 \setminus 0)^H = \emptyset$, so that all isotropy of $V_2 \setminus 0$ is properly contained in $H$.
Since $H \in \scr F_2$ and $\scr F_1, \scr F_2$ are weakly adjacent, we deduce that all isotropy of $V_2 \setminus 0$ is in $\scr F_1$, as needed.
\end{proof}

\subsection{Filtration by isotropy}
Let again $p: \scr S \to \scr B$ be a proper étale morphism.

Given a family $\scr F$, we obtain an adjunction \[ i_!: \PSh_\Sigma(\SmQA_{\scr S}[\scr F]) \adj \PSh_\Sigma(\SmQA_{\scr S}): i^*. \]
\begin{definition}
We put \[ \E\scr F := i_!(*) \in \PSh_\Sigma(\SmQA_{\scr S}). \]
Given a second family $\scr F' \subset \scr F$, we put \[ \E(\scr F', \scr F) := \cof(\E\scr F' \to \E\scr F) \in \PSh_\Sigma(\SmQA_{\scr S})_*. \]
We also put \[ \widetilde{\E\scr F} = \E(\scr F, \scr F_\all/\scr B). \]
\end{definition}

Since $\SmQA_{\scr S}[\scr F]$ is a sieve (Remark \ref{rmk:isotropy-sieve}), the above constructions have some standard properties which we summarize now.
\begin{remark} \label{rmk:EE-interpretation}
Explicitly, $\E(\scr F)$ is the universal space with isotropy in $\scr F$; in other words $(\E \scr F)(\scr T)$ takes value $*$ if $\scr T$ has isotropy in $\scr F$, and value $\emptyset$ else.
In particular if $\scr F_1 \subset \scr F_2$ then \[ \E(\scr F_1) \times \E(\scr F_2) \xrightarrow{\wequi} \E(\scr F_1). \]
Also \[ \E(\scr F) \wequi \colim_{\scr X \in \SmQA_{\scr S}[\scr F]} \scr X. \]
\end{remark}
\begin{remark} \label{rmk:EFF-interpretation}
Smashing the cofiber sequence $\E(\scr F_1)_+ \to S^0 \to \widetilde{\E\scr F_1}$ with $\E(\scr F_2)_+$ and applying Remark \ref{rmk:EE-interpretation}, we deduce that \[ \E(\scr F_1, \scr F_2) \wequi \E(\scr F_2)_+ \wedge \widetilde{\E\scr F_1}. \]
In particular $\E(\scr F)_+ \wedge \widetilde{\E\scr F} = *$.
\end{remark}

\begin{remark} \label{rmk:Etilde-monoid}
Smashing the cofiber sequence $\E(\scr F_1)_+ \to S^0 \to \widetilde{\E\scr F_1}$ with $\widetilde{\E\scr F_2}$ and using that $\E(\scr F_1)_+ \wedge \widetilde{\E\scr F_2} \wequi \E(\scr F_1)_+ \wedge \E(\scr F_2)_+ \wedge \widetilde{\E\scr F_2} \wequi *$ (using Remarks \ref{rmk:EE-interpretation} and \ref{rmk:EFF-interpretation}) we see that \[ \widetilde{\E\scr F_2} \xrightarrow{\wequi} \widetilde{\E\scr F_1} \wedge \widetilde{\E\scr F_2}. \]
\end{remark}

The category $\PSh_\Sigma(\SmQA_\scr{S}[\scr F])$ has natural notions of Nisnevich, $\A^1$-and motivic equivalence, generated by respectively Nisnevich covering sieves of objects in $\SmQA_\scr{S}[\scr F]$, vector bundle torsors over objects in $\SmQA_\scr{S}[\scr F]$, or both.
Note that it is crucial here that $\scr F$ is a family, so that e.g. $\A^1 \times \scr X \in \SmQA_\scr{S}[\scr F]$ for $\scr X \in \SmQA_\scr{S}[\scr F]$.\NB{More generally we need closure under vector bundle torsors, which are not isovariant in general.}
\begin{lemma} \label{lemm:family-inclusion-pres-mot-eq}
Given a family $\scr F$, both of the functors $i_!$ and $i^*$ preserve Nisnevich, homotopy and motivic equivalences.
\end{lemma}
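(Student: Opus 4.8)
The plan is to exploit that, since $\scr F$ is a family, the inclusion $i\colon \scr A := \SmQA_{\scr S}[\scr F] \hookrightarrow \scr B := \SmQA_{\scr S}$ is a full sieve (Remark \ref{rmk:isotropy-sieve}) which is moreover closed in $\scr B$ under finite coproducts, under pullback along arbitrary morphisms of $\scr B$, and — crucially — under passing to total spaces of vector bundles and vector-bundle torsors. This last closure is the one place the family hypothesis (rather than that of a general isotropy specification) is genuinely needed: a vector bundle or torsor over $\scr X \in \scr A$ is Nisnevich-locally a total space $\mathbb{V}(W) \to \scr X$, whose geometric isotropy groups are subgroups of those of $\scr X$ and hence still lie in $\scr F$. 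From the sieve structure one gets a triple adjunction $i_! \dashv i^* \dashv i_*$ on $\PSh_\Sigma$, with $i_!$ fully faithful, $i^*$ preserving all limits and colimits as well as the $\Sigma$-condition, and $i_*$ preserving the $\Sigma$-condition. Since all three functors commute with $L_\Sigma$, it suffices to treat Nisnevich and homotopy equivalences separately; the motivic case follows formally, as a colimit-preserving functor sending each generating class of maps into the appropriate class of equivalences automatically preserves the strongly saturated classes they generate.

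For $i_!$ I would argue directly on generators: over a representable $\scr X \in \scr A$, every Nisnevich covering sieve and every vector-bundle torsor is already one in $\scr B$, all of whose objects lie in $\scr A$ by the sieve property. As $i_!$ is colimit-preserving and restricts to the inclusion on representables, it carries the generating Nisnevich (resp. homotopy) equivalences of $\PSh_\Sigma(\scr A)$, described via Lemma \ref{lemm:squared-sieve}, to generating Nisnevich (resp. homotopy) equivalences of $\PSh_\Sigma(\scr B)$; the saturation argument then finishes all three cases.

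For $i^*$ I would instead use the adjunction $i^* \dashv i_*$: it is equivalent to show that $i_*$ carries Nisnevich-local (resp. homotopy-local, resp. motivic-local) objects to local objects, via the formula $i_* G(\scr X) \simeq \Map_{\PSh_\Sigma(\scr A)}(i^* h_{\scr X}, G) \simeq \lim_{\scr Z \in \scr A_{/\scr X}} G(\scr Z)$, where $\scr A_{/\scr X} = \scr A \times_{\scr B} \scr B_{/\scr X}$. In the Nisnevich case: for a Nisnevich square $(\scr U,\scr V,\scr W,\scr T)$ in $\scr B$, the map $i^*(h_{\scr U} \amalg_{h_{\scr W}} h_{\scr V}) \to i^* h_{\scr T}$ is a monomorphism (the pushout is $0$-truncated, as in the proof of Lemma \ref{lemm:squared-sieve}) and a Nisnevich-local epimorphism in $\PSh_\Sigma(\scr A)$, because for any $\scr Z \in \scr A$ over $\scr T$ the base-changed square $\{\scr Z \times_{\scr T} \scr U,\ \scr Z \times_{\scr T} \scr V\}$ is a Nisnevich covering of $\scr Z$ by objects of $\scr A$ over which the structure map lifts to $\scr U \amalg \scr V$; hence it is a Nisnevich equivalence, $\Map(-,G)$ turns the square into a pullback, and $i_* G$ has Nisnevich descent. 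In the homotopy case: for a vector-bundle torsor $\pi\colon H \to \scr X$ in $\scr B$, the forgetful functor $\rho\colon \scr A_{/H} \to \scr A_{/\scr X}$ admits a right adjoint $\Phi\colon(\scr Z \xrightarrow{g} \scr X) \mapsto (\scr Z \times_{\scr X} H \to H)$ — well-defined exactly because $\scr Z \times_{\scr X} H \in \scr A$ by the closure property above — so $\rho$ is initial and the canonical map $i_* G(\scr X) = \lim_{\scr A_{/\scr X}} G \to \lim_{\scr A_{/H}} G = i_* G(H)$ induced by $\pi$ is an equivalence; thus $i_* G$ inverts all vector-bundle torsors. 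The motivic case is the conjunction of these two.

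The routine formal parts (cocontinuity of $i^*$, behaviour of $L_\Sigma$, the strongly-saturated-class manipulations) are painless; I expect the actual work to be the bookkeeping around the sieve $\scr A$ — verifying its closure under vector-bundle torsors from the family hypothesis, and checking carefully that $i_* G$ is computed by the slice $\scr A_{/\scr X}$ so that the adjunction $\rho \dashv \Phi$ applies. That is the part where something genuinely has to be said, and it is where the hypothesis that $\scr F$ be a family cannot be dropped.
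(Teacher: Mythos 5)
Your treatment of $i_!$ coincides with the paper's (colimit-preserving, sends generators to generators, using that the family hypothesis makes $\SmQA_{\scr S}[\scr F]$ a sieve closed under vector bundle torsors). For $i^*$ you take a genuinely different route: the paper shows directly that $i^*$ commutes with the localization functors, using that $L_\Nis F(\scr U)$ and $L_\htp F(\scr U)$ are computed by explicit colimits of $F$ over objects locally equivalent to $\scr U$ (the latter via \cite[Proposition 3.4(1)]{hoyois-equivariant}), and that the same formulas hold on the subcategory; you instead pass to the right adjoint $i_*$ and check it preserves local objects. That strategy is legitimate, and your Nisnevich step is fine: $i^*$ of the covering sieve of a Nisnevich square in $\SmQA_{\scr S}$ is a monomorphism and a local epimorphism in $\PSh_\Sigma(\SmQA_{\scr S}[\scr F])$ because the base-changed square lands in the sieve, so $\Map(-,G)$ gives descent for $i_*G$.

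However, the homotopy-invariance step contains a genuine error. You claim that since $\rho\colon \scr A_{/H} \to \scr A_{/\scr X}$ admits a right adjoint $\Phi$, it is initial, so that the restriction map between the limits computing $i_*G(\scr X)$ and $i_*G(H)$ is an equivalence. But those limits are indexed by the \emph{opposite} slice categories ($i_*$ is a right Kan extension, and $G$ is contravariant), so the relevant condition is that $\rho^{\op}$ be initial, i.e.\ that $\rho$ be cofinal — which would follow from $\rho$ having a \emph{left} adjoint, not a right one. As stated, your argument makes no use of the homotopy invariance of $G$ and would therefore prove that $i_*G$ inverts all vector bundle torsors for an arbitrary $\Sigma$-presheaf $G$; taking $\scr F = \scr F_\all$ (so $i = \id$) shows this is false. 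The step can be repaired: the counit $\rho\Phi \to \id$ is the projection $\scr Z \times_{\scr X} H \to \scr Z$, a vector bundle torsor between objects of $\SmQA_{\scr S}[\scr F]$ (here the family hypothesis enters, as you note), so homotopy invariance of $G$ gives a natural equivalence $G \wequi G \circ (\rho\Phi)^{\op}$; combined with the fact that $\Phi^{\op}$ \emph{is} a left adjoint (of $\rho^{\op}$), hence initial, one gets $i_*G(H) \wequi \lim_{(\scr A_{/\scr X})^{\op}} G((-)\times_{\scr X} H) \wequi i_*G(\scr X)$, and a triangle-identity check identifies this with the canonical map. Without that correction the lemma's homotopy case is not established.
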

\begin{proof}
For $i_!$, since the functor preserves colimits and generating equivalences, this is clear.
For $i^*$, we can use the fact that if $L: \PSh_\Sigma(\SmQA_{\scr T}) \to \PSh_\Sigma(\SmQA_{\scr T})$ denotes one of the localizations, then $(LF)(\scr U)$ can be written as a colimit of $F$ evaluated on certain spaces $L$-equivalent to $\scr U$; this is clear for Nisnevich sheafification, and for homotopy localization this is \cite[Proposition 3.4(1)]{hoyois-equivariant}.
The same formula works for $\PSh(\SmQA_{\scr T}[\scr F])$, whence $i^*$ commutes with $L$, and the result follows.
\end{proof}

We put \[ \Spc(\scr S)[\scr F] := L_\mot \PSh_\Sigma(\SmQA_\scr{S}[\scr F]). \]
\begin{corollary} \label{cor:i!-ff}
There is an induced adjunction \[ i_!: \Spc(\scr S)[\scr F] \adj \Spc(\scr S): i^*. \]
The functor $i_!$ is fully faithful and identifies the source with the full subcategory of the target generated by $\SmQA_\scr{S}[\scr F]$.
The functor $i^*$ preserves colimits.
\end{corollary}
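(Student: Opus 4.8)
The plan is to deduce everything from the corresponding facts for $\PSh_\Sigma$, where the adjunction $i_! \dashv i^*$ is simply the left Kan extension/restriction adjunction along the fully faithful inclusion $i \colon \SmQA_{\scr S}[\scr F] \hookrightarrow \SmQA_{\scr S}$, and then to descend to the motivic localizations using Lemma \ref{lemm:family-inclusion-pres-mot-eq}.

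First I would record the presheaf-level input. Since $i$ is a fully faithful functor preserving finite coproducts (the latter because a point of $\scr X \amalg \scr Y$ has isotropy in $\scr F$ iff both summands do), the induced functor $i_! \colon \PSh_\Sigma(\SmQA_{\scr S}[\scr F]) \to \PSh_\Sigma(\SmQA_{\scr S})$ is fully faithful with $i^* i_! \wequi \id$, and $i^*$ admits both adjoints $i_! \dashv i^* \dashv i_*$ (right Kan extension, which restricts to $\PSh_\Sigma$ for the same reason), so $i^*$ preserves all limits and colimits. Moreover the essential image of $i_!$ is the full subcategory generated under sifted colimits by $\SmQA_{\scr S}[\scr F]$. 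By Lemma \ref{lemm:family-inclusion-pres-mot-eq}, both $i_!$ and $i^*$ carry (generating, hence all) motivic equivalences to motivic equivalences; hence both functors commute with the motivic localization functors $L_\mot$ on source and target and descend to an adjunction $\overline{i_!} \colon \Spc(\scr S)[\scr F] \adj \Spc(\scr S) \colon \overline{i^*}$. Since $L_\mot$ is colimit-preserving and essentially surjective and $i^*$ preserves colimits, $\overline{i^*}$ preserves colimits, giving the last assertion.

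For full faithfulness of $\overline{i_!}$, I would apply $L_\mot$ to the equivalence $i^* i_! \wequi \id$ and use the compatibilities $\overline{i^*} L_\mot \wequi L_\mot i^*$ and $\overline{i_!} L_\mot \wequi L_\mot i_!$ to get $\overline{i^*}\,\overline{i_!}\, L_\mot \wequi L_\mot$; as $L_\mot \colon \PSh_\Sigma(\SmQA_{\scr S}[\scr F]) \to \Spc(\scr S)[\scr F]$ is essentially surjective this yields $\overline{i^*}\,\overline{i_!} \wequi \id$, so $\overline{i_!}$ is fully faithful. Finally, $\overline{i_!}$ is a left adjoint, hence preserves colimits, and $\Spc(\scr S)[\scr F]$ is generated under colimits by the images of objects of $\SmQA_{\scr S}[\scr F]$; therefore the (fully faithful) image of $\overline{i_!}$ is exactly the smallest full subcategory of $\Spc(\scr S)$ containing the image of $\SmQA_{\scr S}[\scr F]$ and closed under colimits, i.e.\ the subcategory generated by $\SmQA_{\scr S}[\scr F]$ as in the statement. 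I do not anticipate a genuine obstacle: the one substantive ingredient is Lemma \ref{lemm:family-inclusion-pres-mot-eq} (which is where it matters that $\scr F$ is a family, so that $\SmQA_{\scr S}[\scr F]$ is a sieve closed under vector bundle torsors), and the remainder is a routine manipulation of Bousfield localizations and of adjunctions between them.
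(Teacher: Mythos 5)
Your argument is correct and matches the paper's intent: the paper's proof is simply ``Immediate,'' meaning exactly the descent of the presheaf-level Kan extension adjunction through the motivic localizations via Lemma \ref{lemm:family-inclusion-pres-mot-eq}, which is precisely what you carry out. The one place you are slightly imprecise (whether right Kan extension $i_*$ restricts to $\PSh_\Sigma$) is inessential, since preservation of colimits by $i^*$ at the presheaf level already follows from the sieve property of $\SmQA_{\scr S}[\scr F]$ (sifted colimits are computed sectionwise and finite coproducts decompose along clopen pieces lying in the sieve).
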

\begin{proof}
Immediate.
\end{proof}

\begin{lemma} \label{lemm:i!i*-ident}
The endofunctor $i_!i^*: \PSh_\Sigma(\SmQA_{\scr S}) \to \PSh_\Sigma(\SmQA_{\scr S})$ is equivalent to $\times \E\scr F$.
\end{lemma}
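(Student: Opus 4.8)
The plan is to compute $i_!$ explicitly, exploiting that $\SmQA_{\scr S}[\scr F] \hookrightarrow \SmQA_{\scr S}$ is a full sieve (Remark \ref{rmk:isotropy-sieve}) which is moreover closed under finite coproducts (since $\scr X \amalg \scr X'$ has isotropy in $\scr F$ if and only if both $\scr X$ and $\scr X'$ do). I would first work with presheaves of spaces rather than with $\PSh_\Sigma$: writing $j: \SmQA_{\scr S}[\scr F] \hookrightarrow \SmQA_{\scr S}$, the functor $j_!: \PSh(\SmQA_{\scr S}[\scr F]) \to \PSh(\SmQA_{\scr S})$ sends a representable $\scr Y$ to the representable $j(\scr Y)$ by full faithfulness of $j$, so by co-Yoneda $(j_! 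G)(\scr X) \wequi \colim_{(\scr Y \to G)} \Map_{\SmQA_{\scr S}}(\scr X, j\scr Y)$. If $\scr X$ does not have isotropy in $\scr F$, each such mapping space is empty, since a morphism into an object of the sieve would force $\scr X$ into the sieve; thus $(j_! G)(\scr X) \wequi \emptyset$. If $\scr X$ does have isotropy in $\scr F$, the same full faithfulness and co-Yoneda give $(j_! G)(\scr X) \wequi G(\scr X)$. So $j_!$ is ``extension by $\emptyset$''; in particular it preserves the subcategories $\PSh_\Sigma$, so it restricts to the functor $i_!$ of the statement, with the same objectwise description. Taking $G = *$ recovers $\E\scr F = i_!(*)$ as the presheaf with value $*$ on stacks with isotropy in $\scr F$ and $\emptyset$ elsewhere (Remark \ref{rmk:EE-interpretation}).

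Next I would assemble the comparison transformation: the counit $i_! i^* F \to F$, together with $i_! i^*$ applied to the terminal map $F \to *$ and the identifications $i^*(*) \wequi *$ and $i_!(*) = \E\scr F$, yields a natural transformation $\theta_F: i_! i^* F \to F \times \E\scr F$. Equivalently one can phrase this as a projection formula $i_!(i^* F \times G) \wequi F \times i_! G$, which for a sieve is immediate from the previous step, and specialise to $G = *$.

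Finally I would check that $\theta_F$ is an equivalence by evaluating objectwise, which is legitimate since products and terminal objects in $\PSh_\Sigma(\SmQA_{\scr S})$ are computed in presheaves of spaces. For $\scr X$ with isotropy in $\scr F$, both sides equal $F(\scr X)$ (using $F(\scr X) \times * \wequi F(\scr X)$) and $\theta_F$ is the identity there; for $\scr X$ without isotropy in $\scr F$, both sides vanish. The whole argument is routine; the only mild subtlety is the passage between $\PSh$ and $\PSh_\Sigma$, which is precisely why one records that the sieve is closed under finite coproducts.
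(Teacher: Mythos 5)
Your proof is correct, and it takes a somewhat different route from the paper's, though both hinge on the same key input: that $\SmQA_{\scr S}[\scr F]$ is a sieve. The paper's argument is top-down: it writes $\E\scr F \wequi \colim_{X \in \SmQA_{\scr S}[\scr F]} X$, observes that consequently $\E\scr F \times Y \wequi \colim_X (X \times Y)$ lies in the essential image of $i_!$ (each $X \times Y$ belongs to the sieve via its projection to $X$), and then uses full faithfulness of $i_!$ to reduce the desired identification to one after applying $i^*$, where it is immediate because $\E\scr F$ restricts to the terminal presheaf on the sieve. You instead compute $i_!$ pointwise as ``extension by $\emptyset$'' and compare both sides objectwise. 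What your approach buys is an explicit formula for $i_!$, which is reusable elsewhere; the price is the extra verification that the unrestricted left Kan extension $j_!$ preserves $\Sigma$-presheaves, which you correctly identify as the only subtlety and reduce to closure of the sieve under finite coproducts (including the empty coproduct, so that $(j_!G)(\emptyset) \wequi *$). Both arguments are sound; yours is the more elementary and self-contained of the two.
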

\begin{proof}
We have $\E \scr F \wequi \colim_{X \in \SmQA_{\scr S}[\scr F]} X$, whence for $Y \in \PSh(\SmQA_{\scr S})$ we have \[ \E \scr F \times Y \wequi \colim_{X \in \SmQA_{\scr S}[\scr F]} (X\times Y) \in i_! \PSh(\SmQA_{\scr S}[\scr F]). \]
Here we are using that $\scr F$ is a family, via Remark \ref{rmk:isotropy-sieve}.
It thus suffices to show that $i^*Y \wequi i^*(\E \scr F \times Y)$, which is clear since products of presheaves are computed sectionwise.
\end{proof}

\begin{remark} \label{rmk:orbit-stack-usefulness}
Let $\scr F$ be a thin isotropy specification, and suppose that $p$ is tame.
It follows from Proposition \ref{prop:identify-flat-inertia} that the composite \[ \SmQA_{\Orb_\scr{F}}[\scr F_\free] \xrightarrow{b^*} \SmQA_{\widetilde\Orb_\scr{F}} \xrightarrow{a_{\sharp}} \SmQA_{\scr X} \] is fully faithful onto $\SmQA_{\scr X}[\scr F]$ (indeed any map between stacks with isotropy in $\scr F$ must be fixed-point reflecting, by thinness).
We deduce that the adjunction \[ a_\sharp b^*i_!: \Spc(\Orb_{\scr F})[\scr F_\free] \adj \Spc(\scr X): i^*b_*a^* \] is obtained as a localization of \[ j_!: \PSh_\Sigma(\SmQA_{\scr X}[\scr F]) \adj \PSh_\Sigma(\SmQA_{\scr X}): j^*. \]
Both $j_!$ and $j^*$ preserve the local equivalences, since this is true for $a_\sharp, a^*, b^*$ (easy), $b_*$ (Proposition \ref{prop:fixed-equiv} and Remark \ref{rmk:p-otimes-*-2}) and $i_!, i^*$ (Lemma \ref{lemm:family-inclusion-pres-mot-eq}).
\end{remark}

\subsection{Generation by gerbes}
Since stabilization is compatible with localization and (appropriate) adjunctions (see e.g. Lemmas \ref{lemm:locn-preserved} and \ref{lemm:adj-preserved}), in the results from the previous subsection we can everywhere replace $\Spc(\ph)$ by $\SHS(\ph) := \Sp((\Spc(\ph)))$.
We do this freely from now on.

\begin{proposition} \label{prop:gerbes-gen}
Let $p: \scr X \to \scr S$ be a tame proper étale gerbe and $\scr F_1 \subset \scr F_2$ weakly adjacent families.
\reasonableness{\scr X}
Then $\SHS(\scr X)[\scr F_2]$ is generated (under colimits and desuspensions) by $\SHS(\scr X)[\scr F_1]$ and $\SmQA_{\scr X}[\scr F_2 \setminus \scr F_1]$.
\end{proposition}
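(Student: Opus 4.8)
The plan is to reduce to suspension spectra of the generators of $\Spc(\scr X)[\scr F_2]$ and then run a homotopy-purity argument that peels the $(\scr F_2\setminus\scr F_1)$-isotropy off the top stratum. By Corollary \ref{cor:i!-ff} together with stabilization, $\SHS(\scr X)[\scr F_2]$ is generated under colimits and desuspensions by the objects $\Sigma^\infty_+\scr T$ with $\scr T\in\SmQA_{\scr X}[\scr F_2]$. Write $\scr G\subset\SHS(\scr X)$ for the subcategory generated under colimits and desuspensions by $\SHS(\scr X)[\scr F_1]$ and $\{\Sigma^\infty_+\scr Y:\scr Y\in\SmQA_{\scr X}[\scr F_2\setminus\scr F_1]\}$; being a stable subcategory closed under colimits and shifts, it is closed under cofibers and extensions. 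It thus suffices to show $\Sigma^\infty_+\scr T\in\scr G$ for every such $\scr T$.

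First I would stratify $\scr T$. Let $\scr U=\scr T(\scr F_1)\subset\scr T$ be the largest open substack with isotropy in $\scr F_1$ (Lemma \ref{lemm:define-TF}), so $\scr U\in\SmQA_{\scr X}[\scr F_1]$, and let $\scr Z=\scr T\setminus\scr U$ be its closed complement. Since $\scr T$ has isotropy in $\scr F_2$ and $\scr F_1$ is a family, a direct check (using the explicit formula $\scr T(\scr F_1)=\scr T\setminus\bigcup_{H\notin\scr F_1}\scr T^H$) gives that $\scr Z$ has isotropy in $\scr F_2\setminus\scr F_1$ and $\scr Z=\scr Z(\scr F_2\setminus\scr F_1)$; by Lemma \ref{lemm:adjacent-inertia-flat} $\scr Z$ is smooth over $\scr X$ with flat inertia, so $(\scr T,\scr Z)$ is a quasi-affine smooth closed pair over $\scr X$. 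Homotopy purity (Lemma \ref{lemm:homotopy-purity}, using the covering hypothesis on $\scr X$), after applying the exact functor $\Sigma^\infty$, yields a cofiber sequence $\Sigma^\infty_+\scr U\to\Sigma^\infty_+\scr T\to\Sigma^\infty Th(N)$ with $N:=N_{\scr Z/\scr T}$ a vector bundle on $\scr Z$. As $\Sigma^\infty_+\scr U\in\SHS(\scr X)[\scr F_1]\subset\scr G$, it remains to prove $\Sigma^\infty Th(N)\in\scr G$.

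The key step is to analyze this Thom space. Applying Lemma \ref{lemm:adjacent-splitting} to $p$, to $\scr F_1\subset\scr F_2$, to the base $\scr Z$ (with $\scr Z=\scr Z(\scr F_2\setminus\scr F_1)$) and to $N$ produces a splitting $N\simeq V_1\oplus V_2$ with $V_1=V_1(\scr F_2\setminus\scr F_1)$ and $V_2\setminus 0$ of isotropy in $\scr F_1$. Writing $\Sigma^\infty Th(N)=\cof(\Sigma^\infty_+(N\setminus 0)\to\Sigma^\infty_+N)$ and using that the zero section is a motivic equivalence, $\Sigma^\infty_+N\simeq\Sigma^\infty_+\scr Z\in\SmQA_{\scr X}[\scr F_2\setminus\scr F_1]\subset\scr G$, so it suffices to show $\Sigma^\infty_+(N\setminus 0)\in\scr G$. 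Inside $N\setminus 0$ the locus $\{v_2=0\}$ is the closed substack $V_1\setminus 0$ (still of isotropy in $\scr F_2\setminus\scr F_1$, since $V_1\setminus 0\subset V_1$), with open complement $\{v_2\ne 0\}=V_1\oplus(V_2\setminus 0)$, whose stabilizers lie in the family $\scr F_1$ because points there map to $V_2\setminus 0$. A second application of homotopy purity, to the pair $(N\setminus 0,\,V_1\setminus 0)$ with normal bundle $q^*V_2|_{V_1\setminus 0}$ (here $q:V_1\to\scr Z$), gives a cofiber sequence $\Sigma^\infty_+\big(V_1\oplus(V_2\setminus 0)\big)\to\Sigma^\infty_+(N\setminus 0)\to\Sigma^\infty Th\big(q^*V_2|_{V_1\setminus 0}\big)$: the first term is in $\SHS(\scr X)[\scr F_1]\subset\scr G$, and the last is $\cof$ of a map whose source has isotropy in $\scr F_1$ (the $V_2$-coordinate being nonzero off the zero section) and whose target is motivically equivalent to $V_1\setminus 0\in\SmQA_{\scr X}[\scr F_2\setminus\scr F_1]$, hence also in $\scr G$. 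Therefore $\Sigma^\infty_+(N\setminus 0)\in\scr G$, so $\Sigma^\infty Th(N)\in\scr G$ and finally $\Sigma^\infty_+\scr T\in\scr G$, proving the proposition.

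The main obstacle is exactly the last paragraph: the normal bundle $N_{\scr Z/\scr T}$ has ``mixed isotropy'', so $\Sigma^\infty Th(N)$ is not directly one of the two kinds of generator, and one must combine the splitting of the bundle from Lemma \ref{lemm:adjacent-splitting} with a further homotopy-purity decomposition of the punctured total space in order to separate the $\scr F_1$- and $(\scr F_2\setminus\scr F_1)$-contributions. The remaining work, namely checking that all the auxiliary stacks ($\scr Z$, $V_1\setminus 0$, $V_1\oplus(V_2\setminus 0)$, and the total spaces and zero-section complements of the relevant Thom bundles) are genuinely quasi-affine smooth $\scr X$-stacks with the asserted isotropy, is routine but needs care.
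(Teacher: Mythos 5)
Your proof is correct and follows the paper's argument in all essentials: the same stratification of $\scr T$ by $\scr T(\scr F_1)$ and its closed complement, the same application of homotopy purity (Lemma \ref{lemm:homotopy-purity}) to reduce to the Thom space of the normal bundle, and the same use of Lemma \ref{lemm:adjacent-splitting} to split that bundle as $V_1\oplus V_2$. The only (harmless) deviation is at the very end: to decompose $(V_1\oplus V_2)\setminus 0$ you invoke homotopy purity a second time along the closed stratum $V_1\setminus 0=\{v_2=0\}$, whereas the paper instead uses the Zariski Mayer--Vietoris square for the cover by $\{v_1\ne 0\}$ and $\{v_2\ne 0\}$; both yield the same conclusion.
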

\begin{proof}
It is clear that $\SHS(\scr X)[\scr F_2]$ is generated by $\SmQA_{\scr X}[\scr F_2]$.
Write $\scr C$ for the subcategory generated by $\SHS(\scr X)[\scr F_1]$ and $\SmQA_{\scr X}[\scr F_2 \setminus \scr F_1]$.
Now let $\scr T \in \SmQA_{\scr X}[\scr F_2]$; we shall show that $\Sigma^\infty_{S^1} \scr T_+ \in \scr C$.
Let $\scr T' = \scr T(\scr F_2 \setminus \scr F_1)$; this is smooth by Lemma \ref{lemm:adjacent-inertia-flat}.
By homotopy purity (Lemma \ref{lemm:homotopy-purity}; here we use the assumption on the local structure of $\scr X$) we have a cofiber sequence $\scr T(\scr F_1)_+ \to \scr T_+ \to Th(N_{\scr T'/\scr T})$.
Since $\scr T(\scr F_1) \in \SHS(\scr X)[\scr F_1]$, it suffices to show that $\Sigma^\infty_{S^1} Th(N_{\scr T'/\scr T}) \in \scr C$.
Lemma \ref{lemm:adjacent-splitting} provides us with a splitting $N_{\scr T'/\scr T} \wequi V_1 \times_{\scr T'} V_2$, with $V_1 \in \SmQA_{\scr X}[\scr F_2 \setminus \scr F_1]$ and $V_2 \setminus 0 \in \SmQA_{\scr X}[\scr F_1]$.
We have a pushout in $\Spc(\scr X)$ (coming from a Zariski covering)
\begin{equation*}
\begin{CD}
(V_1 \setminus 0) \times_{\scr T'} (V_2 \setminus 0) @>>> (V_1 \setminus 0) \times_{\scr T'} V_2 \wequi V_1 \setminus 0 \\
@VVV                                                               @VVV \\
V_1 \times_{\scr T'} (V_2 \setminus 0) \wequi V_2 \setminus 0  @>>> (V_1 \times_{\scr T'} V_2) \setminus 0
\end{CD}
\end{equation*}
as well as a cofiber sequence \[ (V_1 \times_{\scr T'} V_2) \setminus 0 \to V_1 \times_{\scr T'} V_2 \wequi \scr T' \to Th(N_{\scr T'/\scr T}). \]
It hence suffices to show that $\scr T', V_2 \setminus 0, V_1 \setminus 0, (V_1 \setminus 0) \times_{\scr T'} (V_2 \setminus 0) \in \scr C$.
This is true since $\scr T', V_1 \setminus 0 \in \SmQA_{\scr X}[\scr F_2 \setminus \scr F_1]$ and $V_2 \setminus 0, (V_1 \setminus 0) \times_{\scr T'} (V_2 \setminus 0) \in \SmQA_{\scr X}[\scr F_1]$.
(For the last containment, apply Remark \ref{rmk:isotropy-sieve} to the projection $(V_1 \setminus 0) \times_{\scr T'} (V_2 \setminus 0) \to V_2 \setminus 0$.)
\end{proof}

\begin{corollary} \label{cor:gerbes-gen}
Let $p: \scr X \to \scr S$ be a tame proper étale gerbe.
\reasonableness{\scr X}
Then $\SHS(\scr X)$ is generated by stacks with flat inertia over $\scr S$.
\end{corollary}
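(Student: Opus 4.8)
The plan is to run an induction along the filtration by isotropy provided by Proposition \ref{prop:gerbes-gen}, using the families $\scr F_n/\scr S$ of subgroups of order $\le n$ from Example \ref{ex:families}, which form a tower $\scr F_0/\scr S = \scr F_\free/\scr S \subset \scr F_1/\scr S \subset \cdots$ of weakly adjacent families. Concretely, I would let $\scr C \subseteq \SHS(\scr X)$ be the subcategory generated under colimits and desuspensions by the suspension spectra of objects of $\SmQA_{\scr X}$ with flat inertia over $\scr S$, and prove by induction on $n$ that $\SHS(\scr X)[\scr F_n/\scr S] \subseteq \scr C$.

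First I would pin down the two ends of the tower. At the bottom, $\SHS(\scr X)[\scr F_\free/\scr S]$ is generated by $\SmQA_{\scr X}[\scr F_\free/\scr S]$ (Corollary \ref{cor:i!-ff}), whose objects carry a free action, hence are representable over $\scr S$, hence have trivial --- in particular flat --- inertia over $\scr S$; this gives the base case $n = 0$. At the top, using that $p$ is proper étale and $\scr S$ is quasi-compact I would trivialize the gerbe over a finite affine fppf cover $\{B_\alpha\}$ of $\scr S$ (Remark \ref{rmk:pet-gerbe-local-structure}) and set $N = \max_\alpha |G_\alpha|$; then $\scr F_N/\scr S = \scr F_\all/\scr S$, and since quasi-affine morphisms are representable one has $\SmQA_{\scr X}[\scr F_\all/\scr S] = \SmQA_{\scr X}$, whence $\SHS(\scr X)[\scr F_\all/\scr S] = \SHS(\scr X)$. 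So once the induction is carried out, evaluating at $n = N$ gives the corollary.

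For the inductive step, Proposition \ref{prop:gerbes-gen} (applicable because its reasonableness hypothesis on $\scr X$ is exactly the hypothesis of the corollary) says $\SHS(\scr X)[\scr F_{n+1}/\scr S]$ is generated under colimits and desuspensions by $\SHS(\scr X)[\scr F_n/\scr S]$ --- handled by the inductive hypothesis --- together with $\SmQA_{\scr X}[\scr F_{n+1}/\scr S \setminus \scr F_n/\scr S]$. The step I expect to matter most is observing that the objects $\scr T$ of this last category \emph{already} have flat inertia over $\scr S$: by thinness of $\scr F_{n+1}/\scr S \setminus \scr F_n/\scr S$, every point of $\scr T$ has isotropy of order exactly $n+1$, so $\scr T(\scr F_{n+1}/\scr S) = \scr T$ and $\scr T(\scr F_n/\scr S) = \emptyset$; since $\scr T \to \scr X$ is smooth and quasi-affine (hence schematic and separated) and $p$ is tame, Lemma \ref{lemm:adjacent-inertia-flat} identifies $\scr T$ with $\scr T(\scr F_{n+1}/\scr S \setminus \scr F_n/\scr S)$ and asserts it has flat inertia over $\scr S$. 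Thus $\Sigma^\infty_{S^1}\scr T_+ \in \scr C$, the induction closes, and we are done.

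Beyond this I do not foresee a real obstacle: all the geometric content is packaged into Proposition \ref{prop:gerbes-gen} and Lemma \ref{lemm:adjacent-inertia-flat}, and the residual work is the bookkeeping above --- most notably checking that finitely many steps of the tower exhaust $\SHS(\scr X)$, which is precisely where quasi-compactness of $\scr S$ is used to bound the orders of the gerbe groups.
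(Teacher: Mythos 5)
Your argument is correct and is essentially the paper's proof: apply Proposition \ref{prop:gerbes-gen} along the tower of weakly adjacent families $\scr F_0/\scr S \subset \scr F_1/\scr S \subset \cdots$ from Example \ref{ex:families} and observe that the new generators $\SmQA_{\scr X}[\scr F_{n+1}/\scr S \setminus \scr F_n/\scr S]$ at each stage already have flat inertia. The only (immaterial) difference is that you deduce the flat-inertia claim from Lemma \ref{lemm:adjacent-inertia-flat}, whereas the paper cites Proposition \ref{prop:identify-flat-inertia}; your extra bookkeeping about the base case and the quasi-compactness bound terminating the induction is correct and merely makes explicit what the paper leaves implicit.
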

\begin{proof}
Applying Proposition \ref{prop:gerbes-gen} to the sequence of weakly adjacent families $\scr F_0/\scr S \subset \scr F_1/\scr S \subset \dots$ from Example \ref{ex:families} we find generation by $\SmQA_{\scr X}[\scr F_i/\scr S \setminus \scr F_{i-1}/\scr S]$ (for varying $i$).
These consist of stacks with flat inertia by Proposition \ref{prop:identify-flat-inertia}.
\end{proof}

\begin{proposition} \label{prop:families-reduction}
Let $\scr F_1 \subset \scr F_2$ be weakly adjacent families, with $\scr F = \scr F_2 \setminus \scr F_1$ and associated restricted orbit stack diagram \[ \scr X \xleftarrow{a} \widetilde\Orb_\scr{F} \xrightarrow{b} \Orb_{\scr F}. \]
\reasonableness{\scr X}
For $E \in \SHS(\scr X)$ we have \[ \E(\scr F_1, \scr F_2) \wedge E \wequi \E(\scr F_1, \scr F_2) \wedge a_\sharp b^{*}(\E(\scr F_\free/\scr S)_+ \wedge b_{*} a^{*} E). \]

Moreover the restricted functor \[ \SHS(\scr X) \to \SHS(\Orb_{\scr F})[\scr F_\free/\scr S] \] is a localization, with associated localization endofunctor given by $(\ph) \wedge \widetilde{\E\scr F_1}$.
\end{proposition}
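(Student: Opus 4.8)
The plan is to deduce the statement from the adjunction
\[ L := a_\sharp b^* i_! \;\colon\; \SHS(\Orb_{\scr F})[\scr F_\free/\scr S] \;\adj\; \SHS(\scr X) \;\colon\; i^* b_* a^* =: R \]
of Remark \ref{rmk:orbit-stack-usefulness}, which is obtained by motivic localization from the presheaf adjunction $j_! \dashv j^*$ along the fully faithful inclusion $j \colon \SmQA_{\scr X}[\scr F] \hookrightarrow \SmQA_{\scr X}$ (recall $\scr F = \scr F_2 \setminus \scr F_1$ is thin by weak adjacency). Using Lemma \ref{lemm:i!i*-ident} to rewrite $i_! i^* \wequi (\ph) \times \E(\scr F_\free/\scr S)$, the object on the right of the displayed formula is $\E(\scr F_1, \scr F_2) \wedge LRE$, so the first assertion says exactly that the counit $c_E \colon LRE \to E$ becomes an equivalence after smashing with $\E(\scr F_1, \scr F_2)$.

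First I would reduce to generators. Both $E \mapsto \E(\scr F_1, \scr F_2) \wedge E$ and $E \mapsto \E(\scr F_1, \scr F_2) \wedge LRE$ preserve colimits and $S^1$-(de)suspensions: for the second this uses that $a_\sharp$, $b^*$, $a^*$ and $(\E(\scr F_\free/\scr S))_+ \wedge (\ph)$ are cocontinuous, together with the crucial point that $b_*$ is cocontinuous because $b$ is a gerbe (Lemma \ref{lemm:p-otimes-colim}). Hence it suffices to check that $\E(\scr F_1, \scr F_2) \wedge c_E$ is an equivalence for $E$ in a generating family. By generation by gerbes (Corollary \ref{cor:gerbes-gen}, which is where the local-structure hypothesis on $\scr X$ enters) together with the isotropy decomposition of flat-inertia stacks (Lemma \ref{lemm:flat-inertia-form} and Proposition \ref{prop:identify-flat-inertia}), one may take the generators to be $\Sigma^\infty_{S^1} \scr T_+$ and its $S^1$-desuspensions, for $\scr T \in \SmQA_{\scr X}$ with all isotropy over $\scr S$ conjugate to a single subgroup $H$.

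For such $\scr T$ I would argue by cases on the position of $H$. If $H \in \scr F_2 \setminus \scr F_1$, then $\scr T \in \SmQA_{\scr X}[\scr F]$, and $j_! j^* \scr T \to \scr T$ is already an equivalence at presheaf level (the counit of a left Kan extension along a fully faithful functor restricts to an equivalence on the subcategory), hence $c_E$ is an equivalence and there is nothing more to do. If $H \in \scr F_1$, then $\scr T$ has isotropy in $\scr F_1$, so $\E\scr F_1 \times \scr T \wequi \scr T$ and therefore $\E(\scr F_1, \scr F_2) \wedge \Sigma^\infty_{S^1}\scr T_+ \wequi (\E\scr F_2)_+ \wedge (\E\scr F_1)_+ \wedge \widetilde{\E\scr F_1} \wedge \Sigma^\infty_{S^1}\scr T_+ \wequi *$ by Remark \ref{rmk:EFF-interpretation}; one must then check that $\widetilde{\E\scr F_1} \wedge LRE \wequi *$ as well, which I would do using the projection formula for the smooth quasi-affine (indeed finite étale) map $a$, the full faithfulness of $b^*$ (Corollary \ref{cor:p*-ff}) so that $b_* b^* \wequi \id$, and the identification of the relevant isotropy over $\scr S$ via Lemma \ref{lemm:orbits-BG} and Proposition \ref{prop:identify-flat-inertia}. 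If $H \notin \scr F_2$, both sides must be shown to vanish after $\E(\scr F_1,\scr F_2) \wedge (\ph)$; here $\scr T(\scr F_2) = \scr T(\scr F_2 \setminus \scr F_1) = \emptyset$, and I would invoke homotopy purity (Lemma \ref{lemm:homotopy-purity}) and the normal-bundle splitting of Lemmas \ref{lemm:adjacent-inertia-flat} and \ref{lemm:adjacent-splitting} — essentially re-running the bookkeeping in the proof of Proposition \ref{prop:gerbes-gen} — to see that $\Sigma^\infty_{S^1}\scr T_+$ and $LRE$ are both $\E(\scr F_1,\scr F_2)$-acyclic. The main obstacle throughout is precisely this isotropy bookkeeping: tracking how $\scr F_1$, $\scr F_2$, $\scr F$ and the objects $\E\scr F$, $\widetilde{\E\scr F}$ behave under $a$ and $b$ and under the homotopy-purity filtration, and ensuring the presheaf-level identifications survive stabilization and motivic localization. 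I expect this to be the only genuinely laborious step.

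Finally, the "moreover" should follow by combining the first part with formal considerations. The endofunctor $(\ph) \wedge \widetilde{\E\scr F_1}$ carries a natural transformation from the identity (via $S^0 \to \widetilde{\E\scr F_1}$) and is idempotent, since its error term $(\E\scr F_1)_+ \wedge \widetilde{\E\scr F_1}$ vanishes (Remarks \ref{rmk:EFF-interpretation} and \ref{rmk:Etilde-monoid}); thus it is a smashing localization of $\SHS(\scr X)$. Using that, by Remark \ref{rmk:orbit-stack-usefulness}, the pair $(L, R)$ is a localization of $(j_!, j^*)$ — so that $L$ is fully faithful onto its image and $LR$ computes as the motivic localization of $j_! j^*$ — together with the first part to control the comparison, I would identify $R$ (corestricted to its essential image) with this smashing localization and conclude that its localization endofunctor on $\SHS(\scr X)$ is $(\ph) \wedge \widetilde{\E\scr F_1}$, as asserted.
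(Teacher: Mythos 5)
Your overall strategy---show the counit $LRE\to E$ becomes an equivalence after smashing with $\E(\scr F_1,\scr F_2)$, reduce to generators, and split into cases by isotropy---is the right one, and your first two cases are essentially the two cases that actually occur. But your third case rests on a false claim: if $\scr T$ has all isotropy conjugate to a single $H\notin\scr F_2$, then $\E(\scr F_1,\scr F_2)\wedge\Sigma^\infty_{S^1}\scr T_+$ does \emph{not} vanish in general. Take $\scr X=B\sslash C_4$ over $\scr S=B$, $\scr F_1=\{e\}$, $\scr F_2=\{e,C_2\}$ (weakly adjacent), and $\scr T=\scr X$ itself, so $H=C_4\notin\scr F_2$; then $\E(\scr F_1,\scr F_2)\wedge\scr T_+\wequi\E(\scr F_1,\scr F_2)$, which is nonzero (its underlying space $\E(\scr F_2)_+\wedge\widetilde{\E\scr F_1}$ has nontrivial $C_2$-isotropy). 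So "both sides are $\E(\scr F_1,\scr F_2)$-acyclic" fails, and re-running the purity bookkeeping on such a $\scr T$ will not rescue the case as stated.

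The missing idea is a preliminary reduction that makes the bad case disappear. Since $\E(\scr F_1,\scr F_2)\wequi\E(\scr F_1,\scr F_2)\wedge\E(\scr F_2)_+$ (Remarks \ref{rmk:EE-interpretation} and \ref{rmk:EFF-interpretation}) and $j^*\E(\scr F_2)_+\wequi S^0$ (objects of $\SmQA_{\scr X}[\scr F]$ have isotropy in $\scr F_2$), both $\E(\scr F_1,\scr F_2)\wedge(\ph)$ and $j_!j^*$ are unchanged by replacing $E$ with $\E(\scr F_2)_+\wedge E$; hence one may assume $E\in\SHS(\scr X)[\scr F_2]$ from the outset. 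One then invokes the \emph{relative} generation statement, Proposition \ref{prop:gerbes-gen} (not Corollary \ref{cor:gerbes-gen}), which says $\SHS(\scr X)[\scr F_2]$ is generated by $\SHS(\scr X)[\scr F_1]$ together with $\SmQA_{\scr X}[\scr F]$ --- exactly your first two cases and nothing else. In the $\scr F_1$ case, note also that the quickest route is that $j^*E=0$ outright (there are no maps from objects with isotropy in $\scr F$ to objects with isotropy in $\scr F_1$, since isotropy only grows along morphisms and $\scr F_1$ is closed under subgroups), so $LRE=0$; the projection-formula detour you sketch is unnecessary. Your treatment of the ``moreover'' correctly identifies $(\ph)\wedge\widetilde{\E\scr F_1}$ as a smashing localization, but you still need to produce the comparison map $(\ph)\wedge\widetilde{\E\scr F_1}\to j_*j^*$ (it exists because $j^*$ inverts this localization) and verify it on the same generators; this verification again uses the two-case generation above.
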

\begin{proof}
First note that $(*)$ $j^*(\E(\scr F_1)_+) \wequi S^0 \wequi j^*(\widetilde{\E\scr F_2})$.

Now we prove the first statement.
Since $\E(\scr F_1, \scr F_2) \wequi \E(\scr F_1, \scr F_2) \wedge \E(\scr F_2)_+$ by Remarks \ref{rmk:EE-interpretation} and \ref{rmk:EFF-interpretation}, we may assume (using Lemma \ref{lemm:i!i*-ident} and $(*)$) that $E \in \SHS(\scr X)[\scr F_2]$.
Hence by Proposition \ref{prop:gerbes-gen}, we reduce to the case where either $E \in \SHS(\scr X)[\scr F_1]$ or $E \in \SmQA_{\scr X}[\scr F]$.
In the notation of Remark \ref{rmk:orbit-stack-usefulness}, we seek to prove that $j_!j^* E \to E$ is inverted when smashing with $\E(\scr F_1, \scr F_2)$.\NB{We use in particular that $b_*$ commutes with $\Sigma^\infty_{S^1}$, since it preserves colimits.}
In the first case, we have $j^*E = 0$ (indeed if $X \in \SmQA_\scr{X}[\scr F]$ and $Y$ in $\SmQA_{\scr X}[\scr F_1]$, then there are no maps $Y \to X$) and also \[ \E(\scr F_1, \scr F_2) \wedge E \stackrel{R.\ref{rmk:EFF-interpretation}}{\wequi} \E(\scr F_2)_+ \wedge \widetilde{\E \scr F_1} \wedge E \stackrel{L.\ref{lemm:i!i*-ident}}{\wequi} \E(\scr F_2)_+ \wedge \widetilde{\E \scr F_1} \wedge \E(\scr F_1)_+ \wedge E \stackrel{R.\ref{rmk:EFF-interpretation}}{=} 0. \]
In the second case, we have $j_!j^*E \wequi E$, and there is nothing to prove.

To prove the second statement, denote now the restricted functor by $j^*$.
Observe that $j^*$ has a left adjoint $j_!$, which is fully faithful (see Remark \ref{rmk:orbit-stack-usefulness}).
Since $j^*$ preserves colimits, it must have a right adjoint $j_*$, which hence is also fully faithful; thus $j^*$ is a localization.
By $(*)$ it inverts the localization $(\ph) \to (\ph) \wedge \widetilde{\E\scr F_1}$, whence we obtain a natural transformation $(\ph) \wedge \widetilde{\E\scr F_1} \to j_*j^*$.
As before we check that this induces an equivalence on $\map(T, \ph)$, where either $T \in \SHS(\scr X)[\scr F_1]$ or $T \in \SmQA_{\scr X}[\scr F]$.
In the former case we get $0$ on both sides as before.
In the latter we have \[ \map(j_! T, j_*j^*E) \wequi \map(j^*j_!T, j^*E) \wequi \map(T, j^*E) \] and \[ \map(j_! T, \widetilde{\E\scr F_1} \wedge E) \wequi \map(T, j^* \widetilde{\E\scr F_1} \wedge E)) \wequi \map(T, j^*E) \] (using $(*)$ once more).
This concludes the proof.
\end{proof}

\subsection{Quotients}
\begin{lemma} \label{lemm:presheaf-quotients}
Let $p: \scr X \to \scr S$ be a proper étale morphism.
\begin{enumerate}
\item The functor $p^*(\ph) \times \E(\scr F_\free/\scr S): \PSh(\SmQA_{\scr S}) \to \PSh(\SmQA_{\scr S})[\scr F_\free/\scr S]$ admits a left adjoint $p_!$.
\item For $\scr Y \in \SmQA_{\scr X}[\scr F_\free/\scr S]$, we have $p_!(\scr Y) = \scr Y \in \SmQA_{\scr S}$.
\item The functor $p_!$ preserves motivic equivalences.
\end{enumerate}
\end{lemma}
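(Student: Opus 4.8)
The plan is to realize the functor of (1) as restriction along an explicit ``forget the base'' functor of small categories, whose left adjoint is then a left Kan extension. Write $i\colon \SmQA_{\scr X}[\scr F_\free/\scr S]\hookrightarrow \SmQA_{\scr X}$ for the (fully faithful) inclusion of the subcategory on stacks with free isotropy over $\scr S$; since $\scr F_\free/\scr S$ is a family (Example~\ref{ex:families}) this is a sieve (Remark~\ref{rmk:isotropy-sieve}), and we have the adjunction $i_!\dashv i^*$ with $(\ph)\times\E(\scr F_\free/\scr S)\simeq i_!i^*$ by Lemma~\ref{lemm:i!i*-ident}. Viewing $p^*(\ph)\times\E(\scr F_\free/\scr S)$ as landing in the essential image of $i_!$, which is $\PSh_\Sigma(\SmQA_{\scr X}[\scr F_\free/\scr S])$, it is identified with $G:=i^*p^*\colon \PSh_\Sigma(\SmQA_{\scr S})\to\PSh_\Sigma(\SmQA_{\scr X}[\scr F_\free/\scr S])$. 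I would then set $q\colon \SmQA_{\scr X}[\scr F_\free/\scr S]\to\SmQA_{\scr S}$ to send a smooth quasi-affine $\scr X$-stack $\scr Y$ with free isotropy over $\scr S$ to the same stack regarded over $\scr S$ via $\scr Y\to\scr X\xrightarrow{p}\scr S$, and take $p_!:=q_!=\operatorname{Lan}_q$.

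First I would check $q$ is well-defined. Smoothness and finite presentation of $q(\scr Y)\to\scr S$ are clear: $\scr Y\to\scr X$ is smooth and locally of finite presentation, $p$ is étale and (being proper) quasi-compact and quasi-separated, so the composite is smooth and of finite presentation. Quasi-affineness is the one geometric input: having free isotropy over $\scr S$ means $\scr Y$ is representable over $\scr S$, and quasi-affineness may be checked fppf-locally on $\scr S$, where (Remark~\ref{rmk:pet-gerbe-local-structure}) $p$ becomes $B\sslash G\to B$ with $G$ finite constant and $\scr Y$ becomes $Y\sslash G=Y/G$ with $Y$ quasi-affine over $B$ carrying a free $G$-action; then $Y/G\to B$ is quasi-affine by Proposition~\ref{prop:relative-quotient-gerbe}(4), applied to the quasi-affine morphism $Y\sslash G\to B\sslash G$ whose target has finite locally free inertia. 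I also record that $q$ preserves finite coproducts and preserves fibre products of families of objects over a fixed object of $\SmQA_{\scr X}[\scr F_\free/\scr S]$ (such fibre products are formed in $\Stk$, again have free isotropy over $\scr S$ by Remark~\ref{rmk:isotropy-sieve}, and are unaffected by forgetting the base); these are needed for (3).

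For (1)--(2): both $G$ and the restriction functor $q^*\colon\PSh_\Sigma(\SmQA_{\scr S})\to\PSh_\Sigma(\SmQA_{\scr X}[\scr F_\free/\scr S])$ preserve all colimits (restriction functors do; $p^*$ is a left adjoint; $i^*$ is a restriction functor), so it suffices to give a natural equivalence $G\simeq q^*$ on representables. Using $p^*y_{\scr S}(Z)\simeq y_{\scr X}(Z\times_{\scr S}\scr X)$, for $\scr Y\in\SmQA_{\scr X}[\scr F_\free/\scr S]$ and $Z\in\SmQA_{\scr S}$ the universal property of the fibre product gives
\[
(Gy_{\scr S}(Z))(\scr Y)=\Hom_{\Stk_{/\scr X}}(\scr Y,Z\times_{\scr S}\scr X)\simeq \Hom_{\Stk_{/\scr S}}(\scr Y,Z)=\Hom_{\Stk_{/\scr S}}(q(\scr Y),Z)=(q^*y_{\scr S}(Z))(\scr Y),
\]
naturally in $Z$ and $\scr Y$. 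Hence $G\simeq q^*$, which has left adjoint $p_!:=q_!=\operatorname{Lan}_q$, proving (1). Part (2) follows since a left Kan extension along $q$ takes the representable $\scr Y$ to $y_{\scr S}(q(\scr Y))$, that is, to $\scr Y$ viewed as an object of $\SmQA_{\scr S}$.

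For (3): $p_!=q_!$ preserves all colimits, so by the universal property of motivic localization it suffices to check it sends the generating Nisnevich and homotopy equivalences of $\PSh_\Sigma(\SmQA_{\scr X}[\scr F_\free/\scr S])$ to motivic equivalences over $\scr S$. A generating Nisnevich equivalence is the inclusion $S\hookrightarrow\scr Y$ of the sieve generated by a Nisnevich cover $\{\scr U_\alpha\to\scr Y\}$ in $\SmQA_{\scr X}[\scr F_\free/\scr S]$; writing $S$ as the colimit of the Čech nerve of $\coprod_\alpha\scr U_\alpha\to\scr Y$ and using that $q$ preserves finite coproducts and the fibre products in that nerve, $q_!$ carries it to the inclusion of the sieve generated by $\{q(\scr U_\alpha)\to q(\scr Y)\}$; since $q$ leaves the morphisms $\scr U_\alpha\to\scr Y$ and the defining conditions of a Nisnevich cover unchanged, this is again a Nisnevich cover of $q(\scr Y)$, whose sieve inclusion is a Nisnevich equivalence. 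A generating homotopy equivalence is the projection $\scr G\to\scr Y$ of a vector bundle torsor with $\scr Y\in\SmQA_{\scr X}[\scr F_\free/\scr S]$; since $\scr G\to\scr Y$ is affine, $\scr G$ again lies in $\SmQA_{\scr X}[\scr F_\free/\scr S]$, and vector bundles and torsors under them are unchanged by forgetting the base, so $q_!$ sends this to a vector bundle torsor projection over $\scr S$, a homotopy equivalence. (The same argument works reading $\PSh$ in place of $\PSh_\Sigma$, as $q$ preserving finite coproducts makes $q_!$ compatible with the $\PSh_\Sigma$-localization.) The main obstacle is the well-definedness of $q$ — that a free-isotropy smooth quasi-affine stack over $\scr X$ becomes quasi-affine over $\scr S$ — which rests on the quotient result Proposition~\ref{prop:relative-quotient-gerbe}(4); the only other point needing care is the Čech-nerve bookkeeping in (3).
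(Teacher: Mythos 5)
Your proof is correct and follows essentially the same route as the paper's: identify $p^*(\ph)\times\E(\scr F_\free/\scr S)$ with $i^*p^*$ via Lemma \ref{lemm:i!i*-ident}, observe via Proposition \ref{prop:relative-quotient-gerbe} that a free-isotropy object of $\SmQA_{\scr X}$ is already smooth and quasi-affine over $\scr S$, so that $p_!$ is the left Kan extension of the ``forget the base'' functor, and deduce (3) because that functor is essentially the identity on generating equivalences. The only cosmetic slip is invoking Remark \ref{rmk:pet-gerbe-local-structure} for a general proper étale $p$ (it is stated for gerbes); one should first split off the finite étale factor $\repr_{\scr S}(\scr X)\to\scr S$ as in Example \ref{ex:fact-etale}, after which your fppf-local quasi-affineness check goes through unchanged.
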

\begin{proof}
We have adjunctions \[ \PSh(\SmQA_{\scr X}[\scr F_\free/\scr S]) \stackrel[i^*]{i_!}{\rightleftarrows} \PSh(\Stk_\scr{X}) \stackrel[p^*]{\tilde{p}_!}{\rightleftarrows} \PSh(\Stk_{\scr S}), \] where $\tilde p_!$ is just the left Kan extension of the evident functor $\Stk_\scr{X} \to \Stk_\scr{S}$.
If $\scr Y \in \SmQA_{\scr X}[\scr F_\free/\scr S]$, then $\scr Y \to \scr S$ is representable and so $\scr Y \wequi \repr_{\scr S}(\scr Y)$.
By Proposition \ref{prop:relative-quotient-gerbe}, $\scr Y \to \scr S$ is smooth and quasi-affine.
It follows that the composite $\tilde{p}_! i_!$ lands in the full subcategory $\PSh(\SmQA_\scr{S})$.
Denote by $p_!$ the resulting functor \[ \PSh(\SmQA_{\scr X}[\scr F_\free/\scr S]) \wequi \PSh(\SmQA_{\scr X})[\scr F_\free/\scr S] \to \PSh(\SmQA_{\scr S}). \]
By construction it has a right adjoint given by the restriction of $i^*p^*$, which coincides with $p^*(\ph) \times \E\scr F_\free/\scr S$ by Lemma \ref{lemm:i!i*-ident}.
This proves (1,2).
Since $p_!$ is essentially the identity, it clearly preserves generating motivic equivalences, and hence (3) holds since $p_!$ preserves colimits.
\end{proof}

\begin{corollary} \label{cor:stable-quotients}
Let $p: \scr X \to \scr S$ be a proper étale morphism.
The functor \[ i^*p^*: \Spc(\scr S) \to \Spc(\scr X)[\scr F_\free/\scr S] \] admits a left adjoint $p_!$ given on representables by $p_!(\scr Y \to \scr X) = (\scr Y \to \scr X \to \scr S)$.
\end{corollary}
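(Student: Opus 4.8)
The plan is to obtain the motivic statement by descending the presheaf-level adjunction of Lemma~\ref{lemm:presheaf-quotients} along the motivic localization. First I would recall that, in the notation of that lemma, $p_!^{\PSh}$ is left adjoint to the functor $R := i^* p^*$, where $p^*$ is the presheaf-level base-change functor $\PSh_\Sigma(\SmQA_{\scr S}) \to \PSh_\Sigma(\SmQA_{\scr X})$ and $i^*$ is restriction along $\SmQA_{\scr X}[\scr F_\free/\scr S] \hookrightarrow \SmQA_{\scr X}$ (indeed, via Lemma~\ref{lemm:i!i*-ident}, $i^* p^*(\ph)$ corresponds to $p^*(\ph) \times \E(\scr F_\free/\scr S)$). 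By Lemma~\ref{lemm:presheaf-quotients}(3), $p_!^{\PSh}$ preserves motivic equivalences, so its right adjoint $R$ preserves motivically local objects; the usual yoga of reflective localizations then produces an induced adjunction $p_! : \Spc(\scr X)[\scr F_\free/\scr S] \adj \Spc(\scr S) : R|_{\Spc(\scr S)}$, with descended left adjoint $L_\mot \circ p_!^{\PSh} \circ \iota$.

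Next I would identify $R|_{\Spc(\scr S)}$ with the motivic composite $i^* p^*$. The two ingredients are: the motivic $p^*$ is (as in the Functoriality discussion above) $L_\mot$ applied to the presheaf-level $p^*$; and $i^*$ commutes with $L_\mot$ --- this is precisely what the proof of Lemma~\ref{lemm:family-inclusion-pres-mot-eq} establishes --- so $i^*$ also carries motivically local presheaves to motivically local presheaves. Hence for $F \in \Spc(\scr S)$ one computes $(i^* p^*)_{\Spc}(F) \wequi i^*\big(L_\mot(p^*_{\PSh} F)\big) \wequi L_\mot\big(i^* p^*_{\PSh} F\big) \wequi L_\mot(R(F)) \wequi R(F)$, the last step because $R(F)$ is already local. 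This yields the desired adjunction $p_! \dashv i^* p^*$ on motivic spaces. Finally, for the description on representables, Proposition~\ref{prop:relative-quotient-gerbe} guarantees that $\scr Y \to \scr S$ is smooth quasi-affine whenever $\scr Y \in \SmQA_{\scr X}[\scr F_\free/\scr S]$, and Lemma~\ref{lemm:presheaf-quotients}(2) gives $p_!^{\PSh}(\scr Y) \wequi (\scr Y \to \scr S)$, so after applying $L_\mot$ we get $p_!(\scr Y) \wequi (\scr Y \to \scr X \to \scr S)$.

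I expect the only point needing any care to be this last identification of the two incarnations of $i^* p^*$ --- presheaf-theoretic (right adjoint of $p_!^{\PSh}$) versus motivic (composite of base-change functors) --- which amounts to checking that an adjunction survives passage through two commuting Bousfield localizations. There is no real obstacle here: the substance of the statement already resides in Lemma~\ref{lemm:presheaf-quotients}, and in particular in its part (3), whose proof is where the geometry (through Proposition~\ref{prop:relative-quotient-gerbe}) is used.
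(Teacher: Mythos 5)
Your proposal is correct and follows exactly the route the paper intends: the paper's own proof is just ``Immediate from Lemma \ref{lemm:presheaf-quotients},'' and your write-up supplies precisely the standard descent-through-localization details (preservation of motivic equivalences by $p_!$, commutation of $i^*$ with $L_\mot$, and the identification on representables) that make that ``immediate'' rigorous.
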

\begin{proof}
Immediate from Lemma \ref{lemm:presheaf-quotients}.
\end{proof}

\subsection{Stabilization}
\begin{lemma} \label{lemm:stabilization-lemma}
Let $p: \scr S \to \scr B$ be a tame proper étale gerbe.
\begin{enumerate}
\item Let $V$ be a vector bundle on $\scr S$.
  The canonical map \[ Th(p^*p_* V) \wedge \widetilde{\E(\scr F_\prop/\scr B)} \to Th(V) \wedge \widetilde{\E(\scr F_\prop/\scr B)} \in \Spc(\scr S)_* \] is an equivalence.
\item Suppose that $\scr B$ is linearly or nicely quasi-fundamental.
  Then \[ \Spc(\scr S)[\scr F_\free/\scr B]_*[\Sph_\scr{B}^{-1}] \wequi \Spc(\scr S)[\scr F_\free/\scr B]_*[\Sph_\scr{S}^{-1}]. \]
  (Recall the notation $\Sph_{\ph}$ from Corollary \ref{cor:SH-sph-normed}.)
\end{enumerate}
\end{lemma}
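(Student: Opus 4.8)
The plan is to prove the two parts in order; part (1) is an explicit Thom space computation and part (2) reduces it, via the ``$\mathbb P^1$-inversion inverts all Thom spaces'' principle, to the fact that on the free part everything is scheme-like over $\scr B$.

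For (1): working fppf-locally we may take $\scr S = B\sslash G$ with $G$ a finite constant tame group and $V$ a $G$-representation over $B$; then the closed immersion $p^*p_*V\hookrightarrow V$ of Proposition~\ref{prop:fixed-points-functor}(2) is $V^G\hookrightarrow V$, and the averaging projector $\tfrac1{|G|}\sum_g g\colon V\to V^G$ is canonical (here tameness enters), hence glues to a global splitting $V\wequi p^*p_*V\oplus W$ of vector bundles on $\scr S$, with $W^G=0$ fppf-locally. In particular $(W\setminus 0)^G=\emptyset$ fppf-locally, so $W\setminus 0$ has isotropy in $\scr F_\prop/\scr B$. Since $Th$ sends direct sums to smash products, $Th(V)\wequi Th(p^*p_*V)\wedge Th(W)$, and under this identification the canonical map is $\id_{Th(p^*p_*V)}\wedge z$, where $z\colon S^0\wequi Th(0)\to Th(W)$ is induced by the zero section $0\hookrightarrow W$. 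It therefore suffices to show $\widetilde{\E(\scr F_\prop/\scr B)}\wedge z$ is an equivalence; smashing the cofiber sequence $(W\setminus 0)_+\to W_+\wequi S^0\to Th(W)$ (homotopy invariance of $W$) with $\widetilde{\E(\scr F_\prop/\scr B)}$ reduces this to $\widetilde{\E(\scr F_\prop/\scr B)}\wedge(W\setminus 0)_+\wequi *$, which holds because $W\setminus 0$ has isotropy in $\scr F_\prop/\scr B$: by Remark~\ref{rmk:EE-interpretation} we have $\E(\scr F_\prop/\scr B)_+\wedge(W\setminus 0)_+\wequi(W\setminus 0)_+$, and $\E(\scr F_\prop/\scr B)_+\wedge\widetilde{\E(\scr F_\prop/\scr B)}\wequi *$ by Remark~\ref{rmk:EFF-interpretation}. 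Finally, smash $\widetilde{\E(\scr F_\prop/\scr B)}\wedge z$ with $Th(p^*p_*V)$.

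For (2): the inclusion ``$\Sph_{\scr B}^{-1}\subseteq\Sph_{\scr S}^{-1}$'' is formal, since for a vector bundle $W$ on $\scr B$ the pullback $p^*W$ is one on $\scr S$ and $p^*Th(W)\wequi Th(p^*W)$, so $\Sph_{\scr B}$ maps into $\Sph_{\scr S}$ inside $\Spc(\scr S)[\scr F_\free/\scr B]_*$; this gives a localization functor between the two localized categories, and it remains to show every $Th(V)$ with $V$ a vector bundle on $\scr S$ is already invertible in $\Spc(\scr S)[\scr F_\free/\scr B]_*[\Sph_{\scr B}^{-1}]$. Note $\P^1_{\scr S}=Th(\scr O_{\scr S})\wequi p^*Th(\scr O_{\scr B})$ lies in (the image of) $\Sph_{\scr B}$, so this category is stable and presentably symmetric monoidal. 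Since $G$ is tame, every $G$-equivariant vector bundle $V$ on $\scr S$ is a retract of $\rho^{\oplus n}$ for $n$ sufficiently large, where $\rho$ is the (globally defined) regular representation bundle on $\scr S$; writing $\rho^{\oplus n}\wequi V\oplus V'$ gives $Th(V)\wedge Th(V')\wequi Th(\rho)^{\wedge n}$, and a smash factor of an invertible object in a symmetric monoidal category is invertible, so it suffices to treat $V=\rho$. On any free orbit $j\colon\scr U\hookrightarrow\scr S$, with associated $G$-torsor $\pi_{\scr U}\colon\scr U_0\to\scr U$ (finite étale), base change gives $\rho|_{\scr U}\wequi(\pi_{\scr U})_*\scr O_{\scr U_0}$, whence by Lemma~\ref{lemm:norm-Trho} $Th(\rho|_{\scr U})\wequi(\pi_{\scr U})_\otimes(\P^1_{\scr U_0})$; since $\P^1_{\scr U_0}$ is a pullback of $\P^1_{\scr B}$ it becomes invertible, and norms along finite étale maps are symmetric monoidal, hence preserve invertibility, so $Th(\rho|_{\scr U})$ is invertible. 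The free orbits generate $\Spc(\scr S)[\scr F_\free/\scr B]$ and the restriction functors $j^*$ are symmetric monoidal, so testing invertibility of $Th(\rho)$ on generators reduces to the invertibility of the $Th(\rho|_{\scr U})$; this gives the equivalence $\Spc(\scr S)[\scr F_\free/\scr B]_*[\Sph_{\scr B}^{-1}]\wequi\Spc(\scr S)[\scr F_\free/\scr B]_*[\Sph_{\scr S}^{-1}]$.

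The step I expect to be the main obstacle is the last one of part (2): upgrading ``$Th(\rho|_{\scr U})$ is invertible for every free orbit $\scr U$'' to ``$Th(\rho)$ is invertible in $\Spc(\scr S)[\scr F_\free/\scr B]_*[\Sph_{\scr B}^{-1}]$''. This needs a conservativity (descent) statement for invertibility in this partially-$\P^1$-inverted stable category with respect to the free orbits, together with the dualizability of $Th(\rho)$ there and the precise identification of the normed structure on $\SH(\ph)$ restricted to the free parts and the fact that finite-étale norms carry invertible objects to invertible objects; the hypothesis that $\scr B$ is (linearly or nicely) quasi-fundamental is exactly what keeps all the stacks in play reasonable enough for homotopy purity (Lemma~\ref{lemm:homotopy-purity}) and these constructions to be available.
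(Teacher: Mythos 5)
Your part (1) is essentially the paper's argument: split $V \wequi p^*p_*V \oplus W$ using tameness, observe that $W\setminus 0$ has isotropy in $\scr F_\prop/\scr B$ because $p_*(W)=0$, and conclude by smashing the cofiber sequence $(W\setminus 0)_+\to S^0\to Th(W)$ with $\widetilde{\E(\scr F_\prop/\scr B)}$. No complaints there.

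Part (2) has a genuine gap, and it is precisely the step you yourself flag as the main obstacle. Knowing that $j^*Th(\rho)$ is invertible for a jointly conservative family of symmetric monoidal functors $j^*$ does \emph{not} imply that $Th(\rho)$ is invertible in the source: invertibility requires producing an inverse in $\Spc(\scr S)[\scr F_\free/\scr B]_*[\Sph_{\scr B}^{-1}]$ itself, and conservative monoidal functors only detect whether a given (co)evaluation map is an equivalence \emph{after} you have exhibited a candidate dual. You never establish dualizability of $Th(\rho)$ in this partially localized category, and it is not formal there. The preparatory reduction is also shaky: the claim that every vector bundle on $\scr S$ is a global direct summand of $\rho^{\oplus n}$ is only clear Zariski-locally (via $V\hookrightarrow V_{\mathrm{triv}}\otimes\rho$ plus local freeness of $V_{\mathrm{triv}}$), and the existence of a globally defined regular representation bundle on a general gerbe needs justification.

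The paper avoids all of this by never trying to verify invertibility in the big category directly. It sets up the adjunction $\sigma^\infty\colon \Spc(\scr S)[\scr F_\free/\scr B]_*[\Sph_{\scr B}^{-1}]\adj \Spc(\scr S)[\scr F_\free/\scr B]_*[\Sph_{\scr S}^{-1}]\colon\omega^\infty$ and shows unit and counit are equivalences after applying $f^*$ for $f\colon\scr X\to\scr S$ with $\scr X\in\SmQA_{\scr S}[\scr F_\free/\scr B]$; the crucial structural input is that $f^*$ commutes with $\omega^\infty$ as well as $\sigma^\infty$ (because $f_\sharp$ exists and commutes with $\sigma^\infty$), so conservativity of the family $\{f^*\}$ legitimately reduces everything to invertibility of $Th(V)$ in $\Spc(\scr X)_*[\Sph_{\scr B}^{-1}]$ for each individual free $\scr X$. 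Such an $\scr X$ is representable, quasi-affine and finitely presented over $\scr B$ (Proposition \ref{prop:relative-quotient-gerbe}), hence quasi-fundamental, so $\Spc(\scr X)_*[\Sph_{\scr B}^{-1}]\wequi\SH(\scr X)$ and all Thom spaces are invertible there by the cited result of Khan--Ravi. To repair your write-up, either supply the dualizability of $Th(\rho)$ (nontrivial) or switch to this unit/counit argument.
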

\begin{proof}
(1)
Let $V_0 = p^*p_* V \subset V$ be the fixed point bundle.
By tameness, this inclusion is split: $V \wequi V_0 \oplus V'$.
Thus $Th(V) \wequi Th(V_0) \wedge Th(V')$ and so it suffices to treat the case $V=V'$, $V_0 = 0$.
Smashing the cofiber sequences \[ \E(\scr F_\prop/\scr B)_+ \to S^0 \to \widetilde{\E(\scr F_\prop/\scr B)} \] and \[ (V \setminus 0)_+ \to V_+ \wequi S^0 \to Th(V) \] we obtain a diagram of cofiber sequences
\begin{equation*}
\begin{CD}
\E(\scr F_\prop/\scr B)_+ \wedge Th(V) @>>> Th(V) @>>> \widetilde{\E(\scr F_\prop/\scr B)} \wedge Th(V) \\
@AAA                                       @AAA                                @AAA \\
\E(\scr F_\prop/\scr B)_+ @>>> S^0 @>>>  \widetilde{\E(\scr F_\prop/\scr B)} \\
@AAA                             @AAA    @AAA \\
\E(\scr F_\prop/\scr B)_+ \wedge (V \setminus 0)_+ @>>> (V \setminus 0)_+ @>>> \widetilde{\E(\scr F_\prop/\scr B)} \wedge (V \setminus 0)_+.
\end{CD}
\end{equation*}
It suffices to show that the lower right hand corner is contractible, for which it is enough that the bottom left hand map is an equivalence, i.e. $V \setminus 0 \in \SmQA_{\scr S}[\scr F_\prop/\scr B]$.
In other words we need $p_*(V \setminus 0) = \emptyset$, which is true since $p_*(V) = 0$.

(2)
Let $\scr X \in \SmQA_{\scr S}[\scr F_\free/\scr B]$ and consider the square of adjunctions
\begin{equation*}
\begin{tikzcd}
\Spc(\scr X)_*[\Sph_\scr{B}^{-1}] \ar[r, "\sigma^\infty", bend left=5] \ar[d, "f_\sharp" swap, bend right] & \Spc(\scr X)_*[\Sph_\scr{S}^{-1}] \ar[l, "\omega^\infty", bend left=5] \ar[d, "f_\sharp" swap, bend right] \\
\Spc(\scr S)[\scr F_\free/\scr B]_*[\Sph_\scr{B}^{-1}] \ar[r, "\sigma^\infty", bend left=5] \ar[u, "f^*" swap, bend right] & \Spc(\scr S)[\scr F_\free/\scr B]_*[\Sph_\scr{S}^{-1}]. \ar[l, "\omega^\infty", bend left=5] \ar[u, "f^*" swap, bend right]
\end{tikzcd}
\end{equation*}
Here $\sigma^\infty$ commutes with $f^*$ and $f_\sharp$ for formal reasons ($f_\sharp \dashv f^*$ being an adjunction of modules).
It follows that also $\omega^\infty$ commutes with $f^*$.
We seek to prove that the natural transformations $\id \to \omega^\infty\sigma^\infty$ and $\sigma^\infty\omega^\infty \to \id$ are equivalences.
Functors of the form $f^*$ yield a conservative collection, so we may as well prove that $f^* \to f^*\omega^\infty\sigma^\infty \wequi \omega^\infty\sigma^\infty f^*$ and $\sigma^\infty\omega^\infty f^* \wequi f^*\sigma^\infty\omega^\infty \to f^*$ are equivalences (here we use the commutation explained above).
It thus will be enough to prove that if $V$ is a vector bundle on $\scr X$ (in fact $\scr S$), then $Th(V)$ is invertible in $\Spc(\scr X)[\Sph_\scr{B}^{-1}]$.
This follows from the fact that $\scr X \wequi \repr_{\scr B}(\scr X) \to \scr B$ is quasi-affine and of finite presentation (Proposition \ref{prop:relative-quotient-gerbe}), whence linearly/nicely quasi-fundamental, and so $\Spc(\scr X)[\Sph_\scr{B}^{-1}] \wequi \SH(\scr X)$ has the desired property \cite[Theorem 4.5(i, iv, v)]{khan2021generalized}.
\end{proof}

\begin{lemma} \label{lemm:tensor-E} \NB{move somewhere else?}
Let $p: \scr S \to \scr B$ be a proper étale morphism, $\scr F$ a family and $\scr M$ a (presentable) $\Spc(\scr S)_*$-module.
There are canonical equivalences \[ \Spc(\scr S)[\scr F] \otimes_{\Spc(S)} \scr M \wequi \E(\scr F)_+ \wedge \scr M \] and \[ (\Spc(\scr S)_* \wedge \widetilde{\E\scr F}) \otimes_{\Spc(S)_*} \scr M \wequi \widetilde{\E\scr F} \wedge \scr M. \]
\end{lemma}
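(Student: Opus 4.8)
The plan is to recognize both equivalences as instances of the fact that a smashing (co)localization of a presentably symmetric monoidal $\infty$-category is preserved under base change along a module tensor product. By Remark \ref{rmk:EE-interpretation} the diagonal $\E\scr F \to \E\scr F \times \E\scr F$ is an equivalence, so $\E\scr F$ (together with the canonical map $\E\scr F \to *$) is an idempotent cocommutative coalgebra in $(\Spc(\scr S), \times)$; its coreflective subcategory is $\Spc(\scr S)[\scr F] \hookrightarrow \Spc(\scr S)$, with inclusion $i_!$, coreflector $i^*$, and $i_!i^* \wequi (\ph) \times \E\scr F$ by Lemma \ref{lemm:i!i*-ident} (which passes from $\PSh_\Sigma$ to $\Spc$ since $i_!, i^*$ commute with $L_\mot$ by Lemma \ref{lemm:family-inclusion-pres-mot-eq}; cf.\ Corollary \ref{cor:i!-ff}). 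Dually, taking $\scr F_1 = \scr F_2 = \scr F$ in Remark \ref{rmk:Etilde-monoid}, the unit $S^0 \to \widetilde{\E\scr F}$ coming from the cofiber sequence $\E(\scr F)_+ \to S^0 \to \widetilde{\E\scr F}$ exhibits $\widetilde{\E\scr F}$ as an idempotent commutative algebra in $(\Spc(\scr S)_*, \wedge)$, with reflective subcategory $\Spc(\scr S)_* \wedge \widetilde{\E\scr F} \hookrightarrow \Spc(\scr S)_*$ whose reflector and associated idempotent endofunctor are both $(\ph) \wedge \widetilde{\E\scr F}$.

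Next I would upgrade both of these to retract data in the appropriate module $\infty$-categories. In each case the subcategory is stable under the ambient action (e.g.\ $(X \times Y) \times \E\scr F \wequi X \times (Y \times \E\scr F) \wequi X \times Y$), hence is a module subcategory with linear inclusion; the idempotent endofunctors $(\ph)\times\E\scr F$ and $(\ph)\wedge\widetilde{\E\scr F}$ are manifestly linear and colimit-preserving (using that $\times$ on $\Spc(\scr S)$ and $\wedge$ on $\Spc(\scr S)_*$ preserve colimits in each variable); and the one-sided inverses $i^*$, resp.\ $(\ph)\wedge\widetilde{\E\scr F}$, are then linear as well, which one checks on underlying objects using full faithfulness and linearity of the inclusion. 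Thus $\Spc(\scr S)[\scr F]$ is a retract of $\Spc(\scr S)$ in $\mathrm{Mod}_{\Spc(\scr S)}(\mathrm{Pr}^L)$, with section $i_!$ and retraction $i^*$, and likewise $\Spc(\scr S)_* \wedge \widetilde{\E\scr F}$ is a retract of $\Spc(\scr S)_*$ in $\mathrm{Mod}_{\Spc(\scr S)_*}(\mathrm{Pr}^L)$ with section the inclusion and retraction $(\ph)\wedge\widetilde{\E\scr F}$.

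I would then apply $(\ph) \otimes_{\Spc(\scr S)} \scr M$, resp.\ $(\ph) \otimes_{\Spc(\scr S)_*} \scr M$. Any functor preserves retract diagrams, and $\Spc(\scr S) \otimes_{\Spc(\scr S)} \scr M \wequi \scr M$, so $\Spc(\scr S)[\scr F] \otimes_{\Spc(\scr S)} \scr M$ is the retract of $\scr M$ determined by the idempotent endofunctor $\bigl((\ph)\times\E\scr F\bigr) \otimes_{\Spc(\scr S)} \id_{\scr M}$. A $\Spc(\scr S)$-linear colimit-preserving endofunctor of $\Spc(\scr S)$ is tensoring with its value on the unit, so this base change is the action of $* \times \E\scr F = \E\scr F$ on $\scr M$; since $\scr M$ is a $\Spc(\scr S)_*$-module, $X \in \Spc(\scr S)$ acts by $X_+ \wedge (\ph)$, so the endofunctor in question is $\E(\scr F)_+ \wedge (\ph)$, whose retract is $\E(\scr F)_+ \wedge \scr M$ by definition of that notation. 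The pointed statement follows by the identical argument with $(\E\scr F, \times, *)$ replaced by $(\widetilde{\E\scr F}, \wedge, S^0)$, yielding the retract of $\scr M$ cut out by $\widetilde{\E\scr F} \wedge (\ph)$, i.e.\ $\widetilde{\E\scr F} \wedge \scr M$.

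The main obstacle is the second step: verifying that these (co)localizations genuinely refine to retract data \emph{in the relevant module $\infty$-categories}, i.e.\ that all the functors and the equivalence witnessing $(\text{retraction})\circ(\text{section}) \simeq \id$ are compatible with the $\Spc(\scr S)$- (resp.\ $\Spc(\scr S)_*$-) action. Granting this, the rest is formal, save for the standard fact — also used above — that $\times$ on $\Spc(\scr S)$ and $\wedge$ on $\Spc(\scr S)_*$ preserve colimits in each variable.
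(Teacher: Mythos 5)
Your proof is correct and rests on the same two pillars as the paper's: the idempotent co/monoid structure of $\E\scr F$ and $\widetilde{\E\scr F}$ (Remarks \ref{rmk:EE-interpretation} and \ref{rmk:Etilde-monoid}, Lemma \ref{lemm:i!i*-ident}) together with the $2$-functoriality and cocontinuity of scalar extension (Lemmas \ref{lemm:scalar-extension-2funct} and \ref{lemm:scalar-extension-cocont}). The only difference is packaging: the paper proves the first equivalence by noting that $i_!$ stays fully faithful after tensoring and identifying its image, and the second by base-changing a cofiber sequence of presentable categories, whereas you treat both uniformly as split idempotents in the relevant module categories — a slightly cleaner but equivalent formulation.
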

\begin{proof}
Since $\Spc(\scr S)[\scr F] \to \Spc(\scr S)$ admits a right adjoint preserving colimits, the functor remains fully faithful after tensoring with $\scr M$ (see Lemma \ref{lemm:scalar-extension-2funct}).
The image will be generated by objects of the form $X \otimes M$, with $X \in \Spc(\scr S)[\scr F]$ and $M \in \scr M$.
Since $\E\scr F$ is an idempotent co-monoid (Remark \ref{rmk:EE-interpretation}), the category generated by these objects coincides with $\E(\scr F)_+ \wedge \scr M$, as needed.
It also follows that $(\ph) \wedge  \widetilde{\E\scr F}$ is the localization annihilating $\Spc(\scr S)_*[\scr F_\free]$, from which one deduces that we have a cofiber sequence of presentable categories \[ \Spc(\scr S)_*[\scr F_\free] \to \Spc(\scr S)_* \to \Spc(\scr S)_* \wedge \widetilde{\E\scr F}. \]
Since this is preserved by scalar extension (Lemma \ref{lemm:scalar-extension-cocont}), the second statement follows (by reversing the logic).
\end{proof}

\begin{proposition} \label{prop:stab-free-prop}
Let $\scr S \to \scr B$ be a tame proper étale morphism, with $\scr S$ and $\scr B$ both linearly scalloped, or nicely scalloped with affine diagonal.
Then \[ \SH(\scr S) \wedge \E(\scr F_\free/\scr B)_+ \wequi \Spc(\scr S)[\scr F_\free/\scr B] \otimes_{\Spc(\scr B)} \SH(\scr B) \] and  \[ \SH(\scr S) \wedge \widetilde{\E(\scr F_\prop/\scr B)} \wequi (\Spc(\scr S)_* \wedge \widetilde{\E(\scr F_\prop/\scr B)}) \otimes_{\Spc(\scr B)_*} \SH(\scr B). \]
\end{proposition}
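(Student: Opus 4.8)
The plan is to reduce each equivalence, via Lemma~\ref{lemm:tensor-E}, to the statement that a canonical base-change-of-scalars functor is an equivalence, then to reduce to the quasi-fundamental case, where this is precisely the content of Lemma~\ref{lemm:stabilization-lemma}. Concretely: applying Lemma~\ref{lemm:tensor-E} with $\scr M = \SH(\scr S)$ identifies $\SH(\scr S)\wedge\E(\scr F_\free/\scr B)_+ \wequi \Spc(\scr S)[\scr F_\free/\scr B]\otimes_{\Spc(\scr S)}\SH(\scr S)$ and $\SH(\scr S)\wedge\widetilde{\E(\scr F_\prop/\scr B)} \wequi (\Spc(\scr S)_*\wedge\widetilde{\E(\scr F_\prop/\scr B)})\otimes_{\Spc(\scr S)_*}\SH(\scr S)$. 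Hence both assertions say that the canonical functors out of $\Spc(\scr S)[\scr F_\free/\scr B]\otimes_{\Spc(\scr B)}\SH(\scr B)$, resp.\ $(\Spc(\scr S)_*\wedge\widetilde{\E(\scr F_\prop/\scr B)})\otimes_{\Spc(\scr B)_*}\SH(\scr B)$, induced by $\Spc(\scr B)\to\Spc(\scr S)$ and $\SH(\scr B)\to\SH(\scr S)$, are equivalences.

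Next I would reduce to the quasi-fundamental case. Both sides of each comparison, as functors of $\scr B$ (with $\scr S$ and all auxiliary stacks pulled back along quasi-affine étale morphisms), satisfy quasi-affine Nisnevich descent: this holds for $\Spc(\ph)$ and $\SH(\ph)$, is inherited by the localizations $\Spc(\ph)[\scr F_\free/\ph]$ and $\Spc(\ph)_*\wedge\widetilde{\E(\scr F_\prop/\ph)}$ (these being compatible with pullback), and is preserved by the relative tensor products with $\SH(\ph)$ by the scalar-extension results (Lemmas~\ref{lemm:scalar-extension-cocont} and~\ref{lemm:scalar-extension-2funct}). Since $\scr B$, hence also $\scr S$ and every smooth quasi-affine stack over them, admits a quasi-affine Nisnevich covering by linearly (resp.\ nicely) quasi-fundamental stacks, we may assume $\scr B$ and $\scr S$ quasi-fundamental. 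Finally, factoring the morphism as $\scr S\xrightarrow{q}\ul{\scr S}\xrightarrow{r}\scr B$ with $q$ a tame proper étale gerbe and $r$ finite étale (Examples~\ref{ex:fact-etale}, \ref{ex:pet}), Lemma~\ref{lemm:orbits-fet} shows that $\scr F_\free/\scr B$ and $\scr F_\prop/\scr B$ (as isotropy specifications for $\scr S$) coincide with $\scr F_\free/\ul{\scr S}$ and $\scr F_\prop/\ul{\scr S}$; combined with the standard base-change identity $\SH(\ul{\scr S})\wequi\Spc(\ul{\scr S})\otimes_{\Spc(\scr B)}\SH(\scr B)$ for the smooth representable morphism $r$ (and its pointed analogue), this lets us replace the morphism by the gerbe $q$.

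In the resulting case ($q\colon\scr S\to\scr B$ a tame proper étale gerbe, both quasi-fundamental) we have $\SH(\scr B)=\Spc(\scr B)_*[\Sph_{\scr B}^{-1}]$ and $\SH(\scr S)=\Spc(\scr S)_*[\Sph_{\scr S}^{-1}]$. Thus the two sides of the first comparison become $\Spc(\scr S)[\scr F_\free/\scr B]_*[\Sph_{\scr B}^{-1}]$ and $\Spc(\scr S)[\scr F_\free/\scr B]_*[\Sph_{\scr S}^{-1}]$, which coincide by Lemma~\ref{lemm:stabilization-lemma}(2). For the second, they become the localizations of $\Spc(\scr S)_*\wedge\widetilde{\E(\scr F_\prop/\scr B)}$ inverting, respectively, $q^*\Sph_{\scr B}$ and $\Sph_{\scr S}$; these agree because $q^*\Sph_{\scr B}\subseteq\Sph_{\scr S}$ (pullbacks of bundles are bundles), while conversely Lemma~\ref{lemm:stabilization-lemma}(1) shows that for every vector bundle $V$ on $\scr S$ one has $Th(V)\wedge\widetilde{\E(\scr F_\prop/\scr B)}\wequi Th(q^*q_*V)\wedge\widetilde{\E(\scr F_\prop/\scr B)}$, which is already invertible once $q^*\Sph_{\scr B}$ is inverted. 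This yields both equivalences.

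The geometric input sits entirely in Lemma~\ref{lemm:stabilization-lemma}; the real work of the present proof is the bookkeeping of the two reductions. The step I expect to be most delicate is verifying that $(\ph)\otimes_{\Spc(\scr B)}\SH(\scr B)$ (and its pointed variant) commutes with quasi-affine Nisnevich sheafification in $\scr B$, so that the reduction to quasi-fundamental $\scr B$ is legitimate, together with making the finite-étale base-change identity for $\SH$ precise --- i.e.\ that over $\ul{\scr S}$ it suffices to invert the Thom spaces of bundles pulled back from $\scr B$. Both are formal given the scalar-extension appendix, but they are where care is needed.
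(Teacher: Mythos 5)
Your strategy coincides with the paper's: rewrite both sides via Lemma \ref{lemm:tensor-E}, reduce to the quasi-fundamental case by quasi-affine Nisnevich descent (using the scalar-extension appendix to see that the relative tensor products remain sheaves in $\scr B$), dispose of the finite étale part of the factorization $\scr S \to \ul{\scr S} \to \scr B$ by base change, and conclude in the quasi-fundamental gerbe case from Lemma \ref{lemm:stabilization-lemma}. Your treatment of the second equivalence — using part (1) of that lemma to see that, after smashing with $\widetilde{\E(\scr F_\prop/\scr B)}$, every $Th(V)$ is identified with $Th(q^*q_*V)$ and hence is already invertible once $q^*\Sph_{\scr B}$ is — is exactly the right mechanism (the paper's text cites part (2) at that point, but part (1) is what is actually needed).

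There is one genuine gap, in the descent reduction. You assert that since $\scr B$, and hence also $\scr S$, admits a quasi-affine Nisnevich cover by quasi-fundamental stacks, one may assume both are quasi-fundamental. But the descent argument replaces $\scr S \to \scr B$ by $\scr U \times_{\scr B} \scr S \to \scr U$ for $\scr U$ ranging over the (nerve of the) terms of a single cover of $\scr B$; you therefore need one cover $\scr U \to \scr B$ such that \emph{both} $\scr U$ and the pullback $\scr U \times_{\scr B} \scr S$ are quasi-fundamental. Covering $\scr B$ and $\scr S$ separately does not achieve this, and quasi-fundamentality is not obviously inherited by $\scr U \times_{\scr B} \scr S$ from $\scr U$ along the non-representable gerbe $\scr S \to \scr B$. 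This is precisely the content of Lemma \ref{lemm:gerbes-form}, whose proof takes a scallop decomposition of $\scr S$ with quasi-fundamental terms and shows that its fixed-point loci $p_*\scr S'_\bullet$ form a scallop decomposition of $\scr B$ with the required compatibility; you need to invoke (or reprove) that statement to make the reduction legitimate. With that lemma supplied, the rest of your argument goes through.
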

\begin{proof}
Factor $p$ through $\scr S \to \repr_{\scr B} \scr S =: \scr B' \to \scr B$.
Then $\scr B' \to \scr B$ is finite étale, and so $\scr B'$ is also scalloped.
The result for $\scr B' \to \scr B$ holds by \cite[Theorem 4.5(iv)]{khan2021generalized}, and filtration by isotropy disregards this part.
Hence we may replace $\scr B$ by $\scr B'$ and so assume that $p$ is a gerbe.

Suppose first that $\scr S$ and $\scr B$ are both quasi-fundamental.
In this case $\SH(\scr S)$ is obtained from $\Spc(\scr S)$ by inverting Thom spaces of all vector bundles, and so $\SH(\scr S) \wedge \E(\scr F_\free/\scr B)_+$ is obtained from $\Spc(\scr S)[\scr F_\free/\scr B]$ by the same procedure (use Lemma \ref{lemm:tensor-E}).
Similarly $\SH(\scr B)$ is obtained from $\Spc(\scr B)$ by inverting all Thom spaces.
The first claim thus reduces to Lemma \ref{lemm:stabilization-lemma}(2).
Similarly $\SH(\scr S) \wedge \widetilde{\E(\scr F_\prop/\scr B)}$ is obtained from $\Spc(\scr S)_*\wedge \widetilde{\E(\scr F_\prop/\scr B)}$ by inverting all Thom spaces, and by the Lemma \ref{lemm:stabilization-lemma}(2) for this it suffices to invert Thom spaces of vector bundles on $\scr B$; hence the second claim.

We now reduce to the general statement to this case.
The argument is the same for both claims, so we focus on the first one.

Given $\scr U \to \scr B$, put $\scr U' = \scr U \times_\scr{B} \scr S$.
Consider the natural transformation of presheaves of categories $F \to G$ on $\SmQA_{\scr B}$ where \[ F(\scr U) = \Spc(\scr S)[\scr F_\free/\scr B] \otimes_{\Spc(\scr S)} \Spc(\scr U') \otimes_{\Spc(\scr B)} \SH(\scr B) \] and \[ G(\scr U) = \Spc(\scr S)[\scr F_\free/\scr B] \otimes_{\Spc(\scr S)} \Spc(\scr U') \otimes_{\Spc(\scr S)} \SH(\scr S). \]
Note that $\E(\scr F_\free/\scr B)|_{\SmQA_{\scr U'}} \wequi \E(\scr F_\free/\scr U)$ and hence by Lemma \ref{lemm:tensor-E} we get \[ \Spc(\scr S)[\scr F_\free/\scr B] \otimes_{\Spc(\scr S)} \Spc(\scr U') \wequi \Spc(\scr U')[\scr F_\free/\scr U']. \]
Thus \[ F(\scr U) \wequi \Spc(\scr U')[\scr F_\free/\scr U'] \otimes_{\Spc(\scr U)} \Spc(\scr U) \otimes_{\Spc(\scr B)} \SH(\scr B) \wequi \Spc(\scr U')[\scr F_\free/\scr U'] \otimes_{\Spc(\scr U)} \SH(\scr U). \]
Similarly \[ G(\scr U) \wequi \Spc(\scr U')[\scr F_\free/\scr U'] \otimes_{\Spc(\scr U')} \SH(\scr U') \wequi \SH(\scr U') \wedge \E(\scr F_\free/\scr U')_+, \] using once more Lemma \ref{lemm:tensor-E}.
All this shows that proving that $F(\scr U) \to G(\scr U)$ is an equivalence amounts to proving the theorem with $\scr S \to \scr B$ replaced by $\scr U' \to \scr U$.
Moreover, applying Example \ref{ex:sheaf-stabilization} and Lemma \ref{lemm:qaff-descent} to the very first definitions of $F, G$, we see that both are sheaves in the quasi-affine Nisnevich topology.
Consequently it suffices to prove the theorem for all the terms arising from the nerve of some quasi-affine Nisnevich cover of $\scr B$.
Lemma \ref{lemm:gerbes-form} below supplies such a cover where all terms are quasi-fundamental (using that quasi-fundamental stacks are stable under quasi-affine étale extension).
This concludes the reduction and hence the proof.
\end{proof}

\begin{lemma} \label{lemm:gerbes-form}
Let $p: \scr S \to \scr B$ be a gerbe with finite inertia, such that both $\scr S$ and $\scr B$ are linearly scalloped, or nicely scalloped with affine diagonal.
Then there exists a quasi-affine Nisnevich cover $\scr U \to \scr B$ such that both $\scr U$ and $\scr U \times_{\scr B} \scr S$ quasi-fundamental.
\end{lemma}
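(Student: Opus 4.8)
The plan is to build the required covering by combining scallop decompositions of $\scr B$ and of $\scr S$, using that a gerbe induces a homeomorphism on underlying topological spaces so that the open substacks of $\scr S$ descend to $\scr B$.

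First I would unwind the hypotheses. By definition of a scallop decomposition (see \cite[Definition 2.9, Definition A.10]{khan2021generalized}), a linearly scalloped stack, resp.\ a nicely scalloped stack with affine diagonal, is covered by finitely many open substacks that are linearly fundamental, resp.\ nicely fundamental, and in particular (quasi-)fundamental. Since an open immersion of qcqs stacks is a quasi-compact open immersion, hence quasi-affine, such a Zariski covering is in particular a quasi-affine Nisnevich covering. Applying this to $\scr B$ and to $\scr S$ gives open coverings $\{\scr V_i \hookrightarrow \scr B\}_i$ and $\{\scr W_j \hookrightarrow \scr S\}_j$ by fundamental stacks.

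Next I would descend the $\scr W_j$ along $p$. Since $p$ is a gerbe it is surjective, and the induced map of underlying topological spaces $|p|\colon |\scr S| \to |\scr B|$ is a homeomorphism; this may be checked fppf-locally on $\scr B$, where $p$ acquires a section and its fibres are classifying stacks, which are topologically single points. Consequently $p$ is open and, being injective on points, each $\scr W_j$ is saturated: $p^{-1}(p(\scr W_j)) = \scr W_j$. Setting $\scr U_j := p(\scr W_j) \subset \scr B$, this is an open substack with $p^{-1}(\scr U_j) \wequi \scr W_j$, and the $\scr U_j$ cover $\scr B$ because $p$ is surjective.

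Finally I would put $\scr U_{ij} := \scr V_i \cap \scr U_j \subset \scr B$. These open substacks cover $\scr B$ (a point of $\scr B$ lies in some $\scr V_i$ and, being hit by $p$, in some $\scr U_j$), so $\{\scr U_{ij} \hookrightarrow \scr B\}$ is a quasi-affine Nisnevich covering. The inclusion $\scr U_{ij} \hookrightarrow \scr V_i$ is a quasi-compact open immersion, hence quasi-affine; since $\scr V_i$ is fundamental and quasi-fundamental stacks are stable under quasi-affine étale extension (as recalled in the proof of Proposition \ref{prop:stab-free-prop}; see also \cite[Theorem 2.12, Theorem 2.14]{khan2021generalized}), $\scr U_{ij}$ is quasi-fundamental. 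Likewise $\scr U_{ij} \times_{\scr B} \scr S \wequi p^{-1}(\scr V_i) \cap p^{-1}(\scr U_j) \wequi p^{-1}(\scr V_i) \cap \scr W_j$ is a quasi-compact open substack of $\scr W_j$, hence quasi-affine over the fundamental stack $\scr W_j$, hence quasi-fundamental. This would complete the proof. I expect the only delicate point to be the topological input that a gerbe is universally injective and open (so that $|p|$ is a homeomorphism), which is exactly what lets the $\scr W_j$ descend to $\scr B$; everything else is bookkeeping with the notions of (quasi-)fundamental and scalloped stack from \cite{khan2021generalized}.
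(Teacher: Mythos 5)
There is a genuine gap, and it is at the very first step: you assert that a (linearly or nicely) scalloped stack ``is covered by finitely many open substacks that are fundamental''. That is not what a scallop decomposition is. In \cite[Definitions 2.9 and A.10]{khan2021generalized} a scallop decomposition is a filtration $\emptyset = \scr U_0 \subset \dots \subset \scr U_n = \scr X$ by quasi-compact opens together with excision (Nisnevich) squares in which the covering maps $\scr V_i \to \scr U_i$ from fundamental stacks are quasi-affine \emph{étale}, not open immersions. Already for $\scr B$ a qcqs algebraic space that is not a scheme there is no open cover by fundamental (essentially affine) substacks, only an étale one; the same phenomenon is the whole reason the Nisnevich topology appears in this context. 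Since your construction of the cover of $\scr B$ rests entirely on descending \emph{open} substacks $\scr W_j \subset \scr S$ along the homeomorphism $|p|$, it never engages with the actual content of the lemma, namely how to transport the non-open, étale pieces of the scallop decomposition of $\scr S$ down to $\scr B$.

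The parts of your argument that are correct --- that a gerbe induces a homeomorphism on underlying topological spaces, so open substacks of $\scr S$ and of $\scr B$ correspond; and that quasi-fundamental stacks are stable under quasi-compact open (more generally quasi-affine étale) extension --- are genuinely used, but they only handle the easy half. The missing idea, which is how the paper proceeds, is to apply the fixed-point functor $p_*$ of Proposition \ref{prop:fixed-points-functor} to an entire scallop decomposition $\scr S'_\bullet$ of $\scr S$ by quasi-fundamental stacks: one checks (fppf-locally, where $p$ becomes $B \sslash G \to B$ and $p_*$ becomes the isovariant/fixed locus) that $p_*\scr S'_\bullet \to \scr B$ is again a scallop decomposition, that $p^* p_* \scr S'_\bullet \to \scr S'_\bullet$ is an open immersion, and that the resulting terms over $\scr S$ still form a scallop decomposition; the fact that taking fixed loci preserves the excision-square property is the nontrivial input, for which the paper cites \cite[Proposition 2.13]{gepner-heller}. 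Without some such mechanism for pushing étale covers forward along the gerbe, the proof does not go through.
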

\begin{proof}
Replacing it by an appropriate cover (existence of which is ensured by the scallopedness assumption), we may assume that $\scr B$ is quasi-fundamental.
There exists a scallop decomposition $\scr S'_\bullet$ of $\scr S$, in which all terms are quasi-fundamental.
We claim that $p_* \scr S'_\bullet \to \scr B$ is a scallop decomposition, $p^*p_*\scr S'_\bullet \to \scr S'_\bullet \to \scr S$ is a scallop decomposition, and $p^*p_*\scr S'_\bullet \to \scr S'_\bullet$ is an open immersion.
This will prove what we want.
Note that a sequence of squares forming a scallop decomposition is fppf local on the target.
Consequently everything we claim is fppf local on $\scr B$, and so we may assume that $\scr B = B$ is an affine scheme and $\scr S = B \sslash G$ for some finite flat group scheme $G$.
The scallop decomposition $\scr S'_\bullet \to \scr S$ consists of various quasi-affine $G$-schemes, étale over $B$.
Then $p_*\scr S'_\bullet$ consists of the fixed loci, which in our case are just the isovariant loci over $B$.
Thus $p_*\scr S'_\bullet$ consists of open subschemes and remains a scallop decomposition for formal reasons; see \cite[Proposition 2.13]{gepner-heller} for more details.
This concludes the proof.
\end{proof}

\begin{corollary} \label{cor:geom-fixed-points}
Let $p: \scr S \to \scr B$ be a as in Proposition \ref{prop:stab-free-prop}.
Then the two functors \[ p_\otimes \text{ and } p_*(\Sigma^\infty \widetilde{\E(\scr F_\prop/\scr S)} \wedge \ph): \SH(\scr S) \to \SH(\scr B) \] are naturally equivalent.
In particular the square
\begin{equation*}
\begin{CD}
\Spc(\scr S)_* \wedge \widetilde{\E(\scr F_\prop/\scr S)} @>{\Sigma^\infty}>> \SH(\scr S) \\
@V{p_*}VV                                                             @V{p_*}VV       \\
\Spc(\scr B)_* @>{\Sigma^\infty}>> \SH(\scr B)
\end{CD}
\end{equation*}
commutes.
\end{corollary}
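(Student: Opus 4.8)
I would prove the natural equivalence $p_\otimes \wequi p_*(\Sigma^\infty\widetilde{\E(\scr F_\prop/\scr S)}\wedge\ph)$ first and deduce the commutative square at the end by evaluating it on suspension spectra; I will carry this out in the case that $p$ is a (tame proper étale) gerbe, which is the essential case. Write $W = \Sigma^\infty\widetilde{\E(\scr F_\prop/\scr S)} \in \SH(\scr S)$ and $\Psi = p_*(W\wedge\ph)$. The plan is: show both $p_\otimes$ and $\Psi$ are invariant under $(\ph)\wedge W$, compare the induced functors on $\SH(\scr S)\wedge W$ using Proposition \ref{prop:stab-free-prop}, and reduce everything to the space-level identity $p_\otimes\wequi p_*$ of Lemma \ref{lemm:pointed-fixed-points}.

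First I would show that both functors factor through the smashing localization $\lambda\colon\SH(\scr S)\to\SH(\scr S)\wedge W$ (which is a localization because $W\wedge W\wequi W$, Remark \ref{rmk:Etilde-monoid}). For $\Psi$ this is immediate from that same idempotence. For $p_\otimes$ I would use that $p_\otimes$ is symmetric monoidal (the stable norms lift to $\CAlg(\Cat)$) together with the computation $p_\otimes(W)\wequi\1$: as $p$ is a gerbe, $p_\otimes$ preserves all colimits (Lemma \ref{lemm:p-otimes-colim} applied after stabilization) hence is exact, so from $\widetilde{\E\scr F_\prop}\wequi\cof(\Sigma^\infty_+\E\scr F_\prop\to\1)$ and $\E\scr F_\prop\wequi\colim_{\scr X\in\SmQA_{\scr S}[\scr F_\prop]}\scr X$ (Remark \ref{rmk:EE-interpretation}) it is enough that $p_\otimes(\Sigma^\infty_+\scr X)\wequi 0$ whenever $\scr X$ has isotropy in proper subgroups; but $p_\otimes$ commutes with $\Sigma^\infty_+$ and $p_\otimes\scr X$ is the fixed-point stack (Remark \ref{rmk:pffqf-fact}), which is empty for such $\scr X$. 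Then $p_\otimes(E\wedge W)\wequi p_\otimes(E)\wedge p_\otimes(W)\wequi p_\otimes(E)$.

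It remains to identify the induced functors $\overline{p_\otimes},\overline\Psi\colon \SH(\scr S)\wedge W\to\SH(\scr B)$, which are just the restrictions of $p_\otimes$ and of $p_*^{\SH}$. Here I would invoke Proposition \ref{prop:stab-free-prop}, presenting $\SH(\scr S)\wedge W\wequi(\Spc(\scr S)_*\wedge\widetilde{\E\scr F_\prop})\otimes_{\Spc(\scr B)_*}\SH(\scr B)$; by Lemma \ref{lemm:stabilization-lemma}(1) every Thom space $Th(V)$ of a vector bundle on $\scr S$ becomes, after $\wedge\widetilde{\E\scr F_\prop}$, the pullback $p^*Th(p_*V)$ of a Thom space from $\scr B$, so on this subcategory the stabilization inverts only bundles pulled back from $\scr B$. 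Both $\overline{p_\otimes}$ and $\overline\Psi$ are colimit-preserving (for $p_\otimes$ because $p$ is a gerbe; for $p_*^{\SH}$ because $p$ is proper étale, so $p_*\wequi p_\#$) and $\SH(\scr B)$-linear (by the projection formula for $p_*^{\SH}$, resp. using $p_\otimes p^*\wequi\id$ for a gerbe), hence each is determined by its restriction along the canonical functor $\sigma^\infty\colon\Spc(\scr S)_*\wedge\widetilde{\E\scr F_\prop}\to\SH(\scr S)\wedge W$, which is the restriction of $\Sigma^\infty_{\scr S}$. Now $\overline{p_\otimes}\circ\sigma^\infty\wequi\Sigma^\infty_{\scr B}\circ p_\otimes^{\mathrm{sp}}$ by construction of the stable norm, $\overline\Psi\circ\sigma^\infty\wequi p_*^{\SH}\circ\Sigma^\infty_{\scr S}$ by definition, and $p_\otimes^{\mathrm{sp}}\wequi p_*^{\mathrm{sp}}$ on pointed spaces by Lemma \ref{lemm:pointed-fixed-points}; so the whole statement reduces to the identity
\[ \Sigma^\infty_{\scr B}\circ p_*^{\mathrm{sp}} \wequi p_*^{\SH}\circ\Sigma^\infty_{\scr S} \quad\text{on } \Spc(\scr S)_*\wedge\widetilde{\E\scr F_\prop}, \]
which is exactly the commutative square of the statement. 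I would prove this last identity directly: the natural comparison map $\Sigma^\infty_{\scr B}p_*^{\mathrm{sp}}\to p_*^{\SH}\Sigma^\infty_{\scr S}$ (adjoint to $p_*^{\mathrm{sp}}\to p_*^{\mathrm{sp}}\Omega^\infty_{\scr S}\Sigma^\infty_{\scr S}\wequi\Omega^\infty_{\scr B}p_*^{\SH}\Sigma^\infty_{\scr S}$) is checked to be an equivalence by mapping in the generators $\Sigma^\infty_+\scr Y\wedge Th(V)^{-1}$ of $\SH(\scr B)$ and using the above description of $\SH(\scr S)\wedge W$ (Proposition \ref{prop:stab-free-prop} and Lemma \ref{lemm:stabilization-lemma}(1)) to rewrite both mapping spaces, via adjunction and the projection formula, as a single mapping space in $\Spc(\scr S)_*\wedge\widetilde{\E\scr F_\prop}$.

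The main obstacle is precisely this last identity: that the genuine right adjoint $p_*^{\SH}$ agrees, on $\widetilde{\E\scr F_\prop}$-local objects, with the naïve stabilization of the space-level fixed-point functor $p_*^{\mathrm{sp}}$. Everything else is bookkeeping with the smashing localization $(\ph)\wedge\widetilde{\E\scr F_\prop}$ and with monoidality and colimit-preservation of the norm; it is the interaction of $p_*$ with stabilization that requires the genuine input of Proposition \ref{prop:stab-free-prop} (equivalently Lemma \ref{lemm:stabilization-lemma}), which says that after $\wedge\widetilde{\E\scr F_\prop}$ the stabilization only inverts bundles pulled back from the base, where $p_*$ is controlled by the projection formula.
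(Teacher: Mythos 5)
Your overall architecture matches the paper's: both proofs hinge on the idempotence of $\widetilde{\E(\scr F_\prop/\scr S)}$ together with Proposition \ref{prop:stab-free-prop} (equivalently Lemma \ref{lemm:stabilization-lemma}), which identifies $\SH(\scr S)\wedge\widetilde{\E(\scr F_\prop)}$ as a scalar extension of $\Spc(\scr S)_*\wedge\widetilde{\E(\scr F_\prop)}$ and thereby reduces the comparison to the space-level identity $p_\otimes\wequi p_*$ of Lemma \ref{lemm:pointed-fixed-points}. However, two of your justifications have genuine gaps. First, you repeatedly use that the \emph{stable} norm $p_\otimes\colon\SH(\scr S)\to\SH(\scr B)$ is exact and preserves all colimits, citing ``Lemma \ref{lemm:p-otimes-colim} applied after stabilization.'' That lemma is a space-level statement; the stable $p_\otimes$ is constructed by formally inverting Thom spaces sectionwise and is a priori neither a left nor a right adjoint, and its preservation of all colimits is essentially a \emph{consequence} of the corollary (once $p_\otimes$ is identified with $p_*(\Sigma^\infty\widetilde{\E(\scr F_\prop)}\wedge\ph)$), not an available input. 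Your use of it in computing $p_\otimes(W)\wequi\1$ is patchable (use instead that the stable norm commutes with $\Sigma^\infty$ and apply the space-level computation $p_*(\widetilde{\E\scr F_\prop})\wequi S^0$), but your identification of $\overline{p_\otimes}$ with $\overline\Psi$ via ``colimit-preserving $\SH(\scr B)$-linear functors are determined by their restriction along $\sigma^\infty$'' genuinely needs it. The paper sidesteps this by reducing to the quasi-fundamental case, where $\SH(\scr S)\wequi\Spc(\scr S)_*[\Sph_{\scr S}^{-1}]$, and invoking the \emph{universal property} of sphere inversion: both functors are (symmetric monoidal, sifted-colimit-preserving) extensions of the same functor on $\Spc(\scr S)_*$, hence agree.

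Second, your justification that $\Psi$ preserves colimits --- ``$p$ is proper étale, so $p_*\wequi p_\sharp$'' --- is false for gerbes. Ambidexterity $p_*\wequi p_\sharp$ is established only for finite étale morphisms; for a gerbe $B\sslash G\to B$ the analogous statement (agreement of $p_*$ with the partially defined $p_!$ on free objects) is exactly the Adams isomorphism, i.e.\ the Adams hypothesis that the paper treats as an open question, and $p_\sharp$ is not even defined for non-representable $p$. The paper instead obtains colimit-preservation of $\Psi$ (and the commutative square) by observing that $\Psi$ is right adjoint to $F\colon\SH(\scr B)\to\SH(\scr S)\wedge\widetilde{\E(\scr F_\prop)}$, that $F$ is the scalar extension along $\Spc(\scr B)\to\SH(\scr B)$ of its space-level analogue, and that by $2$-functoriality of scalar extension (Lemma \ref{lemm:scalar-extension-2funct}) the right adjoint of $F$ is the scalar extension of the space-level right adjoint, which is automatically colimit-preserving. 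You should replace both of your assertions by arguments along these lines; as written they assume facts that are either unproved or equivalent to the statement being proved.
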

\begin{proof}
The functor $p_*(\Sigma^\infty \widetilde{\E(\scr F_\prop/\scr S)} \wedge \ph)$ is right adjoint to the composite \[ F: \SH(\scr B) \to \SH(\scr S) \to \SH(\scr S) \wedge \widetilde{\E(\scr F_\prop)} \] (use that $\widetilde{\E(\scr F_\prop)}$ is an idempotent monoid by Remark \ref{rmk:Etilde-monoid}).
By Proposition \ref{prop:stab-free-prop}, $F$ is obtained by scalar extension along $\Spc(\scr B) \to \SH(\scr B)$ from its analog on the level of pointed spaces.
Since $p_*$ on the level of spaces is a module functor over $\Spc(\scr B)$ (being essentially given by fixed points), Lemma \ref{lemm:scalar-extension-2funct} implies that the right adjoint of $F$ is thus obtained by scalar extending the right adjoint at the level of spaces.
\tdn{details from here on}
This implies the second claim; it also shows that the functor is symmetric monoidal.
For the first, using the definition of $p_\otimes$ by passing to associated sheaves (see \S\ref{subsec:stab-norms}) one reduces as in Proposition \ref{prop:stab-free-prop} to the case where $\scr S$ is quasi-fundamental.
Now $p_\otimes$ has a universal property shared by the other functor, by what we already observed; hence they agree.
\end{proof}

\begin{corollary} \label{cor:quotients-stable}
Let $p: \scr S \to \scr B$ be a tame proper étale morphism, with $\scr S, \scr B$ as in Proposition \ref{prop:stab-free-prop}.
The functor \[ \SH(\scr B) \to \SH(\scr S) \to \SH(\scr S)[\scr F_\free/\scr B] \] admits a left adjoint $p_!$, compatible with $\Sigma^\infty$ and arbitrary base change.
\end{corollary}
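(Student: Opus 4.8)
The plan is to transport the unstable left adjoint of Corollary \ref{cor:stable-quotients} across the stabilization maps by a scalar extension argument, using Proposition \ref{prop:stab-free-prop} to identify the stabilized categories.

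By Corollary \ref{cor:stable-quotients} (applied with the roles of $\scr S$ and $\scr B$ here playing those of $\scr X$ and $\scr S$ there), the symmetric monoidal colimit-preserving functor $i^*p^* \colon \Spc(\scr B) \to \Spc(\scr S)[\scr F_\free/\scr B]$ admits a left adjoint $p_!$, which on a representable $\scr Y \in \SmQA_{\scr S}[\scr F_\free/\scr B]$ is given by composing its structure map with $p$. The first step is to observe that $p_!$ is a $\Spc(\scr B)$-module functor for the module structure on $\Spc(\scr S)[\scr F_\free/\scr B]$ induced by the symmetric monoidal functor $i^*p^*$; equivalently, that the projection formula $p_!(N \times i^*p^*M) \wequi p_!(N) \times M$ holds. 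Both sides are colimit-preserving in $N$ and $M$, so one reduces to representable $N = \scr Y \in \SmQA_{\scr S}[\scr F_\free/\scr B]$ and $M = \scr Z \in \SmQA_{\scr B}$, where it follows from the identification $\scr Y \times_{\scr S} p^*\scr Z \wequi \scr Y \times_{\scr B} \scr Z$ --- the latter still having free inertia over $\scr B$ by Corollary \ref{cor:bc-inertia} --- together with the explicit formula for $p_!$ on representables.

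Now apply $(\ph) \otimes_{\Spc(\scr B)} \SH(\scr B)$. Since $p_! \dashv i^*p^*$ is an adjunction of $\Spc(\scr B)$-module functors, Lemma \ref{lemm:scalar-extension-2funct} yields an adjunction
\[ p_! \otimes \SH(\scr B) \colon \Spc(\scr S)[\scr F_\free/\scr B] \otimes_{\Spc(\scr B)} \SH(\scr B) \adj \SH(\scr B) \colon i^*p^* \otimes \SH(\scr B). \]
By Proposition \ref{prop:stab-free-prop} the source of the left adjoint is identified with $\SH(\scr S) \wedge \E(\scr F_\free/\scr B)_+ \wequi \SH(\scr S)[\scr F_\free/\scr B]$, and under this identification $i^*p^* \otimes \SH(\scr B)$ is the composite $\SH(\scr B) \to \SH(\scr S) \to \SH(\scr S)[\scr F_\free/\scr B]$ of the statement: both are colimit-preserving symmetric monoidal functors out of $\SH(\scr B)$ agreeing on the generators $\Sigma^\infty_+ \scr Z$ with $\scr Z \in \SmQA_{\scr B}$. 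This exhibits the required left adjoint $p_! := p_! \otimes \SH(\scr B)$.

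Finally, compatibility with $\Sigma^\infty$ is built in: under the identifications above $\Sigma^\infty$ corresponds to $(\ph) \otimes \1$, so the square comparing the unstable and the stable $p_!$ commutes on the representable generators, hence everywhere by cocontinuity. Compatibility with an arbitrary base change $g \colon \scr B' \to \scr B$ follows likewise, since the base change equivalence $g^* p_! \wequi p'_! (g')^*$ at the level of spaces is immediate from the formula for $p_!$ on representables and is preserved by the functorial scalar extension. The substantive points are the verification of the projection formula and the matching of the two functors under Proposition \ref{prop:stab-free-prop}; the rest is formal.
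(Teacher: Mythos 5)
Your proposal is correct and follows essentially the same route as the paper, which simply says to construct the left adjoint by scalar extending the space-level functor along $\Spc(\scr B) \to \SH(\scr B)$ using Proposition \ref{prop:stab-free-prop}. You have merely made explicit the details the paper leaves implicit, notably the projection formula making $p_!$ a $\Spc(\scr B)$-module functor so that Lemma \ref{lemm:scalar-extension-2funct} applies.
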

\begin{proof}
Using Proposition \ref{prop:stab-free-prop}, we may construct the left adjoint by scalar extending the functor on the level of spaces from \S\ref{subsec:app-quotients}; all properties follows.
\end{proof}

\begin{corollary} \label{cor:stable-families-reduction}
Let $p: \scr X \to \scr S$ be proper tame étale.
Let $\scr F_1 \subset \scr F_2$ be weakly adjacent families, with $\scr F = \scr F_2 \setminus \scr F_1$ and associated restricted orbit stack diagram \[ \scr X \xleftarrow{a} \widetilde\Orb_\scr{F} \xrightarrow{b} \Orb_{\scr F}. \]
Assume that $\Orb_{\scr F}$ and $\scr B$ are both (linearly or nicely with affine diagonal) scalloped.

For $E \in \SH(\scr X)$ we have a natural equivalence \[ \E(\scr F_1, \scr F_2) \wedge E \wequi \E(\scr F_1, \scr F_2) \wedge a_\sharp b^{*}(\E(\scr F_\free/\scr S)_+ \wedge b_\otimes a^{*} E). \]
\end{corollary}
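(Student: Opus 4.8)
The plan is to deduce the statement from its $S^1$-stable counterpart, Proposition~\ref{prop:families-reduction}, by $\Gm$-stabilization. Recall that Proposition~\ref{prop:families-reduction} already supplies, for every $E' \in \SHS(\scr X)$, a natural equivalence $\E(\scr F_1, \scr F_2) \wedge E' \wequi \E(\scr F_1, \scr F_2) \wedge a_\sharp b^{*}(\E(\scr F_\free/\scr S)_+ \wedge b_{\otimes} a^{*} E')$, where I have written $b_\otimes$ for the functor denoted $b_*$ there --- this is legitimate since $b$ is a gerbe, so $b_\otimes \wequi b_*$ at the level of pointed spaces, hence of $S^1$-spectra (Lemma~\ref{lemm:pointed-fixed-points}). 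Because $\Orb_{\scr F}$ and $\scr S$ are scalloped and $\widetilde\Orb_{\scr F}$ is a gerbe over $\Orb_{\scr F}$, the categories $\SH(\widetilde\Orb_{\scr F})$, $\SH(\Orb_{\scr F})$ and the norm $b_\otimes \colon \SH(\widetilde\Orb_{\scr F}) \to \SH(\Orb_{\scr F})$ are all defined (\S\ref{subsec:stab-norms}).

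First I would check that every functor occurring above is compatible with the stabilization functors $\SHS(\ph) \to \SH(\ph)$: $a^*$ and $b^*$ because they are pullbacks; $a_\sharp$ because $a$ is finite étale, hence smooth; smashing with $\E(\scr F_\free/\scr S)_+$ and with $\E(\scr F_1, \scr F_2)$ because $\Sigma^\infty_\Gm$ is symmetric monoidal; and $b_\otimes$ because it is a normed functor, hence part of the coherent normed data out of which $\SH^\otimes$ is assembled in \S\ref{subsec:stab-norms} (concretely, for the gerbe $b$ it is the symmetric monoidal geometric fixed point functor $b_*(\Sigma^\infty \widetilde{\E(\scr F_\prop)} \wedge \ph)$ of Corollary~\ref{cor:geom-fixed-points}). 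Applying $\Sigma^\infty_\Gm$ to the equivalence of Proposition~\ref{prop:families-reduction} then yields the asserted equivalence in $\SH(\scr X)$ for every $E$ in the essential image of $\Sigma^\infty_\Gm \colon \SHS(\scr X) \to \SH(\scr X)$; in particular for $E = \Sigma^\infty_+ \scr Y$ with $\scr Y \in \SmQA_{\scr X}$.

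Finally I would bootstrap to all $E \in \SH(\scr X)$. Both $L(E) := \E(\scr F_1, \scr F_2) \wedge E$ and $R(E) := \E(\scr F_1, \scr F_2) \wedge a_\sharp b^{*}(\E(\scr F_\free/\scr S)_+ \wedge b_\otimes a^{*} E)$ are colimit-preserving endofunctors of $\SH(\scr X)$ (for $R$: $a_\sharp$ and $b^*$ preserve colimits, smashing with a fixed object preserves colimits, and $b_\otimes$ preserves colimits for the gerbe $b$, cf.\ Lemma~\ref{lemm:pointed-fixed-points} and Corollary~\ref{cor:geom-fixed-points}), and both commute with $\wedge \Gm$ for the trivially acting $\Gm$: for $R$ this uses that $b_\otimes$ is symmetric monoidal and that $b_\otimes a^* \Gm \wequi b_\otimes b^* \Gm \wequi \Gm$, i.e.\ geometric fixed points of the inflated $\Gm$ recovers $\Gm$. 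Since $\SH(\scr X)$ is generated under colimits by the $\Sigma^\infty_+ \scr Y \wedge \Gmp{n}$ with $\scr Y \in \SmQA_{\scr X}$ and $n \in \Z$ (see \S\ref{subsec:gen-triv}), and $L, R$ agree on the $\Sigma^\infty_+ \scr Y$, are colimit-preserving, and commute with $\Gm$-(de)suspension, they agree on all of $\SH(\scr X)$; naturality is evident throughout. (If one prefers to avoid \S\ref{subsec:gen-triv}, one may instead use the general generators $\Sigma^\infty_+ \scr Y \wedge Th(V)^{-1}$ and replace $\Gm$-linearity with the observation that $Th(a^*V) \wedge a^*\E(\scr F_1,\scr F_2) \wequi Th(b^*b_*a^*V) \wedge a^*\E(\scr F_1,\scr F_2)$, which follows from Lemma~\ref{lemm:stabilization-lemma}(1) applied to the fixed-subbundle inclusion $b^*b_* a^* V \hookrightarrow a^*V$.)

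The step I expect to be the main obstacle is the identification in the second paragraph: that the functor called $b_*$ in Proposition~\ref{prop:families-reduction} is the restriction of the normed functor $b_\otimes$, rather than (the stabilization of) the $\SH$-level right adjoint of $b^*$ --- which differs from $b_\otimes$ by a geometric-fixed-points twist by Corollary~\ref{cor:geom-fixed-points}. Making this precise rests on the coincidence $b_\otimes \wequi b_*$ persisting through $S^1$-stabilization for gerbes (Lemma~\ref{lemm:pointed-fixed-points}) together with the functoriality of the construction $\Spc_*^\otimes \to \SH^\otimes$ of \S\ref{subsec:stab-norms}; the remaining verifications are routine manipulations with the $\E(\ph)$-calculus of Remarks~\ref{rmk:EE-interpretation}--\ref{rmk:Etilde-monoid} and with idempotents.
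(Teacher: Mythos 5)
Your route is genuinely different from the paper's. The paper scalar-extends the \emph{localization} statement (the second half of Proposition \ref{prop:families-reduction}) along $\Spc(\scr X) \to \SH(\scr X)$, identifies the target via Proposition \ref{prop:stab-free-prop}, and concludes by abstract localization theory; you instead apply $\Sigma^\infty_{\Gm}$ to the first half of that proposition on suspension spectra and then bootstrap over generators. Your treatment of the point you correctly identify as the crux --- that the $S^1$-level $b_*$ of Proposition \ref{prop:families-reduction} stabilizes to the normed $b_\otimes$ rather than to the $\SH$-level right adjoint --- is sound: for suspension spectra this is exactly Lemma \ref{lemm:pointed-fixed-points} combined with the fact that $\Sigma^\infty: \Spc(\ph)_* \to \SH(\ph)$ is a normed functor (equivalently, Corollary \ref{cor:geom-fixed-points}).

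There are, however, two gaps. First and most seriously, your main line is circular: Proposition \ref{prop:gens} in \S\ref{subsec:gen-triv} is itself proved by applying Corollary \ref{cor:stable-families-reduction}, so generation by $\Sigma^\infty_+ \scr Y \wedge \Gmp{n}$ is not available here. Your parenthetical alternative --- the generators $\Sigma^\infty_+ \scr Y \wedge Th(V)^{-1}$ together with $Th(a^*V) \wedge a^*\E(\scr F_1,\scr F_2) \wequi Th(b^*b_*a^*V) \wedge a^*\E(\scr F_1,\scr F_2)$ --- is therefore not a preference but a necessity. It does work, but you must also justify that $a^*\E(\scr F_1,\scr F_2)$ absorbs $\widetilde{\E(\scr F_\prop/\Orb_{\scr F})}$ before Lemma \ref{lemm:stabilization-lemma}(1) applies; this holds because any subgroup lying in both $\scr F_\prop/\Orb_{\scr F}$ and $a^*\scr F_2$ lies in $a^*\scr F_1$ by weak adjacency, so that $a^*\E(\scr F_1,\scr F_2) \wedge \E(\scr F_\prop/\Orb_{\scr F})_+ \wequi *$. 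Second, ``agree on generators, preserve colimits, hence agree everywhere'' presupposes a natural comparison map between $L$ and $R$, and this is not evident: since $b_\otimes$ is \emph{not} the right adjoint of $b^*$ on $\SH$, there is no single counit, only a zigzag through $b_*(\widetilde{\E(\scr F_\prop/\Orb_{\scr F})} \wedge \ph)$ whose wrong-way leg is an equivalence by the same absorption property. Constructing this comparison (and its naturality in $E$, which is needed for the orbit stack induction of \S\ref{subsec:orbit-stack-ind}) is precisely the bookkeeping that the paper's localization formalism is designed to package; with those two repairs your argument goes through.
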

\begin{proof}
The functor $j_1'^*: \SH(\scr X) \to \SH(\Orb_{\scr F})[\scr F_\free/\scr S]$ inverts $\wedge \E(\scr F_2)_+$ and $\wedge \widetilde{\E\scr F_1}$, as does the functor $\wedge \E(\scr F_1, \scr F_2)$.
We may thus restrict to $E \in \SH(\scr X)[\scr F_2]$.
Denote by $j_0: \SHS(\scr X)[\scr F_2] \to \SHS(\Orb_{\scr F})[\scr F_\free/\scr S]$ the canonical functor.
This is a module functor over $\Spc(\scr X)$ and also a localization (the latter by Proposition \ref{prop:families-reduction}).
Scalar extending along $\Spc(\scr X) \to \SH(\scr X)$ we obtain a functor \[ j_1^*: \SH(\scr X)[\scr F_2] \to \SH(\Orb_{\scr F})[\scr F_\free/\scr S] \] (use Proposition \ref{prop:stab-free-prop} to identify the target), which is still a localization.
By construction, $j_1^*$ is the restriction of $j_1'^*$.
Since $j_1^*$ is precisely the localization at maps inverted by $\wedge \widetilde{\E\scr F_1}$, we find that $j_{1*}j^*_1 \wequi (\ph) \wedge \widetilde{\E\scr F_1}$.
It will thus suffice construct an equivalence $j_1^* \wequi j_1^* a_\sharp b^* j_1^*$.
We in fact produce an equivalence \[ \id \wequi j^*_1 a_\sharp b^* \in \mathrm{End}(\SH(\Orb_{\scr F})[\scr F_\free/\scr S]). \]
Indeed this functor factors as \begin{gather*} \SH(\Orb_{\scr F})[\scr F_\free/\scr S] \wequi \SHS(\Orb_{\scr F})[\scr F_\free/\scr S] \otimes_{\Spc(\scr B)} \SH(\scr B) \xrightarrow{a_\sharp b^*} \\ \SHS(\scr X)[\scr F_2]  \otimes_{\Spc(\scr B)} \SH(\scr B) \xrightarrow{\Sigma^\infty} \SH(\scr X)[\scr F_2] \xrightarrow{j_1^*} \SH(\Orb_{\scr F})[\scr F_\free/\scr S] \wequi \SH(\Orb_{\scr F})[\scr F_\free/\scr S],\end{gather*} the composite in the second row is $j_0^*$, and the functor in the first row is the left adjoint of $j_0^*$.
We conclude since $j_0^*$ is a localization and any left adjoint of a localization (if it exists) is fully faithful.
\end{proof}

\section{Ambidexterity} \label{sec:ambidex}
\subsection{Partially defined adjoints}
Let $p^*: \scr C \to \scr D$ be a functor of $\infty$-categories.
\begin{definition}
By a right adjoint to $p^*$ at $X \in \scr D$ we mean an object $U \in \scr C$ together with a map $c: p^*U \to X$ such that, for every $T \in \scr C$ the composite \[ \Map_{\scr C}(T, U) \xrightarrow{p^*} \Map_{\scr D}(p^*T, p^*U) \xrightarrow{c \circ } \Map_{\scr D}(p^*T, X) \] is an equivalence.
\end{definition}

\begin{example}
If $p$ admits a right adjoint $p_*$, then the co-unit $p^*p_* X \to X$ exhibits $p_*X$ as partial right adjoint of $p^*$ at $X$.
\end{example}

\begin{remark}
Passing to a bigger universe if necessary, we may assume $\scr C, \scr D$ are small.
Then the left Kan extension $p^*: \PSh(\scr C) \to \PSh(\scr D)$ has right adjoint $p_*$, given by precomposition with $p$.
For $X \in \scr D$, write $R_X \in \PSh(\scr D)$ for the associated representable presheaf.
Then a map $c: p^*U \to X$ is the same thing as a map $c^\dagger: R_U \to p_*(R_X)$, and $c$ exhibits $U$ as a right adjoint to $p^*$ at $X$ if and only if $c^\dagger$ is an equivalence.
\end{remark}

It follows in particular that partial right adjoints are unique in a strong sense.
Abusing notation somewhat, when no confusion can arise, we will denote the partial right adjoint to $p^*$ at $X$ by $p_* X$.

\begin{remark}
Let $\scr D' \subset \scr D$ denote the full subcategory on those objects which admit a partial right adjoint of $p^*$.
Then we obtain a functor $p_*: \scr D' \to \scr C$ ($\subset \PSh(\scr C)$).
\end{remark}

\begin{remark} \label{rmk:partial-co-unit}
Let $V \in \scr C$, and $U$ a partial right adjoint of $p^*$ at $p^* V$.
Then the identity map $p^* V \to p^* V$ corresponds to a map $u: V \to U$ which we call the \emph{(partial) unit}.
By definition of $u$, the composite \[ p^*V \xrightarrow{p^*u} p^*U \xrightarrow{c} p^*V \] is the identity.
\end{remark}

Now suppose given a commutative square of $\infty$-categories
\begin{equation*}
\begin{CD}
\scr C @>F>> \scr C' \\
@A{p^*}AA     @A{p^*}AA \\
\scr D @>F>> \scr D'.
\end{CD}
\end{equation*}
\begin{definition} \label{def:preserve-partial-adjoint}
Let $c: p^*U \to X \in \scr C$ exhibit $U \in \scr D$ as partial right adjoint of $p^*$ at $X$.
We say that \emph{$F$ preserves the partial right adjoint at $X$} if \[ p^*(FU) \wequi F(p^*U) \xrightarrow{Fc} FX \] exhibits $FU$ as partial right adjoint of $p^*$ at $FX$.
Alternatively, suppose that $U' \in \scr D'$ is a partial right adjoint of $p^*$ at $FX$.
Then the map $p^*(FU) \to FX$ induces a map \[ \Ex: FU \to U', \] called the \emph{(partial) exchange transformation at $X$}.
\end{definition}

\begin{remark}
We also have ``exchange transformations'' $Fp^* \wequi p^*F$.
In general we denote by $\Ex$ exchange transformations built by composing the above two types.
\end{remark}

\begin{lemma} \label{lemm:u-commutes}
Suppose given a commutative square as above, and $Y \in \scr D$.
Assume that $p^*Y$ and $Fp^*Y$ admit partial right adjoints $p_*p^*Y, p_*p^*FY$.
Then the following diagram commutes
\begin{equation*}
\begin{tikzcd}
FY \ar[r,"Fu"] \ar[dr,"uF" swap] & Fp_*p^*Y \ar[d,"\Ex"] \\
   & p_*p^*FY.
\end{tikzcd}
\end{equation*}
\end{lemma}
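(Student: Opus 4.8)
The plan is to reduce the commutativity of the triangle to a one-line diagram chase, using that the partial right adjoint $p_*p^*FY$ is pinned down by a mapping-space equivalence. Write $c_Y\colon p^*(p_*p^*Y)\to p^*Y$ and $c_{FY}\colon p^*(p_*p^*FY)\to p^*FY$ for the two (partial) counits, so that, by Remark \ref{rmk:partial-co-unit}, the unit $u\colon Y\to p_*p^*Y$ satisfies $c_Y\circ p^*(u)\wequi\id_{p^*Y}$, and likewise $uF\colon FY\to p_*p^*FY$ is the unique map with $c_{FY}\circ p^*(uF)\wequi\id_{p^*FY}$. Since applying $p^*$ and postcomposing with $c_{FY}$ gives an equivalence $\Map_{\scr D'}(FY,p_*p^*FY)\xrightarrow{\wequi}\Map_{\scr C'}(p^*FY,p^*FY)$, it suffices to prove that $c_{FY}\circ p^*(\Ex\circ Fu)\wequi\id_{p^*FY}$.

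First I would unwind $\Ex$. By Definition \ref{def:preserve-partial-adjoint}, with $X=p^*Y$, $U=p_*p^*Y$ and $U'=p_*p^*FY$, the exchange transformation $\Ex\colon F(p_*p^*Y)\to p_*p^*FY$ corresponds, under the same sort of mapping-space equivalence, to the composite $\gamma\colon p^*F(p_*p^*Y)\wequi Fp^*(p_*p^*Y)\xrightarrow{F(c_Y)}Fp^*Y\wequi p^*FY$ assembled from the structural equivalence $p^*F\wequi Fp^*$; concretely, $c_{FY}\circ p^*(\Ex)\wequi\gamma$. Hence $c_{FY}\circ p^*(\Ex\circ Fu)\wequi\gamma\circ p^*(Fu)$, and it remains to identify this with the identity of $p^*FY$.

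Next I would run the chase. Naturality of the structural equivalence $p^*F\wequi Fp^*$ along the arrow $u\colon Y\to p_*p^*Y$ identifies the composite $p^*FY\xrightarrow{p^*(Fu)}p^*F(p_*p^*Y)\wequi Fp^*(p_*p^*Y)$ with $p^*FY\wequi Fp^*Y\xrightarrow{F(p^*u)}Fp^*(p_*p^*Y)$. Substituting into $\gamma$ and using functoriality of $F$ gives $\gamma\circ p^*(Fu)\wequi\bigl[p^*FY\wequi Fp^*Y\xrightarrow{F(c_Y\circ p^*u)}Fp^*Y\wequi p^*FY\bigr]$. Now $c_Y\circ p^*u\wequi\id_{p^*Y}$ by Remark \ref{rmk:partial-co-unit}, so $F(c_Y\circ p^*u)\wequi\id$, and the remaining composite is the structural equivalence $p^*FY\wequi Fp^*Y$ followed by its inverse, hence $\id_{p^*FY}$. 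Thus $c_{FY}\circ p^*(\Ex\circ Fu)\wequi\id_{p^*FY}\wequi c_{FY}\circ p^*(uF)$, and the triangle commutes.

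The mathematical content here is nil; the one genuine difficulty is coherence bookkeeping, namely verifying that the three mapping-space equivalences in play --- for $p_*p^*Y$, for $p_*p^*FY$, and the one implicit in the construction of $\Ex$ --- are all the ones induced by the respective counits, and that the homotopy witnessing $p^*F\wequi Fp^*$ is invoked compatibly with its naturality in the source variable. Should this threaten to become unwieldy, an alternative is to enlarge the universe and pass to presheaf categories as in the remark after the definition of partial right adjoints, where $p^*$ acquires an honest right adjoint and $F$ an honest left adjoint, reducing the claim to the standard compatibility of units with a (lax) commuting square of adjunctions --- at the cost of checking the requisite Beck--Chevalley data for $F$ and $p^*$, which is once more pure bookkeeping.
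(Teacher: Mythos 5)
Your argument is correct and is essentially the paper's own proof: the paper likewise passes to the adjoint diagram with target $p^*FY$ (i.e.\ applies $p^*$ and postcomposes with the counit, which is an equivalence on mapping spaces by the defining property of the partial right adjoint), identifies the adjoint of $\Ex$ via the definition of the exchange transformation, uses naturality of $p^*F\wequi Fp^*$ along $u$, and concludes from the triangle identity $c\circ p^*u\wequi\id$ of Remark \ref{rmk:partial-co-unit}. The only difference is presentational (prose chase versus the pasted diagram), so there is nothing further to add.
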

\begin{proof}
We need to prove that two maps with target the partial right adjoint $p_*(p^* FY)$ are homotopic.
In order to do this we pass to the adjoint diagram, with target $p^* FY$.
\begin{equation*}
\begin{tikzcd}
Fp^* Y \ar[r, "Fp^* u"] & Fp^*p_*p^*Y \ar[ddd, "Fc", bend left=50] \\
p^*FY \ar[r, "p^*Fu"] \ar[u, "\wequi"] \ar[d, "p^*uF"] \ar[dd, bend right=70, "\id" swap] & p^*Fp_*p^*Y \ar[u, "\wequi"] \ar[ld, "p^*\Ex"] \\
p^*p_*p^*FY \ar[d, "c"] \\
p^*FY \ar[r, "\wequi"] & Fp^*Y \\
\end{tikzcd}
\end{equation*}
The top cell commutes for trivial reasons, the left hand cell by Remark \ref{rmk:partial-co-unit}, and the right hand cell by the definition of $\Ex$.
It remains to prove that the composite $Fp^* Y \xrightarrow{Fp^* u} Fp^*p_*p^*Y \xrightarrow{Fc} Fp^*Y$ is also the identity.
This holds by Remark \ref{rmk:partial-co-unit}.
\end{proof}

\begin{remark}
Dualizing the discussion, we can also defined partial left adjoints.
For example, to exhibit a partial left adjoint $U \in \scr C$ of $p^*$ at $X \in \scr D$, we need to proved a map $X \to p^* U$ satisfying a universal property.
We often denote the partial right adjoint by $p_! X$.
\end{remark}

\begin{remark}
Given $X \in \scr D$, by an \emph{ambidextrous partial adjoint of $p^*$ at $X$} we mean an object $U \in \scr C$ together with maps $X \to p^* U$ and $p^* U \to X$ exhibiting $U$ as both partial left and partial right adjoint.
Beware that, even if this exists, it is not uniquely determined by $X$ and $p^*$.
Indeed the extra data is an equivalence between the partial left and right adjoints (each separately uniquely determined).
\end{remark}

\subsection{Normable and ambidextrous objects}
Fix an $\infty$-category $\scr X$ with pullbacks and a functor $\scr A: \scr X^\op \to \Cat_\infty$.
For $f: X \to Y \in \scr X$, we denote the functoriality by $f^*: \scr A(Y) \to \scr A(X)$.
We write $\Delta_f: X \to X \times_Y X$ for the diagonal of $f$, \[ \Delta_f^2: X \to X \times_{X \times_Y X} X \] for the double diagonal, and more generally $\Delta_f^n$ for the iterated diagonal.
We call $f$ \emph{truncated} if $\Delta_f^n$ is an equivalence for some $n$.
\begin{definition}
Let $f: X \to Y \in \scr X$ be truncated.
We shall inductively define what it means for an object $T \in \scr A(X)$ to be $f$-normable (respectively $f$-ambidextrous).
For an $f$-normable object $T$, the partial left adjoint to $f^*$ at $X$ will exist, say denoted by $(U,\eta)$, and come with a canonical map $\nu = \nu_{f,U,\eta}: f^* U \to T$.
We call $T$ $f$-ambidextrous if $\nu$ exhibits $U$ as a partial right adjoint to $f^*$ at $X$.

To begin with, if $f$ is an equivalence, then all objects of $\scr A(X)$ are $f$-ambidextrous, $U := (f^*)^{-1}(T)$ and $T \to f^*U \to T$ are the canonical maps.
If $f$ is not an equivalence, then by induction we may assume that $\Delta_f$-ambidexterity has been defined.
Consider the pullback square
\begin{equation*}
\begin{CD}
X \times_Y X @>{\pi_1}>> X \\
@V{\pi_2}VV             @VfVV \\
X @>f>>                 Y.
\end{CD}
\end{equation*}
We call $T \in \scr A(X)$ $f$-normable if
\begin{enumerate}
\item a partial left adjoint $(U,\eta: T \to f^* U)$ of $f^*$ at $T$ exists,
\item $f^*$ preserves this partial left adjoint (i.e. $f^*U \wequi \pi_{2!}\pi_1^* T$; see Definition \ref{def:preserve-partial-adjoint}), and
\item $T$ is $\Delta_f$-ambidextrous.
\end{enumerate}
Now in order to construct $\nu: f^* U \to T$, by (2) it suffices to exhibit $\nu': \pi_1^* T \to \pi_2^* T$.
Note that $\Delta_f^* \pi_i^* T \wequi T$, which by assumption is $\Delta_f$-ambidextrous.
Let $V \in \scr A(X \times_Y X)$ be the ambidextrous partial adjoint of $\Delta_f^*$ at $T$.
Writing $T = \Delta^* \pi_1^* T$ we obtain a unit $\pi_1^* T \to V$, and writing $T = \Delta^* \pi_2^* T$ we obtain a co-unit $V \to \pi_2^* T$.
We let $\nu'$ be their composite.
\end{definition}
\NB{what about replacing left by right adjoints?}

If $X$ is $f$-normable and the partial right adjoint also exists at $X$ (e.g. $X$ if is $f$-ambidextrous), then we obtain a map \[ W = W_f: f_! X \to f_* X. \]
By definition $X$ is $f$-ambidextrous if and only if $W_f$ is an equivalence.

\subsection{Naturality of the norm}
Consider a natural transformation \[ \scr A \to \scr B \in \Fun(\scr X^\op, \Cat_\infty). \]
\begin{lemma} \label{lemm:norm-nat}
Let $f: X \to Y \in \scr X$ be truncated and $T \in \scr A(X)$.
Suppose that $T, FT$ are $f$-normable, partial right adjoints $f_* T, f_* FT$ exist, and for each iterated diagonal $\Delta = \Delta_f^n$ (with $n \ge 1$) the exchange transformation \[ \Delta_! FT \to F\Delta_! T \] is an equivalence.
Then the following diagrams commute
\begin{equation*}
\begin{tikzcd}
f_!FT \ar[d] \ar[r, "WF"] & f_* FT & f^*f_! FT \ar[d] \ar[r, "\nu F"] & FT & \pi_1^*FT \ar[d, "\wequi"] \ar[r, "\nu' F"] & \pi_2^* FT \ar[d, "\wequi"] \\
Ff_!T \ar[r, "FW"] & Ff_*T \ar[u] & Ff^*f_!T \ar[ur, "F\nu" swap]     &&     F\pi_1^*T \ar[r, "F\nu'"] & F\pi_2^* T,
\end{tikzcd}
\end{equation*}
where the unlabelled maps are exchange transformations.
\end{lemma}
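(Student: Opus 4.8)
The strategy is an induction on the truncation level of $f$, running in parallel with the inductive definition of $f$-normability and $f$-ambidexterity. The base case is when $f$ is an equivalence: here all the partial adjoints are computed by $(f^*)^{-1}$, the maps $\nu,\nu'$ are the structural identifications, and the three squares commute for trivial naturality reasons (the exchange transformation $Ff^* \simeq f^*F$ is what makes the outer triangles/squares commute). For the inductive step, assume the statement is known for all truncated morphisms of lower truncation level, in particular for $\Delta = \Delta_f$ and all the higher $\Delta_f^n$ with $n\ge 2$ (which are diagonals of $\Delta_f$). The key observation is that the rightmost square of the lemma — commutativity of $\nu' F$ versus $F\nu'$ under the vertical equivalences $\pi_i^*FT \simeq F\pi_i^*T$ — is exactly the $\Delta_f$-level instance of the statement, because by construction $\nu'$ is the composite $\pi_1^*T \to V \to \pi_2^*T$ of the unit and co-unit of the $\Delta_f$-ambidextrous partial adjoint $V$ of $\Delta_f^*$ at $T$. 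So I would first establish that $FV$ is the $\Delta_f$-ambidextrous partial adjoint of $\Delta_f^*$ at $FT$ (this uses the induction hypothesis applied to $\Delta_f$, namely that $F$ preserves the relevant partial left adjoints and that the Wirthmüller maps match up), and then that $F$ carries the unit $\pi_1^*T \to V$ and co-unit $V \to \pi_2^*T$ to the corresponding unit and co-unit for $FV$; composing gives the rightmost square.

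Next I would deduce the middle square. By item (2) in the definition of $f$-normable, $f^*U \simeq \pi_{2!}\pi_1^*T$ and $f^*(FU)\simeq \pi_{2!}\pi_1^*FT$ (using the hypothesis that $\Delta_!FT \to F\Delta_!T$ is an equivalence for $\Delta = \Delta_f$, and that $F$ preserves the left adjoint $\pi_{2!}$ up to the exchange transformation, which follows from base change and the normability of $FT$). Under these identifications $\nu$ is induced by $\nu'$ via the $(\pi_{2!},\pi_2^*)$-adjunction, so the compatibility of $\nu F$ with $F\nu$ is a formal consequence of the rightmost square together with naturality of the exchange transformations for $\pi_{2!}$ and $\pi_2^*$ (i.e. a mate/Beck--Chevalley calculation). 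This is essentially a diagram chase: one passes to the adjoint maps with target $\pi_2^*FT$, where everything reduces to the rightmost square and the coherence of exchange transformations, much in the spirit of Lemma \ref{lemm:u-commutes}.

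Finally, the leftmost square. Here $W_f: f_!X \to f_*X$ is obtained by composing the counit $f^*f_*X \to X$, regarded via $\nu$, with the structure of the partial adjoints; concretely $W_f$ is adjoint (under $f_! \dashv f^*$, partially defined) to the composite $f^*f_!X \xrightarrow{\nu} X$ followed by the partial unit into $f^*f_*X$. So the commutativity of the $W$-square follows by combining the middle square (compatibility of $\nu$ with $F$) with the naturality of the partial unit, exactly as in Lemma \ref{lemm:u-commutes} — indeed that lemma is the special case $T = X$ of the needed unit-compatibility, and the general case is proved the same way. The main obstacle, and the step deserving the most care, is the first one: verifying that $F$ genuinely preserves the $\Delta_f$-ambidextrous partial adjoint $V$ and matches units/counits, since this is where the induction hypothesis is invoked and where one must check that the hypothesis ``$\Delta_!FT \to F\Delta_!T$ is an equivalence for all iterated diagonals'' is exactly strong enough to propagate — one has to observe that the iterated diagonals of $\Delta_f$ are among the $\Delta_f^n$, so the hypothesis for $f$ restricts to the corresponding hypothesis for $\Delta_f$. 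Once that bookkeeping is in place, the remaining two squares are formal mate-calculus.
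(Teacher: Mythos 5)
Your proposal is correct and follows essentially the same route as the paper: the three squares are chained together by passing to adjoints (mate calculus), the rightmost square is where the induction lands — decomposing $\nu'$ into the unit, the Wirthmüller map for $\Delta_f$, and the co-unit, handling the unit/co-unit cells by Lemma \ref{lemm:u-commutes} and the $W$-cell by the inductive hypothesis for $\Delta_f$, with the assumed equivalences $\Delta_!FT \to F\Delta_!T$ used to invert the relevant exchange arrows. The only difference is cosmetic: the paper reduces first $\to$ second $\to$ third and then proves the third, whereas you build up in the opposite order.
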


Our main use for this is the following.
\begin{proposition}
Let $f: X \to Y \in \scr X$ be truncated and $T \in \scr A(X)$.
Suppose that $T, FT$ are $f$-ambidextrous and for each iterated diagonal $\Delta = \Delta_f^n$, where we allow $n=0$ (and $\Delta_f^0 := f$) the exchange transformation \[ \Delta_! FT \to F\Delta_! T \] is an equivalence.
Then the exchange transformation \[ Ff_* T \to f_* FT \] is an equivalence (and the same is true for $\Delta$ in place of $f$).
\end{proposition}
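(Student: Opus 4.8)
The plan is to deduce this directly from Lemma \ref{lemm:norm-nat} together with the fact that $F$, being a functor, preserves equivalences.

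First I would verify the hypotheses of Lemma \ref{lemm:norm-nat}. Since $T$ and $FT$ are $f$-ambidextrous, they are in particular $f$-normable, and the partial right adjoints $f_* T$ and $f_* FT$ exist (an ambidextrous partial adjoint is by definition simultaneously a partial left and a partial right adjoint). Moreover, for every iterated diagonal $\Delta = \Delta_f^n$ with $n \ge 1$, the exchange transformation $\Delta_! FT \to F\Delta_! T$ is an equivalence by hypothesis (our hypothesis in fact allows $n = 0$ as well). Hence Lemma \ref{lemm:norm-nat} applies, and the square
\begin{equation*}
\begin{tikzcd}
f_! FT \ar[r, "W_f"] \ar[d] & f_* FT \\
F f_! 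T \ar[r, "F W_f"] & F f_* T \ar[u]
\end{tikzcd}
\end{equation*}
commutes, where the vertical arrows are the exchange transformations.

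Next I would observe that three of the four edges of this square are equivalences. The top edge $W_f \colon f_! FT \to f_* FT$ is an equivalence because $FT$ is $f$-ambidextrous; the bottom edge $F(W_f)$ is an equivalence because $T$ is $f$-ambidextrous and $F$ preserves equivalences; and the left-hand vertical edge $f_! FT \to F f_! T$ is precisely the exchange transformation for the iterated diagonal $\Delta_f^0 = f$, which is an equivalence by hypothesis. By commutativity of the square, the remaining edge, namely the exchange transformation $F f_* T \to f_* FT$, is therefore an equivalence as well.

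Finally, for the statement with an iterated diagonal $\Delta = \Delta_f^n$ in place of $f$, I would note that $T$ and $FT$ are automatically $\Delta$-ambidextrous: this is built into the inductive definition of $f$-ambidexterity via condition (3) of $f$-normability, iterated down the tower of diagonals. Since the iterated diagonals of $\Delta_f^n$ are exactly the $\Delta_f^m$ with $m \ge n$, they form a subfamily of the iterated diagonals of $f$, so the exchange hypotheses restrict verbatim; applying the argument of the previous paragraph with $\Delta$ in place of $f$ then gives the conclusion. There is no real obstacle here beyond bookkeeping — the genuine content, that the formation of the Wirthmüller map $W$ is natural in $F$, was already packaged into Lemma \ref{lemm:norm-nat}; the only point needing a moment's care is checking that $f$-ambidexterity and the exchange hypotheses propagate correctly along the tower of iterated diagonals.
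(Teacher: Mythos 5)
Your proof is correct and is exactly the paper's argument: the paper's proof reads ``Immediate from commutativity of the first diagram, using $2$-out-of-$3$,'' and you have simply spelled out which three edges of that square are equivalences (the two Wirthmüller maps by ambidexterity of $FT$ and $T$, and the left-hand exchange by the $n=0$ hypothesis) and how the case of iterated diagonals follows from the inductive definition of ambidexterity.
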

\begin{proof}
Immediate from commutativity of the first diagram, using $2$-out-of-$3$.
\end{proof}

\begin{proof}[Proof of Lemma \ref{lemm:norm-nat}.]
The first diagram asserts that two maps into $f_*FT$ are homotopic.
We hence pass to adjoints as follows.
\begin{equation*}
\begin{tikzcd}
f^*f_!FT \ar[r, "f^*WF"] \ar[d] & f^*f_*FT \ar[r, "c"] & FT \\
f^*Ff_!T \ar[r, "f^*FW"] \ar[rd, "\wequi"] & f^*Ff_*T \ar[r, "\wequi"] \ar[u] & Ff^*f_*T \ar[u, "Fc"] \\
& Ff^*f_!T \ar[ru, "Ff^*W" swap]
\end{tikzcd}
\end{equation*}
The right hand cell commutes by definition of the exchange transformation for $f_*$, and the bottom cell commutes for trivial reasons.
The ``outer triangle'' (involving the top left hand, top right hand, and bottom corner) is thus isomorphic to the second diagram (by definition of $W$ in terms of $\nu$).
The first commutativity statement thus reduces to the second.

We expand the second diagram as follows.
\begin{equation*}
\begin{tikzcd}
\pi_{2!}\pi_1^* FT \ar[r]\ar[d] & f^*f_! FT \ar[r, "\nu F"] \ar[d] &  FT \\
F\pi_{2!}\pi_1^* T \ar[r] & Ff^*f_!T \ar[ur, "F\nu"]
\end{tikzcd}
\end{equation*}
The partial left adjoints $\pi_{2!}$ exist by definition of $f$-normability.
The left hand cell commutes by naturality of exchange transformations.\NB{elaborate?}
The left hand horizontal maps are equivalences by definition of normability, so it suffices to prove commutativity of the outer diagram.
We do so by passing to adjoints as follows.
\begin{equation*}
\begin{tikzcd}
\pi_1^*FT \ar[d, "\wequi"] \ar[r, "uF"] & \pi_2^*\pi_{2!} \pi_1^*FT \ar[r, "\pi_2^*\nu F"] \ar[d] & \pi_2^* FT \\
F\pi_1^*T \ar[r, "Fu"] & \pi_2^*F\pi_{2!}\pi_2^*T \ar[ru, "\pi_2^*F\nu" swap]
\end{tikzcd}
\end{equation*}
The left hand cell commutes by the definition of the exchange transformation (for $\pi_{2!}$).
The outer triangle is thus isomorphic to the third diagram (by definition of $\nu$ in terms of $\nu'$), and so the second commutativity statement reduces to the third.

Finally we expand the third diagram using the definition of $\nu'$ as follows.
\begin{equation*}
\begin{CD}
\pi_1^* FT @>{uF}>> \Delta_*\Delta^* \pi_1^*FT @<{WF}<< \Delta_!\Delta^*\pi_1^*FT @>{\wequi}>> \Delta_! \Delta^*\pi_2^*FT @>{cF}>> \pi_2^*FT \\
@V{\wequi}VV     @A{\Ex'}AA                                  @V{\Ex}VV                             @VVV                     @V{\wequi}VV \\
F\pi_1^*T @>{Fu}>> F\Delta_*\Delta^*\pi_1^*T @<{FW}<< F\Delta_!\Delta^*\pi_1^*T @>{\wequi}>> F\Delta_!\Delta^*\pi_2^*T @>{Fc}>> F\pi_2^*T
\end{CD}
\end{equation*}
The outer cells commute by Lemma \ref{lemm:u-commutes}.
The right hand middle cell commutes for trivial reasons.
The left hand middle cell commutes by induction.
By assumption, the map $\Ex$ is an equivalence, and so are $FW, WF$; hence the same is true for $\Ex'$.
One deduces that we may reverse the arrows $WF, FW$ and $\Ex'$ while retaining commutativity.
The result follows.
\end{proof}

\section{More about spaces with finite étale transfers} \label{sec:more-fet}
\subsection{Descent and continuity} \label{subsec:descent-ctty} \NB{this actually holds for $\NAlg$ in any normed category satisfying descent/continuity}
\begin{lemma} \label{lemm:spcfet-descent}
The functor $\Stk^\op \to \Cat, \scr X \mapsto \Spc^\fet(\scr X)$ satisfies quasi-affine Nisnevich descent.
\end{lemma}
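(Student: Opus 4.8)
The plan is to reduce the descent statement for $\Spc^\fet(\ph)$ to the already-established descent statement for $\Spc(\ph)$ (Lemma \ref{lemm:qaff-descent}), using the invariant description $\Spc^\fet(\scr X) \wequi \NMon(\Spc(\scr X))$ of Lemma \ref{lemm:Spcfet-NMon}. Concretely, $\Spc^\fet$ is the composition of $\Spc(\ph): \Stk^\op \to \Cat$ with the functor $\NMon = \Fun^\times(\Span(\Fin), \ph): \Cat \to \Cat$ (or rather $\NAlg$ of the relevant normed structure, as in Lemma \ref{lemm:Spcfet-NMon-abstract}). So the key point is that $\NMon$, being a limit-type construction (a full subcategory of a functor category cut out by a product-preservation condition), commutes with the limits that express descent.

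First I would spell out what quasi-affine Nisnevich descent means here: given a quasi-affine Nisnevich cover $\{\scr U_i \to \scr X\}$ with Čech nerve $\scr U_\bullet$, one must show $\Spc^\fet(\scr X) \wequi \lim_{\Delta} \Spc^\fet(\scr U_\bullet)$. Since $\Spc(\ph)$ satisfies this (Lemma \ref{lemm:qaff-descent}), we have $\Spc(\scr X) \wequi \lim_\Delta \Spc(\scr U_\bullet)$ in $\Cat_\infty$. Applying $\NMon(\ph) = \Fun^\times(\Span(\Fin,\all,\all), \ph)$ and using that $\Fun(K, \ph)$ commutes with limits of $\infty$-categories (and that the product-preservation condition is detected on the level of the underlying categories, hence preserved under the limit), we get $\NMon(\Spc(\scr X)) \wequi \lim_\Delta \NMon(\Spc(\scr U_\bullet))$. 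Then Lemma \ref{lemm:Spcfet-NMon} converts both sides into $\Spc^\fet$, giving the claim. One must check that the restriction functors $\Spc^\fet(\scr X) \to \Spc^\fet(\scr U_i)$ agree, under the equivalence of Lemma \ref{lemm:Spcfet-NMon}, with the functors $\NMon$ applied to $\Spc(\scr X) \to \Spc(\scr U_i)$; this is part of the naturality built into the normed-category formalism of \S\ref{subsec:spcfet}, and in particular the fact that the equivalence of Lemma \ref{lemm:Spcfet-NMon-abstract} is natural in $\scr S$.

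An alternative, more hands-on route — which I would keep as a fallback — is to mimic directly the proof of Lemma \ref{lemm:qaff-descent} at the level of presheaf categories: $\Shv_{\qaff\Nis}(\Cor^\fet(\SmQA_{\scr X}))$ satisfies internal descent along quasi-affine étale maps because $\Cor^\fet(\SmQA_{\scr U}) \wequi \Cor^\fet(\SmQA_{\scr X})_{/\scr U}$ for $\scr U \to \scr X$ quasi-affine étale (the category of finite étale correspondences is slice-compatible), so one invokes \cite[Theorem 6.1.3.9]{lurie-htt} as before; then one checks that homotopy invariance is detected and preserved along such covers exactly as in the cited proof, using Lemma \ref{lemm:detect-motivic-equiv} to compare motivic equivalences on $\Cor^\fet$ with those on $\SmQA$.

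The main obstacle I anticipate is not a deep one but a bookkeeping one: verifying that the equivalence $\NMon(\Spc(\ph)) \wequi \Spc^\fet(\ph)$ is genuinely natural in $\scr X$ as a map of functors $\Stk^\op \to \Cat$, i.e. that the slice-compatibility of $\Cor^\fet$ is compatible with the normed-category machinery of Barwick's unfurling used in \S\ref{subsec:spcfet}. Once that naturality is in hand, commuting $\NMon$ past the cosimplicial limit is formal. I would therefore organize the write-up as: (i) recall the slice identification $\Cor^\fet(\SmQA_{\scr U}) \wequi \Cor^\fet(\SmQA_{\scr X})_{/\scr U}$; (ii) deduce descent for the unlocalized presheaf topos via internal descent; (iii) pass to the motivic localization using Lemma \ref{lemm:detect-motivic-equiv} (which ensures the localization is compatible with restriction along the cover); (iv) conclude.
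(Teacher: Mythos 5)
Your overall strategy---reducing descent for $\Spc^\fet(\ph)$ to the known descent for $\Spc(\ph)$ (Lemma \ref{lemm:qaff-descent})---is the same as the paper's, but both mechanisms you propose for effecting the reduction have genuine gaps. The main route does not work as stated because $\NMon$ in this paper is \emph{not} $\Fun^\times(\Span(\Fin),\ph)$ applied to the underlying category $\Spc(\scr X)$: by Lemma \ref{lemm:Spcfet-NMon-abstract}, the normed objects form a category of (partially cartesian) sections of a fibration over $\Span(\SmQA_{\scr X},\all,\fet)$, a base that varies with $\scr X$. So the formal fact that $\Fun(K,\ph)$ commutes with limits for a \emph{fixed} $K$ is not applicable, and commuting the section construction past the \v Cech limit is exactly the nontrivial content you would need to supply, not a formality. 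Your fallback route breaks at step (ii): $\PSh_\Sigma(\Cor^\fet(\SmQA_{\scr X}))$ is pointed and semiadditive (Lemma \ref{lemm:Spc-fet-semiadditive}), hence is not an $\infty$-topos and not a slice of one, so the internal descent theorem invoked in the proof of Lemma \ref{lemm:qaff-descent} is unavailable; relatedly, the slice $\Cor^\fet(\SmQA_{\scr X})_{/\scr U}$ of a span category is not $\Cor^\fet(\SmQA_{\scr U})$ --- its objects come with a span to $\scr U$, not an honest map.

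The paper's proof sidesteps both problems with a short adjunction-plus-conservativity argument: the comparison functor $c\colon \Spc^\fet(\scr X) \to \lim_\Delta \Spc^\fet(\scr X_\bullet)$ admits a right adjoint $r(F) = \lim_i p_{i*}(F_i)$, and since the $p_i$ are smooth quasi-affine, both $p_i^*$ and $p_{i*}$ commute with the conservative forgetful functor $U\colon \Spc^\fet(\ph) \to \Spc(\ph)$ (as does the limit). Hence the unit and counit of $(c,r)$ are equivalences because they are so after applying $U$, where they become the unit and counit for $\Spc(\ph)$, which are equivalences by Lemma \ref{lemm:qaff-descent}. This conservativity step is the ingredient missing from your write-up; with it, the reduction to $\Spc(\ph)$ actually goes through, and neither the $\NMon$-commutation nor the topos-theoretic internal descent is needed.
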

\begin{proof}
Let $\scr X_\bullet \to \scr X$ be the nerve of a quasi-affine Nisnevich covering.
We wish to show that $c: \Spc^\fet(\scr X) \to \lim_\Delta \Spc^\fet(\scr X_\bullet)$ is an equivalence.
The right hand side consists of compatible families of objects $(F_i \in \Spc^\fet(\scr X_i))_{i \in \Delta}$ \cite[Corollary 3.3.3.2]{lurie-htt}.
In these terms the functor $c$ is given by $c(F)_i = p_i^*(F)$, where $p_i: \scr X_i \to \scr X$ is the projection.
It follows (from \cite[Proposition 5.5.3.13]{lurie-htt}) that $c$ has a right adjoint $r$, given by $r(F) = \lim_i p_{i*}(F_i)$.
Since the $p_i$ are smooth quasi-affine, both $p_i^*$ and $p_{i*}$ commute with the conservative forgetful functors $U:\Spc^\fet(\ph) \to \Spc(\ph)$, and the same is of course true for the limit.
Thus the unit and counit maps $\id \to rc$ and $cr \to \id$ are both equivalences, since they are after applying $U$, since $\scr X \mapsto \Spc(\scr X)$ satisfies quasi-affine Nisnevich descent (see Lemma \ref{lemm:qaff-descent}).
\end{proof}

\begin{lemma} \label{lemm:spcfet-cont}
Let $\scr X \wequi \lim_\alpha \scr X_\alpha$ where the transition maps are affine and the diagram is cofiltered.
Then $\Spc^\fet(\scr X) \wequi \lim_\alpha \Spc^\fet(\scr X_\alpha)$.
\end{lemma}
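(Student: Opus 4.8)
The plan is to reproduce, with finite étale transfers added, the standard continuity argument already used for $\Spc(\ph)$ in Lemma \ref{lemm:cty-spaces}. Recall that $\Spc^\fet(\scr X) = L_\mot \PSh_\Sigma(\Cor^\fet(\scr X))$ with $\Cor^\fet(\scr X) = \Span(\SmQA_{\scr X}, \fet, \all)$, so it suffices to establish three compatibilities: that $\scr X \mapsto \Cor^\fet(\scr X)$ is continuous, i.e.\ $\Cor^\fet(\scr X) \wequi \colim_\alpha \Cor^\fet(\scr X_\alpha)$ as a filtered colimit of $\infty$-categories along the pullback-of-correspondences functors; that $\PSh_\Sigma(\ph)$ converts this filtered colimit of $\infty$-categories into the corresponding limit $\lim_\alpha \PSh_\Sigma(\Cor^\fet(\scr X_\alpha))$; and that the motivic localizations are compatible with this limit. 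Granting these, one concatenates the equivalences exactly as in Lemma \ref{lemm:cty-spaces}.

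For the first point, the input is the fact---already implicit in Lemma \ref{lemm:cty-spaces}---that $\SmQA_{\scr X} \wequi \colim_\alpha \SmQA_{\scr X_\alpha}$: a smooth quasi-affine finitely presented morphism to $\scr X = \lim_\alpha \scr X_\alpha$, and any morphism between two such, descends essentially uniquely to a finite stage (spreading out along cofiltered limits with affine transition maps; cf.\ \cite[Proposition C.7]{hoyois2015quadratic} and \cite[Propositions B.2 and B.3]{rydh2015noetherian}). Since finite étale morphisms are finitely presented, they and their compatibility data likewise descend, and the class $\fet$ is stable under the transition functors as well as under base change and composition. Hence the two-sided span construction $\Span(\ph, \fet, \all)$---being finitary, in the sense that every object, $1$-morphism, and coherence simplex of $\Span(\SmQA_{\scr X}, \fet, \all)$ consists of only finitely much data---commutes with the filtered colimit, which gives the first point. (Equivalently, one may package this using $\Spc^\fet(\scr X) \wequi \NMon(\Spc(\scr X))$ via Lemma \ref{lemm:Spcfet-NMon} together with the observation, flagged at the start of this subsection, that $\NAlg$ of a continuous normed category is continuous; but the direct argument is more self-contained.)

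For the second point, $\PSh(\ph) = \Fun(\ph^\op, \Spc)$ carries colimits of $\infty$-categories to limits, and the reflective subcategory $\PSh_\Sigma(\ph)$ of product-preserving presheaves inherits this, the restriction functors preserving product-preservation. For the third point, $L_\mot$ on $\PSh_\Sigma(\Cor^\fet(\scr X))$ is by definition the localization at the maps $F(\alpha)$ for $\alpha$ a generating motivic equivalence in $\PSh_\Sigma(\SmQA_{\scr X})$---Nisnevich covering sieves and vector bundle torsor projections over objects of $\SmQA_{\scr X}$---and each such datum descends to a finite stage by the first point, so the classes of local objects are compatible with the limit and $L_\mot$ commutes with it. Combining the three points yields $\Spc^\fet(\scr X) \wequi \lim_\alpha \Spc^\fet(\scr X_\alpha)$.

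I expect the only real obstacle to be making the first point precise: one must check that $\Span(\ph, \fet, \all)$ genuinely commutes with filtered colimits of $\infty$-categories along functors compatible with the marked class, i.e.\ that not merely objects and spans but all of Barwick's coherence data spread out to a finite stage, and that the finite-étale constraint on the backward leg is detected there. This is routine bookkeeping, given that $\fet$ is a finitary, base-change- and composition-stable class, but it is the one point requiring genuine care.
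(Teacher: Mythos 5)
Your proposal is correct and follows essentially the same route as the paper: continuity of $\Cor^\fet(\SmQA_{(-)})$, compatibility of $\PSh_\Sigma$ with the (co)limit, and descent of the motivic localization. The one point you flag as delicate — that the span construction commutes with the filtered colimit — is handled in the paper exactly as you anticipate, via the complete Segal space presentation of $\Cor^\fet(\SmQA_{(-)})$ combined with the continuity of $\FEt_{(-)}$ from EGA IV, so that all coherence data spreads out levelwise to a finite stage.
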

In order to make sense of the above statement, we view $\Spc^\fet$ as a functor $\Sch \to \Cat$ (not $\Sch^\op$!), with functoriality coming from functors of the form $p_*$.
Equivalently we have \cite[Corollary 5.5.3.4 and Theorem 5.5.3.18]{lurie-htt} \[ \Spc^\fet(\scr X) \wequi \colim_\alpha \Spc^\fet(\scr X_\alpha) \in Pr^L, \] where this time we do use the $p^*$ functoriality.
\begin{proof}
We have $\colim_\alpha \SmQA_{\scr X_\alpha} \wequi \SmQA_{\scr X}$ (see e.g. the proof of Lemma \ref{lemm:cty-spaces}).
The complete Segal space description of $\Cor^\fet(\SmQA_{\ph})$ together with continuity of $\FEt_{\ph}$ \cite[Théorèmes 8.8.2(ii), 8.10.5(x) and Proposition 17.7.8(ii)]{EGAIV} then implies that \[ \colim_\alpha \Cor^\fet(\SmQA_{X_\alpha}) \wequi \Cor^\fet(\SmQA_{X}). \]
The functor $\PSh_\Sigma$ is left adjoint to the forgetful functor from presentable categories to categories with finite coproducts \cite[Proposition 5.5.8.15]{lurie-htt}, and hence we get \[ \colim_\alpha \PSh_\Sigma(\Cor^\fet(\SmQA_{X_\alpha})) \wequi \PSh_\Sigma(\Cor^\fet(\SmQA_{X})) \in Pr^L. \]
Rewriting this as a limit, it remains to prove that if $F \in \PSh_\Sigma(\Cor^\fet(\SmQA_{X}))$ then $F$ is homotopy invariant (respectively a quasi-affine Nisnevich sheaf) if and only if the same is true for $p_{\alpha*} F$ for every $\alpha$, where $p_\alpha: \scr X \to \scr X_\alpha$ is the projection.
This follows from the same statement for $\Spc(\ph)$, which holds by Lemma \ref{lemm:cty-spaces}.
\end{proof}

\subsection{Isotropy specification} \label{subsec:spcfet-isotropy-spec}
Fix a proper étale morphism $p: \scr S \to \scr B$ and a family $\scr F$ as in Definition \ref{def:family}.
We write \[\Spc^\fet(\scr S)[\scr F] := L_\mot \PSh_\Sigma(\Cor^\fet(\SmQA_{\scr S}[\scr F])). \]
Left Kan extension induces an adjunction \[ i_!: \PSh_\Sigma(\Cor^\fet(\SmQA_{\scr S}[\scr F])) \adj \PSh_\Sigma(\Cor^\fet(\SmQA_{\scr S})): i^*. \]
\begin{lemma} \label{lemm:i!-fet-pres-mot-equiv}
Given a family of isotropy groups $\scr F$, both of the functors $i_!$ and $i^*$ preserve Nisnevich, homotopy and motivic equivalences.
The forgetful functor $U: \PSh_\Sigma(\Cor^\fet(\SmQA_{\scr S}[\scr F])) \to \PSh(\SmQA_{\scr S}[\scr F])$ preserves and detects motivic equivalences.
\end{lemma}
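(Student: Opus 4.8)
The plan is to reduce the statement about $\Cor^\fet$ to the corresponding statement for $\SmQA$ via the adjunction $F \dashv U$, using the explicit formula $UF \wequi \coprod_{n \ge 0} \D_n(-)$ from Theorem \ref{thm:free-space-with-transfers} (Lemma \ref{lemm:identify-free-normed-presheaf}), which works verbatim for the sieve $\SmQA_{\scr S}[\scr F]$ since $\scr F$ is a family and the whole setup of \S\ref{subsec:normed-oo-cat} applies with $\scr C = \Stk_{/\scr S}[\scr F]$. First I would treat $i_!$: it preserves colimits and carries generating Nisnevich (resp. homotopy) equivalences of $\PSh_\Sigma(\Cor^\fet(\SmQA_{\scr S}[\scr F]))$ to generating equivalences of $\PSh_\Sigma(\Cor^\fet(\SmQA_{\scr S}))$, since a Nisnevich square (resp. vector bundle torsor) over an object of $\SmQA_{\scr S}[\scr F]$ remains one over $\SmQA_{\scr S}$ (Remark \ref{rmk:isotropy-sieve} ensures all vertices stay in the sieve). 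This is exactly the argument of Lemma \ref{lemm:family-inclusion-pres-mot-eq}, transported through $\Cor^\fet$.

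For $i^*$ and for the forgetful functor $U$, the cleanest route is to imitate the proofs of Lemma \ref{lemm:detect-motivic-equiv} and Lemma \ref{lemm:family-inclusion-pres-mot-eq} simultaneously. For $U$: using \cite[Lemma 2.10]{bachmann-norms} it suffices to check that $U F(\alpha)$ is a motivic equivalence whenever $\alpha$ is a generating motivic equivalence in $\PSh_\Sigma(\SmQA_{\scr S}[\scr F])$, and by the formula $U F(\alpha) \wequi \coprod_n \D_n(\alpha)$ this follows from the fact that the functors $\D_n$ preserve motivic equivalences (Corollary \ref{cor:Dn-mot-equiv}, whose proof only uses that $f_\sharp$ and $p_\otimes$ preserve motivic equivalences — and over the sieve $\SmQA_{\scr S}[\scr F]$ the relevant $\D_n$ is built from the same $p_\otimes$ along finite étale $p$, which still preserves motivic equivalences by Proposition \ref{prop:norms-equiv}; here one must note $\D_n$ restricted to the sieve lands back in the sieve because $\scr F$ is a family). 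Detection of motivic equivalences for $U$ then follows by the standard argument: $U$ preserves motivic equivalences and motivically local objects, hence commutes with motivic localization, and is conservative. For $i^*$: it preserves Nisnevich and homotopy localizations by the same ``colimit of $L$-equivalent spaces'' formula used in Lemma \ref{lemm:family-inclusion-pres-mot-eq}, which is available here because both $\Cor^\fet(\SmQA_{\scr S})$ and $\Cor^\fet(\SmQA_{\scr S}[\scr F])$ carry compatible topologies and $\A^1$-localizations; alternatively, since $U i^* \wequi i^* U$ and $U$ detects motivic equivalences on both sides, preservation by $i^*$ reduces to preservation by the (already handled) unstable $i^*$ of Lemma \ref{lemm:family-inclusion-pres-mot-eq}.

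The main obstacle is purely bookkeeping: making sure that all the constructions of \S\ref{subsec:normed-oo-cat} — in particular the decomposition \eqref{eq:E-decomp}, the functors $\D_\lambda$, and Theorem \ref{thm:free-space-with-transfers} — apply to the sieve $\SmQA_{\scr S}[\scr F]$ with $\scr E$ still the class of finite étale morphisms. This requires checking that $\SmQA_{\scr S}[\scr F]$ is closed under the operations $p_\otimes$ along finite étale $p$ (true since finite étale maps are fixed-point reflecting, or more simply since $p_\otimes$ of a stack with isotropy in $\scr F$ again has isotropy in $\scr F$ because Weil restriction along finite étale maps does not introduce new stabilizers) and that it is extensive. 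Once these closure properties are in place, every step above is a transcription of an argument already given in \S\ref{subsec:spcfet}, so I do not expect any genuinely new difficulty.
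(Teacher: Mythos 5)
Your proposal is correct, but the key step --- showing that $U$ preserves motivic equivalences --- is handled by a genuinely different route from the paper's. You propose to re-run the free-normed-object machinery of \S\ref{subsec:normed-oo-cat} over the sieve $\SmQA_{\scr S}[\scr F]$, so that $UF \wequi \coprod_{n}\D_n$ holds there and the argument of Lemma \ref{lemm:detect-motivic-equiv} can be repeated verbatim via Corollary \ref{cor:Dn-mot-equiv}. This is viable, and the closure property you worry about is in fact automatic: in the colimit defining $\D_n$, the term $p_\otimes(X'\times_X Y')$ lives over $Y \in \SmQA_{\scr S}[\scr F]$, so Remark \ref{rmk:isotropy-sieve} already places it in the sieve, with no appeal to fixed-point reflection needed. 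But the bookkeeping you flag is real: Theorem \ref{thm:free-space-with-transfers} is stated for overcategories $\scr L_X$, and the sieve is not literally of that form, so one must re-verify the decomposition and the universality arguments in this modified setting. The paper sidesteps all of this with a single commutative square expressing $U$ as the composite $(i^*)' \circ U' \circ i_!$, where $U'$ is the forgetful functor on the \emph{full} category of correspondences (which preserves motivic equivalences by the already-proved Lemma \ref{lemm:detect-motivic-equiv}) and $(i^*)'$ is the unstable restriction (Lemma \ref{lemm:family-inclusion-pres-mot-eq}); preservation by $U$ then follows with no new computation. Your treatment of $i_!$ (colimit-preserving, sends generating equivalences to generating equivalences), of detection by $U$ (preserves equivalences and local objects, conservative), and of $i^*$ (via $Ui^* \wequi (i^*)'U'$ together with detection) all coincide with the paper's. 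The net effect is that your route buys nothing over the paper's here --- it proves the same statement at the cost of redoing the \S\ref{subsec:normed-oo-cat} setup over the sieve --- though it would be the natural approach if one did not already have the full-category case in hand.
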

\begin{proof}
By construction $i_!$ preserves motivic equivalences.
Since $i_!$ is fully faithful, the following diagram commutes
\begin{equation*}
\begin{CD}
\PSh_\Sigma(\Cor^\fet(\SmQA_{\scr S}[\scr F])) @>{i_!}>> \PSh_\Sigma(\Cor^\fet(\SmQA_{\scr S})) \\
@V{U}VV                                                   @V{U'}VV \\
\PSh_\Sigma(\SmQA_{\scr S}[\scr F]) @<{(i^*)'}<< \PSh_\Sigma(\SmQA_{\scr S}).
\end{CD}
\end{equation*}
It follows that $U$ preserves motivic equivalences, since so do $i_!$ (by what we just said), $U'$ (by Lemma \ref{lemm:detect-motivic-equiv}) and $(i^*)'$ (by Lemma \ref{lemm:family-inclusion-pres-mot-eq}).
Since $U$ preserves motivically local objects by construction, it also detects motivic equivalences.
Finally since $Ui^* \wequi (i^*)'U'$, we see that $Ui^*$ preserves motivic equivalences, and hence so does $i^*$.
\end{proof}

\begin{lemma} \label{lemm:Spc-fet-semiadditive}
The categories $\Cor^\fet(\SmQA_{\scr S}[\scr F]), \PSh_\Sigma(\Cor^\fet(\SmQA_{\scr S}[\scr F]))$ and $\Spc^\fet(\scr S)[\scr F]$ are semiadditive.
\end{lemma}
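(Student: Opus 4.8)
The claim is that $\Cor^\fet(\SmQA_{\scr S}[\scr F])$, its $\Sigma$-presheaf category, and $\Spc^\fet(\scr S)[\scr F]$ are semiadditive. The plan is to handle the first of these directly and then bootstrap the other two by standard abstract nonsense. First I would observe that $\Cor^\fet(\SmQA_{\scr S}[\scr F]) = \Span(\SmQA_{\scr S}[\scr F], \fet, \all)$ is a span category in which the left legs are finite étale (in particular, among them are the fold maps, since $\SmQA_{\scr S}[\scr F]$ is closed under finite coproducts: $\scr F$ being a family, $\scr X \amalg \scr Y$ has isotropy in $\scr F$ whenever $\scr X, \scr Y$ do). A span category $\Span(\scr C, \scr E, \all)$ with $\scr C$ admitting finite coproducts, $\scr E$ containing fold maps and closed under the relevant operations, is automatically semiadditive: the coproduct of $\scr C$ provides both products and coproducts in the span category, with the zero object given by the initial (= terminal) object $\emptyset$, and the biproduct structure witnessed by the diagonal and fold spans. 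This is \cite[Appendix C]{bachmann-norms} (see also the general theory of Mackey/span categories); I would simply cite this.

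Next, for $\PSh_\Sigma(\Cor^\fet(\SmQA_{\scr S}[\scr F]))$: the $\Sigma$-presheaf construction $\PSh_\Sigma(\ph)$ applied to a category with finite coproducts is the free sifted-cocompletion, and it preserves semiadditivity. More concretely, $\PSh_\Sigma$ of a semiadditive category is semiadditive because finite products and finite coproducts of representables agree (they are computed by the biproduct in the source), and both operations commute with sifted colimits; alternatively one invokes that $\PSh_\Sigma(\scr A)$ for $\scr A$ semiadditive is a presentably semiadditive $\infty$-category. I would cite the relevant statement, e.g. along the lines of \cite[Proposition 5.5.8.15]{lurie-htt} combined with the characterization of semiadditivity via commuting (co)products, or reference the analogous argument already used for $\Spc^\fet$ elsewhere.

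Finally, $\Spc^\fet(\scr S)[\scr F] = L_\mot \PSh_\Sigma(\Cor^\fet(\SmQA_{\scr S}[\scr F]))$ is an accessible localization of a (presentably) semiadditive category, and semiadditivity is stable under accessible localization: the localization functor is a left adjoint that is also right exact enough, and a reflective localization of a semiadditive presentable category is semiadditive (the local objects are closed under finite products, and in a pointed category with finite products a reflective localization inherits finite coproducts that agree with finite products). So this step is immediate once the previous two are in place.

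I do not expect a genuine obstacle here — this is a routine "semiadditivity passes through span categories, $\PSh_\Sigma$, and localizations" argument. The only mild care needed is the very first point: verifying that $\SmQA_{\scr S}[\scr F]$ is closed under finite coproducts (using that $\scr F$ is a \emph{family}, so Remark \ref{rmk:isotropy-sieve} and the coproduct decomposition keep us inside the subcategory) and that $\fet$ contains the fold maps of these coproducts, so that the span category really is semiadditive rather than merely additive-up-to-something. Everything after that is a citation.
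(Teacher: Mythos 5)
Your proposal is correct and follows essentially the same three-step route as the paper: the span category is semiadditive because fold maps lie in the finite étale class (the paper calls this ``obvious''), $\PSh_\Sigma$ preserves semiadditivity (the paper cites \cite[Corollary 2.4]{gepner2016universality}), and the motivic localization inherits semiadditivity because the local objects are closed under finite products, hence under finite coproducts. Your extra check that $\SmQA_{\scr S}[\scr F]$ is closed under finite coproducts is a reasonable point to make explicit, though it holds for any isotropy specification (coproduct decompositions do not interact with the family condition), not specifically because $\scr F$ is a family.
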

\begin{proof}
This is obvious for $\Cor^\fet(\SmQA_{\scr S}[\scr F])$, holds for $\PSh_\Sigma(\Cor^\fet(\SmQA_{\scr S}[\scr F]))$ by \cite[Corollary 2.4]{gepner2016universality}, and follows for $\Spc^\fet(\scr S)[\scr F]$ since the latter category is closed inside $\PSh_\Sigma(\Cor^\fet(\SmQA_{\scr S}[\scr F]))$ under finite products and hence finite coproducts.
\end{proof}

\begin{corollary}
\begin{enumerate}
\item There is an induced adjunction \[ i_!: \Spc^\fet(\scr S)[\scr F] \adj \Spc^\fet(\scr S): i^*. \]
\item The functor $i_!$ is fully faithful and the functor $i^*$ preserves colimits.
\item There is a canonical equivalence of endofunctors \[ i_!i^* \wequi (\ph \mapsto \ph \wedge F(\E \scr F)): \Spc^\fet(\scr S) \to \Spc^\fet(\scr S). \]
\end{enumerate}
\end{corollary}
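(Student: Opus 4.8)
The plan is to deduce all three statements from the presheaf level, together with Lemma~\ref{lemm:i!-fet-pres-mot-equiv}, in the same way that Corollary~\ref{cor:i!-ff} and Lemma~\ref{lemm:i!i*-ident} were obtained in the transfer-free setting. For (1) and (2), the key observation is that the inclusion $\Cor^\fet(\SmQA_{\scr S}[\scr F]) \to \Cor^\fet(\SmQA_{\scr S})$ is fully faithful: since $\SmQA_{\scr S}[\scr F]$ is a sieve (Remark~\ref{rmk:isotropy-sieve}) and finite étale morphisms are smooth quasi-affine, a span $\scr X \leftarrow \scr W \to \scr Y$ with $\scr X,\scr Y \in \SmQA_{\scr S}[\scr F]$ automatically has apex $\scr W \in \SmQA_{\scr S}[\scr F]$, so no morphisms are lost. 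Hence the presheaf-level $i_!$ is fully faithful, and, $i_!$ and $i^*$ preserving motivic equivalences by Lemma~\ref{lemm:i!-fet-pres-mot-equiv}, the adjunction descends to $\Spc^\fet$; the unit $\id \to i^*i_!$ is an equivalence before localization, hence after, so $i_!$ remains fully faithful. Finally $i^*$ preserves colimits because it does so on $\PSh_\Sigma$ and commutes with $L_\mot$ --- it preserves motivic equivalences, and it preserves motivically local objects since its left adjoint $i_!$ preserves motivic equivalences.

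For (3), recall that $F\colon \Spc(\scr S) \to \Spc^\fet(\scr S)$ is symmetric monoidal, so $F(\scr X) \wedge F(\scr Y) \wequi F(\scr X \times_{\scr S} \scr Y)$ and $F(\ast) \wequi \1$. The collapse map $\E\scr F \to \ast$ in $\Spc(\scr S)$ induces $F(\E\scr F) \to \1$, hence a natural transformation $(\ph) \wedge F(\E\scr F) \to (\ph)$ of colimit-preserving endofunctors of $\Spc^\fet(\scr S)$. First I would check that $(\ph) \wedge F(\E\scr F)$ lands in the essential image $\scr C_{\scr F}$ of $i_!$: by Remark~\ref{rmk:EE-interpretation}, $\E\scr F \wequi \colim_{\scr X \in \SmQA_{\scr S}[\scr F]} \scr X$, so writing $Y \wequi \colim_j F(\scr Z_j)$ with $\scr Z_j \in \SmQA_{\scr S}$ one gets $Y \wedge F(\E\scr F) \wequi \colim_{\scr X}\colim_j F(\scr Z_j \times_{\scr S}\scr X)$; each projection $\scr Z_j \times_{\scr S}\scr X \to \scr X$ is smooth quasi-affine with target in $\SmQA_{\scr S}[\scr F]$, so $\scr Z_j \times_{\scr S}\scr X \in \SmQA_{\scr S}[\scr F]$ by the sieve property, whence $F(\scr Z_j \times_{\scr S}\scr X) \in \scr C_{\scr F}$; and $\scr C_{\scr F}$ is closed under colimits. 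Since the counit $i_! i^* \to \id$ is the coreflection onto $\scr C_{\scr F}$, and hence terminal among natural transformations to $\id$ out of functors landing in $\scr C_{\scr F}$, the transformation above factors uniquely as $(\ph) \wedge F(\E\scr F) \xrightarrow{\,\phi\,} i_! i^* \to \id$.

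It then remains to show $\phi$ is an equivalence, and since both functors preserve colimits it suffices to check this on the generators $F(\scr X)$, $\scr X \in \SmQA_{\scr S}$. Here I would transfer the transfer-free identity $i_! i^*\scr X \wequi \scr X \times \E\scr F$ (Lemma~\ref{lemm:i!i*-ident}, in its $\Spc(\scr S)$-form via Corollary~\ref{cor:i!-ff}) across $F$, using two compatibilities of the free functor with the sieve inclusions: on presheaves, restriction along $\Cor^\fet(\SmQA_{\scr S}[\scr F]) \hookrightarrow \Cor^\fet(\SmQA_{\scr S})$ intertwines $F$ with the corresponding free functor $F^{[\scr F]}$ over $\SmQA_{\scr S}[\scr F]$ --- both send a presheaf $G$ and an object $Z \in \SmQA_{\scr S}[\scr F]$ to $\colim_{V \in \FEt_Z} G(V)$ by Remark~\ref{rmk:compute-F}, since finite étale covers of $Z$ already lie in $\SmQA_{\scr S}[\scr F]$ --- and, adjointly, $F \circ i_! \wequi i_! \circ F^{[\scr F]}$. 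Applying $F$ to $i_! i^*\scr X \wequi \scr X \times \E\scr F$ then yields $i_! i^* F(\scr X) \wequi F(\scr X \times \E\scr F) \wequi F(\scr X) \wedge F(\E\scr F)$, and one verifies this coincides with $\phi_{F(\scr X)}$ after applying the conservative functor $i^*$: both sides then become $F^{[\scr F]}$ of the equivalence $i^*(\scr X \times \E\scr F) \wequi i^*\scr X$.

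The bulk of the work, and the main obstacle, is (3). Unlike in Lemma~\ref{lemm:i!i*-ident}, the monoidal product on $\PSh_\Sigma(\Cor^\fet(\SmQA_{\scr S}))$ is a Day convolution and is \emph{not} computed sectionwise, so the ``products of presheaves are sectionwise'' argument does not transcribe directly; its substitute is the pair of Beck--Chevalley-type compatibilities of $F$ with the sieve inclusions, which however reduce swiftly to Remark~\ref{rmk:compute-F} and the sieve property. A second, minor, point is to make the comparison genuinely natural rather than merely objectwise, which is handled by producing it as a factorization through the counit $i_! i^* \to \id$.
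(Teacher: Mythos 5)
Your proposal is correct and follows essentially the same route as the paper: reduce everything to the presheaf level via Lemma~\ref{lemm:i!-fet-pres-mot-equiv}, show that $(\ph)\wedge F(\E\scr F)$ lands in the essential image of $i_!$, and verify the resulting comparison after applying $i^*$ on generators $F(X_0)$ using Remark~\ref{rmk:compute-F} together with the fact that finite étale covers of objects with isotropy in $\scr F$ again have isotropy in $\scr F$. The one point the paper makes more explicit is that colimit-preservation of $i^*$ on $\PSh_\Sigma(\Cor^\fet(\dots))$ is not purely sectionwise: finite coproducts there are handled via semiadditivity (Lemma~\ref{lemm:Spc-fet-semiadditive}), with only the sifted colimits being computed sectionwise.
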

\begin{proof}
(1,2) The only statement requiring proof is that $i^*$ preserves colimits.
Being a right adjoint it preserves products, and hence by semiadditivity (Lemma \ref{lemm:Spc-fet-semiadditive}) it suffices to show that it preserves sifted colimits \cite[Lemma 2.7]{bachmann-norms}.
This we can check for $\PSh_\Sigma(\dots)$ (by Lemma \ref{lemm:i!-fet-pres-mot-equiv}), where it is clear since sifted colimits are computed sectionwise \cite[Proposition 5.5.8.10]{lurie-htt}.

(3) By construction, $\E\scr F \in \Spc(\scr S)[\scr F]$ and consequently for $X \in \Spc(\scr S)$ we have $FX \wedge F(\E \scr F) \in \Spc^\fet(\scr S)[\scr F]$.
Since $\Spc^\fet(\scr S)$ is generated by objects of the form $FX$, we deduce that $F(\E \scr F) \wedge \Spc^\fet(\scr S) \subset \Spc^\fet(\scr S)[\scr F]$, this latter subcategory being closed under colimits.
It hence suffices to exhibit an equivalence \[ i^* (\ph \wedge F(\E \scr F)) \to i^* i_! i^* \wequi i^*. \]
We take the transformation induced by $\E \scr F \to *$.
Since all the relevant functors preserve motivic equivalences and sifted colimits, to show that this is an equivalence it suffices to treat $X = F(X_0) \in \PSh_\Sigma(\Cor^\fet(\scr S))$ for $X_0 \in \PSh_\Sigma(\SmQA_{\scr S})$ (or in fact $X_0 \in \SmQA_{\scr S}$); i.e. we need to show that $F(\E\scr F \times X_0) \to F(X_0)$ induces an equivalence on sections over $U \in \SmQA_{\scr S}[\scr F]$.
We use that we have (see Remark \ref{rmk:compute-F}) \[ F(X_0)(U) \wequi \colim_{U' \in \FEt_U} X_0(U'), \] and similarly for $F(\E \scr F \times X_0)$.
Since $U \in \scr F$ also $U' \in \scr F$, and consequently $(X_0 \times \E\scr F)(U') \to X_0(U')$ is an equivalence.
The result follows.
\end{proof}

\subsection{Quotients} \label{subsec:spcfet-quotients}
Let $p: \scr X \to \scr S$ be a proper étale morphism.
The functor $p_!: \SmQA_{\scr X}[\scr F_\free/\scr S] \to \SmQA_{\scr S}$ from Lemma \ref{lemm:presheaf-quotients} is basically the identity, and so preserves pullbacks and finite étale morphisms.
Thus there is an induced functor $p_!: \Cor^\fet(\SmQA_{\scr X}[\scr F_\free/\scr S]) \to \Cor^\fet(\SmQA_{\scr S})$, and hence an induced functor on $\PSh_\Sigma(\ph)$.

\begin{lemma} \label{lemm:p!-cor-exists}
The functor $p_!: \PSh_\Sigma(\Cor^\fet(\SmQA_{\scr X}[\scr F_\free/\scr S])) \to \PSh_\Sigma(\Cor^\fet(\SmQA_{\scr S}))$ is left adjoint to $p^*(\ph) \wedge F\E(\scr F_\free/\scr S)$ and preserves motivic equivalences.
\end{lemma}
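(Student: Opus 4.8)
The plan is to run the proof of Lemma~\ref{lemm:presheaf-quotients} with $\Cor^\fet$ everywhere, the only genuinely new ingredient being the transfer-enriched analogue of Lemma~\ref{lemm:i!i*-ident}.

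\textbf{The adjunction.} First I would set up the chain of adjunctions
\[ \PSh_\Sigma(\Cor^\fet(\SmQA_{\scr X}[\scr F_\free/\scr S])) \stackrel[i^*]{i_!}{\adj} \PSh_\Sigma(\Cor^\fet(\Stk_{\scr X})) \stackrel[p^*]{\tilde{p}_!}{\adj} \PSh_\Sigma(\Cor^\fet(\Stk_{\scr S})), \]
exactly as in \emph{loc.\ cit.}; here $\Cor^\fet(\Stk_{\scr X})$ denotes the span category of (qcqs, algebraic) stacks over $\scr X$ with finite étale backward legs (this makes sense: $\Stk_{\scr X}$ has all fibre products and finite coproducts, and the distinguished classes are stable under base change and disjoint union). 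The functor $i_!$ is left Kan extension along the inclusion, which preserves finite coproducts and finite-étale pullbacks and induces a fully faithful functor on $\Cor^\fet$ since $\SmQA_{\scr X}[\scr F_\free/\scr S]$ is closed under finite étale covers inside $\Stk_{\scr X}$; and $\tilde{p}_!$ is induced by post-composition $\Stk_{\scr X}\to\Stk_{\scr S}$. The right adjoints are the evident restriction functors, and restriction along $\Stk_{\scr X}\to\Stk_{\scr S}$ agrees (on representables, hence everywhere) with the $\Cor^\fet$-level base-change functor $p^*$, because a finite étale cover of $\scr Y$ over $\scr X$ is the same thing as one over $\scr S$. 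For $\scr Y\in\SmQA_{\scr X}[\scr F_\free/\scr S]$ the stack $\scr Y$ is representable and hence (Proposition~\ref{prop:relative-quotient-gerbe}) smooth quasi-affine over $\scr S$, so $\tilde{p}_! i_!$ lands in the full subcategory $\PSh_\Sigma(\Cor^\fet(\SmQA_{\scr S}))$; unwinding the definitions, this composite is precisely the functor $p_!$ of the statement, with right adjoint the restriction of $i^*p^*$. It remains to identify $i^*p^*$ with $p^*(\ph)\wedge F\E(\scr F_\free/\scr S)$: by the transfer-enriched analogue of Lemma~\ref{lemm:i!i*-ident}, namely $i_!i^*(\ph)\wequi\ph\wedge F\E(\scr F_\free/\scr S)$ on $\PSh_\Sigma(\Cor^\fet(\SmQA_{\scr X}))$ — proved by the same sections-over-$\scr U$ computation (Remarks~\ref{rmk:compute-F} and~\ref{rmk:EE-interpretation}) already used for $\Spc^\fet$ — together with full faithfulness of $i_!$ on $\Cor^\fet$-presheaves, we may regard $i^*p^*$ as landing in $\PSh_\Sigma(\Cor^\fet(\SmQA_{\scr X}[\scr F_\free/\scr S]))$ and identify it with $p^*(\ph)\wedge F\E(\scr F_\free/\scr S)$.

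\textbf{Preservation of motivic equivalences.} As in the proof of Lemma~\ref{lemm:presheaf-quotients}(3), $p_!$ is essentially the identity; in particular $p_!\circ F\wequi F\circ p_!$, since both composites preserve colimits and agree on representables. Hence a generating motivic equivalence $F(\alpha)$ is sent by $p_!$ to $F(p_!\alpha)$, which is a motivic equivalence because $p_!\alpha$ is one (Lemma~\ref{lemm:presheaf-quotients}(3)) and $F$ preserves motivic equivalences. As $p_!$ preserves colimits, \cite[Lemma~2.10]{bachmann-norms} promotes this to preservation of all motivic equivalences, and so $p_!$ descends to a colimit-preserving functor $\Spc^\fet(\scr X)[\scr F_\free/\scr S]\to\Spc^\fet(\scr S)$, still left adjoint to the motivic localization of $p^*(\ph)\wedge F\E(\scr F_\free/\scr S)$.

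\textbf{Main obstacle.} Everything above is bookkeeping parallel to the transfer-free case; the one point that genuinely requires work is the identification $i_!i^*(\ph)\wequi\ph\wedge F\E(\scr F_\free/\scr S)$ for $\Cor^\fet$-presheaves — equivalently, checking that $p^*(\ph)\wedge F\E(\scr F_\free/\scr S)$ really is the expected right adjoint and not merely its underlying object after applying the forgetful functor $U$. If one wishes to sidestep the appeal to that analogue, the identification can instead be carried out directly by comparing mapping spaces out of representables $h_{\scr Y}$, $\scr Y\in\SmQA_{\scr X}[\scr F_\free/\scr S]$, using Remark~\ref{rmk:compute-F} for $F$ and the description of $p_!$ on objects from Lemma~\ref{lemm:presheaf-quotients}(2).
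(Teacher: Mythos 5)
Your proof is correct, and it reaches the adjunction by a different route from the paper. The paper's own proof is essentially a one-line appeal to the general fact that adjunctions between underlying categories pass to span categories (\cite[Corollary C.21]{bachmann-norms}): applied to $p_\sharp \dashv p^*$ on stacks, this produces a partially defined left adjoint of $p^*$ on $\Cor^\fet(\SmQA_{\scr X}[\scr F_\free/\scr S])$ given by $p_\sharp \wequi p_!$, after which the adjointness claim and the preservation of motivic equivalences are declared to follow by construction. You instead rerun the construction of Lemma \ref{lemm:presheaf-quotients} with transfers, factoring through $\PSh_\Sigma(\Cor^\fet(\Stk_{\scr X}))$; this costs some bookkeeping (full faithfulness of $i_!$ at the level of span categories, which you correctly reduce to $\SmQA_{\scr X}[\scr F_\free/\scr S]$ being a sieve stable under finite étale covers, via Remark \ref{rmk:isotropy-sieve}) but avoids the external citation and makes the identification $\tilde p_! i_! \wequi p_!$ completely explicit. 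You are also right to isolate $i_!i^*(\ph) \wequi \ph \wedge F\E(\scr F_\free/\scr S)$ as the one substantive input: the paper's ``this implies the adjointness claim'' silently relies on exactly this identification, which is part (3) of the unnumbered corollary in \S\ref{subsec:spcfet-isotropy-spec}, and its proof there is precisely the sections computation via Remark \ref{rmk:compute-F} that you sketch (and that computation is carried out at the level of $\PSh_\Sigma$ on objects of the form $F(X_0)$, so it does apply before motivic localization, as your statement requires). Finally, your argument for preservation of motivic equivalences ($p_!\circ F \wequi F\circ p_!$ on generators together with Lemma \ref{lemm:presheaf-quotients}(3)) is the intended content of the paper's ``clear by construction''. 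In short: same underlying idea --- on free objects $p_!$ is essentially the identity and is compatible with transfers --- but your write-up trades an abstract span-category lemma for an explicit Kan-extension argument and fills in a step the paper leaves implicit.
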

\begin{proof}
The adjunction \[ p_\sharp: \Stk_{\scr X} \adj \Stk_{\scr S}: p^* \] is preserved by passing to spans \cite[Corollary C.21]{bachmann-norms}, whence $p^*: \Cor^\fet(\SmQA_{\scr S}) \to \Cor^\fet(\SmQA_{\scr X})$ has a partially defined left adjoint given on $\Cor^\fet(\SmQA_{\scr X}[\scr F_\free/\scr S])$ by $p_\sharp \wequi p_!$.
This implies the adjointness claim.
Preservation of motivic equivalences is clear by construction.
\end{proof}

\begin{corollary} \label{cor:stable-quotient-fet}
Let $p: \scr X \to \scr S$ be a proper étale morphism.
The functor \[ p^*(\ph) \wedge F\E\scr F_\free/\scr S: \Spc^\fet(\scr S) \to \Spc^\fet(\scr X)[\scr F_\free/\scr S] \] has a left adjoint $p_!$ compatible with arbitrary base change and $F: \Spc(\scr X)[\scr F_\free/\scr S] \to \Spc^\fet(\scr X)[\scr F_\free/\scr S]$.
\end{corollary}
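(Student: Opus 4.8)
The plan is to descend the presheaf‑level adjunction of Lemma~\ref{lemm:p!-cor-exists} to motivic localizations, exactly as Corollary~\ref{cor:stable-quotients} is deduced from Lemma~\ref{lemm:presheaf-quotients}. Recall from \S\ref{subsec:spcfet-isotropy-spec} that $\PSh_\Sigma(\Cor^\fet(\SmQA_{\scr X}[\scr F_\free/\scr S]))$ carries its own motivic localization, with $i_!,i^*$ preserving motivic equivalences (Lemma~\ref{lemm:i!-fet-pres-mot-equiv}), so that $\Spc^\fet(\scr X)[\scr F_\free/\scr S]$ is defined. By Lemma~\ref{lemm:p!-cor-exists}, at the presheaf level $p_!$ is left adjoint to $p^*(\ph)\wedge F\E(\scr F_\free/\scr S)$ and preserves motivic equivalences. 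Since a left adjoint that preserves local equivalences passes to the associated reflective localizations together with its right adjoint (as in the cited Lemmas~\ref{lemm:locn-preserved} and~\ref{lemm:adj-preserved}), we obtain the claimed adjunction $p_!: \Spc^\fet(\scr X)[\scr F_\free/\scr S]\adj \Spc^\fet(\scr S): p^*(\ph)\wedge F\E(\scr F_\free/\scr S)$.

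For compatibility with $F$, I would observe that the quotient functor $p_!$ on (pointed or unpointed) motivic spaces from Corollary~\ref{cor:stable-quotients} and the one just constructed are both induced by the single functor $p_!\colon \SmQA_{\scr X}[\scr F_\free/\scr S]\to \SmQA_{\scr S}$ of Lemma~\ref{lemm:presheaf-quotients}, which is essentially the identity and in particular commutes with the passage to finite étale correspondences. Hence the corresponding square of presheaf categories commutes on representables; all four functors involved ($p_!$ twice, and $F$ twice) are left adjoints, so preserve colimits, and preserve motivic equivalences, so the square commutes and descends to motivic localizations. Base change is handled the same way: for $p'\colon\scr X'\to\scr S'$ the pullback of $p$ along $g\colon\scr S'\to\scr S$, formation of $\SmQA_{(\ph)}[\scr F_\free/(\ph)]$ and of $\Cor^\fet$ is compatible with base change (Remark~\ref{rmk:orbit-stack-bc}, together with $p_!$ being essentially the identity), giving $g^*p_!\wequi p'_!g^*$ on representables at the presheaf level, hence everywhere by cocontinuity, hence on $\Spc^\fet$.

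I do not expect a serious obstacle here — the real content is already packaged in Lemma~\ref{lemm:p!-cor-exists}. The one point that warrants a sentence of care is that the localized right adjoint is computed as stated, i.e.\ that $p^*(\ph)\wedge F\E(\scr F_\free/\scr S)$ preserves motivically local objects; this holds because smashing with a fixed object preserves motivic equivalences ($\Spc^\fet$ being a presentably symmetric monoidal localization), and in any case it is automatic once the left adjoint is known to preserve local equivalences.
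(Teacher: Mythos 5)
Your proposal is correct and follows the same route as the paper, which simply declares the corollary ``immediate from Lemma \ref{lemm:p!-cor-exists}''; your elaboration — descending the presheaf-level adjunction through the motivic localizations via the standard localization lemmas, and checking compatibility with $F$ and with base change on representables — is exactly the intended unpacking.
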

\begin{proof}
Immediate from Lemma \ref{lemm:p!-cor-exists}.
\end{proof}

\subsection{Ambidexterity} \label{subsec:spcfet-ambidex}
\begin{proposition} \label{prop:ambidex-spcfet}
Let $p: \scr X \to \scr S$ be a tame proper étale morphism.
Then all objects of \[ \Spc^\fet(\scr X)[\scr F_\free/\scr S] \subset \Spc^\fet(\scr X) \] are $p$-ambidextrous.
\NB{If $p$ is finite étale, then $\Spc^\fet(\scr X)[\scr F_\free/\scr S] = \Spc^\fet(\scr X)$.}
\end{proposition}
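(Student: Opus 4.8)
The plan is to reduce, via the factorization of a tame proper étale morphism into a gerbe followed by a finite étale map, to two special cases, and to treat the gerbe case by an explicit computation on free generators.

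\emph{Reduction.} By Example~\ref{ex:fact-etale} we factor $p$ as $\scr X \xrightarrow{q} \ul{\scr X} \xrightarrow{r} \scr S$ with $q$ a tame proper étale gerbe (with finite étale inertia) and $r$ finite étale. Since $\ul{\scr X}\to\scr S$ is representable, every smooth quasi-affine $\ul{\scr X}$-stack is automatically representable over $\scr S$, so $\SmQA_{\ul{\scr X}}[\scr F_\free/\scr S]=\SmQA_{\ul{\scr X}}$, $q^*$ carries $\SmQA_{\ul{\scr X}}$ into $\SmQA_{\scr X}[\scr F_\free/\scr S]$, and the functor $p_!$ of Corollary~\ref{cor:stable-quotient-fet} factors as $r_!q_!$. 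The iterated diagonals $\Delta_p^n$ are built from those of $q$ and $r$, and ambidexterity is closed under composition (a formal consequence of the inductive definition of \S\ref{sec:ambidex}: if $T$ is $q$-ambidextrous and $q_!T\wequi q_*T$ is $r$-ambidextrous, then $T$ is $p$-ambidextrous). Hence it suffices to prove: (A) for $r$ finite étale, every object of $\Spc^\fet(\ul{\scr X})$ is $r$-ambidextrous; (B) for $q$ a tame proper étale gerbe, every object of $\Spc^\fet(\scr X)[\scr F_\free/\scr S]$ is $q$-ambidextrous.

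\emph{The finite étale case.} This is governed by semiadditivity of $\Spc^\fet$ (Lemma~\ref{lemm:Spc-fet-semiadditive}). By Lemma~\ref{lemm:spcfet-descent} together with naturality of the norm (Lemma~\ref{lemm:norm-nat} applied to $f^*$, $f$ ranging over a quasi-affine Nisnevich cover of $\scr S$ trivializing $r$; the iterated-diagonal hypotheses hold since $f^*$ commutes with the relevant pullbacks and the diagonals of $r$ and its iterates are clopen immersions), one reduces to the case $\ul{\scr X}\wequi\scr S^{\amalg n}$ with $r$ the fold map. Then $\Spc^\fet(\ul{\scr X})\wequi\Spc^\fet(\scr S)^n$, $r^*$ is the diagonal, $r_!$ the coproduct and $r_*$ the product; these agree by semiadditivity and the inductively constructed norm is the canonical comparison map, an equivalence. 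The same description applies to each iterated diagonal of the fold map, so all objects are $r$-ambidextrous.

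\emph{The gerbe case.} For $T\in\Spc^\fet(\scr X)[\scr F_\free/\scr S]$, the partial left adjoint $q_!T$ exists and is compatible with base change and with $F$ by Corollary~\ref{cor:stable-quotient-fet}, and the partial right adjoint $q_*T$ is the genuine right adjoint, which preserves colimits by Corollary~\ref{cor:p*-fet}. One checks inductively that $T$ is $\Delta_q^n$-ambidextrous for every $n$: the $\Delta_q^n$ are again tame proper étale gerbes and pull free objects back to free objects, so Corollary~\ref{cor:stable-quotient-fet} keeps applying and the induction closes up; this also gives that $q_!$, $q_*$ and (via Lemma~\ref{lemm:norm-nat}) the norm map $W_T\colon q_!T\to q_*T$ are compatible with base change in $\scr S$ and with the colimits generating $\Spc^\fet(\scr X)[\scr F_\free/\scr S]$. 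Consequently, using Lemma~\ref{lemm:spcfet-descent} and Lemma~\ref{lemm:spcfet-cont} to track base change, it suffices to show $W_T$ is an equivalence for $T=F(\scr Y)$ with $\scr Y\in\SmQA_{\scr X}[\scr F_\free/\scr S]$. For such $T$ one has $q_!F(\scr Y)\wequi F(\scr Y)$, with $\scr Y$ now regarded as a smooth quasi-affine $\scr S$-stack (Corollary~\ref{cor:stable-quotient-fet} and Lemma~\ref{lemm:presheaf-quotients}), while a direct computation with the sectionwise formula $(q_*\mathcal G)(W)\wequi\mathcal G(q^*W)$, the colimit description of $F$ (Remark~\ref{rmk:compute-F}), and the fppf-local structure of $q$ (Remark~\ref{rmk:pet-gerbe-local-structure}, which identifies $q^*$ with the trivial-action functor and the free locus with $\SmQA_{\scr X}[\scr F_\free/\scr S]$) gives $q_*F(\scr Y)\wequi F(\scr Y)$ as well --- this is the Adams isomorphism for free objects, performed on the free space-with-transfers. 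The main obstacle is the final identification: one must check that the evident equivalence $q_!F(\scr Y)\wequi F(\scr Y)\wequi q_*F(\scr Y)$ coincides with the norm map $W_{F(\scr Y)}$, which forces one to unwind the inductive construction of the norm through the diagonals $\Delta_q^n$.
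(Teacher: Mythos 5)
Your proposal stops short of proving the statement at its crux. In the gerbe case you compute that $q_!F(\scr Y)$ and $q_*F(\scr Y)$ are each equivalent to $F(\scr Y)$, and then you explicitly defer "the main obstacle": checking that the norm map $W_{F(\scr Y)}$ --- the specific transformation built inductively through the diagonals $\Delta_q^n$ --- coincides with the evident equivalence. But that identification \emph{is} the proposition; ambidexterity is a statement about a particular canonically constructed map, not about the abstract existence of an isomorphism $p_!T \wequi p_*T$. The paper closes this gap by working with the map itself rather than with its source and target separately: it observes that on a generator $F(\scr T)$, evaluated at $U \in \SmQA_{\scr S}$, the map $F(p_!\scr T)(U) \to p_*(F\scr T)(U)$ induced by $W$ is a concrete map of $1$-groupoids of spans, given by $(U \leftarrow Z' \to p_!\scr T) \mapsto (p^*U \leftarrow p^*Z' \times_{p^*p_!\scr T} \scr T \to \scr T)$, and it writes down an explicit inverse (using that any $Z$ mapping to the free object $\scr T$ is itself free, so $p_!Z$ exists). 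Note also that the paper does this uniformly for all tame proper étale $p$, with no factorization into a gerbe followed by a finite étale map.

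A second, smaller gap: your reduction rests on the claim that ambidexterity of objects is closed under composition ("if $T$ is $q$-ambidextrous and $q_!T$ is $r$-ambidextrous, then $T$ is $p$-ambidextrous"), which you call a formal consequence of the inductive definition. It is not formal in the paper's framework: $p$-ambidexterity of $T$ requires $T$ to be $\Delta_p$-ambidextrous, and $\Delta_p$ is neither $\Delta_q$ nor $\Delta_r$ but a composite of $\Delta_q$ with a pullback of $\Delta_r$, so the claim needs its own (inductive) proof, which \S4 of the paper does not supply. Such composition results are standard in the ambidexterity literature, but invoking one here imports a lemma the paper deliberately avoids needing.
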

\begin{proof}
By induction on the truncatedness of $p$ we have a transformation $W_p: p_! \to p_*$, which we must prove is an equivalence.
Since both functors preserve colimits and motivic equivalences (see Corollary \ref{cor:p*-fet} and Lemma \ref{lemm:p!-cor-exists}), we may check this on generators of $\PSh_\Sigma(\dots)$ (see also Lemma \ref{lemm:norm-nat}).
Thus let $\scr T \in \SmQA_\scr{X}[\scr F_\free/\scr S]$ and $U \in \SmQA_\scr{S}$.
We seek to prove that a certain map of groupoids $F(p_!\scr T)(U) \to p_*(F\scr T)(U)$ is an equivalence.
The source is the groupoid of spans of the form \[ U \xleftarrow{\fet} Z' \to p_! \scr T \] and the target is the groupoid of spans of the form \[ p^*U \xleftarrow{\fet} Z \to \scr T. \]
Unpacking the definitions\tdn{...}, the map from the source to the target is given by applying $p^*$ and then composing with the span $p^*p_! \scr T \leftarrow \scr T \rightarrow \scr T$.
Thus it is given by \[ (U \leftarrow Z' \to p_! \scr T) \mapsto (p^*U \leftarrow p^*Z' \times_{p^*p_! \scr T} \scr T \rightarrow \scr T). \]
We can write down an inverse of this map as follows.
Given $p^*U \leftarrow Z \to \scr T$, $Z$ must be free (admitting a map to $\scr T$), and so $p_!Z$ exists.
We can thus define \[ (p^*U \leftarrow Z \to \scr T) \mapsto (U \leftarrow p_!Z \to p_!\scr T), \] with the new span being induced by adjunction and functoriality.
This being maps of $1$-groupoids, it is easy to verify they are indeed inverses.
In fact the functor $p_!$ is essentially the identity, and then the key point is that \[ p^*Z' \times_{p^*p_!\scr T} \scr T = (Z \times_\scr{S} \scr X) \times_{\scr T \times_\scr{S} \scr X} \scr T \wequi Z', \] as one can see e.g. from the following cartesian squares
\begin{equation*}
\begin{CD}
Z @>>> Z \times_{\scr S} \scr X @>>> Z \\
@VVV                      @VVV         @VVV \\
\scr T @>>> \scr T \times_{\scr S} X @>>> \scr T.
\end{CD}
\end{equation*}
\end{proof}

\subsection{Geometric fixed points} \label{subsec:spcfet-geom-fixed}
\begin{proposition} \label{prop:geom-fixed-fet}
Let $p: \scr X \to \scr S$ be a tame proper étale gerbe.
Then the two functors \[ p_\otimes \text{ and } p_*(F_*\widetilde{\E(\scr F_\prop/\scr S)} \wedge \ph): \Spc^\fet(\scr X) \to \Spc^\fet(\scr S) \] are naturally equivalent.
In particular the square
\begin{equation*}
\begin{CD}
\Spc(\scr X)_* \wedge \widetilde{\E(\scr F_\prop/\scr S)} @>{F_*}>> \Spc^\fet(\scr X) \\
@V{p_*}VV                                                             @V{p_*}VV       \\
\Spc(\scr S)_* @>F_*>> \Spc^\fet(\scr S)
\end{CD}
\end{equation*}
commutes.
\end{proposition}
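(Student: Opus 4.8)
The plan is to run the argument of Corollary~\ref{cor:geom-fixed-points}, with $\SH$ replaced by $\Spc^\fet$ and $\Sigma^\infty$ by $F_*$. Both functors in the statement preserve colimits (for $p_*$ this is Corollary~\ref{cor:p*-fet}; smashing with the idempotent object $F_*\widetilde{\E(\scr F_\prop/\scr S)}$ certainly does; and $p_\otimes$ on a gerbe preserves sifted colimits by construction and finite coproducts as in Lemma~\ref{lemm:pointed-fixed-points}). Since $\Spc^\fet(\scr X)$ is generated under colimits by the image of $F_*$, it suffices to compare the two functors after precomposition with $F_* \colon \Spc(\scr X)_* \to \Spc^\fet(\scr X)$. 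On the one hand, $F_*$ is compatible with the norm functors $p_\otimes$ (by \eqref{eq:F*-first} and the subsequent remark), so $p_\otimes \circ F_* \wequi F_* \circ p_\otimes^{\Spc_*} \wequi F_* \circ p_*$ using Lemma~\ref{lemm:pointed-fixed-points}. On the other hand $F_*$ is symmetric monoidal, so $p_*(F_*\widetilde{\E(\scr F_\prop/\scr S)} \wedge F_* X) \wequi p_*(F_*(\widetilde{\E(\scr F_\prop/\scr S)} \wedge X))$. Together with the space-level identity
\[ p_*(X) \wequi p_*(\widetilde{\E(\scr F_\prop/\scr S)} \wedge X) \in \Spc(\scr S)_*, \]
obtained by smashing the cofiber sequence $\E(\scr F_\prop/\scr S)_+ \to S^0 \to \widetilde{\E(\scr F_\prop/\scr S)}$ with $X$, applying the colimit- and limit-preserving functor $p_*$, and computing fppf-locally, where $p_*$ is $G$-fixed points and $\E(\scr F_\prop/\scr S)_+ \wedge X$ has trivial $G$-fixed points, this reduces the proposition to the commutativity of the displayed square: that $p_* \colon \Spc^\fet(\scr X) \to \Spc^\fet(\scr S)$ commutes with $F_*$ on the subcategory of $\Spc(\scr X)_*$ on which $\widetilde{\E(\scr F_\prop/\scr S)}$ acts invertibly.

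For the square I would imitate the proof of Corollary~\ref{cor:geom-fixed-points}. As $\widetilde{\E(\scr F_\prop/\scr S)}$ is an idempotent monoid (Remark~\ref{rmk:Etilde-monoid}) and $F_*$ is symmetric monoidal, $F_*\widetilde{\E(\scr F_\prop/\scr S)}$ is an idempotent monoid in $\Spc^\fet(\scr X)$, so $p_*(F_*\widetilde{\E(\scr F_\prop/\scr S)} \wedge \ph)$ is the right adjoint of the composite
\[ \Spc^\fet(\scr S) \xrightarrow{p^*} \Spc^\fet(\scr X) \longrightarrow \Spc^\fet(\scr X) \wedge F_*\widetilde{\E(\scr F_\prop/\scr S)}. \]
The key point is then the $\Spc^\fet$-analogue of Proposition~\ref{prop:stab-free-prop}: there is an equivalence
\[ \Spc^\fet(\scr X) \wedge F_*\widetilde{\E(\scr F_\prop/\scr S)} \wequi \bigl(\Spc(\scr X)_* \wedge \widetilde{\E(\scr F_\prop/\scr S)}\bigr) \otimes_{\Spc(\scr S)_*} \Spc^\fet(\scr S), \]
compatible with the localized pullbacks, exhibiting the displayed composite as the scalar extension along $\Spc(\scr S)_* \to \Spc^\fet(\scr S)$ of the analogous composite of pointed spaces. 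Granting this, and noting that $p_* \colon \Spc(\scr X)_* \wedge \widetilde{\E(\scr F_\prop/\scr S)} \to \Spc(\scr S)_*$ is a $\Spc(\scr S)_*$-module functor (being, fppf-locally, fixed points, for which the projection formula is an isomorphism), Lemma~\ref{lemm:scalar-extension-2funct} identifies the right adjoint of the scalar extension with the scalar extension of the space-level right adjoint; this is exactly the commutativity of the square, whence the proposition by the first paragraph.

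The main obstacle is the $\Spc^\fet$-analogue of Proposition~\ref{prop:stab-free-prop}. I would prove it as that proposition: Lemma~\ref{lemm:tensor-E} rewrites the right-hand tensor product, and then quasi-affine Nisnevich descent (Lemma~\ref{lemm:spcfet-descent}, Lemma~\ref{lemm:qaff-descent}) along the cover of Lemma~\ref{lemm:gerbes-form} reduces to the case where $\scr S$ and $\scr X$ are quasi-fundamental, where it follows again from Lemma~\ref{lemm:tensor-E} together with the base-change identity $\Spc^\fet(\scr X) \wequi \Spc(\scr X)_* \otimes_{\Spc(\scr S)_*} \Spc^\fet(\scr S)$ for the gerbe $p$. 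Alternatively one can bypass the scalar-extension bookkeeping and verify the square directly after applying the conservative functor $U_* \colon \Spc^\fet(\scr S) \to \Spc(\scr S)_*$, which commutes with $p_*$ on gerbes because $F_*$ commutes with $p^*$; via the analogous formula $U_*F_*X \wequi \bigvee_n \D_n X$ (cf.\ Corollary~\ref{cor:identify-free-normed-space}) this reduces to showing that the extended-power functors $\D_n$ commute with $p_*$ on $\widetilde{\E(\scr F_\prop/\scr S)}$-local objects, i.e.\ that the summands of the tom Dieck splitting of $p_*\D_n$ corresponding to proper isotropy vanish on such objects --- but the scalar-extension route seems cleaner.
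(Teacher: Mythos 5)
Your first paragraph is a sound reduction (the space-level identity $p_*(X) \wequi p_*(\widetilde{\E(\scr F_\prop/\scr S)} \wedge X)$ is correct and is exactly the computation the paper also uses), modulo the standard caveat that agreement of two colimit-preserving functors on the generators $F_*X$ does not by itself produce a natural equivalence --- you still need to construct the comparison map $p_\otimes \to p_*(F_*\widetilde{\E(\scr F_\prop/\scr S)} \wedge \ph)$ first, which the paper does via the $2$-functoriality of $\Cor^\fet(\ph)$ applied to the adjunction $p^* \dashv p_*$ on $\SmQA$.

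The genuine gap is in your proof of the square. Your key input is the ``$\Spc^\fet$-analogue of Proposition \ref{prop:stab-free-prop}'', which you reduce to the base-change identity $\Spc^\fet(\scr X) \wequi \Spc(\scr X)_* \otimes_{\Spc(\scr S)_*} \Spc^\fet(\scr S)$ for the gerbe $p$. This identity is not established anywhere in the paper and does not follow by the argument of Proposition \ref{prop:stab-free-prop}: that proposition works because $\SH(\ph)$ is a \emph{localization} of $\Spc(\ph)_*$ (inverting Thom spaces), so Lemma \ref{lemm:stabilization-lemma}(2) can compare which spheres need to be inverted; by contrast $\Spc^\fet(\scr X)$ is obtained from $\Spc(\scr X)_*$ by \emph{adding transfers}, and the tensor product over $\Spc(\scr S)_*$ only creates transfers along finite étale maps pulled back from $\scr S$, whereas $\Spc^\fet(\scr X)$ has transfers along all finite étale maps over the gerbe (e.g. $(G\times B)\sslash G \to B\sslash G$, which is not pulled back from $B$). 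Statements of exactly this base-change type are listed as open questions in \S\ref{sec:intro-questions}, and proving even the $\widetilde{\E}$-localized version amounts to showing that after killing proper isotropy only base transfers survive --- which is precisely the content of the proposition, so the argument is circular as written. The paper instead proves this directly: by semiadditivity it evaluates $p_*FX$ on sections $\Gamma_Y$, splits the commutative monoid of spans $p^*Y \leftarrow T \to X$ according to whether $T$ has full or proper isotropy, and matches the two summands with $F(p_*X)(Y)$ and $p_*F(X \times \E\scr F_\prop)(Y)$. Your alternative sketch via $U_*$ and the splitting of $p_*\D_n$ is much closer to this (though note $U_*F_*X \wequi \bigvee_n \D_n X$ fails unstably --- only the unpointed and stable versions, Corollaries \ref{cor:identify-free-normed-space} and \ref{cor:F*-stable}, are available); that is the route you would need to carry out.
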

\begin{proof}
Given a functor $F: \SmQA_{\scr X} \to \SmQA_{\scr Y}$ preserving finite étale morphisms and pullbacks therealong, there is an induced functor $\Cor^\fet(F): \Cor^\fet(\SmQA_{\scr X}) \to \Cor^\fet(\SmQA_\scr{Y})$.
This construction is even $2$-functorial \cite[Proposition C.20]{bachmann-norms}.
The adjunction \[ p^*: \SmQA_\scr{S} \adj \SmQA_\scr{X}: p_* \] thus induces a transformation \[ \Cor^\fet(p^*) \circ \Cor^\fet(p_*) \wequi \Cor^\fet(p^* \circ p_*) \to \Cor^\fet(\id) \wequi \id. \]
Using that $\Cor^\fet(p^*) \wequi p^*$ and $\Cor^\fet(p_*) \wequi p_\otimes$, we can adjoin this over to a natural transformation \[ p_\otimes \to p_* \in \Fun(\PSh_\Sigma(\Cor^\fet(\SmQA_{\scr X})), \PSh_\Sigma(\Cor^\fet(\SmQA_\scr{S}))). \]
Since $p_*, p_\otimes$ preserve (sifted) colimits and motivic equivalence (see Corollary \ref{cor:p*-fet} and Lemma \ref{lemm:pointed-fixed-points}), it will suffice to prove that for $X \in \SmQA_\scr{X}$ the composite \[ F p_\otimes X \wequi p_\otimes FX \to p_* FX \to p_*(F_*\widetilde{\E(\scr F_\prop/\scr S)} \wedge FX) \in \PSh_\Sigma(\Cor^\fet(\SmQA_\scr{S})) \] is an equivalence.

The formula $X_+ \wedge (U_+/V_+) \wequi (X \times U)_+ / (X \times V)_+$ yields a pushout diagram
\begin{equation*}
\begin{CD}
F(X \times \E \scr F_\prop) @>>> F(X) \\
@VVV          @VVV                 \\
*    @>>> F_*(X_+ \wedge \widetilde{\E \scr F_\prop}),
\end{CD}
\end{equation*}
which is preserved by $p_*$.
By the pasting law \cite[Lemma 4.4.2.1]{lurie-htt}, it will thus suffice to show that the following is a pushout in $\PSh_\Sigma(\Cor^\fet(\SmQA_{\scr S}))$
\begin{equation*}
\begin{CD}
*              @>>>              F(p_*X) \\
@VVV                           @VVV      \\
p_*F(X \times \E \scr F_\prop) @>>> p_*F(X).
\end{CD}
\end{equation*}
Let $Y \in \SmQA_{\scr S}$ and $\scr F \in \PSh_\Sigma(\Cor^\fet(\SmQA_{\scr S}))$.
Since $\PSh_\Sigma(\Cor^\fet(\SmQA_{\scr S}))$ is semiadditive, $\scr F(Y)$ is canonically a commutative monoid, and we obtain a functor \[ \Gamma_Y: \PSh_\Sigma(\Cor^\fet(\SmQA_{\scr S})) \to \CMon(\Spc). \]
The functor $\Gamma_Y$ preserves sifted colimits and finite products, and hence all colimits.
It thus suffices to show that \[ \Gamma_Y(p_*F(X \times \E \scr F_\prop)) \oplus \Gamma_Y(F(p_* X)) \wequi \Gamma_Y(p_* FX). \]
We know that $\Gamma_Y(p_* FX)$ is the commutative monoid of spans $p^* Y \leftarrow T \to X$.
Since any finite étale $p^*Y$-scheme splits into a component with full isotropy (i.e. trivial action) and a component with isotropy contained in a proper subgroup (we may check this fppf locally, where it is clear), we find that \[ \Gamma_Y(p_* FX) \wequi \Gamma_Y(p_* FX)^\prop \oplus \Gamma_Y(p_* FX)^0, \] where the first space consists of spans where $T$ has isotropy in $\scr F_\prop$ and the second space consists of spans where $T$ has full isotropy.
The same argument applies to $\Gamma_Y(p_*F(X \times \E \scr F_\prop))$, and by construction \[ \Gamma_Y(p_*F(X \times \E \scr F_\prop))^0 \wequi * \quad\text{and}\quad \Gamma_Y(p_*F(X \times \E \scr F_\prop))^\prop \wequi \Gamma_Y(p_* FX)^\prop. \]
It thus remains to check that the groupoids of spans \[ p^*Y \leftarrow T \to X \quad\text{and}\quad Y \leftarrow T' \to p_*X \] are equivalent, where $T$ is required to have trivial action.
Since $p^*: \SmQA_{\scr S} \to \SmQA_{\scr X}$ is fully faithful onto the subcategory of stacks with trivial action, this is clear.

\end{proof}

\section{DM-stacks} \label{sec:DM}
\begin{definition}
By a \emph{tame DM-stack} we mean a stack with diagonal which is unramified and finite, and which is tame in the sense of \cite[Theorem 3.2]{abramovich2008tame}.
\end{definition}
Such a stack is nicely scalloped and admits a quasi-affine Nisnevich covering by nicely quasi-fundamental stacks; in fact by stacks affine over $B \sslash G$, where $G$ is a tame finite étale groups scheme over the affine scheme $B$ (this follows from \cite[Lemma 2.2.3]{abramovich2002compactifying} using that our stacks have affine diagonal).
If it is of finite type over an affine scheme, then in the previous sentence one may replace ``nicely'' by ``linearly''.
Note also that if $\scr B$ is a tame DM-stack and $p: \scr S \to \scr B$ is tame proper étale, then $\scr S$ is also a tame DM-stack.

\begin{definition} \label{def:adams-hyp}
We say that the \emph{Adams hypothesis} holds if given any proper étale morphism $p: \scr S \to \scr B$ of tame DM-stacks, all objects of $\SH(\scr S)[\scr F_\free/\scr B]$ are $p$-ambidextrous.
\end{definition}

\begin{definition} \label{def:loc-const-stack}
We call a tame DM-stack $\scr X$ \emph{locally constant} if it admits a quasi-affine Nisnevich cover by quasi-fundamental stacks, for various finite \emph{constant} groups.
\end{definition}

\subsection{Generation by trivial spheres} \label{subsec:gen-triv}
\begin{proposition} \label{prop:gens}
Let $\scr S$ be a tame DM-stack.
The category $\SH(\scr S)$ is generated by objects of the form $\Sigma^\infty_+ \scr X \wedge \Gmp{n}$, for $\scr X \in \SmQA_{\scr S}$ and $n \in \Z$.
If $\scr S$ is a gerbe, $\scr X$ can also be assumed to be.
\end{proposition}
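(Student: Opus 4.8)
The plan is to reduce, by quasi-affine Nisnevich descent and then filtration by isotropy, to the case of an algebraic space --- in fact a scheme --- where $\SH$ is the classical $\P^1$-stabilization of pointed motivic spaces and the statement is standard.

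First I would reduce to $\scr S$ quasi-fundamental. The assignment $\scr X \mapsto \SH(\scr X)$ satisfies quasi-affine Nisnevich descent, pullback along a quasi-affine Nisnevich cover $\{f_i\colon \scr S_i \to \scr S\}$ is jointly conservative, and for $f_i$ quasi-affine the functor $f_{i\sharp}$ is defined, satisfies the projection formula, and sends $\Sigma^\infty_+ \scr X \wedge \Gmp{n}$ (with $\scr X \in \SmQA_{\scr S_i}$ and $\Gm$ pulled back from $\scr S$) to an object of the same form over $\scr S$; a standard orthogonality argument then propagates the proposition from the $\scr S_i$ to $\scr S$. A tame DM-stack admits such a cover by (nicely, or linearly in the finite-type case) quasi-fundamental stacks, and if $\scr S$ is a gerbe the cover can be taken to consist of gerbes. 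For quasi-fundamental $\scr S$ we have $\SH(\scr S) = \Spc(\scr S)_*[\Sph_{\scr S}^{-1}]$, generated under colimits by $\Sigma^\infty_+ \scr Y \wedge Th(V)^{-1}$ with $\scr Y \in \SmQA_{\scr S}$ and $V$ a vector bundle on $\scr S$; and by homotopy purity (Lemma \ref{lemm:homotopy-purity}) the suspension spectrum of the Thom space of any bundle on any $\scr Y$ is already a cofibre of suspension spectra of objects of $\SmQA_{\scr S}$. So the point is to show that the localizing subcategory $\scr C \subset \SH(\scr S)$ generated by the $\Gm$-desuspensions of $\Sigma^\infty_+(\SmQA_{\scr S})$ contains the negative twists $\Sigma^\infty_+\scr Y \wedge Th(V)^{-1}$.

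The heart of the argument is filtration by isotropy. The proofs of Proposition \ref{prop:gerbes-gen} and Corollary \ref{cor:gerbes-gen} use only homotopy purity, the splitting Lemmas \ref{lemm:adjacent-inertia-flat} and \ref{lemm:adjacent-splitting}, and elementary cofibre and Zariski-pushout manipulations --- all available in $\SH$ --- so the same argument, run along the exhausting chain of weakly adjacent families $\scr F_0 \subset \scr F_1 \subset \cdots$ of Example \ref{ex:families}, reduces the question to the strata $\SmQA_{\scr S}[\scr F_i \setminus \scr F_{i-1}]$, which by Proposition \ref{prop:identify-flat-inertia} consist of stacks with flat inertia, i.e.\ gerbes. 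Through the associated orbit-stack diagram, Proposition \ref{prop:stab-free-prop} and Corollary \ref{cor:stable-families-reduction} identify the contribution of each stratum with $\SH$ of a stack with \emph{free} inertia; such a stack is representable over its base, so after a further Nisnevich localization we are reduced to a scheme, where $\SH = \Spc_*[(\P^1)^{-1}]$ is, by the classical construction, generated by $\Gm$-desuspensions of suspension spectra of smooth quasi-affine schemes. Reassembling the strata --- a finite dévissage, since the isotropy groups are bounded --- then gives the proposition for $\SH(\scr S)$. For the refinement when $\scr S$ is a gerbe, one notes that the total space $V$ of a vector bundle over a gerbe $\scr Y$ and the complement $V \setminus 0$ are again gerbes (since $I_{V/\scr Y} \simeq V$, whence $I_V \simeq I_{\scr Y} \times_{\scr Y} V$ by Corollary \ref{cor:bc-inertia}), so every cell appearing in the homotopy-purity decompositions is a gerbe; combined with the fact that over a gerbe the flat-inertia generators of Corollary \ref{cor:gerbes-gen} may be taken to be gerbes, the entire argument stays within the class of gerbes.

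The main obstacle is exactly this middle step. For a genuine stack $\SH(\scr S)$ inverts \emph{all} Thom spaces of vector bundles on $\scr S$, not only $\P^1$, and $\Sigma^\infty Th(V)$ need not be $\P^1$-invertible (already on a classifying stack a nontrivial representation sphere is not), so the scheme-level argument that $Th(V)^{-1}$ is assembled from $\Gm$-twists of suspension spectra fails on the nose; the sole purpose of running filtration by isotropy is to descend to the free locus, where the stack becomes an algebraic space and the obstruction disappears. The technical care lies in checking that the localizing-subcategory statement --- not merely its $S^1$-stable shadow --- survives each reduction, which is what Propositions \ref{prop:identify-flat-inertia} and \ref{prop:stab-free-prop} and Corollary \ref{cor:stable-families-reduction} are there to provide.
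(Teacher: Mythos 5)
Your proposal is correct and follows essentially the same route as the paper: reduce Nisnevich-locally to a quotient stack over a scheme, observe that the only issue is getting the inverse Thom spaces $Th(V)^{-1}$ (the paper's $T^{-\rho}$) from $\Gm$-desuspensions, then filter by isotropy and use Corollary \ref{cor:stable-families-reduction} together with Proposition \ref{prop:stab-free-prop} to land in the free-isotropy locus over the base scheme, where $\SH=\Spc_*[T^{-1}]$; the gerbe refinement is Corollary \ref{cor:gerbes-gen} in both cases. No substantive differences.
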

\begin{proof}
We shall prove the first statement; the second then follows via Corollary \ref{cor:gerbes-gen}.
The problem being local on $\scr S$, we may assume that $\scr S$ is quasi-affine over $B \sslash G$, and we must prove that $T^{-\rho}$ is in the subcategory $\scr C$ with the claimed generators.
For this it suffices to treat the case $\scr S = B \sslash G$.
Consequently $p: \scr S \to B$ is a tame proper étale gerbe.
Using filtration by the weakly adjacent families $\scr F_i/B$, it is enough to show that $\E(\scr F_i/B, \scr F_{i+1}/B) \wedge T^{-\rho} \in \scr C$.
Applying Corollary \ref{cor:stable-families-reduction} it is enough to show that $a_\sharp b^*(\SH(\Orb_{\scr F})[\scr F_\free/B]) \in \scr C$.
This follows from Proposition \ref{prop:stab-free-prop} applied to the tame proper étale morphism $\Orb_{\scr F} \to B$, since $B$ is a scheme (whence $\SH(B) \wequi \Spc(B)[T^{-1}]$).
\end{proof}

Let $\scr S$ be a tame DM-stack and $q: \scr X \to \scr S \in \SmQA_{\scr S}$ be a gerbe with quotient $p: \scr X \to X$.
We denote by $(\ph)^{\scr X}$ the composite functor \[ \SH(\scr S) \xrightarrow{q^*} \SH(\scr X) \xrightarrow{p_*} \SH(X). \]
\begin{corollary} \label{cor:fixed-points-cons}
Let $\scr S$ be a tame DM-gerbe.
The collection of functors $(\ph)^{\scr X}$, where $\scr X$ runs over gerbes in $\SmQA_{\scr S}$, is jointly conservative.
\end{corollary}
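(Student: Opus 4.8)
The plan is to deduce this from the generation statement Proposition \ref{prop:gens} by a formal adjunction argument. First I would identify the left adjoint of each fixed-point functor. Since $q \colon \scr X \to \scr S$ is smooth quasi-affine we have an adjunction $q_\sharp \dashv q^*$ on $\SH$, and since $p \colon \scr X \to X$ is a gerbe we have $p^* \dashv p_*$; hence the composite $(\ph)^{\scr X} = p_* q^* \colon \SH(\scr S) \to \SH(X)$ admits the left adjoint $q_\sharp p^* \colon \SH(X) \to \SH(\scr S)$.

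Next I would invoke the standard fact that a family of functors out of a presentable stable $\infty$-category, each a right adjoint, is jointly conservative as soon as the union of the essential images of their left adjoints generates the source under colimits and desuspensions (if $\Map(q_\sharp p^* B, E) \wequi \Map(B, (\ph)^{\scr X} E)$ vanishes on a generating family, then $E \wequi 0$). So it is enough to produce a generating family of $\SH(\scr S)$ lying in the images of the various $q_\sharp p^*$.

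This is exactly what Proposition \ref{prop:gens} supplies: because $\scr S$ is a (tame DM-)gerbe, $\SH(\scr S)$ is generated by the objects $\Sigma^\infty_+ \scr X \wedge \Gmp{n}$ with $\scr X \in \SmQA_{\scr S}$ itself a gerbe and $n \in \Z$. For such an $\scr X$, writing $q \colon \scr X \to \scr S$ and $p \colon \scr X \to X$ for its quotient, we have $\Sigma^\infty_+ \scr X \wequi q_\sharp \1_{\scr X} \wequi q_\sharp p^* \1_X$, and since $\Gm$ carries the trivial action it is pulled back from $X$ (indeed from $\Spec \Z$), so the smooth projection formula gives $\Sigma^\infty_+ \scr X \wedge \Gmp{n} \wequi q_\sharp p^*(\1_X \wedge \Gmp{n})$. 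Thus every generator lies in the image of the left adjoint of the corresponding $(\ph)^{\scr X}$, and joint conservativity follows. I do not anticipate a genuine obstacle here; the only points deserving a line of care are the bookkeeping identifying $q_\sharp p^*$ as the left adjoint and the projection-formula manipulation pulling $\Gmp{n}$ through $q_\sharp$, both of which are routine.
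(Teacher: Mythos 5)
Your argument is correct and is exactly the paper's proof, just written out in more detail: the paper also deduces the claim from Proposition \ref{prop:gens} via the adjunction identity $\map(\Gmp{n}, E^{\scr X}) \wequi \map(\Sigma^\infty_+ \scr X \wedge \Gmp{n}, E)$, which is precisely your identification $q_\sharp p^*(\Gmp{n}) \wequi \Sigma^\infty_+ \scr X \wedge \Gmp{n}$ together with the projection formula. No differences worth noting.
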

\begin{proof}
If $E \in \SH(\scr S)$ with $E^{\scr X} = 0$, then\NB{use that $f_\sharp$ preserves $\Gmp{n}$...} \[ 0 = \map(\Gmp{n}, E^{\scr X}) \wequi \map(\Sigma^\infty_+ \scr X \wedge \Gmp{n}, E). \]
Thus the result follows from Proposition \ref{prop:gens}.
\end{proof}

\subsection{Admissible gerbes} \label{subsec:adm-gerbes}
\begin{definition} \label{def:admissible}
We call a gerbe $\scr S \to \scr B$ \emph{admissible} if there exists a cartesian square
\begin{equation*}
\begin{CD}
\scr S @>>> B \sslash G \\
@VVV           @VVV     \\
\scr B @>>> B \sslash (G/N)
\end{CD}
\end{equation*}
where $B$ is an affine scheme, $G$ is a finite group linearly reductive over $B$, $N$ is a normal subgroup of $G$ and $\scr B \to B \sslash (G/N)$ is quasi-affine.
We call a proper étale morphism admissible if the associated gerbe is.
\end{definition}

\begin{lemma} \label{lemm:orb-pres-admissible}
If $\scr S \to \scr B$ is admissible, then also $\widetilde\Orb_p \to \Orb_p$ and $\Orb_p \to \repr_{\scr B}(\Orb_p)$ are admissible.
\end{lemma}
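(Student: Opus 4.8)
The plan is to reduce to a universal shape and then compute the orbit stacks. By Definition~\ref{def:admissible}, $p\colon\scr S\to\scr B$ is the base change of $\bar p\colon B\sslash G\to B\sslash(G/N)$ along a quasi-affine morphism $c\colon\scr B\to B\sslash(G/N)$. Since the orbit stack constructions are stable under base change (Remark~\ref{rmk:orbit-stack-bc}, and similarly for $\widetilde\Orb$ by its construction), we have $\Orb_p\wequi\Orb_{\bar p}\times_{B\sslash(G/N)}\scr B$ and $\widetilde\Orb_p\wequi\widetilde\Orb_{\bar p}\times_{B\sslash(G/N)}\scr B$; and because $\Orb_{\bar p}\to B\sslash(G/N)$ is proper étale, hence has flat locally finitely presented inertia, Proposition~\ref{prop:relative-quotient-gerbe}(1) gives $\repr_{\scr B}(\Orb_p)\wequi\repr_{B\sslash(G/N)}(\Orb_{\bar p})\times_{B\sslash(G/N)}\scr B$. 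In particular the projections $\Orb_p\to\Orb_{\bar p}$ and $\repr_{\scr B}(\Orb_p)\to\repr_{B\sslash(G/N)}(\Orb_{\bar p})$ are quasi-affine, being base changes of $c$. Hence it suffices to exhibit, in the universal case $\scr B=B\sslash(G/N)$, $\scr S=B\sslash G$, cartesian squares of the shape of Definition~\ref{def:admissible} presenting $\widetilde\Orb_{\bar p}\to\Orb_{\bar p}$ and $\Orb_{\bar p}\to\repr_{B\sslash(G/N)}(\Orb_{\bar p})$ as admissible in which the lower horizontal morphism is an \emph{isomorphism}: base-changing such a square along the corresponding quasi-affine projection keeps it cartesian, replaces the left vertical by the desired morphism, and turns the lower horizontal into an isomorphism composed with a quasi-affine morphism, hence quasi-affine.

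Next I would compute the three orbit stacks in the universal case. Since $\bar p$ is proper étale, $N$ is finite étale, and working étale-locally on $B$ we may assume $G$ (and so $N$, $G/N$) constant. The atlas $\beta\colon B\to B\sslash(G/N)$ is a $(G/N)$-torsor, and $\bar p\times_{B\sslash(G/N)}B\wequi(B\sslash N\to B)$. By Remark~\ref{rmk:orbit-stack-bc}, Lemma~\ref{lemm:orbits-BG} and Proposition~\ref{prop:relative-quotient-gerbe}(1), the pullbacks along $\beta$ of $\Orb_{\bar p}$, $\widetilde\Orb_{\bar p}$ and $\repr_{B\sslash(G/N)}(\Orb_{\bar p})$ are $\coprod_{(H)\subset N}B\sslash(N_N(H)/H)$, $\coprod_{(H)\subset N}B\sslash N_N(H)$ and $\coprod_{(H)\subset N}B$ respectively. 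Descending along $\beta$ is bookkeeping with the $G$-action on $N$ by conjugation (in which $N$ acts by inner automorphisms), and yields
\[ \Orb_{\bar p}\wequi\coprod_{(H)}B\sslash\bigl(N_G(H)/H\bigr),\qquad \widetilde\Orb_{\bar p}\wequi\coprod_{(H)}B\sslash N_G(H), \]
\[ \repr_{B\sslash(G/N)}(\Orb_{\bar p})\wequi\coprod_{(H)}B\sslash\bigl(N_G(H)N/N\bigr), \]
with $(H)$ now running over $G$-conjugacy classes of subgroups of $N$ and the structure maps to $B\sslash(G/N)$ induced by $N_G(H)\to N_G(H)N/N\hookrightarrow G/N$; here one invokes $H\triangleleft N_G(H)$, $N_N(H)=N_G(H)\cap N\triangleleft N_G(H)$ and $\bigl(N_G(H)/H\bigr)\big/\bigl(N_N(H)/H\bigr)\cong N_G(H)N/N$, and checks the asserted identifications on representables (reducing to finite $G$-sets) exactly as in the proof of Lemma~\ref{lemm:orbits-BG}.

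Then I would read off the result. On the summand indexed by $(H)$, the morphism $p_2\colon\widetilde\Orb_{\bar p}\to\Orb_{\bar p}$ is $B\sslash N_G(H)\to B\sslash\bigl(N_G(H)/H\bigr)$, the quotient by the normal subgroup $H$, and the morphism $\Orb_{\bar p}\to\repr_{B\sslash(G/N)}(\Orb_{\bar p})$ is $B\sslash\bigl(N_G(H)/H\bigr)\to B\sslash\bigl(N_G(H)N/N\bigr)$, the quotient by the normal subgroup $N_N(H)/H$. Put $B'=\coprod_{(H)}B$, a finite (hence affine) disjoint union of copies of $B$, and let $(G',N')$ be the pair of group schemes over $B'$ given on the $(H)$-summand by $\bigl(N_G(H),H\bigr)$, resp.\ $\bigl(N_G(H)/H,\,N_N(H)/H\bigr)$ --- these are subquotients of $G$, hence linearly reductive over $B'$. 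Then there are cartesian squares
\begin{equation*}
\begin{CD}
\widetilde\Orb_{\bar p} @>{\wequi}>> B'\sslash G' \\
@V{p_2}VV @VVV \\
\Orb_{\bar p} @>{\wequi}>> B'\sslash(G'/N')
\end{CD}
\end{equation*}
and, with the second choice of $(G',N')$, the analogous cartesian square with $(\widetilde\Orb_{\bar p}\xrightarrow{p_2}\Orb_{\bar p})$ replaced by $(\Orb_{\bar p}\to\repr_{B\sslash(G/N)}(\Orb_{\bar p}))$; in both the lower horizontal morphism is an isomorphism. By the reduction of the first paragraph this proves the lemma.

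The step I expect to be the main obstacle is the explicit identification of the three orbit stacks in the second paragraph: one must descend the conclusion of Lemma~\ref{lemm:orbits-BG} along the $(G/N)$-torsor $B\to B\sslash(G/N)$ while tracking the $G$-conjugation action on the subgroups of $N$ and on their normalizers and Weyl groups, and confirm that the descended stacks are the stated disjoint unions of classifying stacks with the stated maps to $B\sslash(G/N)$. It is also here that one uses that $N$ (equivalently $\bar p$) is finite étale --- in the intended applications everything may be taken constant --- so that Lemma~\ref{lemm:orbits-BG} applies in the form used.
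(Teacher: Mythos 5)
Your proposal is correct and follows essentially the same route as the paper: reduce by base change to the universal case $B\sslash G\to B\sslash(G/N)$, then identify $\widetilde\Orb_{\bar p}$ and $\Orb_{\bar p}$ as $\coprod_{(H)}B\sslash N_G(H)$ and $\coprod_{(H)}B\sslash\bigl(N_G(H)/H\bigr)$ over $G$-conjugacy classes of subgroups of $N$, by pulling back along the atlas, applying Lemma~\ref{lemm:orbits-BG}, and descending via the $G/N$-action on $N$-conjugacy classes. You are in fact somewhat more explicit than the paper in the final step, where the paper merely says the identification ``implies what we want'' while you spell out the cartesian squares and the computation of $\repr_{\scr B}(\Orb_p)$.
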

\begin{proof}
Since admissible gerbes are stable under (quasi-affine) base change by definition, and orbit stacks are stable under base change by Remark \ref{rmk:orbit-stack-bc}, it suffices to treat the case $p: B \sslash G \to B \sslash (G/N)$.
We claim that then there are equivalences \[ \widetilde\Orb_p \wequi \coprod_{(H) \subset N} B \sslash NH \quad\text{and}\quad \Orb_p \wequi \coprod_{(H) \subset N} B \sslash WH, \] where the disjoint unions are over $G$-conjugacy classes of subgroups of $H$.
This implies what we want.
We give the proof for $\Orb_p$, the case of $\widetilde\Orb_p$ is similar.
Suppose given $T \in \Stk_{B \sslash (G/H)}$ and a morphism $\alpha: T \to B \sslash WH$, where $H$ is a subgroup of $N$.
Pulling back $B \sslash NH \to B \sslash WH$ along $\alpha$ we obtain a stack $X_\alpha$ with a morphism to $T$ and also to $B \sslash G$, whence to $p^* T$.
We claim that $X_\alpha \in \FEt_{p^*T}$, that this is an orbit, and that the induced map $\coprod_{(H) \subset N} B \sslash WH \to \Orb_p$ is an equivalence.
All of these claims are local, so we may pull back everything along $B \to B \sslash (G/N)$.
We get $B \sslash G \times_{B \sslash (G/N)} B = B \sslash N$.
Similarly $B \sslash WH \times_{B(G/N)} B$ is a stack with automorphisms given by $ker(WH \to G/N) = (NH \cap N)/H = W_N H$, but more than one object (locally, up to isomorphism).
Consider the action of $G/N$ on the set of $N$-conjugacy classes of subgroups of $G$.
The stabilizer of (the $N$-conjugacy class of) $H$ consists of those $gN \in G/N$ such that there exists $n \in N$ with $H^{gn} = H$, i.e. $gn \in NH$.
Equivalently we must have $gN \cap NH \ne \emptyset$; in other words the stabilizer is precisely the image of $NH \to G/N$.
This allows us to identify \[ B \sslash WH \times_{B \sslash (G/N)} B \wequi \coprod_{H'} B \sslash W_N H'; \] here the disjoint union is over $N$-conjugacy classes of subgroups of $N$ which are $G$-conjugate to $H$.
It follows that \[ B \times_{B \sslash (G/H)} \coprod_{(H) \subset N} B \sslash WH \wequi \coprod_{(H)_N \subset N} B \sslash W_N H, \] where the right hand side means disjoint union over $N$-conjugacy classes of subgroups of $N$.
We know that this is $\Orb_{p \times_{B \sslash (G/H)} B}$ by Lemma \ref{lemm:orbits-BG}, and the isomorphism is compatible with the construction we gave above.
This concludes the proof.
\end{proof}

\begin{lemma} \label{lemm:gerbes-admissible-auto}
Let $\scr X$ be a gerbe, admitting a quasi-affine morphism locally of finite type to $B \sslash G$ where $G$ is a finite constant group, linearly reductive over the affine scheme $B$.
Then up to a disjoint union decomposition, $\scr X \to \repr(\scr X)$ is an admissible gerbe.
\end{lemma}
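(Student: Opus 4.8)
The plan is to pull back to an honest quotient stack, use the gerbe hypothesis to split off the fixed loci, and then exhibit the admissibility square one summand at a time.

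First, since $f\colon\scr X\to B\sslash G$ is quasi-affine, pulling back along the atlas $B\to B\sslash G$ (a $G$-torsor) produces a quasi-affine $B$-scheme $X$ equipped with a $G$-action together with an identification $\scr X\wequi X\sslash G$. The hypothesis that $\scr X$ is a gerbe says exactly that the inertia $I_X=\coprod_{g\in G}X^g\to X$ is flat, where $X^g\subseteq X$ is the fixed-point scheme of $g$, i.e.\ the pullback of $\Delta_X$ along $(\mathrm{id},g)\colon X\to X\times X$. As $X$ is quasi-affine, hence separated, each $X^g\hookrightarrow X$ is a closed immersion; being moreover flat (a summand of $I_X\to X$), it is a clopen immersion. (This reproves, and slightly generalizes, Lemma~\ref{lemm:flat-inertia-form}: the argument "separated $+$ flat $\Rightarrow$ clopen" needs no finiteness assumption on $X$.)

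Next I carry out the disjoint-union decomposition. For a subgroup $H\subseteq G$ write $X(H)\subseteq X$ for the locus of points with isotropy group exactly $H$; being obtained from the $X^g$ by finitely many intersections and complements, it is clopen, the $X(H)$ are pairwise disjoint, and $X=\coprod_H X(H)$. Grouping by conjugacy classes and using that $g$ carries $X(H)$ isomorphically onto $X(gHg^{-1})$ gives $X\wequi\coprod_{(H)}G\times_{N_G(H)}X(H)$, hence a decomposition $\scr X\wequi\coprod_{(H)}X(H)\sslash N_G(H)$, the coproduct over conjugacy classes of subgroups. Write $NH=N_G(H)$ and $WH=NH/H$. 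On each summand $H\trianglelefteq NH$ acts trivially on $X(H)$ (its isotropy contains $H$), while $WH$ acts freely (an element of $NH$ fixing a point of $X(H)$ lies in $H$); and $NH$, $WH$ are finite constant and linearly reductive over $B$, being subquotients of $G$. In particular $X(H)\sslash WH$ is an algebraic space, and since $H$ acts trivially and $WH$ freely, it is the $0$-truncation $\repr(X(H)\sslash NH)$.

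It remains to produce, for each $(H)$, an admissibility square for $\scr X':=X(H)\sslash NH\to Y:=X(H)\sslash WH$, and I take $B'=B$, $G'=NH$, $N'=H$, so that $G'/N'=WH$. Both the top map $\scr X'\to B\sslash NH$ and the bottom map $Y\to B\sslash WH$ are induced by the $NH$-equivariant structure morphism $X(H)\to B$ (with $B$ carrying the trivial action), so the square commutes; base-changing the bottom map along $B\to B\sslash WH$ returns $X(H)\to B$, which is quasi-affine, so $Y\to B\sslash WH$ is quasi-affine. For cartesianness I compare $T$-points: that of $X(H)\sslash WH\times_{B\sslash WH}B\sslash NH$ is the groupoid of quadruples (an $NH$-torsor $Q\to T$, a $WH$-torsor $P\to T$, a $WH$-equivariant map $P\to X(H)$, an isomorphism $Q/H\wequi P$), and because $H$ acts trivially on $X(H)$ the datum $P\wequi Q/H\to X(H)$ is interchangeable with that of an $NH$-equivariant map $Q\to X(H)$; this identifies the groupoid canonically with $X(H)\sslash NH(T)$. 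Hence the square is cartesian and $\scr X'\to Y$ is an admissible gerbe, so the decomposition $\scr X\wequi\coprod_{(H)}\scr X'$ witnesses the claim. The main obstacle is precisely this last cartesianness check — the torsor/$T$-point bookkeeping and the correct identification of $\repr(\scr X')$ with the free quotient $X(H)\sslash WH$; the clopenness of the $X^g$ is routine once one observes that the "separated plus flat" argument avoids the finiteness hypotheses of Lemma~\ref{lemm:flat-inertia-form}.
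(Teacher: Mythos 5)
Your proof is correct and follows essentially the same route as the paper: pull back along $B \to B\sslash G$ to write $\scr X = X\sslash G$, apply the clopen decomposition by exact isotropy (Lemma \ref{lemm:flat-inertia-form}), group by conjugacy classes to get $\scr X \wequi \coprod_{(H)} X(H)\sslash NH$, and check that each summand is admissible over $X(H)\sslash WH$ via the square over $B\sslash NH \to B\sslash WH$. The paper's proof stops at "it suffices to show $\scr X$ decomposes into pieces $X\sslash H$ with $N$ acting trivially and $H/N$ freely" and leaves the admissibility square implicit; you carry out the cartesianness check via $T$-points, which is a genuine (and correct) completion of that step.

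One caveat: your parenthetical claim that "separated $+$ flat $\Rightarrow$ clopen" with no finiteness hypothesis is false. A flat closed immersion need not be open: if $A = \prod_{n}k$ and $I = \bigoplus_n k$, then $A/I$ is flat over $A$ (as $A$ is absolutely flat) but $V(I) \subset \Spec(A)$ is not open. What one needs is that the closed immersion be of finite presentation, so that flat implies open; this is exactly what the locally-of-finite-type hypothesis supplies (the diagonal, hence each $X^g$, is then a finitely presented closed immersion). Since your statement does assume the morphism to $B\sslash G$ is locally of finite type, this does not affect the proof, but the claimed strengthening of Lemma \ref{lemm:flat-inertia-form} should be deleted.
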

\begin{proof}
It will suffice to show that $\scr X$ is a disjoint union of stacks of the form $X \sslash H$, where $H \subset G$ is a subgroup, $X$ is an $H$-scheme, and there is a normal subgroup $N \subset H$ such that $N$ acts trivially on $X$ and $H/N$ acts freely.
We have $\scr X = X_0 \sslash G$, for some (relative) quasi-affine $G$-scheme $X_0$ over $B$.
The result now follows from Lemma \ref{lemm:flat-inertia-form}.
\end{proof}

\subsection{Orbit stack induction} \label{subsec:orbit-stack-ind}
Let $p: \scr S \to \scr B$ be a proper étale morphism of tame DM-stacks.
Suppose we want to prove some result regarding $p_*(E)$, for every object $E \in \SH(\scr S)$ (or some related category with similar properties).
Filtering by isotropy, it will be enough to treat $\E(\scr F_i/\scr B, \scr F_{i+1}/\scr B) \wedge E$ for $i \in \Z$.
Consider the associated orbit stack diagram
\begin{equation*}
\begin{CD}
\widetilde\Orb_{\scr F} @>b>> \Orb_{\scr F} \\
@V{a}VV                         @V{c}VV     \\
\scr S   @>p>> \scr B,
\end{CD}
\end{equation*}
where $\scr F = \scr F_{i+1} \setminus \scr F_i$.
Applying Corollary \ref{cor:stable-families-reduction} and using the projection formula we get \[ \E(\scr F_1, \scr F_2) \wedge E \wequi  a_\sharp (a^*(\E(\scr F_1, \scr F_2)) \wedge b^{*}(\E(\scr F_\free/\scr S)_+ \wedge b_\otimes a^{*} E)). \]
By ambidexterity \cite[Theorem 7.10 and 7.1(1)]{khan2021generalized} for finite étale morphisms, $a_\sharp \wequi a_*$, and so we find \[ p_*(\E(\scr F_1, \scr F_2) \wedge E) \wequi c_* b_*(E'), \quad\text{where}\quad E' = a^*(\E(\scr F_1, \scr F_2)) \wedge b^{*}(\E(\scr F_\free/\scr S)_+ \wedge b_\otimes a^{*} E). \]
It is thus enough to prove the desired assertion for $b_*$ and $c_*$.
This can often be done by induction, as follows.
Fppf locally on $\scr B' := \repr_{\scr B} \scr S$, the morphism $p': \scr S \to \scr B'$ is of the form $B \sslash G \to B$ for finite groups $G$.
Since all our stacks are quasi-compact, the size of these groups is bounded, say by $N$.
We call this the \emph{size of the isotropy of $p$}.
Now observe that the size of the isotropy of $b$ is at most $i+1$, and the size of the isotropy of $c$ is at most $N/(i+1)$.
Since also $\scr F_0/\scr B = \emptyset$ and $\scr F_n/\scr B = \scr F_\all/\scr B$, all cases but $i \in \{0, N-1\}$ hold by induction.
Now observe that \[ \E(\scr F_0/\scr B, \scr F_1/\scr B) \wequi \E(\scr F_\free/\scr B) \] and (since $\scr F_\prop/\scr B \subset \scr F_{N-1}/\scr B$) \[ \E(\scr F_{N-1}/\scr B, \scr F_{N}/\scr B) \wequi \E(\scr F_{N-1}/\scr B, \scr F_{N}/\scr B) \wedge \widetilde{\E\scr F_\prop/\scr S}. \]
Assuming that free objects are $p'$-ambidextrous we thus get \[ p_*(\E(\scr F_0/\scr B, \scr F_1/\scr B) \wedge E) \wequi p''_*p'_!(\E(\scr F_\free/\scr B)), \] and without further assumptions we have (using Corollary \ref{cor:geom-fixed-points}) \[ p_*(\E(\scr F_{N-1}/\scr B, \scr F_{N}/\scr B) \wedge E) \wequi p''_*p'_\otimes(\E(\scr F_{N-1}/\scr B, \scr F_{N}/\scr B) \wedge E), \] where $p': \scr S \to \scr B'$ is the associated gerbe and $p'': \scr B' \to \scr B$ is finite étale.
These objects are often easier to understand than the general case.

Note also that the ambidexterity assumption follows from the Adams hypothesis, and it also holds if $p$ is admissible by \cite[\S6.1, Theorem 6.33]{gepner-heller}.
Note finally that if $p$ was admissible so are $b, c$, by Lemma \ref{lemm:orb-pres-admissible}.

\subsection{The homotopy $t$-structure} \label{subsec:htpy-t}
For a tame DM-stack $\scr S$, let $\SH(\scr S)_{\ge 0}$ be the subcategory generated under colimits and extensions by objects of the form $\Sigma^\infty_+ \scr X \wedge \Gmp{n}$, for $\scr X \in \Spc(\scr S)$ and $n \in \Z$.
This is the non-negative part of a $t$-structure \cite[Proposition 1.4.4.11]{lurie-ha}, which we call the \emph{homotopy $t$-structure}.

\begin{proposition} \label{prop:fixed-t-exact}
Consider the homotopy $t$-structure on $\SH(\scr S)$, for various tame DM-stacks $\scr S$.
\begin{enumerate}
\item The functors $f_\sharp$ (for $f$ smooth representable) are right-$t$-exact.
\item The functors $f^*$ are right-$t$-exact (for $f$ not necessarily representable), and $t$-exact for $f$ smooth representable.
\item Let $p: \scr S \to \scr B$ be a proper étale morphism.
  Suppose that either $p$ is admissible (Definition \ref{def:admissible}) or the Adams hypothesis holds (Definition \ref{def:adams-hyp}).
  Then $p_*$ is $t$-exact.
\end{enumerate}
\end{proposition}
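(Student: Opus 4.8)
The plan is to prove (1) and (2) as formal consequences of the definition of the homotopy $t$-structure, and to reduce (3) to these together with orbit stack induction.

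For (1) and (2): both $f_\sharp$ (when $f$ is smooth representable) and $f^*$ preserve colimits, hence are exact and preserve extensions, so it is enough to see what they do to the generators $\Sigma^\infty_+\scr X\wedge\Gmp{n}$. For $f^*$, base change sends $\scr X\in\SmQA_{\scr S}$ to $\scr X\times_{\scr S}\scr S'\in\SmQA_{\scr S'}$ and commutes with $\wedge\Gmp{n}$, so generators go to generators and $f^*$ is right $t$-exact. For $f_\sharp$, postcomposition sends $\scr X\in\SmQA_{\scr S'}$ to a smooth (not necessarily quasi-affine) $\scr S$-stack, which is a colimit of objects of $\SmQA_{\scr S}$ along a Zariski cover, so $\Sigma^\infty_+(f_\sharp\scr X)\in\SH(\scr S)_{\ge 0}$; together with the projection formula $f_\sharp(\ph\wedge\Gmp{n})\wequi f_\sharp(\ph)\wedge\Gmp{n}$ this gives right $t$-exactness of $f_\sharp$. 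Finally, for $f$ smooth representable, the adjunction $f_\sharp\dashv f^*$ with $f_\sharp$ right $t$-exact forces $f^*$ to be left $t$-exact, hence $t$-exact.

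For (3), left $t$-exactness of $p_*$ is immediate and requires no hypothesis: $p^*\dashv p_*$ and $p^*$ is right $t$-exact by (2). So the content is right $t$-exactness of $p_*$. I would induct on the size $N$ of the isotropy of $p$ in the sense of \S\ref{subsec:orbit-stack-ind}; the base case $N\le 1$ is exactly when $p$ is finite étale, where $p_*\wequi p_\sharp$ by ambidexterity and we conclude by (1). For the inductive step, fix $E\in\SH(\scr S)_{\ge 0}$. Filtration by isotropy gives a finite filtration of $E$ with associated graded the objects $\E(\scr F_i/\scr B,\scr F_{i+1}/\scr B)\wedge E$; since $p_*$ is exact and $\SH(\scr B)_{\ge 0}$ is closed under extensions, it suffices to show each $p_*(\E(\scr F_i/\scr B,\scr F_{i+1}/\scr B)\wedge E)$ lies in $\SH(\scr B)_{\ge 0}$. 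Now invoke orbit stack induction: with the orbit stack diagram $\scr S\xleftarrow{a}\widetilde\Orb_{\scr F}\xrightarrow{b}\Orb_{\scr F}\xrightarrow{c}\scr B$ (here $a$ is finite étale, $b$ a tame proper étale gerbe, $c$ tame proper étale, and $p\circ a=c\circ b$), Corollary \ref{cor:stable-families-reduction} together with ambidexterity $a_\sharp\wequi a_*$ for finite étale morphisms expresses the $i$-th piece as $c_*b_*(E')$, where $E'$ is obtained from $a^*E$ by applying $b_\otimes$, smashing with the connective objects $\E(\scr F_\free/\scr S)_+$ and $\E(\scr F_i,\scr F_{i+1})$, and pulling back along $a$ and $b$. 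All of $a^*$, $b^*$ and smashing with those connective suspension spectra are right $t$-exact by (2) and right $t$-exactness of $\wedge$; and $b_\otimes$ is right $t$-exact because it preserves colimits (Lemma \ref{lemm:p-otimes-colim}, Corollary \ref{cor:geom-fixed-points}) and sends each generator $\Sigma^\infty_+\scr X\wedge\Gmp{n}$ to a generator (using that $\Sigma^\infty_+$ is a normed functor, that $b_\otimes$ preserves $\SmQA$ by Proposition \ref{prop:fixed-points-functor}, and that $b_\otimes$ is symmetric monoidal and fixes $\Gm$). Hence $E'\in\SH(\widetilde\Orb_{\scr F})_{\ge 0}$. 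Away from the two extreme values of $i$, the morphisms $b$ and $c$ have strictly smaller isotropy ($\le i+1$ and $\le N/(i+1)$) and remain admissible (Lemma \ref{lemm:orb-pres-admissible}) resp.\ fall under the Adams hypothesis, so by induction $b_*$ and $c_*$ are right $t$-exact and this piece is done. The two extreme pieces are handled by hand: for the full-isotropy piece, \S\ref{subsec:orbit-stack-ind} and Corollary \ref{cor:geom-fixed-points} identify $p_*$ of it with $p''_*p'_\otimes(\ph)$, where $p=p''\circ p'$ factors through the relative coarse stack $\scr B'$ with $p''$ finite étale and $p'$ a gerbe; $p'_\otimes$ is right $t$-exact as above and $p''_*\wequi p''_\sharp$ is right $t$-exact by (1), and no hypothesis is used. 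For the free piece, \S\ref{subsec:orbit-stack-ind} — now invoking the hypothesis, i.e.\ that free objects of $\SH(\scr S)[\scr F_\free/\scr B']$ are $p'$-ambidextrous (which is the Adams hypothesis, and for admissible $p$ is \cite[Theorem 6.33]{gepner-heller}) — identifies $p_*$ of it with $p''_*p'_!(\ph)$, where $p'_!$ is the quotient functor of Corollary \ref{cor:quotients-stable}; $p'_!$ preserves colimits and sends generators to generators, since the quotient of a free smooth quasi-affine stack is again smooth quasi-affine over $\scr B'$ by Proposition \ref{prop:relative-quotient-gerbe}, so it is right $t$-exact, and again $p''_*$ is fine by (1).

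The genuinely substantial step is the free piece: \emph{a priori} the restriction of $p_*$ to free objects is only left $t$-exact (it is the right adjoint of $p^*$), and its right $t$-exactness is precisely the force of the Adams isomorphism, which replaces $p_*$ there by the homotopy-orbit functor $p'_!$, manifestly right $t$-exact. The remaining work is bookkeeping: one must check that smashing with $\E\scr F_\free$ keeps $E$ connective inside the sliced category $\SH(\scr S)[\scr F_\free/\scr B']$ (via Remark \ref{rmk:isotropy-sieve} and Lemma \ref{lemm:i!i*-ident}), that the isotropy of $b$ and $c$ is genuinely smaller than $N$ for $i\notin\{0,N-1\}$, that admissibility or the blanket Adams hypothesis is inherited by the orbit stacks so the inductive hypothesis applies to $b_*$ and $c_*$, and that the two extreme pieces only ever call on $p'_!$, $p'_\otimes$ and the finite-étale $p''_*$ — so there is no circular appeal to right $t$-exactness of $p_*$ at isotropy size $N$. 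Everything else is tracking connectivity through colimit-preserving functors and through suspension spectra of spaces.
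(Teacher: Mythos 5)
Your proposal is correct and follows essentially the same route as the paper: right $t$-exactness of $f^*$, $f_\sharp$ is checked on the generators, left $t$-exactness statements come from adjunction, and right $t$-exactness of $p_*$ is established by orbit stack induction, reducing to $p_\otimes$ for gerbes, $p_!$ on free objects (where the Adams isomorphism/admissibility enters), and $p_*\wequi p_\sharp$ for finite étale $p$. The only cosmetic difference is that the paper handles $f_\sharp$ for non-quasi-affine smooth representable $f$ by switching to the smooth-representable model of $\Spc(\scr S)$ (Proposition \ref{prop:nice-scallop}) rather than by your covering argument; otherwise your write-up just makes explicit the connectivity bookkeeping that the paper declares ``clear''.
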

\begin{proof}
Throughout, we use the model of $\Spc(\scr S)$ using smooth \emph{representable} stacks over $\scr S$ instead of quasi-affine ones (see Proposition \ref{prop:nice-scallop}).

We first prove the right-$t$-exactness.
All of the functors preserve colimits (and hence extensions), so it suffices to show that they map our generating family into the $\ge 0$ part.
This is obvious for $f^*$ and $f_\sharp$.

For $p_*$, we use orbit stack induction (see \S\ref{subsec:orbit-stack-ind}).
It thus suffices to show that the following functors are right-$t$-exact: $p_\otimes$ for $p$ a proper étale gerbe, $p_!$ applied to free objects, and $p_*$ when $p$ is finite étale.
Each of these is clear (note in particular that in the first case $p_\otimes(\Gm) \wequi \Gm$ since $p_\otimes$ is fixed points and the action on $\Gm$ is trivial, and that in the last case, $p_* \wequi p_\sharp$).

If $f$ is smooth representable, then $f^*$ is left-$t$-exact because its left adjoint $f_\sharp$ is right $t$-exact (see e.g. \cite[Lemma 5]{bachmann-hurewicz} for a proof of the fact that the right adjoint of a right-$t$-exact functor is left-$t$-exact).
Similarly $p_*$ is left $t$-exact because its left adjoint $p^*$ is right $t$-exact.
\end{proof}

\begin{proposition} \label{prop:homotopy-t-nondegenerate}
Let $\scr S$ be a tame DM-stack.
\begin{enumerate}
\item The homotopy $t$-structure is right non-degenerate: \[ \cap_n \SH(\scr S)_{\le n} = \{0\}. \]
\item Let $\scr S$ be finite dimensional, and suppose that either (a) $\scr S$ is locally constant (Definition \ref{def:loc-const-stack}) and locally of finite type over a scheme, or (b) the Adams hypothesis holds (Definition \ref{def:adams-hyp}).
  Then the homotopy $t$-structure on $\SH(\scr S)$ is left non-degenerate: \[ \cap_n \SH(\scr S)_{\ge n} = \{0\}. \]
\end{enumerate}
\end{proposition}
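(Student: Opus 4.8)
For part (1), I would argue purely formally from the generation result. By construction $\SH(\scr S)_{\ge 0}$ is generated under colimits and extensions by the set $\mathcal{G}$ of objects $\Sigma^\infty_+ \scr X \wedge \Gmp{n}$ with $\scr X \in \SmQA_{\scr S}$ and $n \in \Z$; these are compact and, by Proposition \ref{prop:gens}, they generate $\SH(\scr S)$ as a stable presentable category. For $G \in \mathcal{G}$ and any $k \in \Z$ we have $G[k] \in \SH(\scr S)_{\ge k}$, so if $E \in \SH(\scr S)_{\le m}$ then $\pi_k \map(G,E) = 0$ for every $k > m$. Hence $E \in \bigcap_m \SH(\scr S)_{\le m}$ forces $\map(G,E) \wequi 0$ for all $G \in \mathcal{G}$, and since $\mathcal{G}$ generates $\SH(\scr S)$ we conclude $E \wequi 0$. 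Note this direction needs no finiteness hypothesis.

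For part (2), the plan is to reduce to the classical non-degeneracy over schemes. First recall that for a scheme $X$ of finite Krull dimension the homotopy $t$-structure on $\SH(X)$ is non-degenerate; using quasi-affine Nisnevich descent (Lemmas \ref{lemm:qaff-descent} and \ref{lemm:Nis-sep}), the $t$-exactness of quasi-affine étale pullback (Proposition \ref{prop:fixed-t-exact}(2)), and the invariance of Krull dimension under étale maps, the same then holds for finite-dimensional algebraic spaces. It therefore suffices to exhibit a jointly conservative family of right-$t$-exact, colimit-preserving functors from $\SH(\scr S)$ to categories $\SH(\scr T)$ with $\scr T$ a finite-dimensional algebraic space: applying them to $E \in \bigcap_n \SH(\scr S)_{\ge n}$ and using right-$t$-exactness yields objects of $\bigcap_n \SH(\scr T)_{\ge n} = 0$, whence $E \wequi 0$.

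When $\scr S$ is a gerbe such a family is at hand: the $\scr T$-fixed point functors $(\ph)^{\scr X} = p_* q^*$ of Corollary \ref{cor:fixed-points-cons} (with $q\colon \scr X \to \scr S$ smooth representable and $p\colon \scr X \to \repr(\scr X)$ the associated proper étale gerbe) are jointly conservative, and $\repr(\scr X)$ is a finite-dimensional algebraic space since $\scr X \to \scr S$ is smooth and finitely presented, hence of bounded relative dimension, while $\scr S$ is finite-dimensional and $\scr X \to \repr(\scr X)$ is a universal homeomorphism. For right-$t$-exactness, $q^*$ is $t$-exact by Proposition \ref{prop:fixed-t-exact}(2) and $p_*$ is right-$t$-exact by Proposition \ref{prop:fixed-t-exact}(3); this is precisely the point at which hypotheses (a), (b) are used. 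Under (b) the Adams hypothesis applies directly. Under (a) one first performs a quasi-affine Nisnevich localization on $\scr S$ — harmless by the conservativity and $t$-exactness of $f^*$ for $f$ quasi-affine étale, and preserving local constancy, finite type over a scheme, and finite-dimensionality — so that Lemma \ref{lemm:gerbes-admissible-auto} makes $p$, up to a disjoint union decomposition, an admissible gerbe, and Proposition \ref{prop:fixed-t-exact}(3) again applies.

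The main obstacle is the passage from an arbitrary tame DM-stack $\scr S$ to the case where $\scr S$ is a gerbe, equivalently the production of the jointly conservative family of fixed point functors without a flat-inertia hypothesis. This cannot be done by a further Nisnevich localization, since a tame DM-stack need not be Nisnevich-locally a gerbe. Instead I would run the orbit stack induction of \S\ref{subsec:orbit-stack-ind} (as in the proofs of Propositions \ref{prop:fixed-t-exact} and \ref{prop:gens}): filter $\SH(\scr S)$ by isotropy and use Corollary \ref{cor:stable-families-reduction} to reduce each graded piece to the orbit stacks $\Orb_{\scr F}$ together with finite-étale and geometric-fixed-point contributions, on which the relevant base is an algebraic space so that the gerbe case and the scheme input already suffice. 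Making this reduction precise — and checking along the way that all finiteness hypotheses survive each step — is where essentially all of the work lies; everything else is formal bookkeeping with the $t$-structure and the results already established.
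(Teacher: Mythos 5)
Your part (1) and your handling of the gerbe case in part (2) match the paper's argument: the fixed point functors $p'_*p^*$ of Corollary \ref{cor:fixed-points-cons} are jointly conservative and right-$t$-exact (Proposition \ref{prop:fixed-t-exact}(2,3), with admissibility supplied by Lemma \ref{lemm:gerbes-admissible-auto} in case (a) and by the Adams hypothesis in case (b)), reducing to finite-dimensional algebraic spaces, thence by Nisnevich descent to schemes, and finally to fields.

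The genuine gap is the reduction from an arbitrary tame DM-stack to a gerbe, which you rightly identify as the crux but then only gesture at. The route you propose --- orbit stack induction via Corollary \ref{cor:stable-families-reduction} --- does not apply as stated: the entire isotropy/orbit-stack apparatus of \S\ref{subsec:isotropy-spec} is defined relative to a chosen \emph{proper étale} morphism $p: \scr S \to \scr B$, and a general tame DM-stack (even one quasi-affine over $B \sslash G$) carries no such structure map, so there are no families $\scr F_i/\scr B$ to filter by. Moreover, Corollary \ref{cor:stable-families-reduction} only identifies the pieces $\E(\scr F_1, \scr F_2) \wedge E$ of the isotropy filtration, so even where it applies you would still owe a convergence argument recovering the vanishing of $E$ from the vanishing of its associated graded. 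The paper's solution is much simpler and is the one idea missing from your proposal: after localizing so that $\scr S$ is quasi-affine of finite type over $B \sslash G$ with $G$ constant, Rydh's étale dévissage \cite[Theorem (B.2)]{rydh2011etale} provides a finite stratification of $\scr S$ by locally closed substacks each of which \emph{is} a gerbe; restriction to the strata is right-$t$-exact (Proposition \ref{prop:fixed-t-exact}(2), which covers pullback along arbitrary, not necessarily smooth representable, morphisms) and jointly conservative by the localization sequence \cite[Proposition 5.8]{khan2021generalized}. This reduces part (2) directly to the gerbe case, after which your argument goes through.
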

\begin{proof}
(1) is immediate from Proposition \ref{prop:gens}.

Now we prove (2).
The problem is local on $\scr S$, so in case (a) we may assume that $\scr S$ is quasi-affine of finite type over $B \sslash G$, for some finite constant (linearly reductive) group $G$.
By \cite[Theorem (B.2)]{rydh2011etale} we can find a locally closed cover $\scr S = \bigcup_i \scr S$, where each $\scr S_i$ is a gerbe.
Restriction to the $\scr S_i$ is right-$t$-exact by Proposition \ref{prop:fixed-t-exact}(2) and conservative by \cite[Proposition 5.8]{khan2021generalized}.
We may thus assume that $\scr S$ is a gerbe.
Let $p: \scr X \to \scr S \in \SmQA_{\scr S}$ where $p$ is a gerbe, and $p': \scr X \to X$ the quotient map.
In case (a), $p'$ is an admissible gerbe by Lemma \ref{lemm:gerbes-admissible-auto}.
The functors of the form $p'_*p^*$ are conservative by Corollary \ref{cor:fixed-points-cons} and right-$t$-exact by Proposition \ref{prop:fixed-t-exact}(3).
We may thus assume that $\scr S$ is an algebraic space (qcqs as always).
This admits a Nisnevich cover by schemes, so we may assume that $S$ is a finite-dimensional scheme.
By \cite[Proposition B.3]{bachmann-norms} we reduce to the case where $S$ is the spectrum of a field, in which case the claim follows for example from \cite[Corollary 2.4]{hoyois-algebraic-cobordism}.
\end{proof}

\subsection{Main result} \label{subsec:main}
\begin{lemma} \label{lemm:Spcfet-invert-rho}
Let $\scr S$ be quasi-affine over $B\sslash G$, where $B$ is affine and $G$ is tame finite étale over $B$.
Assume that either $G$ is constant or the Adams hypothesis holds.
Then the canonical functor \[ \alpha: \SH(\scr S) \to \Spc^\fet(\scr S)[T^{-\rho}] \] is an equivalence.
\end{lemma}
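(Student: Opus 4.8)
First I would observe that $\alpha$ is the stabilization at $T^\rho$ of the symmetric monoidal functor $F : \Spc(\scr S) \to \Spc^\fet(\scr S)$; in particular it is cocontinuous, and its right adjoint $\omega$ is induced by the forgetful functor $U$. Since $\Spc^\fet(\scr S)[T^{-\rho}]$ is stable, $\omega$ preserves finite coproducts, and it preserves sifted colimits by Corollary \ref{lemm:U-sifted-colim}, so $\omega$ is cocontinuous as well. As $\alpha$ sends the generators $\Sigma^\infty_+ \scr X \wedge \Gmp{n}$ of $\SH(\scr S)$ (Proposition \ref{prop:gens}) to the objects $\sigma^\infty F(\scr X) \wedge \Gmp{n}$, which generate $\Spc^\fet(\scr S)[T^{-\rho}]$, the functor $\alpha$ is essentially surjective, and it will suffice to show that the unit $\eta : \id \to \omega\alpha$ is an equivalence. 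Both functors being cocontinuous and $\Gm$ being invertible, I would only check this on $E = \Sigma^\infty_+ \scr X$ with $\scr X \in \SmQA_{\scr S}$; and by continuity (Lemmas \ref{lemm:cty-spaces} and \ref{lemm:spcfet-cont}) I would reduce to the case that $B$ is noetherian of finite Krull dimension.

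The plan is to carry out the comparison through the homotopy $t$-structures. Here $\SH(\scr S)_{\ge 0}$ is generated by $\Sigma^\infty_+ \scr X \wedge \Gmp{n}$ and $\Spc^\fet(\scr S)[T^{-\rho}]_{\ge 0}$ by their $\alpha$-images, so $\alpha$ is tautologically right $t$-exact; the real work is that $\alpha$ is also \emph{left} $t$-exact, equivalently that $\omega$ is right $t$-exact, and this is where the tom Dieck splitting enters. Concretely, I would compute $\omega\alpha\Sigma^\infty_+\scr X$ by orbit stack induction (\S\ref{subsec:orbit-stack-ind}): filtering by the weakly adjacent families $\scr F_i/B$ and combining Corollary \ref{cor:stable-families-reduction}, Corollary \ref{cor:stable-quotient-fet}, the geometric fixed point identifications of Corollary \ref{cor:geom-fixed-points} and Proposition \ref{prop:geom-fixed-fet}, and the free normed monoid formulas of Corollaries \ref{cor:SH-sph-normed} and \ref{cor:F*-stable}, one sees that $\omega\alpha\Sigma^\infty_+\scr X$ acquires precisely the expected motivic tom Dieck splitting, a sum over conjugacy classes of subgroups of suspension spectra of orbit stacks, and in particular is connective. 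Granting $t$-exactness of $\alpha$, and using that the homotopy $t$-structure on $\SH(\scr S)$ is right non-degenerate (Proposition \ref{prop:homotopy-t-nondegenerate}(1)) and generated by connective compact objects, hence right complete, while $\omega$ preserves limits and $\alpha$ commutes with Postnikov truncations, the equivalence $\eta_{\Sigma^\infty_+\scr X}$ would follow once the induced functor on hearts $\alpha^\heart : \SH(\scr S)^\heart \to (\Spc^\fet(\scr S)[T^{-\rho}])^\heart$ is fully faithful; combined with the essential surjectivity already noted (in the heart) it is then an equivalence.

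It remains to identify the hearts. Restriction along the locally closed gerbe strata of a Rydh-type stratification of $\scr S$ is conservative and right $t$-exact (as in the proof of Proposition \ref{prop:homotopy-t-nondegenerate}(2)) and compatible with $\alpha$; together with the jointly conservative, $t$-exact $\scr T$-fixed point functors (Corollary \ref{cor:fixed-points-cons}, Proposition \ref{prop:fixed-t-exact}) and their $\Spc^\fet$-analogues, this reduces the equivalence of hearts to $\scr S = B \sslash G$ a gerbe over an affine scheme, and then by induction on $|G|$ via geometric fixed points to the case of a scheme. Over a scheme $B$ one has $T^\rho = T$, and $(\Spc^\fet(B)[T^{-1}])^\heart$ is the category of homotopy sheaves on $B$ with finite étale transfers, $\SH(B)^\heart$ that of homotopy sheaves; the forgetful comparison is an equivalence because finite étale transfers on a homotopy module are automatic and unique (the Morel-type input), and the genuine Mackey-functor bookkeeping at higher isotropy — supplied by the Adams isomorphism of Gepner–Heller \cite{gepner-heller} when $G$ is constant, or by the Adams hypothesis (Definition \ref{def:adams-hyp}) in general, and compatible with $\alpha$ by the naturality statement Lemma \ref{lemm:norm-nat} together with the ab initio ambidexterity of $\Spc^\fet$ (Proposition \ref{prop:ambidex-spcfet}) — is the heart-level incarnation of the theorem of Guillou–May \cite{guillou2011models}.

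The main obstacle, I expect, is the left $t$-exactness of $\alpha$: one must show that the orbit-stack-induction decomposition of $\omega\alpha\Sigma^\infty_+\scr X$ agrees, term by term and naturally in $\scr X$, with the a priori tom Dieck splitting coming from the free normed monoid formula. This is exactly the ``cumbersome'' identification of two tom Dieck splittings flagged in the sketch; the way around it is to phrase everything in terms of Adams isomorphisms and the ambidexterity formalism of \S\ref{sec:ambidex}, so that naturality of the Wirthmüller/Adams isomorphism (Lemma \ref{lemm:norm-nat}) performs the matching automatically. A secondary technical point will be verifying that $\alpha$ is genuinely compatible with the $\scr T$-fixed point, homotopy-orbit and geometric fixed point functors, which requires assembling Corollaries \ref{cor:geom-fixed-points}, \ref{cor:quotients-stable}, \ref{cor:stable-quotient-fet} and Proposition \ref{prop:geom-fixed-fet} with some care.
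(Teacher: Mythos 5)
Your setup — $\alpha$ cocontinuous with conservative cocontinuous right adjoint $\omega$ induced by $U$, reduction to the unit on suspension spectra of generators, and the reduction by continuity to $B$ of finite type — matches the paper, and you have correctly identified Corollary \ref{cor:F*-stable} and Proposition \ref{prop:homotopy-t-nondegenerate}(2) as the essential inputs. But the route you then take, through $t$-exactness of $\alpha$ and an identification of hearts, has two genuine gaps, and it is not how the argument actually closes.

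First, the heart comparison you invoke at the end is not available. The paper never constructs a $t$-structure on $\Spc^\fet(\scr S)[T^{-\rho}]$, never identifies its heart, and in particular contains nothing like the assertion that ``finite étale transfers on a homotopy module are automatic and unique.'' That statement is a hard theorem of Morel–Bachmann type over perfect fields and is not known (and is not needed here) over a general noetherian base $B$; making it the endpoint of the reduction replaces the lemma by something at least as difficult. Likewise the left $t$-exactness of $\alpha$ (equivalently right $t$-exactness of $\omega$), which you flag as the main obstacle, is essentially equivalent to the connectivity statement that actually drives the proof, so deferring it to a ``term-by-term matching of two tom Dieck splittings'' leaves the core of the argument unproved.

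Second, and more importantly, the actual mechanism is quantitative, not qualitative, and the heart-level picture cannot see it. After observing that $U_*$ commutes with $T^\rho$-loops and filtered colimits, so that $\omega\alpha$ can be computed levelwise on $T^\rho$-prespectra, Corollary \ref{cor:F*-stable} gives an explicit formula for the cofiber of the unit:
\[ \cof(\eta) \wequi \colim_n \Bigl[T^{-n\rho} \wedge \bigvee_{m \ge 2} \D_m(T^{n\rho} \wedge \Sigma^\infty X) \Bigr]. \]
The point is then a connectivity estimate: $T^{-n\rho}\wedge\D_m(T^{n\rho}\wedge X) \in \SH(\scr S)_{\ge (m-1)n}$, proved by unwinding $\D_m$ as a colimit of $q_\sharp p_\otimes$'s, using $p_\otimes Th(V) \wequi Th(p_*V)$ and Zariski-local triviality of vector bundles to reduce to $T^\rho \in \SH(\scr S)_{\ge 1}$. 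Since $m \ge 2$, the connectivity tends to infinity with $n$, and left non-degeneracy of the homotopy $t$-structure (Proposition \ref{prop:homotopy-t-nondegenerate}(2) — this is the only place the local constancy / Adams hypothesis enters) kills the colimit. No identification of hearts, no $t$-exactness of $\alpha$, and no matching of splittings is required: the higher extended powers are simply pushed to infinite connectivity by the $T^{n\rho}$-suspensions. I would redirect your effort from the heart comparison to this connectivity estimate.
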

\begin{proof}
As a first step, we show that we may without loss of generality put some additional finiteness hypotheses on $\scr S$ and $B$.
Let $\scr S_\alpha$ be a cofiltered system of stacks with affine transition maps and limit $\scr S$.
Then $\Spc^\fet(\scr S) \wequi \lim_\alpha \Spc^\fet(\scr S_\alpha)$, by Lemma \ref{lemm:spcfet-cont}.
If each $\scr S_\alpha$ is quasi-affine over $B_0 \sslash G_0$ for some tame finite étale group $G_0$ over an affine scheme $B_0$, then we deduce that also $\Spc^\fet(\scr S)[T^{-\rho}] \wequi \lim_\alpha \Spc^\fet(\scr S_\alpha)[T^{-\rho}]$ (use Lemma \ref{lemm:scalar-extension-cocont}).
Similarly, using Lemma \ref{lemm:cty-spaces}, we also get in this situation that $\SH(\scr S) \wequi \lim_\alpha \SH(\scr S_\alpha)$.
In order to implement our desired reduction, we thus need to produce a cofiltered system of stacks $\scr S_\alpha$ as above, such that additionally each $\scr S_\alpha$ has our desired finiteness properties.

By \cite[Theorem D]{rydh2015noetherian}, we may write $\scr S = \lim_\alpha \scr S_\alpha$, where the transition maps are affine and each $\scr S_\alpha$ is quasi-affine and finitely presented over $B \sslash G$.
Hence we may assume that $\scr S \to B \sslash G$ is of finite presentation.
We can write $B = \lim_\alpha B_\alpha$, where each $B_\alpha$ is affine of finite type over $\Z$.
Since $G \to B$ is finitely presented, we may assume that $G = G_0 \times_{B_0} B$, for some tame finite étale group $G_0$ over $B_0$.
Letting $G_\alpha = G_0 \times_{B_0} B_\alpha$, we get $B \sslash G = \lim_\alpha B G_\alpha$.
By \cite[Propositions B.2 and B.3]{rydh2015noetherian} we find a stack (possibly after increasing the index $0$) $\scr S_0$ quasi-affine of finite presentation over $B \sslash G_0$ such that $\scr S \wequi \scr S_0 \times_{B \sslash G_0} B \sslash G$.
Letting $\scr S_\alpha = \scr S_0 \times_{B \sslash G_0} B \sslash G_\alpha$ we get $\scr S \wequi \lim_\alpha \scr S_\alpha$, where each $\scr S_\alpha$ is quasi-affine of finite presentation over $B \sslash G_\alpha$, and hence over $B \sslash G_0$.

\textbf{Thus from now on, we may (and will) assume that $\scr S$ is finite type over $B$, and $B$ is finite type over $\Z$.}
By construction, $\alpha$ is a colimit preserving, symmetric monoidal functor.
Denote its right adjoint by $\beta$.
Since $\alpha$ preserves a compact generating family, $\beta$ is conservative and preserves colimits.
It consequently suffices to show that the unit $\id \to \beta \alpha$ is an equivalence (indeed then so is $\beta \alpha \beta \xrightarrow{\beta \epsilon} \beta$, by the triangle identities, and whence so is the counit $\epsilon$ itself).
It is enough to check this on generators, and since $\alpha$ is symmetric monoidal we have (see e.g. \cite[Lemma 3.2]{bachmann-topmod}) \[ \beta(\ph \wedge \alpha(T^{n\rho})) \wequi \beta(\ph) \wedge T^{n\rho}. \]
We thus need only verify that $\eta: \Sigma^\infty X \to \beta \alpha \Sigma^\infty X$ is an equivalence, for all $X \in \Spc(\scr S)_*$.

The functor $U_*: \Spc^\fet(\scr S) \to \Spc(\scr S)_*$ preserves filtered colimits (Lemma \ref{lemm:U-sifted-colim}) and $T^\rho$-loops, and consequently commutes with spectrification of $T^\rho$-prespectra \cite[p. 7]{hoyois2016cdh} \cite[\S6.1]{hoyois-equivariant}.
It follows that the extension of $U_*$ to prespectra preserves stable equivalences, and hence $\beta$ may be computed levelwise\NB{direct reference? make this a separate lemma?}.
We thus find that \[ \beta \alpha \Sigma^\infty X \wequi \colim_n[T^{-n\rho} \wedge \Sigma^\infty U_*F_*(X \wedge T^{n\rho})]. \]
On the other hand \[ \Sigma^\infty X \wequi \colim_n[T^{-n\rho} \wedge \Sigma^\infty(T^{n\rho} \wedge X)]. \]
Using Corollary \ref{cor:F*-stable} (and the fact that $\D_1 \wequi \id$) we hence find that the cofiber of $\eta$ is given by \[ \cof(\eta) \wequi \colim_n \left[T^{-n\rho} \wedge \bigvee_{m \ge 2} \D_m(T^{n\rho} \wedge \Sigma^\infty X) \right]; \] we need to show that this is zero.
By non-degeneracy of the homotopy $t$-structure (Proposition \ref{prop:homotopy-t-nondegenerate}(2)), for this it suffices to show that \[ T^{-n\rho} \wedge \D_m(T^{n\rho} \wedge X) \in \SH(\scr S)_{\ge (m-1)n}. \]

\NB{Details? Make some of this separate lemmas?}
Using the definition of $\D_m$ as a colimit (see \eqref{eq:D-lambda}) and Lemma \ref{prop:fixed-t-exact}(1), for this it is enough to show that if $p: \scr S' \to \scr S$ is finite étale of degree $m$, with $\scr S$ quasi-affine over $B \sslash G$, and $X \in \Spc(\scr S')_*$, then \[ T^{-n\rho} \wedge p_\otimes \Sigma^\infty(T^{n\rho} \wedge X) \in \SH(\scr S)_{\ge (m-1)n}. \]
Since $p_\otimes$ is symmetric monoidal and commutes with $\Sigma^\infty$, we may assume that $X = S^0$.
If $\scr S = S \sslash G$, write $f: \bar{\scr S} \to \scr S$ for the finite étale morphism $(G \times S) \sslash G \to S \sslash G$, and similarly for $\scr S'$.
Note that $\bar{\scr S}$ and $\bar{\scr S'}$ are schemes.
Consider the commutative diagram
\begin{equation*}
\begin{CD}
\bar{\scr S'} @>{f'}>> \scr S' \\
@V{p'}VV            @V{p}VV  \\
\bar{\scr S} @>f>> \scr S.
\end{CD}
\end{equation*}
We have $T^{\rho}_{\scr S} = f_\otimes(T_{\bar{\scr S}})$ (this follows from Lemma \ref{lemm:norm-Trho}), and hence \[ T^{-n\rho} \wedge p_\otimes T^{n\rho} \wequi f_\otimes(p'_\otimes(T) \wedge T^{-1})^{\wedge n}. \]
It is thus enough to show that $f_\otimes(p'_\otimes(T) \wedge T^{-1}) \in \SH(\scr S)_{\ge m-1}$.
Zariski locally on $\scr S$, any vector bundle on $\bar{\scr S}$ of constant rank is trivial \cite[Lemma 1.4.4]{bruns1998cohen}\NB{for quasi-affine $G$-scheme $X$ and $Gx \subset X$, semi-localization $X_{Gx}$ carries $G$-action, and then $X_{Gx} \times G$ is still semilocal}, and hence working Zariski locally on $\scr S$, we may assume that $p'_\otimes(T) \wequi T^m$ (using Lemma \ref{lemm:norm-Trho} again).
We are thus reduced to showing that $T^\rho \in \SH(\scr S)_{\ge 1}$, which holds since $T^\rho \wequi T \wedge T^{\bar\rho}$.
\end{proof}
It follows that in this situation, we obtain a functor \[ \Sigma^\infty_\fet: \Spc^\fet(\scr S) \to \Spc^\fet(\scr S)[T^{-\rho}] \wequi \SH(\scr S). \]

\begin{lemma} \label{lemm:p*-comm}
Let $p: \scr X \to \scr B$ be a proper étale morphism of tame DM-stacks.
Assume that either $p$ is admissible or the Adams hypothesis holds.
\begin{enumerate}
\item The following diagram commutes
\begin{equation*}
\begin{CD}
\Spc^\fet(\scr X) @>{\Sigma^\infty_\fet}>> \SH(\scr X) \\
@V{p_*}VV              @V{p_*}VV   \\
\Spc^\fet(\scr B) @>{\Sigma^\infty_\fet}>> \SH(\scr B).
\end{CD}
\end{equation*}
\item After stabilization, the functors $p_*$ are module functors over their targets.
\item The following diagram commutes
\begin{equation*}
\begin{CD}
\Spc^\fet(\scr X)[T^{-1}] @>{\Sigma^\infty_\fet}>> \SH(\scr X) \\
@V{p_*}VV              @V{p_*}VV   \\
\Spc^\fet(\scr B)[T^{-1}] @>{\Sigma^\infty_\fet}>> \SH(\scr B).
\end{CD}
\end{equation*}
\end{enumerate}
\end{lemma}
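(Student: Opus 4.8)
The plan is to treat part (2) first (it is essentially independent and is needed in part (3)), then part (1) by orbit stack induction, and finally to deduce part (3) from (1) and (2).

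\emph{Part (2).} On the level of (pointed) motivic spaces, factoring $p$ through a gerbe followed by a finite étale morphism (Example \ref{ex:fact-etale}, Example \ref{ex:pet}) exhibits $p_*$ as a composite of a fixed-point functor and a Weil restriction. For each of these the projection formula $p_*(p^* M \wedge E) \wequi M \wedge p_* E$ is classical: for a finite étale morphism $p_* \wequi p_\sharp$ and this is the usual projection formula, while for a gerbe it is the observation (checkable fppf locally) that fixed points of a smash product with an object carrying trivial action split off that object. Hence $p_*$ is a $\Spc(\scr B)_*$-module functor, and the same argument works on $\Spc^\fet$. Scalar-extending along the stabilizations, using Proposition \ref{prop:stab-free-prop} to identify the resulting module categories (exactly as in the proof of Corollary \ref{cor:geom-fixed-points}, via Lemma \ref{lemm:scalar-extension-2funct}), the module structure is preserved; this gives part (2) for both $\Spc^\fet(\ph)[T^{-1}]$ and $\SH(\ph)$.

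\emph{Part (1).} By construction $\Sigma^\infty_\fet$ preserves colimits, and so does $p_*$ (e.g.\ by orbit stack induction together with Corollary \ref{cor:p*-fet}, Corollary \ref{cor:stable-quotient-fet} and Lemma \ref{lemm:p-otimes-colim}). There is a canonical comparison transformation $p_* \circ \Sigma^\infty_\fet \to \Sigma^\infty_\fet \circ p_*$, namely the mate of the equivalence $\Sigma^\infty_\fet \circ p^* \wequi p^* \circ \Sigma^\infty_\fet$ (which holds because $\Sigma^\infty_\fet$ is a morphism of normed $\infty$-categories, cf.\ \S\ref{subsec:stab-norms}, \S\ref{subsec:spcfet}, together with naturality of the identification $\Spc^\fet(\ph)[T^{-\rho}] \wequi \SH(\ph)$) under the adjunctions $p^* \dashv p_*$ and $\Sigma^\infty_\fet \dashv \omega$. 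Since both sides preserve colimits, it suffices to prove this transformation is an equivalence on a set of colimit-generators; by Corollary \ref{cor:gerbes-gen} we may take these to be the objects $F(\scr T)$ with $\scr T \in \SmQA_{\scr X}$ of flat inertia. We then run orbit stack induction (\S\ref{subsec:orbit-stack-ind}) on the size of the isotropy of $p$: on each graded piece of the isotropy filtration the comparison transformation for $p$ is assembled from the comparison transformations for the finite étale map $a$ and the smaller proper étale map $c$ of the orbit stack diagram, together with the compatibility of $\Sigma^\infty_\fet$ with $f^*$, $f_\sharp$ and with $p_\otimes$ for gerbes (the latter from normed-functoriality and Lemma \ref{lemm:norm-Trho}). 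The genuinely new inputs are the base and endpoint cases: for $p$ finite étale we use ambidexterity ($a_\sharp \wequi a_*$, $p''_* \wequi p''_\sharp$) and the compatibility of $\Sigma^\infty_\fet$ with $f_\sharp$; for the ``free'' graded piece (isotropy $\scr F_\free$) we use that $p_!$ on $\Spc^\fet$ and on $\SH$ are compatible with $\Sigma^\infty$ (Corollary \ref{cor:stable-quotient-fet} and Corollary \ref{cor:quotients-stable}); and for the top graded piece we use that $p_\otimes$ on $\SH$ (resp.\ on $\Spc^\fet$) is $p_*(\widetilde{\E(\scr F_\prop/\scr B)} \wedge \ph)$ compatibly with $\Sigma^\infty$ (resp.\ $F_*$), i.e.\ Corollary \ref{cor:geom-fixed-points} and Proposition \ref{prop:geom-fixed-fet}, together with the compatibility of $\Sigma^\infty_\fet$ with smashing by $\E(\scr F_i)_+$ and $\widetilde{\E(\scr F_\prop/\scr B)}$.

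\emph{Part (3).} The functor $\Sigma^\infty_\fet$ factors through $\Spc^\fet(\ph)[T^{-1}]$, since the trivial sphere $T$ maps to an invertible object in $\SH(\ph)$; denote the resulting functor again by $\Sigma^\infty_\fet$. By part (2), $p_*$ is a module functor, and since the trivial sphere satisfies $T \wequi p^* T$, it follows that $p_*$ commutes with $\wedge\, T^{-1}$ and hence descends to a functor $\Spc^\fet(\scr X)[T^{-1}] \to \Spc^\fet(\scr B)[T^{-1}]$ compatibly with the localization $\Spc^\fet(\ph) \to \Spc^\fet(\ph)[T^{-1}]$. The square of part (3) is then obtained from the square of part (1) by precomposing with the essentially surjective localization functor $\Spc^\fet(\scr X) \to \Spc^\fet(\scr X)[T^{-1}]$.

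\emph{Expected main obstacle.} The crux is the orbit-stack-induction step in part (1): one must verify that the comparison transformation is genuinely compatible with the isotropy filtration and with the orbit stack decomposition of Corollary \ref{cor:stable-families-reduction} (so that the inductive hypothesis applies to $b$ and $c$), and that on each graded piece it reduces to one of the already-established compatibilities. The bookkeeping of matching up the idempotents $\E(\scr F_i)_+$ and $\widetilde{\E(\scr F_\prop/\scr B)}$ on the $\Spc^\fet$- and $\SH$-sides, together with keeping track of which auxiliary stacks remain admissible (Lemma \ref{lemm:orb-pres-admissible}) so that the standing hypotheses propagate through the induction, is where the real effort lies.
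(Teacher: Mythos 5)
Parts (1) and (3) of your proposal follow essentially the paper's route: the exchange transformation is checked on generators by orbit stack induction, with the three terminal cases handled by finite étale ambidexterity, by Corollaries \ref{cor:stable-quotient-fet} and \ref{cor:quotients-stable} on the free stratum, and by Proposition \ref{prop:geom-fixed-fet} together with Corollary \ref{cor:geom-fixed-points} on the top stratum; and (3) is deduced from (1) and (2) by descending along the $T$-localization. The problem is part (2), which (3) depends on.

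Your part (2) rests on two identifications that fail where you need them. First, $p_* \wequi p_\sharp$ for $p$ finite étale is \emph{not} true on $\Spc(\ph)_*$ (for the fold map it is the difference between wedge and product); ambidexterity for finite étale maps is available only after stabilization, or on the semiadditive $\Spc^\fet(\ph)$. Second, for a gerbe the identification of $p_*$ with the fixed-point functor $p_\otimes$ holds on $\Spc(\ph)_*$ (Lemma \ref{lemm:pointed-fixed-points}) but \emph{not} on $\Spc^\fet(\ph)$ or $\SH(\ph)$: Proposition \ref{prop:geom-fixed-fet} and Corollary \ref{cor:geom-fixed-points} say precisely that on those categories $p_\otimes \wequi p_*(\widetilde{\E(\scr F_\prop)} \wedge \ph)$, i.e. $p_\otimes$ is the \emph{geometric} fixed points, which differs from the categorical right adjoint $p_*$ by the tom Dieck splitting. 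Consequently, scalar-extending the unstable $p_*$ of a gerbe produces $p_\otimes$ on $\SH$, not the $p_*$ of the statement; and Proposition \ref{prop:stab-free-prop} cannot repair this, since it identifies only $\SH(\scr X) \wedge \E(\scr F_\free/\scr B)_+$ and $\SH(\scr X) \wedge \widetilde{\E(\scr F_\prop)}$ as scalar extensions, not $\SH(\scr X)$ itself (whether $\SH(\scr X) \wequi \Spc(\scr X)_* \otimes_{\Spc(\scr B)_*} \SH(\scr B)$ is one of the open questions of \S\ref{sec:intro-questions}). So your argument, carried out, proves the projection formula for $p_\otimes$ rather than for $p_*$. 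The repair is to treat (2) exactly like (1): $p_*$ is automatically a lax module functor as the right adjoint of a symmetric monoidal functor, and one shows the lax structure map is an equivalence by another orbit stack induction, reducing to the projection formulas for $p_\sharp$ ($p$ finite étale, where $p_* \wequi p_\sharp$ \emph{stably}), for $p_!$ on free objects, and for $p_\otimes$ — and it is only in this last case that your "symmetric monoidal with $p_\otimes p^* \wequi \id$" argument applies.
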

\begin{proof}
(1)
We may replace $\Spc^\fet(\ph)$ by $\SH^{S^1\fet}(\ph)$.
To prove the result, we use orbit stack induction (see \S\ref{subsec:orbit-stack-ind}).
That is, we shall check that the exchange transformation is an equivalence, and we shall do so on generators.
Filtering by isotropy, it suffices to consider generators of the form $F_*(\E(\scr F_1, \scr F_2) \wedge E)$ for $E \in \SHS(\scr X)$ and $\scr F_1 \subset \scr F_2$ weakly adjacent families.
Using Proposition \ref{prop:families-reduction} and the projection formula, we have \[ F_*(\E(\scr F_1, \scr F_2) \wedge E) \wequi a_\sharp(E') \] for some $E' \in SH^{S^1\fet}(\widetilde \Orb_\scr{F})$.
By ambidexterity for finite étale morphisms (Proposition \ref{prop:ambidex-spcfet} and \cite[Theorem 7.10 and 7.1(1)]{khan2021generalized}), and naturality of the ambidexterity isomorphism (Lemma \ref{lemm:norm-nat}), we may replace $a_\sharp$ by $a_*$.
Proceeding with the orbit stack induction as usual, we see that it suffices to treat three cases: (a) $p$ finite étale, (b) $p$ a gerbe, $E \wequi E \wedge \E(\scr F_\free/\scr B)_+$, and (c) $p$ a gerbe, $E \wequi E \wedge \widetilde{\E(\scr F_\prop/\scr B)}$.
Case (a) is clear.
For cases (b) and (c), we claim the stronger result that both $F_1: \Spc(\ph)_* \to \Spc^\fet(\ph)$ and $F_2: \Spc(\ph)_* \to \SH(\ph)$ commute with $p_*$.
Case (b) reduces via naturality of ambidexterity isomorphisms (Lemma \ref{lemm:norm-nat}) to showing that $p_!$ (on free objects) commutes with $F_1, F_2$.
This follows from Corollaries \ref{cor:stable-quotient-fet} and \ref{cor:quotients-stable}.
Case (c) follows from Proposition \ref{prop:geom-fixed-fet} and Corollary \ref{cor:geom-fixed-points}.

(2)
We concentrate on $\SH^{S^1\fet}(\ph)$; the argument for $\SH(\ph)$ is the same.
Being right adjoint to a symmetric monoidal functor, $p_*$ is canonically a lax module functor\tdn{ref}.
We must prove that given $A \in \SH^{S^1\fet}(\scr B), B \in \SH^{S^1\fet}(\scr X)$ we have $p_*(p^*(A) \wedge B) \wequi A \wedge p_*(B)$.
By another orbit stack induction, we again have three cases for $p, B$: we must prove that $p_*$ is a module functor for $p$ finite étale, that $p_!$ is a module functor, and $p_\otimes$ is.
In the first case we have $p_* \wequi p_\sharp$.
Both $p_\sharp$ (for $p$ finite étale) and $p_!$ (on free objects) satisfy the projection formula, as one verifies by checking on generators,
Finally $p_\otimes$ is a module functor since it is symmetric monoidal and $p_\otimes p^* \wequi \id$ (note that $p_\otimes$ is essentially fixed points).

(3)
Immediate from (1) and (2), using $2$-functoriality of scalar extension (Lemma \ref{lemm:scalar-extension-2funct}).
\end{proof}

\begin{theorem} \label{thm:main}
Let $\scr S$ be a tame DM-stack.
Assume that either $\scr S$ is locally constant (Definition \ref{def:loc-const-stack}) or the Adams hypothesis holds (Definition \ref{def:adams-hyp}).
Then the canonical functor (induced by $\Sigma^\infty_\fet$) \[ \Spc^\fet(\scr S)[T^{-1}] \to \SH(\scr S) \] is an equivalence.
\end{theorem}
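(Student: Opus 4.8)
\emph{Strategy and reductions.}
The plan is to carry out the three-step scheme of \S\ref{subsub:proof-sketch}: generation by gerbes (Proposition \ref{prop:gens}), the identification $\SH \wequi \Spc^\fet[T^{-\rho}]$ (Lemma \ref{lemm:Spcfet-invert-rho}), and compatibility of $\Sigma^\infty_\fet$ with fixed points (Lemma \ref{lemm:p*-comm}); together these will force $T^\rho$ to become invertible already after inverting $T$. First I would reduce the generality. The functors $\SH(\ph)$ and $\Spc^\fet(\ph)$ (Lemma \ref{lemm:spcfet-descent}), hence also $\Spc^\fet(\ph)[T^{-1}]$, satisfy quasi-affine Nisnevich descent, and the comparison functor of the theorem underlies a natural transformation of the resulting $\Cat$-valued sheaves. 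Since a tame DM-stack which is locally constant (resp.\ arbitrary, if the Adams hypothesis holds) admits a quasi-affine Nisnevich cover by stacks quasi-affine over some $B\sslash G$ with $B$ affine and $G$ tame finite étale and constant (resp.\ arbitrary), and the Čech nerve of such a cover consists of stacks of the same kind, it suffices to treat such $\scr S$; in particular the case of algebraic spaces ($G = e$) follows already from Lemma \ref{lemm:Spcfet-invert-rho} with $\rho$ trivial together with this descent. For $\scr S$ quasi-affine over $B\sslash G$, Lemma \ref{lemm:Spcfet-invert-rho} identifies $\SH(\scr S)$ with $\Spc^\fet(\scr S)[T^{-\rho}]$ so that the comparison functor becomes the canonical map $\Spc^\fet(\scr S)[T^{-1}] \to \Spc^\fet(\scr S)[T^{-\rho}]$; writing $T^\rho \wequi T \wedge T^{\bar\rho}$ (Lemma \ref{lemm:norm-Trho}), this map inverts $T^{\bar\rho}$ and is thus an equivalence exactly when $T^\rho$ is invertible in $\Spc^\fet(\scr S)[T^{-1}]$. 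Because $T^\rho_{\scr S}$ is the image of $T^\rho_{B\sslash G}$ under the symmetric monoidal pullback along $\scr S \to B\sslash G$ (base change for norms, Corollary \ref{cor:weil-res}), it is enough to prove that $\Phi := \Sigma^\infty_\fet : \Spc^\fet(B\sslash G)[T^{-1}] \to \SH(B\sslash G)$ is an equivalence.

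\emph{The case $\scr S = B\sslash G$: setup.}
The functor $\Phi$ is symmetric monoidal and colimit-preserving and sends $F(\scr X) \wedge \Gmp{n}$ to $\Sigma^\infty_+ \scr X \wedge \Gmp{n}$; by Proposition \ref{prop:gens} these, with $\scr X \in \SmQA_{B\sslash G}$ a gerbe, generate $\SH(B\sslash G)$, so $\Phi$ is essentially surjective, and the same generation statement holds in $\Spc^\fet(B\sslash G)[T^{-1}]$ (the filtration-by-isotropy proof of Proposition \ref{prop:gens} applies verbatim there, using \S\ref{subsec:spcfet-isotropy-spec}). It therefore remains to show $\Phi$ is fully faithful; and — the class of $A$ with $\Map(A,B) \xrightarrow{\wequi} \Map(\Phi A,\Phi B)$ for all $B$ being closed under colimits — it suffices to treat $A = F(\scr X) \wedge \Gmp{n}$ with $\scr X$ a gerbe.

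\emph{The mapping-space computation.}
Fix such $\scr X$, with smooth quasi-affine structure morphism $q : \scr X \to B\sslash G$ (so that $q_\sharp$ exists) and canonical map $p : \scr X \to X := \repr(\scr X)$, a proper étale gerbe; by Lemma \ref{lemm:gerbes-admissible-auto} it is admissible in the locally constant case, so Lemma \ref{lemm:p*-comm}(3) applies to $p$ (using the Adams hypothesis otherwise). Put $(\ph)^{\scr X} := p_* q^*$ and $L^{\scr X} := q_\sharp p^*$, a left adjoint of $(\ph)^{\scr X}$, defined compatibly on $\Spc^\fet(\ph)[T^{-1}]$ and on $\SH(\ph)$; then $A \wequi L^{\scr X}(\1_X \wedge \Gmp{n})$, the functor $\Phi$ commutes with $L^{\scr X}$ (with $p^*$ trivially, with $q_\sharp$ as with any construction compatible with stabilization) and with $(\ph)^{\scr X}$ (with $q^*$ trivially, with $p_*$ by Lemma \ref{lemm:p*-comm}(3)), and $\Phi_X : \Spc^\fet(X)[T^{-1}] \to \SH(X)$ is an equivalence by the algebraic-space case. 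Hence for arbitrary $B$,
\[ \Map(\Phi A, \Phi B) \wequi \Map\bigl(\1_X \wedge \Gmp{n},\, (\Phi B)^{\scr X}\bigr) \wequi \Map\bigl(\1_X \wedge \Gmp{n},\, \Phi_X(B^{\scr X})\bigr) \wequi \Map\bigl(\1_X \wedge \Gmp{n},\, B^{\scr X}\bigr) \wequi \Map(A, B), \]
using in turn $L^{\scr X} \dashv (\ph)^{\scr X}$ together with commutation of $\Phi$ with $L^{\scr X}$, commutation of $\Phi$ with $(\ph)^{\scr X}$, the equivalence $\Phi_X$, and $L^{\scr X} \dashv (\ph)^{\scr X}$ in $\Spc^\fet(\ph)[T^{-1}]$. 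Therefore $\Phi$ is fully faithful, hence an equivalence.

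\emph{Main obstacle.}
The genuinely hard work lies not in this assembly but in Lemmas \ref{lemm:Spcfet-invert-rho} and \ref{lemm:p*-comm}: the former uses the homotopy $t$-structure and the computation of $U_*F_*$ in terms of the extended powers $\D_n$, and the latter uses the Adams isomorphism via orbit stack induction (together with the naturality of the ambidexterity isomorphism, Lemma \ref{lemm:norm-nat}, and the geometric fixed point comparisons). Within the argument above, the one point needing care is the invocation of Lemma \ref{lemm:p*-comm}(3): it requires the canonical gerbe $\scr X \to \repr(\scr X)$, for a gerbe $\scr X$ in $\SmQA_{B\sslash G}$, to be admissible — precisely the content of Lemma \ref{lemm:gerbes-admissible-auto} — which is why the refinement in Proposition \ref{prop:gens} to generators $\scr X$ that are themselves gerbes is indispensable.
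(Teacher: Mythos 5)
Your proposal is correct and follows essentially the same route as the paper's own proof: quasi-affine Nisnevich descent to reduce to stacks quasi-affine over $B \sslash G$, Lemma \ref{lemm:Spcfet-invert-rho} to reduce the theorem to the invertibility of $T^\rho$ in $\Spc^\fet(B \sslash G)[T^{-1}]$, generation by gerbes (Proposition \ref{prop:gens}) for essential surjectivity, and Lemma \ref{lemm:p*-comm}(3) to transport the remaining comparison along the fixed-point functors $(\ph)^{\scr X} = p_*q^*$ down to algebraic spaces, where Lemma \ref{lemm:Spcfet-invert-rho} applies with $T = T^\rho$. Your verification of full faithfulness on mapping spaces out of the generators $L^{\scr X}(\1_X \wedge \Gmp{n})$ is the formal dual of the paper's verification that the unit $E \to \Omega^\infty_\fet\Sigma^\infty_\fet E$ becomes an equivalence after applying the jointly conservative family of Corollary \ref{cor:fixed-points-cons}; both versions rest on the same inputs, including the extension of generation-by-gerbes to $\Spc^\fet(\scr S)[T^{-1}]$, which the paper leaves implicit and you rightly make explicit.
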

\begin{proof}
Both sides satisfy quasi-affine Nisnevich descent, by Lemma \ref{lemm:spcfet-descent} and Example \ref{ex:sheaf-stabilization} for $\Spc^\fet(\ph)[T^{-1}]$ and \cite[Theorem 4.5(iii)]{khan2021generalized} for $\SH(\ph)$.
It thus suffices to prove the equivalence for all terms in the Čech nerve of a quasi-affine Nisnevich cover of $\scr S$.
This allows us to assume that $\scr S$ is quasi-affine over $B \sslash G$, where moreover either $G$ is constant or the Adams hypothesis holds.
Since $\Spc^\fet(\scr S)[T^{-\rho}] \wequi \SH(\scr S)$ (Lemma \ref{lemm:Spcfet-invert-rho}), it is enough to show that $T^\rho$ is invertible in $\Spc^\fet(\scr S)[T^{-1}]$.
For this we may assume that $\scr S = B \sslash G$

Since $\Sigma^\infty_\fet$ has dense image by Proposition \ref{prop:gens}, it suffices to prove fully faithfulness.
In other words we must prove that given $E \in \Spc^\fet(\scr S)[T^{-1}]$ the map $E \to \Omega^\infty_\fet \Sigma^\infty_\fet E$ is an equivalence.
By Corollary \ref{cor:fixed-points-cons} it suffices to show that $(E \to \Omega^\infty_\fet \Sigma^\infty_\fet E)^{\scr X}$ is an equivalence, for all gerbes $\scr X$ smooth and quasi-affine over $B \sslash G$.
For a smooth quasi-affine morphism $f$, $\Sigma^\infty_\fet$ commutes with $f^*$ and $f_\sharp$, and so $\Omega^\infty_\fet$ commutes with $f_*$ and $f^*$.
Thus given $E \in \Spc^\fet(\scr S)[T^{-1}]$, we get $f^*\Omega^\infty_\fet\Sigma^\infty_\fet E \wequi \Omega^\infty_\fet\Sigma^\infty_\fet f^* E$.
Applying this to $f = (\scr X \to \scr S)$, we find that it suffices to prove the following: if $\scr X$ is a gerbe, smooth quasi-affine over $B \sslash G$, and with quotient $p: \scr X \to X$, then $p_*(E \to \Omega^\infty_\fet \Sigma^\infty_\fet E)$ is an equivalence.
Either the Adams hypothesis holds, or by Lemma \ref{lemm:gerbes-admissible-auto}, we may assume that $p$ is admissible.
Since $p_*$ commutes with $\Omega^\infty$, Lemma \ref{lemm:p*-comm}(3) now shows that $p_*$ commutes with \[ \Omega^\infty_\fet \Sigma^\infty_\fet: \Spc^\fet(\ph)[T^{-1}] \to \Spc^\fet(\ph)[T^{-1}]. \]
In other words, we have reduced to the case where $\scr X = X$ is an algebraic space.
In this case $T=T^\rho$, and the result reduces to Lemma \ref{lemm:Spcfet-invert-rho}.
\end{proof}

\appendix
\section{Stabilization and localization} \label{app:stab}
For a presentable $\infty$-category $\scr C$ and a small set of morphism $S$ in $\scr C$, we denote by $\scr C[S^{-1}]$ the initial presentable category under $\scr C$ in which all maps in $S$ become equivalences (here we are working in $Pr^L$).
We also denote by $\scr C_* = \scr C_{*/}$ the category of pointed objects and by $\Sp(\scr C)$ the category of spectrum objects \cite[\S1.4.2]{lurie-ha}.
We frequently use the following well-known results.

\begin{lemma} \label{lemm:locn-preserved}
Let $\scr C$ be presentable and $S$ a set of maps in $\scr C$.
\begin{enumerate}
\item $\scr C[S^{-1}]_* \wequi (\scr C_*)[f_+^{-1}, f \in S]$
\item $\Sp(\scr C[S^{-1}]) \wequi \Sp(\scr C)[(\Sigma^{\infty+n}_+ f)^{-1}, f \in S, n \in \Z]$
\end{enumerate}
\end{lemma}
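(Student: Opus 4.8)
The plan is to prove both parts in parallel by a standard argument: realise each side as a reflective localization of $\scr C_*$ (respectively of $\Sp(\scr C)$), identify the two subcategories of local objects, and conclude by uniqueness of a reflective localization with a prescribed local subcategory.

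First I would record the standard facts that $(\ph)_* \wequi (\ph) \otimes \Spc_*$ and $\Sp(\ph) \wequi (\ph) \otimes \Sp$ as endofunctors of $Pr^L$ (see \cite[\S4.8]{lurie-ha}); in particular both preserve colimits of presentable categories. Since the localization $q \colon \scr C \to \scr C[S^{-1}]$ is an epimorphism in $Pr^L$ (equivalently, its right adjoint, the inclusion of $S$-local objects, is fully faithful), applying these functors produces epimorphisms $\scr C_* \to (\scr C[S^{-1}])_*$ and $\Sp(\scr C) \to \Sp(\scr C[S^{-1}])$, so both $(\scr C[S^{-1}])_*$ and $\Sp(\scr C[S^{-1}])$ are reflective localizations of $\scr C_*$ and of $\Sp(\scr C)$. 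Their essential images are easy to describe: since $*$ is already $S$-local, the image of $(\scr C[S^{-1}])_* \hookrightarrow \scr C_*$ consists of the pointed objects $(* \to Z)$ with $Z$ being $S$-local; and using the presentation $\Sp(\scr D) \wequi \lim(\cdots \xrightarrow{\Omega} \scr D_* \xrightarrow{\Omega} \scr D_*)$ together with the fact that $\Omega$ in a reflective subcategory agrees with $\Omega$ in the ambient category, the image of $\Sp(\scr C[S^{-1}]) \hookrightarrow \Sp(\scr C)$ consists of the spectrum objects all of whose structure spaces are $S$-local.

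For (1) I would then compute the local objects of the right-hand side. Using the adjunction $(\ph)_+ \colon \scr C \adj \scr C_* \colon U$ with $U$ the forgetful functor, a pointed object $Z$ is $f_+$-local iff $\Map_{\scr C}(Y, UZ) \to \Map_{\scr C}(X, UZ)$ is an equivalence, i.e.\ iff $UZ$ is $f$-local; hence $Z$ is $\{f_+ : f \in S\}$-local iff $UZ$ is $S$-local, which is exactly the condition cutting out $(\scr C[S^{-1}])_*$. For (2), similarly with $\Sigma^\infty_+ \colon \scr C \adj \Sp(\scr C) \colon \Omega^\infty$ and a spectrum object $E$ with structure spaces $(E_n)_{n \ge 0}$, one has $\Map_{\Sp(\scr C)}(\Sigma^{\infty+n}_+ Y, E) \wequi \Map_{\scr C}(Y, \Omega^\infty \Sigma^{-n} E)$, so $E$ is local for $\{(\Sigma^{\infty+n}_+ f)^{-1} : f \in S,\ n \in \Z\}$ iff $\Omega^\infty \Sigma^m E$ is $S$-local for all $m \in \Z$; as $\Omega^\infty \Sigma^m E$ equals $E_m$ for $m \ge 0$ and $\Omega^{-m} E_0$ for $m < 0$ (which is $S$-local once $E_0$ is, loops of a local object being local), this is equivalent to every structure space $E_n$ being $S$-local. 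In both cases the local subcategory coincides with the essential image found above, and uniqueness of reflective localizations yields the claimed equivalences (compatibly with the functors from $\scr C_*$, resp.\ $\Sp(\scr C)$).

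The main obstacle is the stable case (2): one must genuinely use the shifts $n \in \Z$ — otherwise the target need not even be stable, and its local objects would only constrain $\Omega^\infty E$ rather than all of $E$ — and one must know that $\Sp(\scr C[S^{-1}])$ really is a reflective localization of $\Sp(\scr C)$. For the latter the abstract $(\ph) \otimes \Sp$ argument appears essential, since the levelwise localization functor does not commute with spectrification, so no naive formula for the reflector is available. The remaining verifications are formal.
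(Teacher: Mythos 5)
Your argument is correct, but it is not the route the paper takes. The paper's proof is a pure universal-property argument in $Pr^L$: one checks that the right-hand sides are pointed (resp.\ stable) and then observes that both sides corepresent the same functor on pointed (resp.\ stable) presentable categories --- e.g.\ $\Fun^L(\scr C[S^{-1}]_*, \scr E) \wequi \Fun^L_S(\scr C,\scr E) \wequi \Fun^L_{\{f_+\}}(\scr C_*,\scr E)$ for pointed $\scr E$ --- so Yoneda finishes the job. You instead realise both sides as reflective subcategories of $\scr C_*$ (resp.\ $\Sp(\scr C)$) and match the local objects. Your approach buys an explicit description of the objects of both categories (pointed objects with $S$-local underlying object; spectrum objects with $S$-local levels), which is genuinely useful elsewhere, at the cost of needing the extra input that $(\ph)\otimes\scr M$ carries the Bousfield localization $\scr C \to \scr C[S^{-1}]$ to another Bousfield localization. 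On that point your justification is the softest part of the write-up: you cannot simply apply $(\ph)\otimes\scr M$ to the adjunction $q \dashv i$, since the fully faithful right adjoint $i$ does not preserve colimits and so is not a morphism of $Pr^L$, and "epimorphism in $Pr^L$" is not by itself the same as "reflective localization". The clean fix is the universal-property computation $\scr C[S^{-1}]\otimes\scr M \wequi (\scr C\otimes\scr M)[\{s\otimes g\}^{-1}]$ for $g$ running over a set of generators of $\scr M$, which exhibits the tensor as a localization at a small set; note that taking $\scr M = \Spc_*$ or $\Sp$ and $g$ the standard generator, this identity already \emph{is} the lemma, and is essentially the paper's Yoneda argument in disguise. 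Your remaining verifications (locality of $Z$ for $f_+$ being equivalent to locality of $UZ$ for $f$; the need for all shifts $n\in\Z$ in the stable case so that locality constrains every level $E_n$ and not just $\Omega^\infty E$; identification of the image via $\Sp(\scr D)\wequi\lim(\cdots\xrightarrow{\Omega}\scr D_*)$ and the fact that the fully faithful right adjoint preserves $\Omega$) are all correct.
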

\begin{proof}
One first verifies that the categories on the right hand side are pointed/stable.
By the Yoneda lemma it then suffices to show that the functors represented among pointed/stable categories are the same, which is clear.
\end{proof}

\begin{lemma} \label{lemm:adj-preserved}
Let $L: \scr C \adj \scr D: R$ be an adjunction of presentable categories, in which $R$ preserves colimits.
\begin{enumerate}
\item $L_*: \scr C_* \adj \scr D_*: R$ is an adjunction
\item $\Sp(L): \Sp(\scr C) \adj \Sp(\scr D): \Sp(R)$ is an adjunction
\item Let $S, T$ be sets of maps in $\scr C, \scr D$ such that $L(S)$ consists of $T$-equivalences, and similarly for $R$.
  Then $L: \scr C[S^{-1}] \adj \scr D[T^{-1}]: R$ is an adjunction.
\end{enumerate}
\end{lemma}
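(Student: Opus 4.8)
The plan is to deduce all three statements from a single principle: any functor of $\infty$-categories carries adjunctions to adjunctions, and the constructions $\scr C \mapsto \scr C_*$, $\scr C \mapsto \Sp(\scr C)$ and $\scr C \mapsto \scr C[S^{-1}]$ are functorial on $Pr^L$ in a manner compatible with this. The observation common to all three parts is that, because $R$ preserves colimits, both $L$ and $R$ are morphisms of $Pr^L$, and the unit and counit of $L \dashv R$ are natural transformations between colimit-preserving functors; thus $L \dashv R$ is an adjunction internal to (the $(\infty,2)$-category underlying) $Pr^L$.

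For (1) and (2), I would use that $\scr C \mapsto \scr C_*$ and $\scr C \mapsto \Sp(\scr C)$ are $2$-functors $Pr^L \to Pr^L$, sending a colimit-preserving functor $G$ to $G_*$, respectively $\Sp(G)$; applying them to the adjunction $L \dashv R$ produces the asserted adjunctions. That the resulting left adjoint really is ``$L$'' --- i.e. $L_*(X_+) \wequi (LX)_+$, respectively $\Sp(L)(\Sigma^\infty_+ X) \wequi \Sigma^\infty_+(LX)$ --- and that the right adjoint really is ``$R$'', follows from the universal properties of $(\ph)_+\colon \scr C \to \scr C_*$ and $\Sigma^\infty_+\colon \scr C \to \Sp(\scr C)$ as the initial colimit-preserving functors into a pointed, respectively stable, presentable category. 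A reader preferring to avoid $(\infty,2)$-categorical language can instead run the direct argument of (3) below: both $L_*$ (resp. $\Sp(L)$) and its candidate adjoint preserve colimits, so the adjunction identity need only be checked on the generating objects $X_+$ (resp. $\Sigma^\infty_+ X$), where it is immediate from $L \dashv R$.

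For (3), I would identify $\scr C[S^{-1}]$ with the full reflective subcategory of $S$-local objects, with reflection $L_S$, and likewise $\scr D[T^{-1}] \subset \scr D$ with reflection $L_T$. The hypothesis that $R(T)$ consists of $S$-equivalences says exactly that $R$ sends $T$-local objects to $S$-local ones: for $s \in S$ and $D \in \scr D[T^{-1}]$, $\Map(s, RD) \wequi \Map(Ls, D)$ is an equivalence since $Ls$ is a $T$-equivalence. Hence $R$ restricts to $\bar R\colon \scr D[T^{-1}] \to \scr C[S^{-1}]$, and I claim its left adjoint is $X \mapsto L_T(LX)$: for $X$ $S$-local and $Y$ $T$-local, $\Map(L_T L X, Y) \wequi \Map(LX, Y) \wequi \Map(X, RY)$. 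To see this functor is the one ``induced by $L$'', i.e. that $L_T(L(L_S X)) \wequi L_T(LX)$ for all $X \in \scr C$, one uses the second hypothesis: $L$ is colimit-preserving and carries $S$ into the $T$-equivalences, hence carries every $S$-equivalence --- in particular the localization map $X \to L_S X$ --- to a $T$-equivalence. The only point requiring any care is the compatibility invoked in (1)--(2); but this is standard, so I expect the real work here to be purely expository, namely deciding how much of the $2$-functoriality of $(\ph)_*$ and $\Sp(\ph)$ to spell out.
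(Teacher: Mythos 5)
Your argument is correct, and for parts (1) and (2) it is essentially the paper's: the paper proves all three statements in one line by observing that pointing, stabilization and localization are instances of scalar extension $(\ph)\otimes_{\Spc}\scr M$ in $Pr^L$ (citing Gepner--Groth--Nikolaus), and that scalar extension is a $2$-functor (Lemma \ref{lemm:scalar-extension-2funct}), hence preserves the adjunction $L \dashv R$, which is internal to $Pr^L$ precisely because $R$ preserves colimits. Your treatment of (3) departs from this: instead of viewing $\scr C[S^{-1}]$ as $\scr C\otimes_{\scr C}\scr C[S^{-1}]$ and quoting the same $2$-functoriality, you argue directly with reflective subcategories of local objects. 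That argument is sound --- though note that your verification that $R$ carries $T$-local objects to $S$-local ones actually uses the hypothesis on $L$ (via $\Map(s,RD)\wequi\Map(Ls,D)$), not the hypothesis on $R$ as you state; the hypothesis that $R(T)$ consists of $S$-equivalences is what guarantees that your restricted functor $\bar R$ is the functor \emph{induced} by $R$ on the quotients, i.e.\ that $R$ descends along $\scr D\to\scr D[T^{-1}]$, dual to the point you do spell out for $L$. The uniform scalar-extension proof buys brevity and treats all three cases at once; your explicit argument for (3) buys transparency and avoids any appeal to $(\infty,2)$-categorical machinery for that part.
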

\begin{proof}
Since pointing, stabilization and localizations are examples of scalar extension \cite{gepner2016universality}, this is a special case of Lemma \ref{lemm:scalar-extension-2funct} below.
\end{proof}

If $\scr A$ is a presentably symmetric monoidal $\infty$-category and $\scr M$ is an $\scr A$-module, then there is an endofunctor of $\scr A$-modules given by $(\ph) \otimes_{\scr A} \scr M$.
If $\scr B$ is a presentably symmetric monoidal $\infty$-category under $\scr A$, then it is in particular an $\scr A$-module, and the functor $(\ph) \otimes_{\scr A} \scr B$ lifts to $\scr B$-modules.
We call either of these three functors \emph{scalar extension}.
\begin{lemma} \label{lemm:scalar-extension-cocont}
Scalar extension preserves colimits.
\end{lemma}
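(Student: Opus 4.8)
The plan is to recognize each version of scalar extension as either the operation of tensoring with a fixed object in a symmetric monoidal $\infty$-category whose tensor product is cocontinuous in each variable, or as an extension-of-scalars functor, which is by construction a left adjoint; cocontinuity is then automatic in both cases.

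First I would recall the ambient structure theory, essentially from \cite[\S 4.2, \S 4.5, \S 4.8]{lurie-ha} (see also \cite{gepner2016universality}). The $\infty$-category $Pr^L$ admits all small colimits and carries a symmetric monoidal structure whose tensor product preserves small colimits separately in each variable. Consequently, for $\scr A \in \CAlg(Pr^L)$ the $\infty$-category $\mathrm{Mod}_{\scr A}(Pr^L)$ is symmetric monoidal under the relative tensor product $\otimes_{\scr A}$, admits all small colimits, and $\otimes_{\scr A}$ preserves small colimits separately in each variable. If $\scr B$ is a presentably symmetric monoidal $\infty$-category under $\scr A$, i.e.\ an object of $\CAlg(\mathrm{Mod}_{\scr A}(Pr^L))$, the same applies to $\mathrm{Mod}_{\scr B}(Pr^L)$, and the restriction functor $\mathrm{Mod}_{\scr B}(Pr^L) \to \mathrm{Mod}_{\scr A}(Pr^L)$ has the extension-of-scalars functor $\scr M \mapsto \scr M \otimes_{\scr A} \scr B$ as its left adjoint.

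Next I would treat the three cases in turn. For an $\scr A$-module $\scr M$, the endofunctor $(\ph) \otimes_{\scr A} \scr M$ of $\mathrm{Mod}_{\scr A}(Pr^L)$ is obtained by fixing $\scr M$ in the second slot of $\otimes_{\scr A}$, hence preserves colimits. The endofunctor $(\ph) \otimes_{\scr A} \scr B$ of $\mathrm{Mod}_{\scr A}(Pr^L)$ associated to $\scr B$ is the special case in which $\scr M$ is the underlying $\scr A$-module of $\scr B$, so it too preserves colimits. Finally, the lift of the previous functor to $\scr B$-modules is precisely the extension-of-scalars functor $\mathrm{Mod}_{\scr A}(Pr^L) \to \mathrm{Mod}_{\scr B}(Pr^L)$, which preserves all colimits because it is a left adjoint.

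I do not expect any real obstacle; the only care needed is to pin down the exact statements in \cite{lurie-ha} guaranteeing that $\mathrm{Mod}_{\scr A}(Pr^L)$ and $\mathrm{Mod}_{\scr B}(Pr^L)$ are cocomplete and that $\otimes_{\scr A}$ is cocontinuous in each variable, so that the claim ``scalar extension preserves colimits'' is both meaningful and correctly cited.
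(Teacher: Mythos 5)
Your proposal is correct and rests on the same key fact as the paper's own (one-line) proof, namely that the tensor product in $Pr^L$ and the relative tensor products $\otimes_{\scr A}$ preserve small colimits separately in each variable \cite[Remark 4.8.1.8]{lurie-ha}; your extra observation that the lift to $\scr B$-modules is a left adjoint to restriction is a clean way to handle the third case. No gaps.
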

\begin{proof}
We just need to know that tensor products in $Pr^L$ preserve colimits in each variable separately, for which see \cite[Remark 4.8.1.8]{lurie-ha}.
\end{proof}
\begin{lemma} \label{lemm:scalar-extension-2funct}
Scalar extension is 2-functorial.
In particular, given a natural transformation \textbf{of $\scr A$-module functors} $F \to G$, there is an induced natural transformation $F \otimes_{\scr A} \scr M \to G \otimes_{\scr A} \scr M$.
\end{lemma}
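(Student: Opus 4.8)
The plan is to recognise all three flavours of scalar extension as instances of the single operation ``tensor with a fixed object'' inside a closed symmetric monoidal $\infty$-category, and then to invoke the general principle that such an operation refines to a functor of $(\infty,2)$-categories. Recall from \cite[\S4.5.2, \S4.8.1]{lurie-ha} that for $\scr A \in \CAlg(Pr^L)$ the $\infty$-category $\mathrm{Mod}_{\scr A}(Pr^L)$ is presentably symmetric monoidal with unit $\scr A$ and tensor product $\otimes_{\scr A}$, and that it is \emph{closed}: the internal mapping object of $\scr N,\scr N'$ is the $\infty$-category $\Fun_{\scr A}(\scr N,\scr N')$ of colimit-preserving $\scr A$-linear functors, whose morphisms are exactly the natural transformations of module functors. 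The same applies to $\mathrm{Mod}_{\scr B}(Pr^L)$ for any presentably symmetric monoidal $\scr B$ under $\scr A$. Under these identifications, $(\ph)\otimes_{\scr A}\scr M$ on $\mathrm{Mod}_{\scr A}(Pr^L)$ and $(\ph)\otimes_{\scr B}\scr M$ on $\mathrm{Mod}_{\scr B}(Pr^L)$ are ``tensor with a fixed object'', while base change $(\ph)\otimes_{\scr A}\scr B\colon \mathrm{Mod}_{\scr A}(Pr^L) \to \mathrm{Mod}_{\scr B}(Pr^L)$ is left adjoint to the forgetful functor.

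The key observation I would use is that each of these functors is the left adjoint of a functor which is \emph{manifestly} $2$-functorial, and that the left adjoint of a functor of $(\infty,2)$-categories (compatibly on all hom-categories) is again a functor of $(\infty,2)$-categories. Indeed, $(\ph)\otimes_{\scr A}\scr M$ is left adjoint to $\underline{\Fun}_{\scr A}(\scr M,\ph)$, which is simply post-composition with functors out of $\scr M$ and hence takes $2$-morphisms to $2$-morphisms tautologically; likewise $(\ph)\otimes_{\scr A}\scr B$ is left adjoint to the forgetful functor $\mathrm{Mod}_{\scr B}(Pr^L)\to\mathrm{Mod}_{\scr A}(Pr^L)$, which merely restricts the enrichment and is therefore $2$-functorial on the nose. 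Having upgraded scalar extension to a functor of $(\infty,2)$-categories, it carries $2$-morphisms to $2$-morphisms; since the $2$-morphisms of $\mathrm{Mod}_{\scr A}(Pr^L)$ are precisely the natural transformations of module functors, a natural transformation $\eta\colon F\to G$ of $\scr A$-module functors is sent to a natural transformation $F\otimes_{\scr A}\scr M\to G\otimes_{\scr A}\scr M$, functorially in $(F,G,\eta)$ by construction --- which is the asserted $2$-functoriality. Equivalently, one may argue via the two-sided bar construction $\scr N\otimes_{\scr A}\scr M\wequi \colim_{[n]\in\Delta^\op}\scr N\otimes\scr A^{\otimes n}\otimes\scr M$ in $Pr^L$, noting that each simplicial level is $2$-functorial on $Pr^L$ because $(\ph)\otimes\scr E$ is left adjoint to the post-composition functor $\Fun^L(\scr E,\ph)$, and that the colimit of a diagram of $2$-functors is again a $2$-functor.

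The part I expect to require the most care in a fully detailed writeup is the $(\infty,2)$-categorical bookkeeping: namely that the closed symmetric monoidal structure on $Pr^L$ (and on $\mathrm{Mod}_{\scr A}(Pr^L)$) of \cite{lurie-ha} genuinely promotes to one on the associated $\Cat_\infty$-enriched $\infty$-category whose hom-categories are the functor $\infty$-categories, and that ``the left adjoint of a $2$-functor is a $2$-functor'' holds at this level of generality. For a fully rigorous treatment one would cite the theory of enriched $\infty$-categories and of $(\infty,2)$-categorical adjunctions (as developed by Gepner--Haugseng, Hinich, and Lurie); for the modest uses made of this lemma in the present article, however, the bar-resolution argument above is entirely adequate and elementary, so I would present that as the proof.
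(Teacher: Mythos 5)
Your proposal is correct in substance and lands on the same underlying mechanism as the paper (the $\Cat_\infty$-enrichment of $\scr A\Mod$ coming from $\Cat_\infty \to Pr^L \to \scr A\Mod$), but it gets there by a genuinely different and more roundabout route. The paper's proof is a one-liner once the enrichment is set up: scalar extension $(\ph)\otimes_{\scr A}\scr M$ is itself an $\scr A$-module functor (by associativity of relative tensor products), hence a $\Cat_\infty$-module functor by restriction of scalars along $\Cat_\infty \to Pr^L \to \scr A\Mod$, hence a functor of $(\infty,2)$-categories. No adjunctions, internal homs, or bar resolutions are invoked. You instead pass through the closed monoidal structure and the principle that ``the left adjoint of a $2$-functor is a $2$-functor.'' That principle is the one place where your argument needs care: at the enriched level it is \emph{not} automatic that an underlying left adjoint of an enriched functor is enriched --- the left adjoint of a (lax) $\scr V$-module functor is a priori only an \emph{oplax} module functor, and promoting it to a strong one is exactly the condition that it commute with tensors. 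In the case at hand that condition holds precisely because $(\ph)\otimes_{\scr A}\scr M$ is an $\scr A$-module (hence $\Cat_\infty$-module) functor --- which is the paper's direct observation. So your adjoint-based route, made rigorous, collapses onto the paper's argument; as written it risks assuming the conclusion under the guise of a general principle. The bar-resolution variant has the same feature (each simplicial level is $2$-functorial for the same direct reason, not because of an adjunction), plus the additional point that colimits of diagrams of $(\infty,2)$-functors need to be seen to be $(\infty,2)$-functors, which again follows most cleanly from the module-structure description. What your approach buys is a more explicit picture of where the induced $2$-morphisms come from (whiskering through the counit of $(\ph)\otimes_{\scr A}\scr M \dashv \underline{\Fun}_{\scr A}(\scr M,\ph)$); what the paper's buys is brevity and the avoidance of $(\infty,2)$-categorical adjunction theory altogether.
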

\begin{proof}
One may promote an $\infty$-category to an $(\infty,2)$-category by constructing an appropriate $\Cat_\infty$-module structure (see \cite[\S7]{GH-enriched} and \cite[Theorem 7.5]{Haugseng-enriched}).
The canonical symmetric monoidal functor $\Cat_\infty \to Pr^L \to \scr A\Mod$\NB{size...?} exhibits $\scr A\Mod$ as an $(\infty,2)$-category.
Since scalar extension is an $\scr A$-module functor, it is a $\Cat_\infty$-module functor, and so a functor of $(\infty,2)$-categories.
\end{proof}

Note that given a $\scr C$-module functor $F: \scr M \to \scr N$ with a left adjoint $G$, then $G$ is automatically an oplax $\scr C$-module functor.
Asking that $G$ be a $\scr C$-module functor is thus a property, not data.
\begin{lemma}
Let $\scr C \to \scr D$ be presentably symmetric monoidal and $F$ an $I$-diagram of $\scr C$-modules in $Pr^L$.
If each of the transition functors in $F$ admits a left adjoint $\scr C$-module functor then \[ (\lim_I F)_{\otimes \scr C} \scr D \wequi \lim_I (F \otimes_{\scr C} \scr D). \]
\end{lemma}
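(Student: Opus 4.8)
The plan is to rewrite the limit $\lim_I F$ as a colimit, and then invoke the fact (Lemma \ref{lemm:scalar-extension-cocont}) that scalar extension preserves colimits. Throughout we work in $\scr C\Mod(Pr^L)$, noting that the forgetful functor to $Pr^L$ creates both limits and colimits \cite[\S4.2.3, \S4.8.1]{lurie-ha}, so all of the (co)limit computations below may be checked on underlying $\infty$-categories; likewise we will use that $(\ph)\otimes_{\scr C}\scr D$ is $2$-functorial (Lemma \ref{lemm:scalar-extension-2funct}).

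\emph{Step 1: the limit is a colimit.} For $\alpha\colon i \to j$ in $I$ write $F(\alpha)\colon F(i) \to F(j)$ for the transition functor and $F(\alpha)^L$ for its left adjoint, which by hypothesis is a $\scr C$-module functor. Passing to left adjoints is $2$-functorial, so the $F(\alpha)^L$ assemble into a diagram $F^L\colon I^\op \to \scr C\Mod$ agreeing with $F$ on objects. I claim there is a canonical equivalence $\lim_I F \wequi \colim_{I^\op} F^L$ in $\scr C\Mod$, under which the leg $\lim_I F \to F(i)$ is left adjoint to the leg $F(i) \to \colim_{I^\op} F^L$. Indeed, it suffices to check this on underlying categories: since each $F(\alpha)$ admits a left adjoint, the diagram $F$ also lands in $Pr^R$, and under the equivalence $Pr^L \wequi (Pr^R)^\op$ sending a left adjoint to its right adjoint, the diagram $F^L\colon I^\op \to Pr^L$ corresponds to $F\colon I \to Pr^R$; hence $\colim_{I^\op} F^L$ computed in $Pr^L$ is $\lim_I F$ computed in $Pr^R$, which in turn is $\lim_I F$ computed in $\widehat{\Cat}_\infty$, i.e. $\lim_I F$ computed in $Pr^L$ \cite[Corollary 5.5.3.4 and Theorem 5.5.3.18]{lurie-htt}. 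The adjointness of the two families of legs is part of this identification.

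\emph{Step 2: extend scalars and recognise a limit again.} Since $(\ph)\otimes_{\scr C}\scr D$ preserves colimits, Step 1 gives
\[ (\lim_I F)\otimes_{\scr C}\scr D \;\wequi\; (\colim_{I^\op} F^L)\otimes_{\scr C}\scr D \;\wequi\; \colim_{I^\op}\bigl(F^L\otimes_{\scr C}\scr D\bigr). \]
On the other hand, by $2$-functoriality the adjunction $F(\alpha)^L \dashv F(\alpha)$ of $\scr C$-module functors is carried to an adjunction $F(\alpha)^L\otimes_{\scr C}\scr D \dashv F(\alpha)\otimes_{\scr C}\scr D$ of $\scr D$-module functors; thus the transition functors of the $I$-diagram $F\otimes_{\scr C}\scr D$ in $\scr D\Mod$ admit $\scr D$-module left adjoints, and (again by $2$-functoriality) the associated diagram of left adjoints is $F^L\otimes_{\scr C}\scr D$. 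Applying Step 1 to $F\otimes_{\scr C}\scr D$ over the ground category $\scr D$ therefore yields $\lim_I(F\otimes_{\scr C}\scr D) \wequi \colim_{I^\op}(F^L\otimes_{\scr C}\scr D)$. Combining with the display produces the asserted equivalence; that it is the canonical comparison map follows because the equivalences of Step 1 are natural in the legs and scalar extension is $2$-functorial.

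\emph{Main difficulty.} Everything is formal once one has the cocontinuity and $2$-functoriality of scalar extension in hand; the only point requiring genuine care is Step 1, namely keeping the variances straight in the duality $Pr^L \wequi (Pr^R)^\op$ and checking that the resulting ``$\lim = \colim^\op$'' comparison is sufficiently natural — in particular that the cone and cocone legs really are mutually adjoint — so that it is transported correctly through scalar extension in Step 2.
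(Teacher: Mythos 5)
Your proof is correct and follows essentially the same route as the paper's: form the left-adjoint diagram $F^L$ on $I^\op$, use that scalar extension preserves colimits (Lemma \ref{lemm:scalar-extension-cocont}) to commute $(\ph)\otimes_{\scr C}\scr D$ past $\colim_{I^\op}F^L$, and then use that colimits in $Pr^L$ are computed as limits of the right-adjoint diagrams. You simply spell out in more detail the $Pr^L\wequi (Pr^R)^\op$ bookkeeping and the transport of the adjunctions through scalar extension, which the paper leaves implicit.
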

\begin{proof}
We may form the left adjoint diagram $F^L$ on $I^\op$, by assumption it is also valued in $\scr C$-modules in $Pr^L$.
Then \[ \colim_{I^\op}(F^L \otimes_{\scr C} \scr D) \wequi (\colim_{I^\op} F^L) \otimes_{\scr C} \scr D \] by Lemma \ref{lemm:scalar-extension-cocont}.
The result follows since colimits in $Pr^L$ are computed as limits on the right adjoints.
\end{proof}

\begin{example} \label{ex:sheaf-stabilization}
If $\scr M$ is a sheaf of $\scr C$-modules (on some site) such that all the pullbacks admit left adjoints which are $\scr C$-module functors (i.e. satisfy the projection formula), then $\scr M \otimes_{\scr C} \scr D$ is also a sheaf.
\end{example}

\bibliographystyle{alpha}
\bibliography{bibliography}

\end{document}